\newtheorem{definition}{Definition}[section]
\numberwithin{equation}{section}
\newcommand{\beq}{\begin{equation}}
\newcommand{\eeq}{\end{equation}}
\newcommand{\bea}{\begin{aligned}}
\newcommand{\eea}{\end{aligned}}
\newcommand{\bdm}{\begin{displaymath}}
\newcommand{\edm}{\end{displaymath}}
\newcommand{\barr}{\begin{array}}
\newcommand{\earr}{\end{array}}
\newcommand{\ben}{\begin{enumerate}}
\newcommand{\een}{\end{enumerate}}
\newcommand{\bde}{\begin{description}}
\newcommand{\ede}{\end{description}}
\newtheorem{teor}{Theorem}
\newtheorem{prop}[teor]{Proposition}
\newtheorem{lem}[teor]{Lemma}
\newtheorem{rem}[teor]{Remark}
\newtheorem{insight}[teor]{Insight}
\newcommand{\R}{\mathbb{R}}
\newcommand{\N}{\mathbb{N}}
\newcommand{\PP}{\mathbb{P}}
\newcommand{\E}{{\mathbb{E}}}
\newcommand{\defi}{\equiv} 
\newcommand{\de}{\delta}
\newcommand{\e}{\epsilon}
\newcommand{\la}{\lambda}
\newcommand{\s}{\sigma}
\newcommand{\lsim}{\lesssim}
\DeclareMathOperator{\arcsinh}{arcsinh}
\newcommand{\ma}{\mathsf a}
\newtheorem{thm}{Theorem}
\newenvironment{thmbis}[1]
  {%
   \addtocounter{thm}{-1}%
   \begin{thm}}
  {\end{thm}}
\begin{document}

\title[Undirected polymers in random environment: mean field limit] {Undirected Polymers in Random Environment: \\ path properties in the mean field limit.} 

\author[N. Kistler]{Nicola Kistler}
\address{Nicola Kistler \\ J.W. Goethe-Universit\"at Frankfurt, Germany.}
\email{kistler@math.uni-frankfurt.de}

\author[A. Schertzer]{Adrien Schertzer}            
\address{adrien schertzer \\ J.W. Goethe-Universit\"at Frankfurt, Germany.}
\email{schertzer@math.uni-frankfurt.de}

\thanks{We are indebted to Lisa Hartung and Marius A. Schmidt for helpful conversations. This work has been partially supported by a DFG research grant, contract number 2337/1-1.}

\date{\today}
\begin{abstract} 
We consider the problem of undirected polymers (tied at the endpoints) in random environment, also known as the unoriented first passage percolation on the hypercube, in the limit of large dimensions. By means of the multiscale refinement of the second moment method we obtain a fairly precise geometrical description of optimal paths, i.e. of polymers with minimal energy. The picture which emerges can be loosely summarized as follows. The energy of the polymer is, to first approximation, uniformly spread along the strand. The polymer's bonds carry however a lower energy than in the directed setting, and are reached through the following  geometrical evolution. Close to the origin, the polymer proceeds in oriented fashion --  it is thus as stretched as possible. The tension of the strand decreases however gradually, with the polymer allowing for more and more backsteps as it enters the core of the hypercube. Backsteps, although increasing the length of the strand, allow the polymer to connect reservoirs of energetically favorable edges which are otherwise unattainable in a fully directed regime. These reservoirs lie at mesoscopic distance apart, but in virtue of the high dimensional nature of the ambient space, 
the polymer manages to connect them through approximate geodesics with respect to the Hamming metric: this is the key strategy which leads to an optimal energy/entropy balance. 
Around halfway, the mirror picture sets in: the polymer tension gradually builds up again, until full orientedness close to the endpoint. The approach yields, as a corollary, a constructive proof of the result by Martinsson [{\it Ann. Appl. Prob.} {\bf 26} (2016), {\it Ann. Prob.} {\bf 46} (2018)] concerning the leading order of the ground state. 
\end{abstract}

\subjclass[2000]{60J80, 60G70, 82B44} \keywords{Undirected polymers in random environment, first passage percolation, hypercube, mean field limits.} 




\maketitle

\hfill{\it In memory of Dima Ioffe.} \\

\tableofcontents

\section{Introduction}

We denote by $G_n = (V_n, E_n)$ the $n$-dimensional hypercube. $V_n = \{0,1\}^n$ is thus the set of vertices, and $E_n$ the set of edges connecting nearest neighbours. We write $\boldsymbol{0}=(0,0, ... ,0)$ and ${\boldsymbol 1}=(1,1, ... ,1)$ for diametrically opposite vertices. For $l\in \N$ we let
\[
\widetilde \Pi_{n,l} \defi \text{the set of polymers, i.e. paths from \textbf {0} to \textbf {1} 
of length}\; l\,,
\]
as well as 
\[
\widetilde \Pi_{n}\defi \bigcup\limits_{l=1}^{\infty}\widetilde \Pi_{n,l}.
\] 
For $\pi \in \tilde \Pi_{n}$ a polymer going through two vertices $\boldsymbol v, \boldsymbol w$ of the hypercube,  we denote by $l_\pi(\boldsymbol v, \boldsymbol w)$ the length of the connecting substrand, also shortening $l_\pi \defi l_\pi(\boldsymbol 0, \boldsymbol 1)$. \\

\noindent Every edge of the $n$-hypercube is parallel to some unit vector $e_j\in \R^n$, where $e_j$ connects 
\[
(0, \dots, 0) \; \text{and}\; (0, \dots, 0, \underbrace{1}_{j^{th}-\text{coordinate}}, 0, \dots, 0)\,.
\] 
We write $e_{-j}\defi -e_j$. The quantity $\pi_j \in \{1,n\}\cup \{-1,-n\}$ then specifies the direction of a $\pi$-path at step $j$. A {\it forward step} occurs if $\pi_j \in \{1,n\}$; if  $\pi_j \in \{-1,-n\}$ we refer to this as a {\it backstep}. \\

\noindent Remark that the endpoint of the (sub)path $\pi_1 \pi_2\dots \pi_i$ coincides with the vertex given by $\sum_{j\leq i} {e_{\pi_j}}$. The edge traversed in the $j$-th step by the $\pi$-path will be denoted $[\pi]_j$. \\

\noindent To each edge we attach independent, standard (mean one) exponentials random variables $\xi$, the random environment, and assign to a polymer $\pi \in \widetilde \Pi_{n,l}$ its {\it weight/energy} according to 
\[
X_\pi\defi\sum_{j=1}^{l} \xi_{[\pi]_j}.
\]

\noindent The question we wish to address concerns the ground state of undirected polymers in random environment\footnote{This problem also appears in the literature under the name of unoriented first passage percolation, FPP for short.  In mathematical biology it bears relevance to the issue of fitness landscapes. in which case it is dubbed accessibility percolation, see  \cite{Beresticky, Beresticky2, Hegarty, Martinsson1, Martinsson2, Martinsson3, Krug, Krug2} and references therein.  We adopt here the polymer terminology since it is arguably more suitable to convey the type of results we obtain.}, to wit:
\beq \label{one}
m_n \defi \min_{{\pi}\in \widetilde \Pi_{n}}X_{\pi},
\eeq 
in the mean field limit $n\uparrow \infty$, and the statistical/geometrical properties of optimal paths. \\

\noindent A first remark is in place: since polymers with loops cannot achieve the ground state (their energy can always be reduced by removing the loops), we will henceforth focus on the set of {\it loopless} paths of length $l\in \N$,  denoted $\Pi_{n,l}$, and shortening, in full analogy, 
\[
\Pi_n \defi \bigcup\limits_{l=1}^{\infty} \Pi_{n,l}, 
\]
for the set of all loopless paths. 

Looplessness will be very useful: it guarantees, in particular, that the energy of a polymer of length, say, $l$, is indeed given by the sum of $l$ independent standard exponentials. On the other hand, loopless paths are not necessarily directed, see Figure \ref{polymers} below for a graphical rendition.   \\ 

\begin{figure}[!h]
    \centering
\includegraphics[scale=0.3]{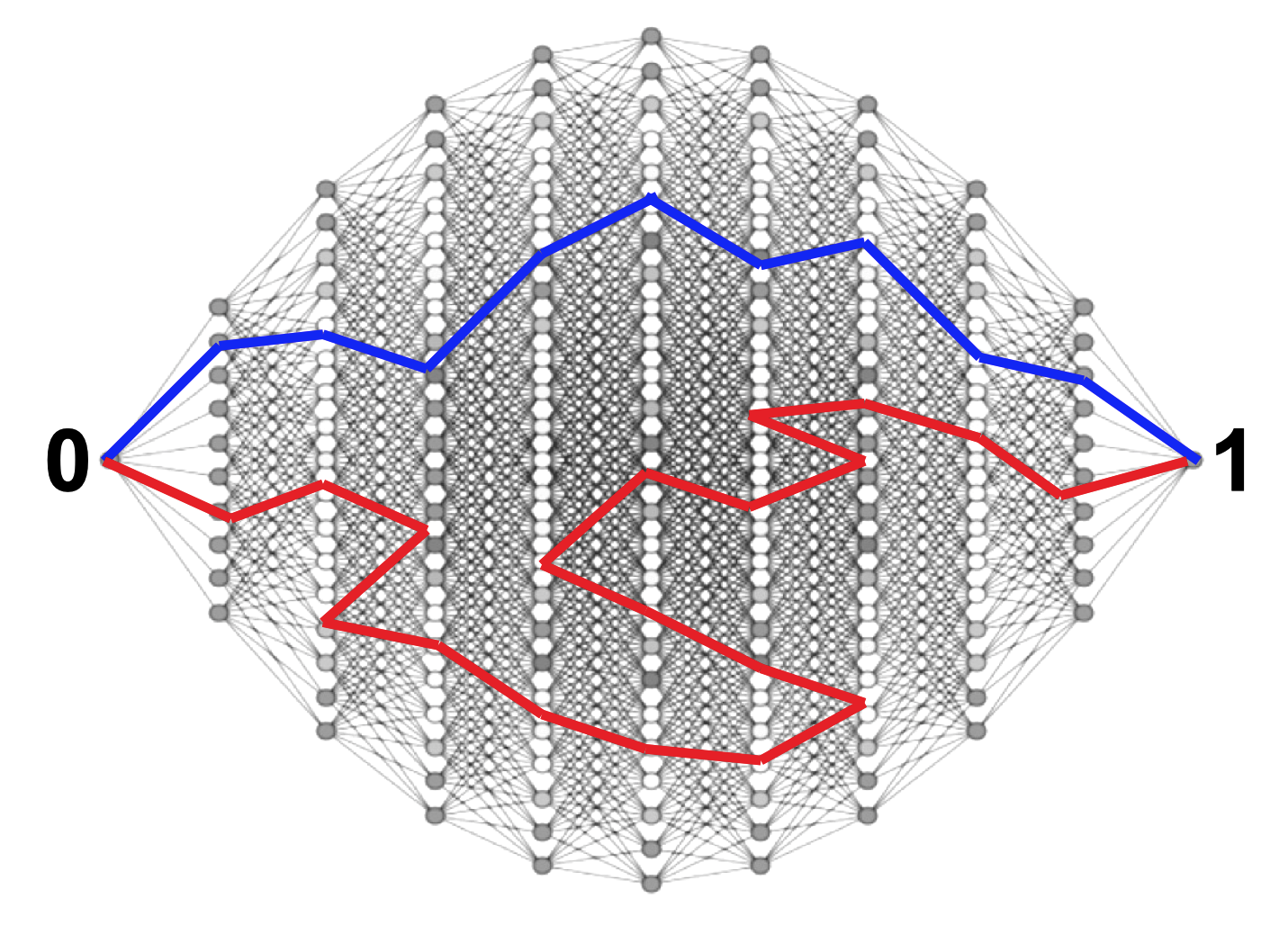}
    \caption{The 10-dim hypercube with two polymers. The blue polymer is {\it directed}: its length coincides with the dimension ($l=n=10$), and it is thus as stretched as possible. The red polymer is {\it undirected}: it performs backsteps, which account for a lower "tension",  and for the long excursions ($l=20$).}
\label{polymers}
\end{figure}

\noindent It is clear that a major issue here will be that of {\it path counting}. For the hypercube, the following beautiful formula is available. We denote by $M_{n,l,d}$ the number of polymers of length $l$ between two points at Hamming distance $d$, i.e. points thus disagree in exactly $d$ coordinates. It then holds :
\beq\bea\label{e4}
M_{n,l,d}=\frac{1}{2^n}\sum_{i=0}^{n}\sum_{j=0}^{d} \binom{d}{j}\binom{n-d}{i-j}{(-1)}^{j}{(n-2i)}^{l}\mathbbm{1}_{j\leq i}.
\eea\eeq 
(This formula concerns all paths of given length: loops, in particular, are also allowed). A proof of this formula, which relies on the classical approach via adjancency matrices, can be found in the monograph by Stanley \cite{stanley}. Since we were not able to identify its first discoverer, we will refer to \eqref{e4} as {\it Stanley's formula}.  

No less remarkable is the following {\it Stanley's identity}, relating $M_{n,l,d}$ to hyperbolic functions. For $x \in \R$, it holds:
\beq\bea\label{e5}
\sum_{l=0}^{\infty}M_{n,l,d}\frac{x^l}{l!} = {\sinh(x)}^{d}{\cosh(x)}^{n-d}\,.
\eea\eeq

Assuming the validity of \eqref{e4}, the proof of \eqref{e5} only requires the binomial theorem and elementary Taylor expansions: it will be given in the Appendix for completeness.  Lightening notations further by setting $M_{n,l} \defi M_{n,l,n}$ for the number of polymers of length $l$ between two opposite vertices on the hypercube, it thus follows from \eqref{e5} that
\beq \label{stanley2}
\sum_{l=0}^{\infty}M_{n,l}\frac{x^l}{l!} = {\sinh(x)}^{n}\,.
\eeq
This relation will allow for precise asymptotical analysis. Before seeing a first, key application, we shall recall yet another technical input concerning {\it tail estimates} for the distribution of the sum of independent standard exponentials as appearing in the problem at hand: denoting by  $\{\xi_i\}_{i \in \N}$ a family of such random variables and with $X_l \defi \sum_{i\leq l} \xi_i$, it then holds:
\beq \label{law_i}
\PP\left( X_l\leq x\right) = \left(1+K(x,l) \right)\frac{e^{- x} x^l}{l!}, 
\eeq
for $x>0$,  and with $0\leq K(x,l)\leq e^{ x} x/(l+1).$ (The proof is truly elementary, but see e.g. \cite[Lemma 5]{kss} for details). \\

Some notational convention: for $a_n, b_n \geq 0$ we write $a_n \lsim b_n$ if $a_n \leq C b_n$ for some numerical constant $C>0$ and $a_n \propto b_n$ if $a_n \lsim b_n$ and $b_n \lsim a_n$ . \\

Armed with Stanley's formula and the tail estimates, we are now ready to make the aforementioned key observation concerning the ground state of undirected polymers: denoting by 
$N_{n,l,x} \defi \#\{\pi \in \Pi_{n,l}, X_{\pi}\leq x\}$ the number of polymers of length $l$ and energies at most $x$, by union bounds and Markov inequality we have
\beq\bea \label{e1}
\PP\left(m_n\leq x\right)=\PP\left(\cup_{l=0}^{\infty} \{N_{n,l,x} \geq 1\}\right)  \leq \sum_{l=0}^{\infty} \E(N_{n,l,x}).
\eea\eeq 
Remark that we are considering polymers with no loops, in which case the energies are indeed sums of $l$ independent random variables. Furthermore, it clearly holds that $\# \Pi_{n,l} \leq M_{n,l}$, since allowing loops can only increase the cardinality\footnote{Here and henceforth we use Stanley's formula although we will be mostly considering loopless polymers: in hindsight, the error/overshooting will turn out to be negligible. This is course due to the high dimensionality of the problem at hand.}. All in all, we have

\beq \bea \label{e11}
\E(N_{n,l,x}) & \leq M_{n,l}\PP\left(X_{l}\leq x\right) \lsim M_{n,l}\frac{x^l}{l!} \,,
\eea \eeq
the second inequality by the tail estimates. 

Performing now the sum over all polymer-lengths in \eqref{e1} and then using \eqref{e5}, we thus obtain 
\beq \label{e12}
\PP\left(m_n\leq x\right) \lsim \sinh(x)^n \,.
\eeq
The $\sinh$-function is increasing, therefore, denoting by 
\beq \label{gs}
\mathsf E \defi \arcsinh(1) = \log(1+\sqrt{2}), 
\eeq
we deduce from \eqref{e12}, and the Borel-Cantelli lemma, a {\it lower bound}  to the ground state, to wit:
\beq
\PP\left( \lim_{n \to \infty} m_n \geq \mathsf E \right) = 1. 
\eeq
As it turns out, this bound is tight.\\

\noindent{\bf Martinsson's Theorem \cite{Martinsson1, Martinsson2}.} {\it For undirected polymers on the hypercube, it holds
\beq  \label{fpp} 
\lim_{n \to \infty} m_n = \mathsf E,
\eeq
in probability.} \\

In other words, a "mean field trivialization" occurs in the limit of large dimensions, and the model of unoriented polymers in random environment thus falls in the so-called {\it REM class} \cite{kistler}. Given the simple derivation of the lower bound, which eventually relies on the Markov inequality only, one is perhaps tempted to tackle the missing upper bound via the Second Moment Method. This is however not the route taken by Martinsson who, in fact, has found {\it two} rather distinct proofs. 

The historically first proof has appeared in \cite{Martinsson1}. In that paper, Martinsson builds upon ideas of Durrett \cite{Durrett} and work by Fill and Pemantle \cite{Fill_Pemantle}, and settles the issue of the upper bound through a delicate comparison with the so-called Branching Translation Process, BTP for short. The BTP is a hierarchical model amenable to an explicit analysis and which, crucially, stochastically dominates the model of unoriented polymers. 

In the second proof of the above theorem, Martinsson proceeds through some ingenious use of the FKG inequality, and (related) subadditivity/monotonicity properties of paths with optimal energies, see \cite{Martinsson2}  for details. 
  
Both proofs naturally come with their own strengths and weaknesses: the first one not only provides a solution of the problem at hand, but also insights into the structure of the BTP which are interesting in their own right, whereas the second proof settles the FPP on Cartesian power graphs, and thus applies in vast generality. 

It seems however fair to say that, by their own nature, both approaches shed little light on the physical phenomena which eventually lead to the mean field trivialization. It is the purpose of this article to fill this gap by providing yet a third proof of the upper bound for the ground state, and hence of Martinsson's Theorem. 

To this end, we will implement the {\it multiscale refinement of the second moment method} \cite{kistler}, a tool which forces us to identify the mechanisms allowing polymers to reach minimal energies. (As will become clear in the treatment, the choice of an exponentially distributed random environment presents no loss of generality).  Unfortunately,  the formulation of our main result, Theorem \ref{fpp1} below, requires not a little infrastructure: this will be provided in the next Section \ref{gathering_insights}. In order the justify (and de-mystify) some otherwise odd looking formulas, concepts, {\it etc.} we will proceed gradually,  increasing the amount of details concerning the geometry of optimal paths through simple observations and elementary computations. The upshot of these findings  will be recorded in the form of {\sf Insights}.  A cautionary note is here due. The computations underlying {\sf Insight \ref{first_insight}-\ref{geodesics}} below are rigorous yet {\it per se} not necessarily conclusive: indeed, they all rely on the {\it existence} of paths with the established geometric properties, but this will be, in fact, the content of Theorem \ref{fpp1} itself. \\

Our new approach leads to a proof of Martinsson's theorem which is much longer than those already available. It does however yield a detailed geometrical description of optimal polymers, and this in turn opens a gateway towards the unsettled issue of fluctuations and weak limits. 

\section{Drawing the picture} \label{gathering_insights}
As we have seen, a reasonable candidate for the ground state eventually follows from an application of the Markov inequality. Albeit crucial, the ground state encodes however only some limited information. Another fundamental quantity is of course the {\it length} of an optimal polymer: as it turns out, a simple computation, allows to make an educated guess.

\subsection{A candidate optimal length} \label{candidate_opt_l} Due to the high dimensionality of the problem, in order to identify the optimal length  it seems  natural to analyze the asymptotics of $\E(N_{n,l, x})$, the expected number of polymers with energies at most $x \in \R_+$, and prescribed length $l \in \N$. To this end, we recall Stanley's identity  \eqref{stanley2} which states that 
\beq \label{restan}
\sum_{l=0}^{\infty}M_{n,l}\frac{x^l}{l!} = {\sinh(x)}^{n}.
\eeq
Restricting to $x>0$ implies that
\beq \label{general}
M_{n,l} \frac{x^l}{l!} \leq \sinh(x)^n\,,
\eeq
and therefore, by optimizing, we obtain, 
\beq\bea\label{general_inf}
M_{n, l} \leq \inf_{x>0}\left[  {\sinh(x)}^{n} \frac{l!}{x^l} \right]\,.
\eea\eeq
Consistently with our terminology, we refer to \eqref{general} and \eqref{general_inf} as {\it Stanley's M-bounds}.

Recall that $N_{n,l,E}$ is the number of paths of length $l$ between two opposite vertices,  and energy at most 
$\mathsf E = \log(1+\sqrt{2})$ as given in \eqref{gs}. By the tail estimates, and the above Stanley's M-bound, we thus have
\beq\bea \label{e2}
\E(N_{n,l,\mathsf E}) \lsim M_{n,l}\frac{\mathsf E^l}{l!}\leq \mathsf E^l\inf_{x>0} \frac{\sinh(x)^n}{x^l} = \mathsf E^l \frac{\sinh(x^*)^n}{{x^*}^l},
\eea\eeq 
where $x^* = x^*(l)$ is the minimizer of the r.h.s. above; taking the derivative of the target function, we see that this is the (unique) solution of 
\beq\bea \label{e3}
\frac{x}{\tanh(x)}=\frac{l}{n}.
\eea\eeq 
At this point one is perhaps tempted to revert the line of reasoning: with the natural candidate for the optimal energy in mind, we choose $x^* \defi \mathsf E$, in which case it follows from \eqref{e3} that $l=\sqrt{2}{\mathsf E}n$, as an elementary computation shows. Changing the order of extremization is of course not quite justified\footnote{One can prove that for all $l\in \N$, and $x^*$ satisfying \eqref{e3}, it holds that
\[
\sinh(x^*)^n \frac{\mathsf E^l}{{x^*}^l}\leq 1,  
\]
with the bound being saturated at $x^*=E$. As a matter of fact, we will prove an even stronger statement, namely that the length of optimal polymers indeed strongly concentrates on $\mathsf L n$, asymptotically in $n$. As we will see, this concentration follows from a key property of the power expansion \eqref{restan}, when evaluated at $x=\mathsf E$: in this case, the $(\mathsf Ln)^{th}$ Taylor-term carries virtually the whole "mass" (whence the saturation). Such a result also provides intriguing clues about the issue of fluctuations, but since it is not instrumental for the rest of the discussion, we postpone the precise formulation, see Proposition \ref{stronger_statement}  below. }, but the upshot turns out to be correct: \vspace{0.2cm}
\begin{center}
\fbox{\begin{minipage}{18em}
\begin{insight} \label{first_insight}
On the $n$-dim hypercube, the (candidate) length of optimal polymers is $$\sqrt{2} \mathsf E n.$$
\end{insight}
\end{minipage}}
\end{center}
\vspace{0.3cm}

\noindent Henceforth, we will shorten
\beq
\mathsf L \defi \sqrt{2} \mathsf E\,,
\eeq 
and always assume, without loss of generality, that $\mathsf Ln \in \N$.

\subsection{Uniform distribution of the energy}
Having found natural candidates for the minimal energy and optimal length, a further question naturally arises: 
\begin{center}
{\it how is an $\mathsf E$-energy distributed along the polymer?}
\end{center}
To formalize, let us consider $\alpha \in [0,1]$, and shorten $\underline{\alpha} \defi 1-\alpha$; furthermore let $\la \in [0,1]$ and similarly shorten $\underline{\la} = 1-\la$. We denote by
\beq\bea
N_{n, \mathsf L n}^{\la, \alpha}:=\# \left\{ \pi \in \Pi_{n,\mathsf Ln}: \;  \sum\limits_{i=1}^{\alpha \mathsf Ln}{\xi_{[\pi]_i}}\leq \lambda \mathsf E, \sum\limits_{i=\underline{\alpha}  \mathsf Ln+1}^{\mathsf Ln}{\xi_{[\pi]_i}}\leq \underline{\lambda} \mathsf E \right\}\,.
\eea\eeq
the number of polymers with the property that an $\la$-fraction of the energy $\mathsf E$ is carried by an 
$\alpha$-fraction of the length  (and similarly for the remaining part of the strand).

\begin{figure}[!h]
    \centering
\includegraphics[scale=0.3]{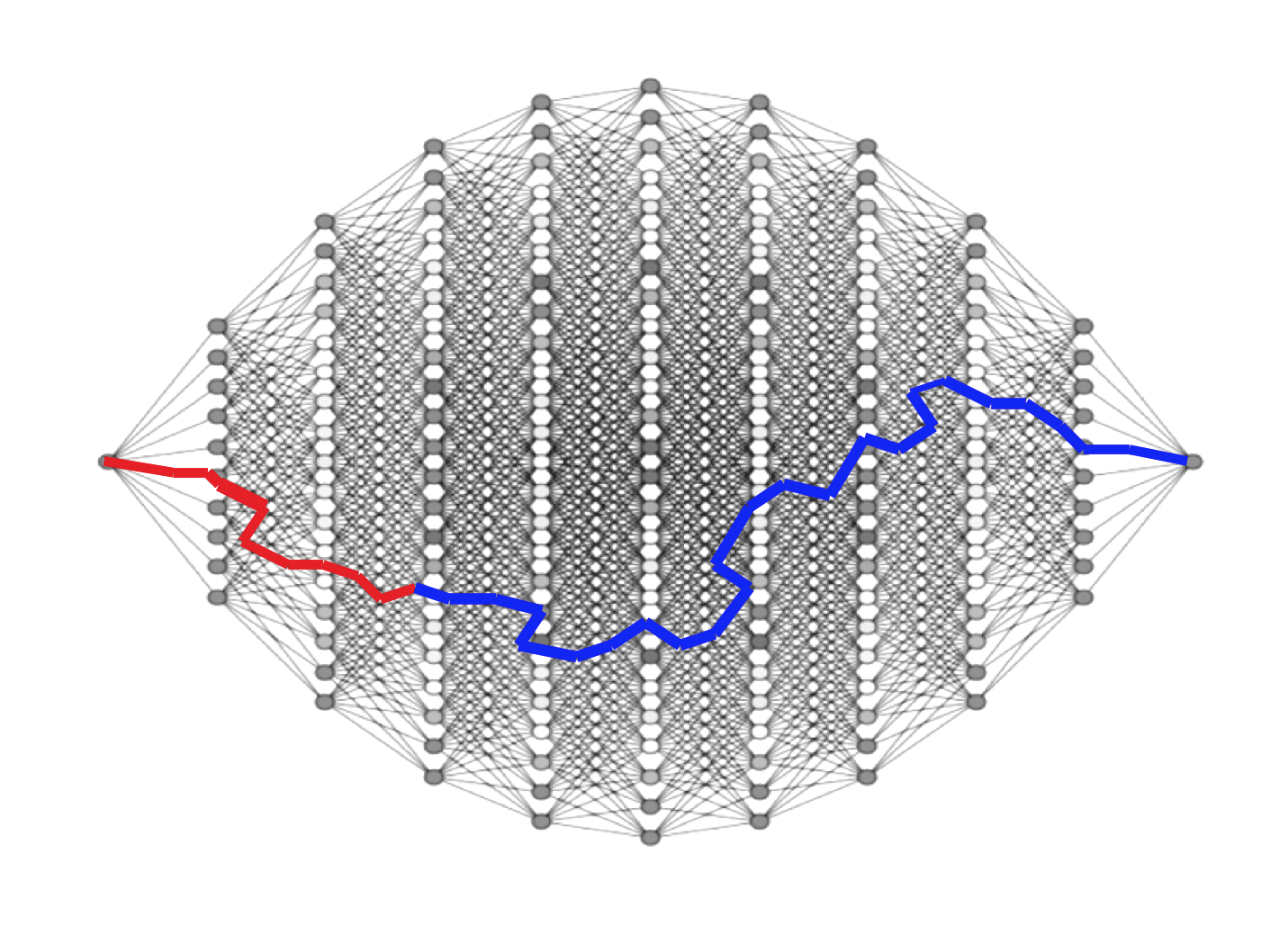}
    \caption{A polymer with $(  \la,  \alpha)$-distribution of the energy $\mathsf E$: the red strand has lenght $\alpha \mathsf Ln$ and carries an energy $\la \mathsf E$, whereas the blue strand has length $\underline{\alpha} Ln$ and carries the remaining energy $\underline{\la} \mathsf E$.}
\label{fig_un}
\end{figure}
Since polymers are loopless, and by independence, we have
\beq\bea \label{first_mom_un}
\E\left( N_{n,\mathsf Ln}^{ \la, \alpha}    \right)&\leq M_{n,\mathsf  Ln}\PP\left (\sum\limits_{i=1}^{\alpha \mathsf  Ln}{\xi_{[\pi]_i}}\leq \lambda \mathsf E, \sum\limits_{i=\underline{\alpha} \mathsf  Ln+1}^{\mathsf  Ln}{\xi_{[\pi]_i}}\leq \underline{\lambda} \mathsf E \right)\\
& = M_{n,\mathsf  Ln}\PP\left(\sum\limits_{i=1}^{\alpha \mathsf  Ln}{\xi_{[\pi]_i}}\leq \lambda \mathsf E \right) \times \PP\left(\sum\limits_{i=\underline{\alpha} \mathsf  Ln+1}^{Ln}{\xi_{[\pi]_i}}\leq\underline{\la} \mathsf E \right)\\
& \lsim M_{n, \mathsf  Ln} \frac{(\lambda \mathsf E)^{\alpha \mathsf  Ln}}{(\alpha \mathsf  Ln)!} \times \frac{ (\underline{\lambda}\mathsf E)^{ \underline{\alpha} \mathsf  Ln} }{(\underline{\alpha} \mathsf  Ln)!} \,,
\eea \eeq
the last inequality by the usual tail estimates. By {\it Stanley's M-bound} \eqref{general}, this time with $x = \mathsf E$, we have
\beq \label{bound_m}
M_{n, \mathsf  Ln} \leq \sinh(\mathsf E)^n \frac{(\mathsf  Ln)!}{\mathsf E^{\mathsf  Ln}} = \frac{(\mathsf  Ln)!}{\mathsf E^{\mathsf  Ln}},
\eeq
the last step since $\sinh(\mathsf E)=1$.  Using this in \eqref{first_mom_un} we thus get
\beq\bea \label{first_mom_un_two}
\E\left( N_{n,\mathsf Ln}^{ \la, \alpha}    \right)
& \lsim \frac{(\mathsf  Ln)!}{\mathsf E^{\mathsf  Ln}}  \frac{(\lambda \mathsf E)^{\alpha \mathsf  Ln}}{(\alpha \mathsf  Ln)!}  \frac{ (\underline{\lambda}\mathsf E)^{ \underline{\alpha} \mathsf  Ln} }{(\underline{\alpha} \mathsf  Ln)!} \\
& = { {\mathsf L} n \choose \alpha  \mathsf L n } (\lambda)^{\alpha \mathsf L n} (\underline{\lambda})^{\underline{\alpha}\mathsf L n},
\eea \eeq
where in the last step we have used that $\mathsf E^{\alpha} \mathsf E^{\underline \alpha} = \mathsf E$, and simplified. By  elementary Stirling approximation  (to first order) of the binomial factor in \eqref{first_mom_un_two},  and again recalling that $\underline{\alpha}=1-\alpha$, and similarly for $\underline{\la}$, we thus arrive at the inequality  
\beq
\E\left( N_{n,\mathsf  Ln}^{ \la,  \alpha}    \right) \lsim {\left\{{\left(\frac{\lambda}{\alpha }\right)}^{\alpha}{\left(\frac{1-\lambda}{1-\alpha}\right)}^{1-\alpha}\right\}}^{\mathsf  Ln}\,.
\eeq
Note that $x  \mapsto  x^y(1-x)^{1-y}$ is strictly concave with a unique critical point at $x=y$. Therefore, $\E N_{n,\mathsf  Ln}^{ \la,  \alpha}$ vanishes exponentially fast as soon as 
$\la \neq \alpha$. Borel-Cantelli then implies the following, loosely formulated summary of the current section: \vspace{0.3cm}

\begin{center}
\fbox{\begin{minipage}{28em}
\begin{insight} \label{uni-energy}
The energy $\mathsf E$ is spread \emph{uniformly} along the polymer.
\end{insight}
\end{minipage}}
\end{center} \vspace{0.2cm}

This insight is of course in complete agreement with the phenomenon of mean field trivialization, see \cite{kistler} for more on this issue.

\subsection{Length vs. distance: the macroscopic picture} We address here the loosely formulated question: 
\begin{center}
{\it at which Hamming distance from the origin \\ do we find a strand of prescribed length?}
\end{center} 

\noindent It is clear that the answer will yield profound insights into the geometry of optimal polymers. To formalize, consider as before $ \alpha \in [0,1]$. (We stick to the convention $\underline{\alpha}=1-\alpha$). For $d \in [0,1]$, let $d_n=\left\lfloor dn\right\rfloor$ and denote by
\beq
H_{d_n}:=\{ \boldsymbol v \in V_n: \; d(\boldsymbol 0, \boldsymbol v)=d_n\}
\eeq 
the {\it hyperplane} consisting of all vertices at Hamming distance $d_n$ from the origin.  (Remark that  
$\sharp H_{d_n} =\binom{n}{d_n}$: indeed, in order to specify a point on the hyperplane we simply need to switch $d_n$ coordinates of $\boldsymbol 0 = (0,0, \dots, 0)$ into 1).

For ${\boldsymbol w} \in H_{d_n}$ we denote by $\Pi_{\alpha \mathsf  Ln}^{d}[\boldsymbol 0 \to {\boldsymbol w}]$ the set of paths connecting $\boldsymbol{0}$ to $\boldsymbol w$ in $\alpha Ln$ steps.  In full analogy, $\Pi_{\underline{\alpha} \mathsf  Ln}^{d}[{\boldsymbol w} \to \boldsymbol 1]$ stands for the set of path connecting $\boldsymbol w$ to $\boldsymbol{1}$ in $\underline{\alpha} \mathsf  Ln$ steps.  Lastly, we denote by $\Pi_{\mathsf  Ln}^{d, \alpha}[\boldsymbol{0} \to \boldsymbol{1}]$ the set of paths of length $\mathsf  Ln$ from  $\boldsymbol{0}$ to  $\boldsymbol{1}$, which are in $H_{d_n}$ after $\alpha\mathsf   L n$ steps. (Note that these paths can cross the hyperplane multiple times, see Figure \ref{pol_unif} below for a graphical rendition).

The goal is now to compute the expected number of these polymers after distributing the energy, in line with the {\sf Insight} from the previous section, {\it  uniformly} along the path. To this end, introduce the cardinalities
\[
N_{n,\mathsf  Ln}^{d,  \alpha}[{\boldsymbol{0} \to \boldsymbol w}]=\#\left\{ \pi\in \Pi_{\alpha \mathsf  L n}^{d}[0 \to \boldsymbol w]:\; \sum\limits_{i=1}^{\alpha \mathsf  L n}{\xi_{[\pi]_i}}\leq \alpha \mathsf E\right\}\,,
\]
\[
N_{n,\mathsf  Ln}^{ d, \underline{ \alpha}}[{\boldsymbol w \to \boldsymbol{1}}]=\#\left\{\pi\in \Pi_{ \underline{\alpha} \mathsf  Ln}^{ d} [\boldsymbol w \to 1], \sum\limits_{i=1}^{\underline{\alpha} \mathsf  Ln}{\xi_{[\pi]_i}}\leq \underline{\alpha}\mathsf  E\right\}\,,
\]
and
\[
N_{n,\mathsf  Ln}^{ d, \alpha}[{\boldsymbol{0}\to \boldsymbol{1}}]=\#\left\{\pi\in\Pi_{\mathsf  Ln}^{ d, \alpha}[\boldsymbol{0} \to \boldsymbol{1}], \sum\limits_{i=1}^{\mathsf  Ln}{\xi_{[\pi]_i}}\leq \mathsf E\right\}.
\]

\begin{figure}[!h]
    \centering
\includegraphics[scale=0.32]{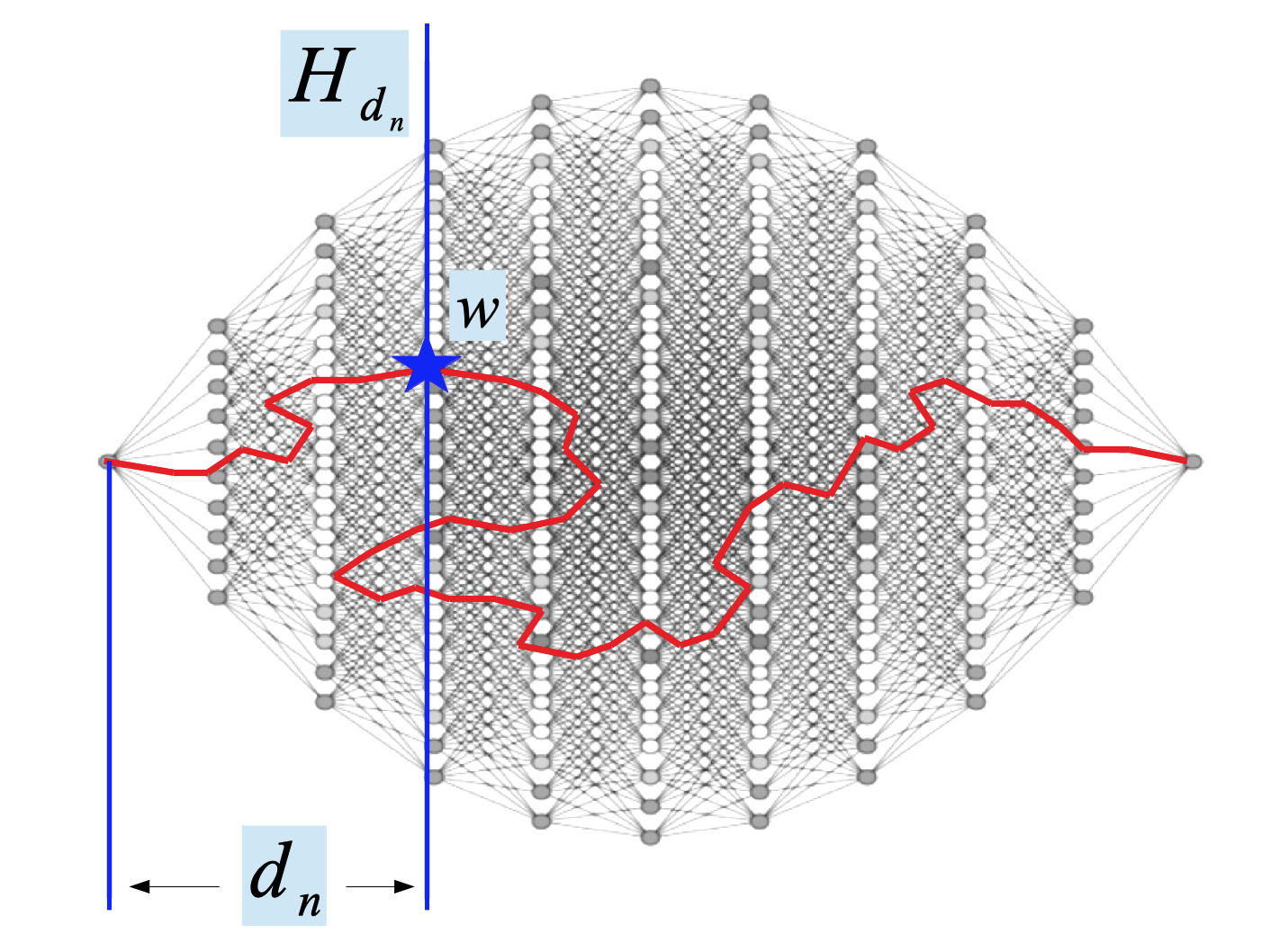}
    \caption{Path-decomposition with an hyperplane $H_{d_n}$ at Hamming distance $d_n$ from $\boldsymbol{0}$. The strand  up to the first crossing of the hyperplane has an $\alpha$-fraction of length, and carries an $\alpha$-fraction of  energy. The rest of the strand has length $\underline{\alpha} \mathsf  Ln$, and carries the remaining $\underline{\alpha}$-fraction of energy.}
\label{pol_unif}
\end{figure}

\noindent Since polymers are loopless, and by independence, it holds\\
\beq\bea\label{fin2}
\E\left( N_{n,\mathsf  Ln}^{ d,  \alpha}[{\boldsymbol{0}\to \boldsymbol{1}}] \right) &= \sum_{\boldsymbol w \in H_{d_n}} \E\left (N_{n,\mathsf  Ln}^{ d,  \alpha}[{\boldsymbol{0} \to \boldsymbol w}] \right) \E\left(N_{n,\mathsf  Ln}^{ d,  \underline{\alpha}}[{\boldsymbol w \to \boldsymbol{1}}]\right)\\
&=\binom{n}{d_n} \E\left (N_{n,\mathsf  Ln}^{ d,  \alpha}[{\boldsymbol{0} \to \boldsymbol w}] \right) \E\left(N_{n,\mathsf  Ln}^{ d,  \underline{\alpha}}[{\boldsymbol w \to \boldsymbol{1}}]\right)\\
&\lsim \binom{n}{d_n} M_{n,\alpha \mathsf  L n,d_n} \frac{{(\alpha \mathsf E)}^{\alpha \mathsf  L n}}{(\alpha \mathsf  L n)!} M_{n,\underline{\alpha} \mathsf  Ln,n-d_n} \frac{({\underline{\alpha} \mathsf E)}^{\underline{\alpha} \mathsf  Ln}}{(\underline{\alpha} \mathsf Ln)!}\,,\\
\eea\eeq
the last inequality by the usual tail estimates. 

In full analogy with \eqref{general_inf}, which is a consequence of Stanley's identity \eqref{stanley2}, the following {\it Stanley's M-bound} is a consequence of Stanley's identity \eqref{e5}: for $x>0$, it holds 
\beq\bea\label{sf}
M_{n, l , d} \leq {\sinh(x)}^{d}{\cosh(x)}^{n-d} \frac{l!}{x^l}\,.
\eea\eeq
Using this for the r.h.s. of \eqref{fin2} we see that for arbitrary $y_1, y_2 >0$, it holds:
\beq\bea\label{fin3}
\E\left( N_{n,\mathsf  Ln}^{ d,  \alpha}[{\boldsymbol{0}\to \boldsymbol{1}}] \right) \lsim \binom{n}{d_n} \frac{{\sinh(y_1)}^{d_n}{\cosh(y_1)}^{n-d_n}}{{\left(\frac{y_1}{\alpha \mathsf E}\right)}^{\alpha L n}}\frac{{\sinh(y_2)}^{n-d_n}{\cosh(y_2)}^{d_n}}{{\left(\frac{y_2}{\underline{\alpha} \mathsf E}\right)}^{\underline{\alpha} \mathsf  Ln}}.
\eea\eeq
Taking $y_1=\alpha \mathsf E$ and $y_2=\underline{\alpha} \mathsf E$, and by elementary Stirling approximation (to first order), 
\beq \label{fin7}
\E\left( N_{n,\mathsf  Ln}^{ d,  \alpha}[{\boldsymbol{0}\to \boldsymbol{1}}] \right)
\lsim {\left(\frac{\cosh(\alpha E)\sinh(\underline{\alpha} E)}{1-\frac{d_n}{n}}\right)}^{n-d_n}{\left(\frac{\sinh(\alpha E)\cosh(\underline{\alpha} E)}{\frac{d_n}{n}}\right)}^{d_n}\,.
\eeq
We will now slightly modify the  form of the r.h.s. above. In order to do so, we recall that
\beq \bea
1 = \sinh(\mathsf E) & = \sinh\left( \alpha \mathsf E + \underline{\alpha}\mathsf E \right) 
\\& = \cosh(\alpha E)\sinh(\underline{\alpha} \mathsf E)+ \sinh(\alpha \mathsf E) \cosh(\underline{\alpha} \mathsf E)\,,
\eea \eeq
the last step by the addition formula for hyperbolic functions, hence 
\beq \bea
\cosh(\alpha \mathsf E)\sinh(\underline{\alpha} \mathsf E) & = 1 - \sinh(\alpha \mathsf E) \cosh(\underline{\alpha} \mathsf E) 
\eea \eeq
This allows to reformulate \eqref{fin7} as 
\beq \bea \label{fin8}
\E\left( N_{n, \mathsf Ln}^{ d,  \alpha}[{\boldsymbol{0}\to \boldsymbol{1}}] \right)  \lsim \left\{{\left(\frac{1-\sinh(\alpha \mathsf E)\cosh(\underline{\alpha} \mathsf E)}{1-\frac{d_n}{n}}\right)}^{1-\frac{d_n}{n}}{\left(\frac{\sinh(\alpha \mathsf E)\cosh(\underline{\alpha} \mathsf E)}{\frac{d_n}{n}}\right)}^{\frac{d_n}{n}}\right\}^n \,.
\eea\eeq
One plainly checks that the function 
\beq
[0,1] \ni \alpha   \mapsto \sinh(\alpha \mathsf E)\cosh(\underline{\alpha} \mathsf E)  
\eeq
is bijective, whereas $x  \mapsto  (1-x)^{1-y}x^y$ is strictly concave with a unique critical point at $x=y$. It thus steadily follows that the r.h.s. of \eqref{fin8} is exponentially small if $\frac{d_n}{n} \neq \sinh(\alpha E)\cosh( \underline{\alpha} E)$. We may thus summarize these findings as follows:  \vspace{0.2cm}

\begin{center}
\fbox{\begin{minipage}{29em}
\begin{insight}\label{insight_alpha_d} After an $\alpha$-fraction of the total length, an optimal polymer  finds itself  
at a typical (normalized) Hamming distance 
\beq \label{quecestbeau}
d = \sinh(\alpha \mathsf E)\cosh((1-\alpha) \mathsf E)
\eeq
from the origin.
\end{insight}
\end{minipage}}
\end{center}  \vspace{0.3cm}

The above {\sf Insight} is both intriguing and delicate. Indeed, a polymer of length greater than the dimension can (must) cross multiple times certain hyperplanes, yet the map $\alpha \mapsto d(\alpha)$ as in \eqref{quecestbeau} is increasing: for consistency, we must therefore deduce that excursions can only happen on mesoscopic (if not microscopic) scales. In other words, and loosely:  \vspace{0.2cm}

\begin{center}
\fbox{\begin{minipage}{27em}
\begin{insight} \label{insight_rare}
Backsteps must be relatively rare, and spread out.
\end{insight}
\end{minipage}}
\end{center} \vspace{0.3cm}

Not surprisingly, this additional {\sf Insight} will play a key role, and guide us through the next steps, but before proceeding any further, a comparison with the directed case is perhaps in place. To better visualize, we re-parametrize in terms of the (normalised) {\it length} of the polymer: with $\alpha E \hookrightarrow l$, and recalling that $\mathsf  L = \sqrt{2} \mathsf E$, we see that the "Hamming depth" $d_{\text{un}}(l)$ reached by the unoriented polymer at length $l$ is then given by 
\beq
l \in [0, \mathsf  L] \mapsto d_{\text{un}}(l) \defi \sinh\left(\frac{l}{\sqrt{2}} \right)\cosh\left( \frac{\mathsf  L-l}{\sqrt{2}} \right)\,.
\eeq
In case of oriented polymers, the Hamming depth as a function of the length is simply 
\beq
l \in [0,1] \mapsto d_{\text{or}}(l) \defi l \,.
\eeq
The two functions are plotted in Figure \ref{or_vs_unor} below, whereas a rendition of the emerging picture at the level of the strands  is given in Figure \ref{strand_evol}.

\begin{figure}[!h]
    \centering
\includegraphics[scale=0.6]{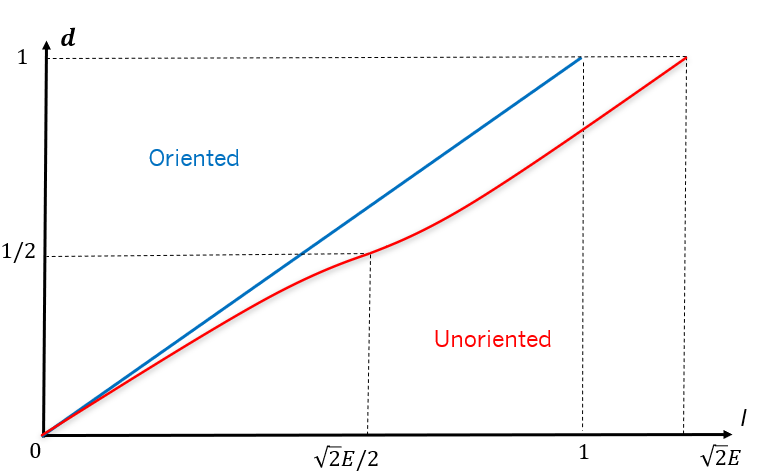}
    \caption{Hamming-depth as a function of the length: directed (blue) vs. undirected (red) polymers. For small lengths, the depths are comparable: close to the origin, the undirected polymer is thus as directed as possible. The  slope of the red curve decreases however gradually as the  polymer approaches the core of the hypercube: the further the polymer goes, the "loser" it becomes. Due to the inherent symmetry of the hypercube,  a mirror picture sets in, of course, at half-length.
}
\label{or_vs_unor}
\end{figure}

\begin{figure}[!h]
    \centering
\includegraphics[scale=0.3]{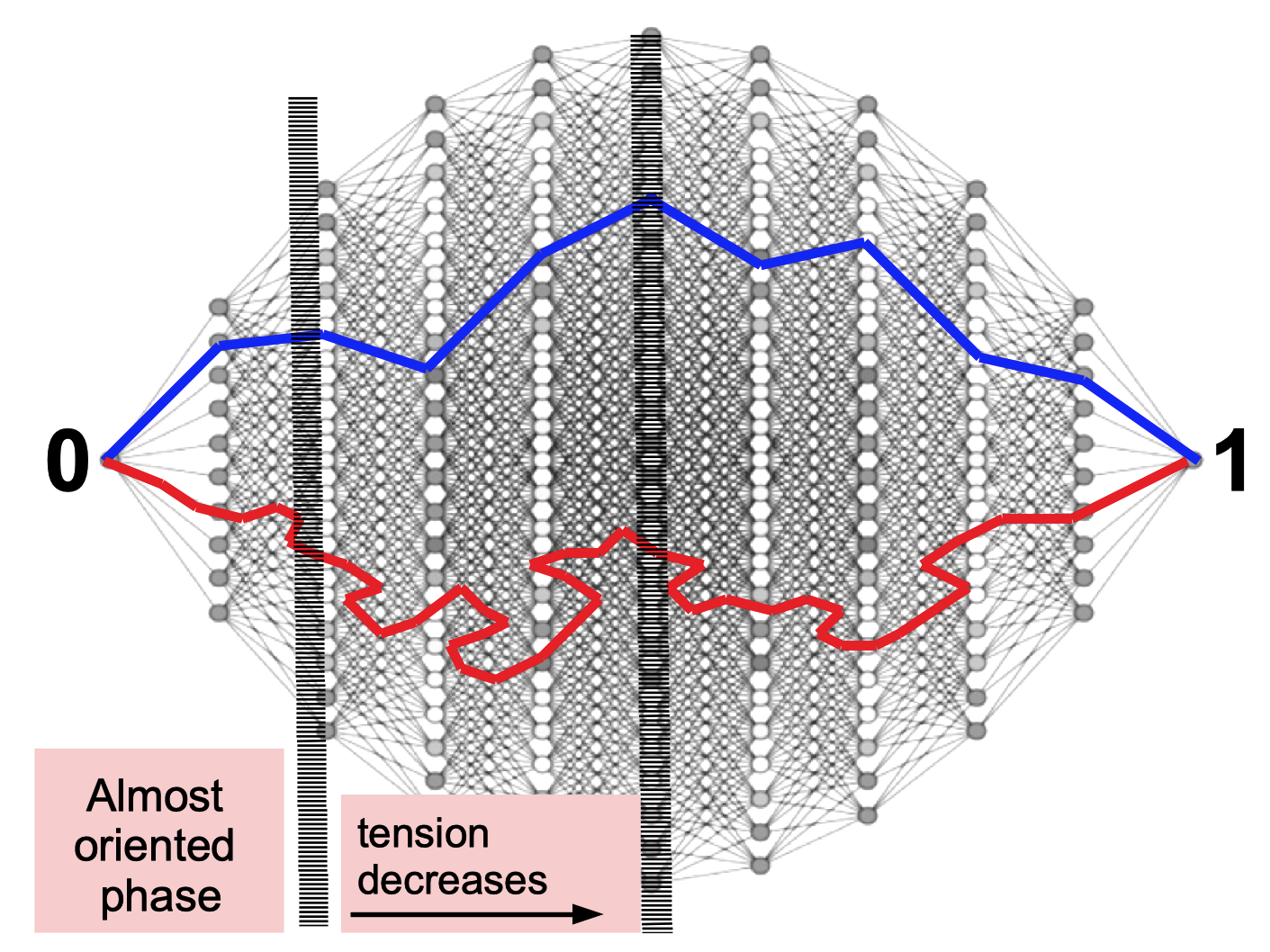}
    \caption{Directed (blue) vs. undirected (red) polymers. The red strand starts off as stretched as possible, but allows for more and more backsteps as it approaches the core of the hypercube. The phenomena are amplified for better visualisation only: in line with {\sf Insight \ref{insight_rare}}, backsteps live on meso/microscopic scale only. In particular, long excursions as in Figure \ref{polymers} above are, in fact, ruled out.
}
\label{strand_evol}
\end{figure}

\newpage

\subsection{Length vs. distance: the mesoscopic picture} \label{meso_sec} As mentioned in the introduction, our approach will eventually rest on a multiscale analysis: in this section, inspired by the previous {\sf Insights}, we introduce the necessary {\it coarse graining} \cite{kistler}. To see how this goes, we denote by $K \in \N$ the numbers of "scales", and shorten henceforth $\hat n_K \defi n/K$ (assuming w.l.o.g. that $\hat n_K \in \N$). We then split the hypercube into $K$ "slabs", i.e. hyperplanes equidistributed w.r.t. the Hamming distance: for $ i=1\dots K$ we let
\beq
H_i \defi \left\{v \in V_n, d(0,v)=  i \hat n_K \right\}\,.
\eeq
We will refer to these hyperplanes as {\it $H$-planes}. Accordingly, we split a polymer of length $\mathsf  L n$ into $K$ substrands of length $\alpha_i Ln$, for $i=1\dots K$, with the normalization $\sum_{i\leq K} \alpha_i=1$.  We shorten $\boldsymbol \alpha =(\alpha_1, \alpha_2, ..., \alpha_K) \in [0,1]^K$ for such a vector, $\overline{\boldsymbol \alpha}_i \defi \sum_{j=1}^i \alpha_j$ for the (fraction of) length of the strand when the polymer crosses the $i^{th}$ H-plane, and 
$\underline{\boldsymbol \alpha}_i \defi 1-\sum_{j=1}^i \alpha_j$ for the length of the remaining strand. A graphical rendition is given in Figure \ref{coarse} below.  \\

\begin{figure}[!h]
    \centering
\includegraphics[scale=0.3]{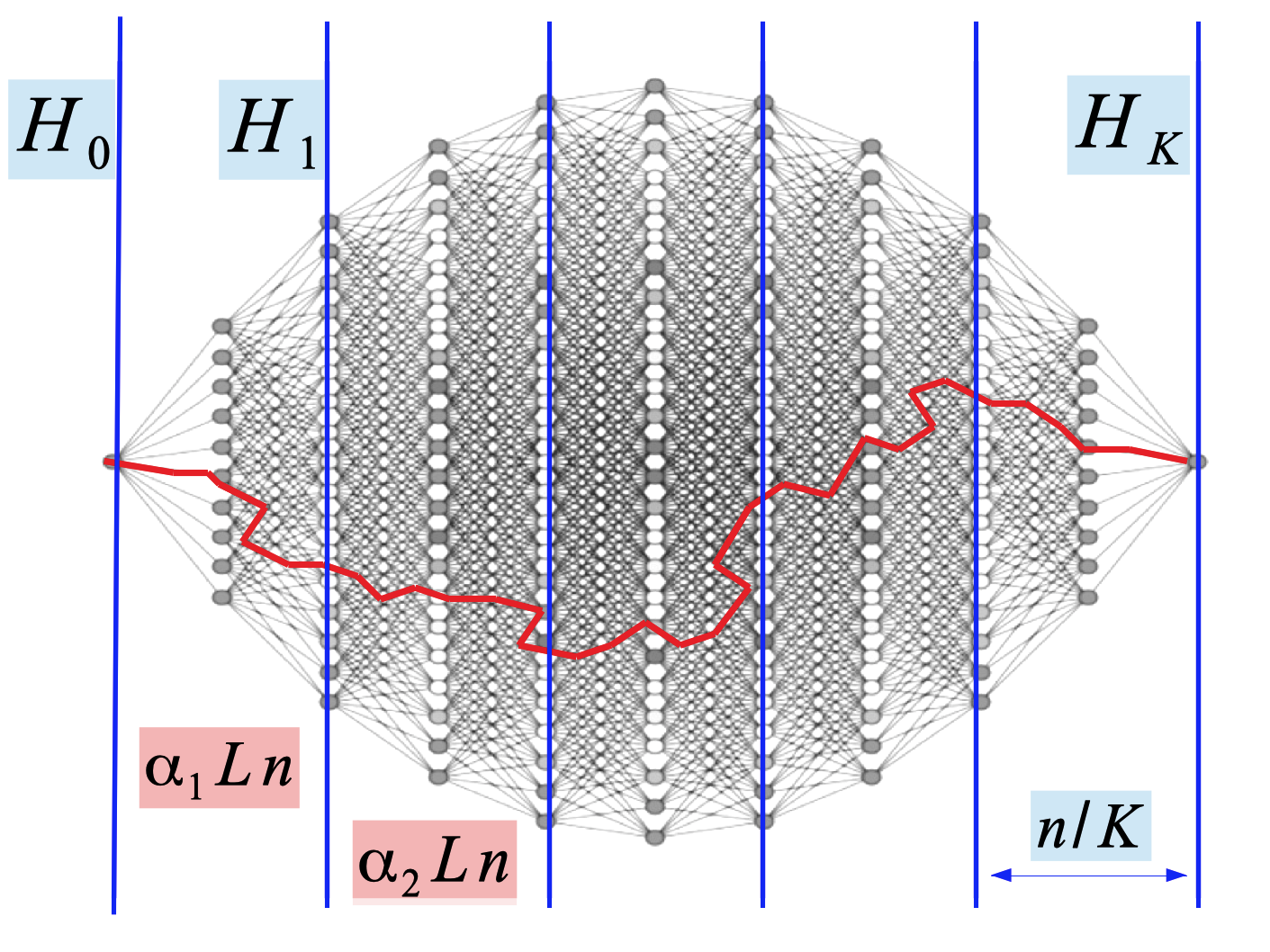}
    \caption{$K$-levels coarse graining: the Hamming distance between any two (successive) hyperplanes is $\hat n_K = n/K$. Remark that by \eqref{if_one}-\eqref{if_two}, the length of the substrand from hyperplane to hyperplane is a function of $\mathsf E$ and $K$ only.}
\label{coarse}
\end{figure}

By the above {\sf Insight \ref{insight_alpha_d}}, length of substrands and Hamming-depth must satisfy the fundamental relation 
\beq\label{fundamental}
\sinh( \overline{\boldsymbol \alpha}_i \mathsf  E)\cosh(  \underline{\boldsymbol \alpha}_i \mathsf E)=\frac{i}{K}\,, \quad i=1\dots K.
\eeq 
The function $x \in [0,1] \mapsto \sinh( x \mathsf E)\cosh( (1-x) \mathsf E)$ is invertible, and one can even construct explicitely the solutions of the above equation: recalling that $\arcsinh(x)=\log(x+\sqrt{1+x^2})$ one plainly checks that  these are given by
\beq \label{if_one} 
\overline{\boldsymbol \alpha}_i=\frac{1}{2}\left\{ 1+\frac{1}{\mathsf E} \arcsinh\left(2\frac{i}{K}-1\right)\right\}\,.
\eeq
This also uniquely identifies the length of the substrands, to wit:
\beq \label{if_two} 
\alpha_i = \overline{\boldsymbol \alpha}_i- \overline{\boldsymbol \alpha}_{i-1}\,,
\eeq
for $i=1\dots K$, see Figure \ref{alpha(i)} below for a plot. 

In particular, it follows from \eqref{if_one} and \eqref{if_two} that 
\beq \label{sym_k}
\alpha_i=\alpha_{K+1-i},
\eeq 
which is in full agreement with the inherent symmetry of the problem at hand, and $\sum_{j\leq K} \alpha_j=1$. Furthermore, since $\arcsinh$ is $1$-Lipschitz we also immediately see that 
\beq \label{control_alpha}
\alpha_i\leq \frac{1}{K\mathsf E}. 
\eeq

In order to emphasize that the $\alpha's$ are no longer arbitrary, we will write  henceforth $\ma = \ma(\mathsf E, K)$ for the solutions of the equations \eqref{if_one}, \eqref{if_two}.

\begin{figure}[!h]
    \centering
\includegraphics[scale=0.6]{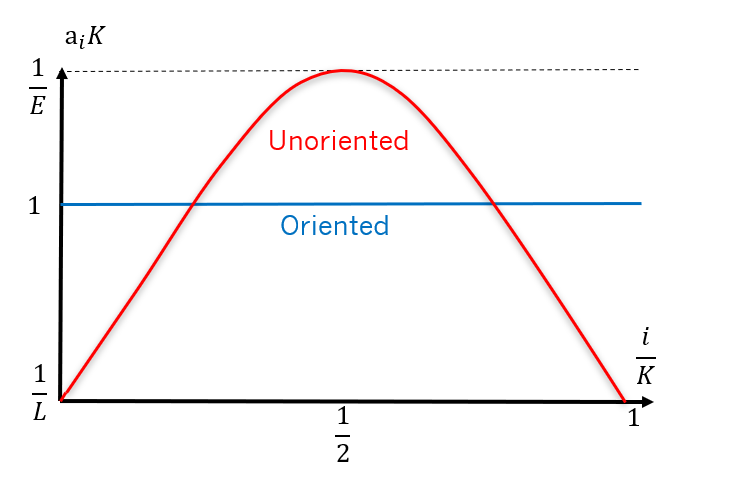}
\caption{Substrand-length as function of the depth, $i \in \{1,\dots K\} \mapsto \ma_i$.
This plot simply restates the key property of optimal polymers: substrands between equidistant hyperplanes become longer as the polymer enters the core of the hypercube. 
}
\label{alpha(i)}
\end{figure}
A straightforward large-$K$ Taylor expansion (with $i/ K = const.$) yields that 
\beq
\ma_{i+1} - \ma_{i} = \frac{2}{K^2 \mathsf E} \left( 1- \frac{2i}{K}\right) + O\left( \frac{1}{K^3}\right)\,,
\eeq
which is manifestly different from the case of directed polymers, where the differential would necessarily vanish. Thus a fundamental question immediately arises: 
\begin{center}
{\it how do substrands of undirected polymers connect \\ the  coarse graining-hyperplanes?} 
\end{center}

\noindent To shed light on this issue we consider $\boldsymbol d =( d_1, d_2, ..., d_K) \in [0,1]^K$ and introduce
\beq \bea \label{looplesspaths_pi_i}
\Pi_{i}^{\boldsymbol d}[\boldsymbol v  \to \boldsymbol w ] \defi\;  & \text{\sf  all loopless paths connecting}\\
& \text{\sf  two vertices} \; \boldsymbol v \in H_{i-1}, \, \boldsymbol w \in H_{i} \\
& \text{\sf  which are at Hamming distance}\; d(\boldsymbol v, \boldsymbol w)=d_i n\,,
\eea \eeq
and
\beq \bea \label{looplesspaths_pi}
 \Pi_{\{1\dots K\}}^{\boldsymbol d}[\boldsymbol 0  \to \boldsymbol 1]  \defi & \; \text{\sf  all loopless paths connecting}\; \boldsymbol 0 \; \text{to}\; \boldsymbol 1\;, \\
& \text{\sf  and that cover a}\; d_i n \text{\sf -Hamming distance}\\
& \text{\sf while connecting the H-hyperplanes,}\; i=1\dots K. \\
\eea \eeq
A graphical rendition is given in Figure \ref{paths_di} below. 
\begin{figure}[!h]
    \centering
\includegraphics[scale=0.35]{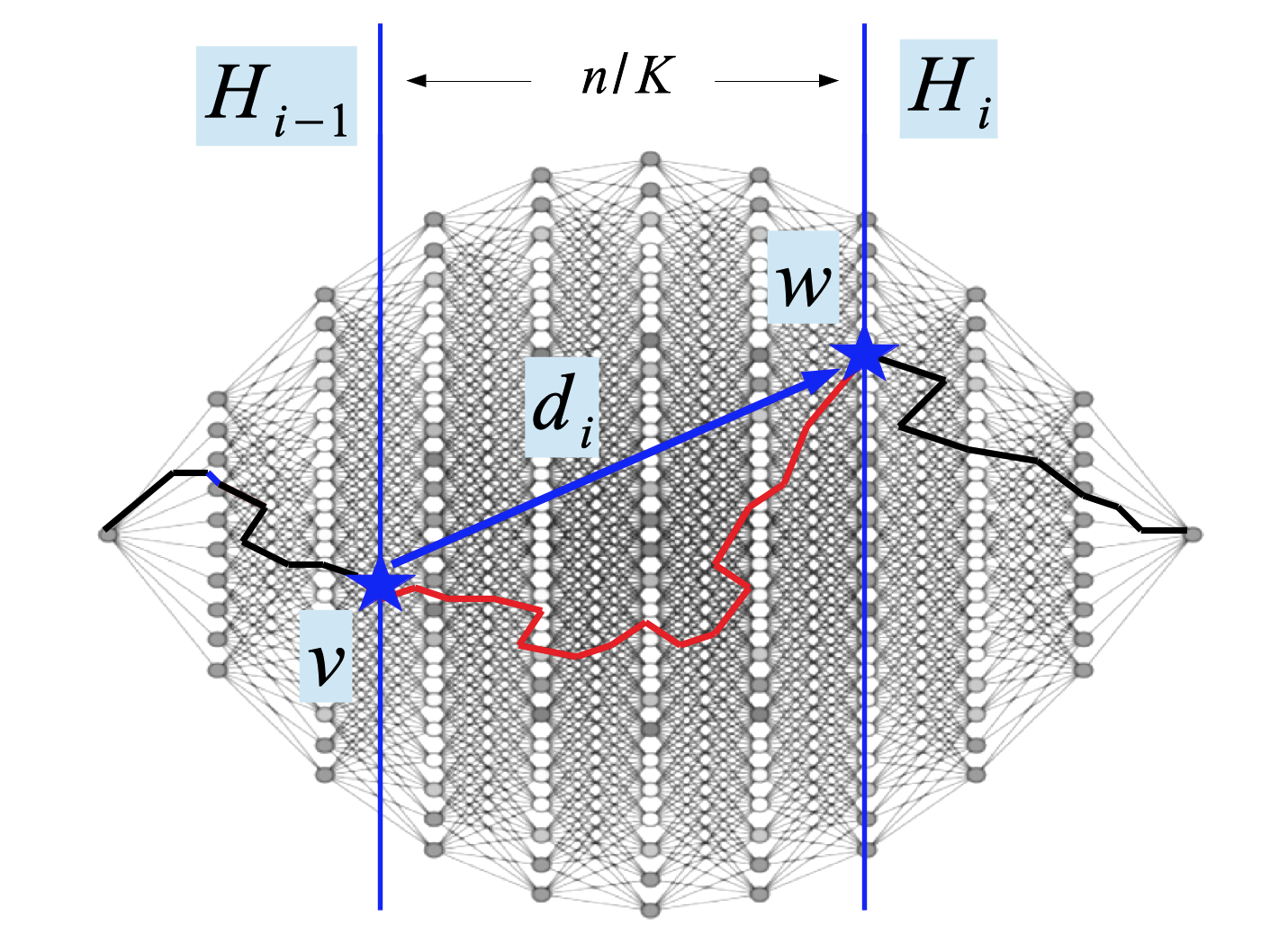}
\caption{A polymer between two hyperplanes: the two vertices $\boldsymbol v$ and $\boldsymbol w$ are at a  Hamming distance $d(\boldsymbol v, \boldsymbol w) = d_i$. Remark that, in particular, $d(H_{i-1}, H_i) = 1/K \leq d_i \leq l_\pi(\boldsymbol v, \boldsymbol w)$.}
\label{paths_di}
\end{figure} \\

\noindent For $\pi \in  \Pi_{\{1\dots K\}}^{\boldsymbol d}[\boldsymbol 0  \to \boldsymbol 1]$, and two vertices 
$\boldsymbol v \in H_{i-1}, \boldsymbol w \in H_i$ (for some $i=1\dots K$), we furthermore shorten 
\beq \bea
X_{\pi}(\boldsymbol v, \boldsymbol w)  \defi\;  & \text{energy of the substrand which connects} \; \boldsymbol v, \, \boldsymbol w\,\,.
\eea \eeq
and denote by 
\beq \label{going_vw}
N_{i}^{\boldsymbol d}[\boldsymbol v \to \boldsymbol w]=\#\left\{ \pi\in \Pi_{i}^{\boldsymbol d}[\boldsymbol v  \to \boldsymbol w ], X_{\pi}(\boldsymbol v, \boldsymbol w) \leq \ma_i \mathsf E\right\}\,,
\eeq
the number of substrands with energies at most $\ma_i E$ connecting such vertices. Finally, let 
\beq \label{going_01}
N_{\{1\dots K\}}^{\boldsymbol d}[{\boldsymbol{0}\to \boldsymbol{1}}]=\#\left\{\pi\in\Pi_{\{1\dots K\}}^{\boldsymbol d}[\boldsymbol 0  \to \boldsymbol 1],\  X_{\pi}(\boldsymbol 0, \boldsymbol 1)\leq \mathsf E\right\}
\eeq
stand for the number of paths with prescribed evolutions\footnote{We shall perhaps emphasize that the above prescription of the evolution involves the Hamming-depths and energies, but {\it not} the length of the connecting substrands. This is because in \eqref{going_vw} we are spreading the energies uniformly along the length of the polymer, very much in line with {\sf Insight \ref{uni-energy}}: energies and optimal lengths are two sides of the same coin.}. The goal is to compute the expectation of this random set, as this will provide fundamental insights into the possible choices of $\boldsymbol d$, which are the only degrees of freedom left. As we will see shortly, there is only one reasonable choice. Before that we need however to introduce some key concepts.

\begin{definition} \label{key_concepts} Let ${\boldsymbol v} \in H_{i-1}$ and $\boldsymbol w \in H_{i}$.
\begin{itemize}
\item The \emph{effective forward steps} are given by 
\[
\mathsf{ef}_i(\boldsymbol v, \boldsymbol w) \defi \frac{1}{n} \# \left\{ 0's\; \text{in}\; \boldsymbol v \; \text{which switch into}\; 1's\; \text{in}\; \boldsymbol w \right\}
\]
\item The \emph{effective backsteps} are given by 
\[
\mathsf{eb}_i(\boldsymbol v, \boldsymbol w) \defi \frac{1}{n} \# \left\{ 1's\; \text{in}\; \boldsymbol v \; \text{which switch into}\; 0's\; \text{in}\; \boldsymbol w \right\}\,.
\]
\item The \emph{detours} are given by
\[
\gamma_{\pi}(\boldsymbol v, \boldsymbol w) \defi \frac{1}{n}\left\{ l_{\pi}(\boldsymbol v, \boldsymbol w)-d(\boldsymbol v, \boldsymbol w)\right\}.
\]
\end{itemize}
\end{definition}
Some comments concerning the above terminology are perhaps in place: we note that the {\it effective forward steps} encode the fraction of steps forward which are not undone by backsteps in the reverse direction; similarly, the {\it effective backsteps} encode the (fraction of)  backsteps which are not undone by steps forward in the reverse direction (or vice versa). Finally, the {\it detours}  capture the amount of forward steps in a path $\pi$ which are cancelled by backsteps in the reverse direction (or vice versa): the smaller $\gamma_\pi$, the higher the "tension" of the substrand. For this reason, we call a substrand  {\it stretched} if the detours vanish. A stretched path is, in fact, a {\it geodesic}. \\

The above quantities are all intertwined. Indeed, it holds:
\beq\bea \label{b-f}
d_i= \mathsf{ef}_i( \boldsymbol v, \boldsymbol w)+\mathsf{eb}_i( \boldsymbol v, \boldsymbol w) \quad \text{and} \quad \frac{1}{K}=\mathsf{ef}_i( \boldsymbol v, \boldsymbol w)-\mathsf{eb}_i( \boldsymbol v, \boldsymbol w)\,.
\eea\eeq 
In particular, it follows from the above relations that 
\beq\bea \label{elementaire}
 \mathsf{ef}_i( \boldsymbol v, \boldsymbol w) =\frac{d_i}{2}+\frac{1}{2K} \quad \text{and} \quad  \mathsf{eb}_i( \boldsymbol v, \boldsymbol w)=\frac{d_i}{2}-\frac{1}{2K}\,.
\eea\eeq 
In other words, effective forward- and backsteps along a substrand depend on the number of scales, and the remaining degrees of freedom $\boldsymbol d$ (which we are going to identify shortly), but {\it not} on the endpoints.  An equally simple line of reasoning shows that detours, {\it as soon as the polymer-length is specified}, do not depend on the specific form of the $\pi$-path, neither: in fact, $\gamma_{i,\pi}n+d_i n=\ma_i  L n$. \\

As mentioned, the goal is to compute the expected number of paths connecting $\boldsymbol 0$ to $\boldsymbol 1$. Since polymers are loopless, and by independence, it holds: 
\beq \label{sum_uno}
\E \left(N_{\{1\dots K\}}^{\boldsymbol d}[{\boldsymbol{0}\to \boldsymbol{1}}] \right)= \sum_{(\star)} \prod_{i=1}^K \E N_i^{\boldsymbol d}\left[ \boldsymbol v^{(i-1)} \to \boldsymbol v^{(i)} \right],
\eeq
where the $(\star)$-sum runs over all possible vertices $\boldsymbol v^{(i)} \in H_i, i=1\dots K$.  But by \eqref{elementaire}, none of the expectations on the r.h.s. depend on the specific $\boldsymbol v$-choice.
The cardinality of $(\star)$ is easily computed: shortening 
\beq \label{phi_function}
[0, \infty) \ni x \mapsto \varphi(x) \defi x^x,
\eeq
one plainly checks that
\beq \bea \label{cardi}
\# (\star)  & =   \prod_{i=1}^{K} \dbinom{\frac{i-1}{K} n}{\mathsf{eb}_i n} \dbinom{\left(1-\frac{i-1}{K}\right) n}{\mathsf{ef}_i n}\\
& \lesssim \prod_{i=1}^{K} \left\{ \frac{\varphi\left( \frac{i-1}{K}\right) \varphi\left( 1-\frac{i-1}{K}\right)}{
\varphi(\mathsf{eb}_i)\varphi\left(\frac{i-1}{K}-\mathsf{eb}_i\right) \varphi\left(\mathsf{ef}_i\right) \varphi\left(1-\frac{i-1}{K}-\mathsf{ef}_i\right)} 
\right\}^n \,,
\eea \eeq
the last step by elementary Stirling-approximation to first order. 

By the tail estimates, and Stanley's M-bound \eqref{sf} with $x = \ma_i \mathsf E$ , it holds  
\beq \bea \label{sameoldstory}
\E N_i^{\boldsymbol d}\left[ \boldsymbol v^{(i-1)} \to \boldsymbol v^{(i)} \right]  \lesssim {\sinh(\ma_i \mathsf E)}^{ d_i n} {\cosh(\ma_i \mathsf E)}^{ (1-d_i)n}\,,
\eea \eeq
for $i=1 \dots K$. 

Plugging \eqref{cardi} and \eqref{sameoldstory} into \eqref{sum_uno}, and rearranging, we thus get the {\it upperbound}
\beq\bea\label{presque}
\E \left(N_{\{1\dots K\}}^{\boldsymbol d}[{\boldsymbol{0}\to \boldsymbol{1}}] \right)
&\lesssim \mathcal F_{\ma, K}(\boldsymbol d)^n,
\eea\eeq
where we have shortened 
\beq
\mathcal F_{\boldsymbol{\ma}, K}(\boldsymbol d) \defi  \prod_{i=1}^{K}\frac{{\sinh(\ma_i E)}^{d_i} {\cosh(\ma_i E)}^{(1-d_i)} \varphi\left(\frac{i-1}{K}\right)\varphi\left(1-\frac{i-1}{K}\right)}{\varphi(\mathsf{eb}_i)\varphi\left(\frac{i-1}{K}-\mathsf{eb}_i\right) \varphi\left(\mathsf{ef}_i\right) \varphi\left(1-\frac{i-1}{K}-\mathsf{ef}_i\right)} \,.
\eeq

Since $\boldsymbol \ma = \boldsymbol \ma(\mathsf E, K)$ are solutions of \eqref{if_one}-\eqref{if_two}, the $\boldsymbol d's$ appearing in the $\mathcal F$-function are the only degrees of freedom left (By \eqref{elementaire}, we recall that $\mathsf{ef}_i$ and $\mathsf{eb}_i$ are function of $d_i$).
The next result shows that even for these, there is  in fact  one reasonable choice only.
\begin{thm}\emph{(Optimal Hamming distance)} \label{choosing_d} Let $\boldsymbol{\mathsf{d}} = (\mathsf d_1, \dots, \mathsf d_K)$, with 
\beq \label{d_gone}
\mathsf d_i \defi \sinh(\ma_i \mathsf E)\cosh((1-\ma_i)\mathsf  E).
\eeq
It then holds:
\beq \label{crucial_one}
\mathcal F_{\boldsymbol{\ma}, K}(\boldsymbol{\mathsf{d}}) = 1, 
\eeq
and
\beq \label{crucial_two}
\mathcal F_{ \boldsymbol{\ma}, K}(\boldsymbol d)< 1, \quad \text{for}\quad \boldsymbol{d} \neq \boldsymbol{\mathsf{d}}.
\eeq
\end{thm}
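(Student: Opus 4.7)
The first essential observation is that $\mathcal F_{\boldsymbol{\ma}, K}$ factorises across scales: one has $\mathcal F_{\boldsymbol{\ma}, K}(\boldsymbol d) = \prod_{i=1}^K F_i(d_i)$ with each factor $F_i$ depending on $\boldsymbol d$ only through the single coordinate $d_i$---indeed, by \eqref{elementaire} both $\mathsf{eb}_i$ and $\mathsf{ef}_i$ are affine in $d_i$ with slopes $\pm 1/2$. The strategy is therefore to maximise each $F_i$ individually over its admissible range $\{\mathsf{eb}_i\geq 0,\ \tfrac{i-1}{K}-\mathsf{eb}_i\geq 0,\ 1-\tfrac{i-1}{K}-\mathsf{ef}_i\geq 0\}$, and then to show that the resulting product of maxima equals exactly $1$.

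Strict log-concavity of each $F_i$ is the next step. Since $\log\varphi(x)=x\log x$, a direct second-order differentiation yields
\[
\frac{d^{2}}{dd_i^{2}}\log F_i(d_i)=-\frac{1}{4}\left[\frac{1}{\mathsf{eb}_i}+\frac{1}{\tfrac{i-1}{K}-\mathsf{eb}_i}+\frac{1}{\mathsf{ef}_i}+\frac{1}{1-\tfrac{i-1}{K}-\mathsf{ef}_i}\right]<0
\]
throughout the interior of the admissible range. The first-order condition $\tfrac{d}{dd_i}\log F_i=0$ reduces to
\[
\tanh^{2}(\ma_i\mathsf E)=\frac{\mathsf{eb}_i\,\mathsf{ef}_i}{\bigl(\tfrac{i-1}{K}-\mathsf{eb}_i\bigr)\bigl(1-\tfrac{i-1}{K}-\mathsf{ef}_i\bigr)},
\]
and I would verify by direct substitution that $d_i=\mathsf d_i$ solves this equation for the interior scales $2\leq i\leq K-1$, whereas for $i\in\{1,K\}$ the maximum of $F_i$ lies at the boundary value $\mathsf d_i = 1/K$ (forced by $\mathsf{eb}_i\geq 0$). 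The key computational tool is the hyperbolic addition formulas together with the master relation \eqref{fundamental}, which in the compact, telescopic form $\tfrac{1}{K}=\cosh\bigl((\overline{\boldsymbol{\ma}}_i+\overline{\boldsymbol{\ma}}_{i-1}-1)\mathsf E\bigr)\sinh(\ma_i\mathsf E)$ (obtained by subtracting two consecutive master equations and applying sum-to-product) makes this verification tractable.

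The main obstacle---and the combinatorial core of the proof---is then the sharp identity $\prod_{i=1}^{K}F_i(\mathsf d_i)=1$. Notably, the individual values $F_i(\mathsf d_i)$ are typically \emph{not} equal to $1$ (e.g.\ for $K=3$ one finds numerically $F_1\approx 1.27$, $F_2\approx 1.17$, $F_3\approx 0.67$); only their product collapses. To prove it, I would substitute the explicit forms of $\mathsf d_i,\mathsf{eb}_i,\mathsf{ef}_i$ into each $\log F_i(\mathsf d_i)$ and sum over $i$. The boundary entropy contributions $\log\varphi\bigl(\tfrac{i-1}{K}\bigr)$ and $\log\varphi\bigl(1-\tfrac{i-1}{K}\bigr)$ match and cancel between neighbouring scales; the remaining $\log\varphi(\mathsf{eb}_i),\ \log\varphi(\mathsf{ef}_i)$ terms reorganise via the telescopic identity above; and the hyperbolic contributions $\mathsf d_i\log\sinh(\ma_i\mathsf E)+(1-\mathsf d_i)\log\cosh(\ma_i\mathsf E)$ combine, through the same addition formulas, into $\log\sinh(\mathsf E)=0$. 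Once \eqref{crucial_one} is established, \eqref{crucial_two} is immediate from the strict log-concavity proved above: the product $\prod_i F_i(d_i)$ is maximised uniquely at $\boldsymbol d=\boldsymbol{\mathsf d}$ with value $1$, and any deviation produces a strict inequality.
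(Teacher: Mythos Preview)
Your overall architecture matches the paper's: factorise $\mathcal F_{\boldsymbol{\ma},K}(\boldsymbol d)=\prod_i g_{i,K}(d_i)$, maximise each factor separately via (strict) log-concavity and the first-order condition, handle the boundary indices $i\in\{1,K\}$ by the domain constraint forcing $d_i=1/K$, and finally show the product of maxima equals $1$. Your critical-point equation is exactly the paper's, and the verification that $d_i=\mathsf d_i$ solves it is done in the paper through a small collection of hyperbolic identities (their Lemma~\ref{allequivalent}) equivalent to what you call ``direct substitution using addition formulas and the master relation''. So far there is no substantive difference.

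The gap is in your treatment of the identity $\prod_i g_{i,K}(\mathsf d_i)=1$. Your claimed mechanism---``the boundary entropy contributions $\log\varphi(\tfrac{i-1}{K})$ and $\log\varphi(1-\tfrac{i-1}{K})$ match and cancel between neighbouring scales''---is not correct: these numerator terms are indexed by $i-1$ and do not pair off with anything in an adjacent factor, nor do the denominator $\log\varphi$-terms (which at the optimum are $\log\varphi$ of triple products of hyperbolic functions in $\overline{\ma}_{i-1},\ma_i,\underline{\ma}_i$) telescope term-by-term. The paper's route is to discover a closed form for the \emph{partial} products (their Evolution Lemma):
\[
\prod_{j=1}^{i} g_{j,K}(\mathsf d_j)=\Bigl[\tfrac{\sinh(\overline{\ma}_i\mathsf E)}{i/K}\Bigr]^{i/K}\Bigl[\tfrac{\cosh(\overline{\ma}_i\mathsf E)}{1-i/K}\Bigr]^{1-i/K},
\]
proved by induction on $i$; the induction step is exactly where the four hyperbolic identities for $\mathsf{eb}_i,\mathsf{ef}_i,\tfrac{i-1}{K}-\mathsf{eb}_i,1-\tfrac{i-1}{K}-\mathsf{ef}_i$ are consumed. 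At $i=K$ the fundamental relation \eqref{fundamental} gives $1$ immediately. This closed form is the ``telescoping object'' you are looking for; without identifying it, the summation you propose does not organise itself, and your sketch as written would not close.
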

By \eqref{presque} and \eqref{crucial_two}, the expected number of polymers connecting a sequence of prescribed vertices on the $H$-planes is thus exponentially small, {\it unless} the Hamming distance of the considered vertices satisfies \eqref{d_gone}: of course, the latter will henceforth be the value of our choice. \\

Theorem \ref{choosing_d} is absolutely crucial for our approach. The proof, which requires a fair amount of work, is postponed. For the remaining part of this section we dwell rather informally on some of its far-reaching implications.\\

 We anticipate that we will eventually consider a large (yet finite) number of scales for the coarse graining, in which case an elementary large-$K$ Taylor expansion (together with the fact that  $\mathsf L = \sqrt{2} \mathsf E$) shows
that to first approximation, Hamming distance between two vertices on the H-planes and substrand-legth do, in fact, coincide:
\beq\bea \label{surprise}
\mathsf d_i =&\sinh(\ma_i \mathsf E)\cosh((1-\ma_i) \mathsf E) = \ma_i \mathsf L +O\left(\frac{1}{K^2}\right)\,.
\eea\eeq
A minute's thought suggests that the above may be reformulated as follows: \\
\begin{center}
\fbox{\begin{minipage}{27em}
\begin{insight} \label{geodesics}
Optimal polymers connect the coarse graining H-planes through essentially stretched paths. 
\end{insight}
\end{minipage}}
\end{center} \vspace{0.3cm}

This is a somewhat surprising feature, which at first sight may even appear non-sensical.  The devil is however in the details: by \eqref{if_one}, and large-$K$ Taylor expansions (again with $i/K=const$), one can check that 
\beq
\ma_i = \frac{1}{K\mathsf E\sqrt{1+(\frac{2i}{K}-1)^2}} +O\left(\frac{1}{K^2}\right) \,,
\eeq
which combined with \eqref{surprise}, and recalling $\mathsf L = \sqrt{2}\mathsf  E$, leads to
\beq \label{d_dev}
\mathsf{d}_i = \frac{\sqrt{2}}{K\sqrt{1+\left(\frac{2i}{K}-1\right)^2}} + O\left(\frac{1}{K^2}\right)\,.
\eeq 
From this we may evince that:
\begin{itemize}
\item for small $i$ (say $i=s K$, and $s \ll 1/2$) it holds that 
\beq \label{small_i_a}
\mathsf a_i =  \frac{1}{K \mathsf E \sqrt{2}} + O\left( \frac{1}{K^2}\right) = \frac{1}{K \mathsf L} +  O\left( \frac{1}{K^2}\right)\,, 
\eeq
as well as 
\beq \label{small_i_d}
\mathsf{d}_i = \frac{1}{K} + O\left( \frac{1}{K^2}\right) = d(H_{i-1}, H_i) +O\left( \frac{1}{K^2}\right)\,,
\eeq
the latter confirming that close to the origin, unoriented polymers proceed in almost directed fashion;
\item for large $i$ (say $i= s K$, and $s \uparrow 1/2$) it holds that $\mathsf d_i \approx \sqrt{2}/K \gg 1/K$, which is much larger than the Hamming distance between two successive H-planes. Substrands of optimal polymers close to the core of the hypercube therefore reach, through approximate geodesics, vertices which are otherwise unattainable in a fully directed regime. Although the length of the substrand is increased, this strategy allows undirected polymers to gain access to a reservoir of energetically favorable  edges. A graphical rendition of this feature, which encodes the key strategy of optimal polymers, is given in Figure \ref{cone} below.
\end{itemize}

\begin{figure}[!h]
    \centering
\includegraphics[scale=0.37]{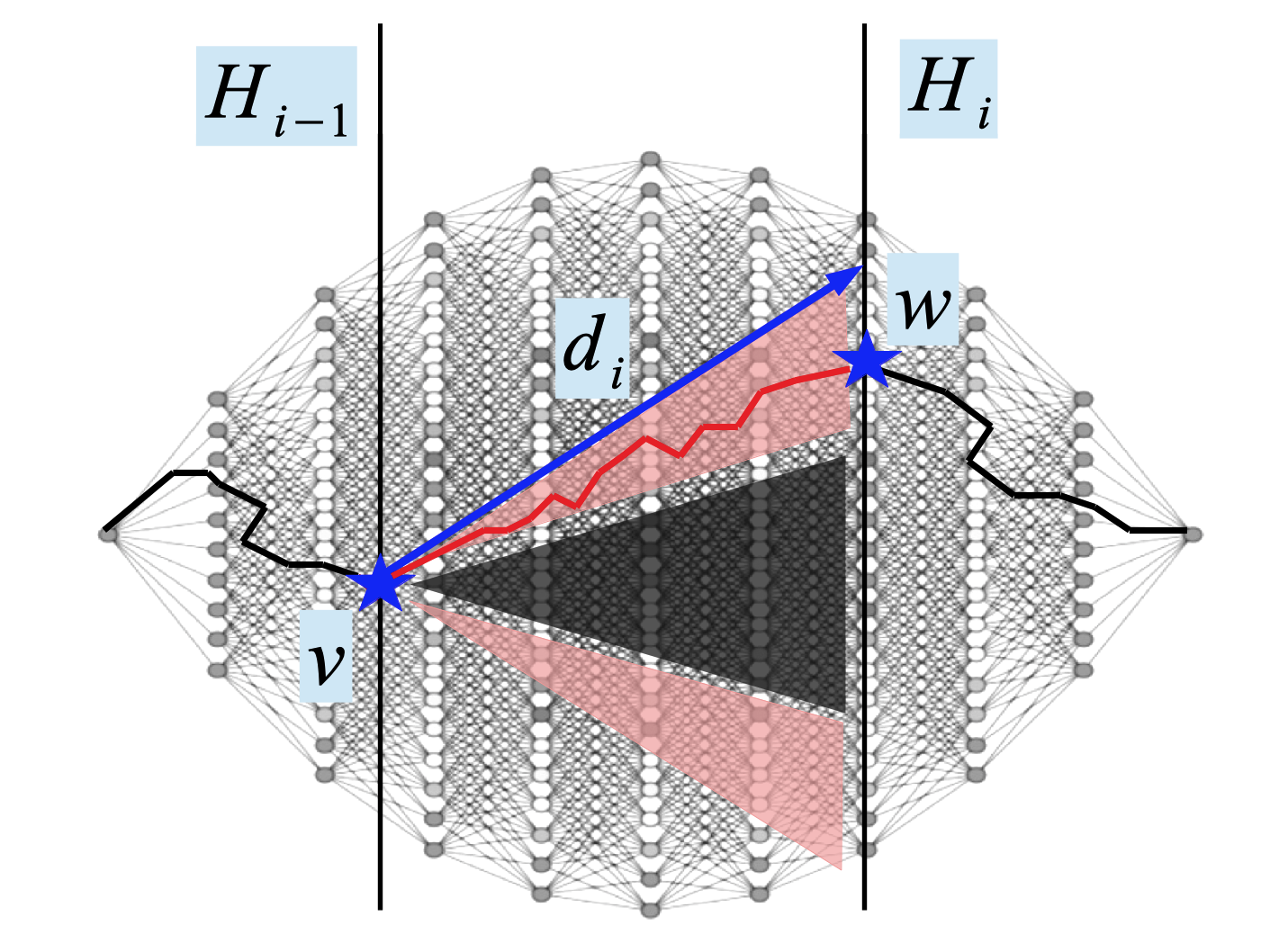}
\caption{The black-shaded cone corresponds to the region where a fully directed polymer would lie. In virtue of Theorem \ref{choosing_d}, the optimal, undirected  polymers evolve however in the red-shaded cones, thereby reaching vertices which are at larger Hamming distance. (Note also that black and red vertical boundaries of these cones are disjunct). For large hyperplane-density, the substrands (in red) of optimal polymers are, in first approximation, geodesics.}
\label{cone}
\end{figure} 

The feature according to which undirected polymers proceed through approximate geodesics 
is absolutely fundamental. On the one hand it neatly explains the deeper mechanisms eventually responsible for the onset of the mean field trivialization. On a more technical level, this  property will lead to a dramatic simplification of some otherwise daunting combinatorial estimates, eventually enabling us to implement the second moment method. In fact, in a (fully) stretched regime, a backstep cannot be cancelled by a forward step (and vice versa). This entails, in particular, a natural representation of paths connecting say $\boldsymbol v \in H_{i-1}$  to $\boldsymbol w \in H_i$ in terms of permutations of the $\boldsymbol v$-coordinates which must be changed in order to obtain $\boldsymbol w$, see in particular Lemma \ref{r} below for a clear manifestation of this  feature.

\subsection{Main result} \label{main_r_sec} We  now specify a subset of polymers with path properties capturing all {\sf Insights} gathered so far: our main result, which is at last formulated in this section, simply states that such a subset is, in fact, non-empty. Towards this goal, some additional observations/notation is needed.

For arbitrary $\boldsymbol d = (d_1, \dots, d_K) \in [0, 1)^K$ (the Hamming-depths) and $\boldsymbol \gamma = (\gamma_1, \dots, \gamma_K) \in [0, \infty)^K$ (the detours),  consider the subset 
\beq \bea
\mathcal P_{n,K} \left\{ \boldsymbol d, \boldsymbol \gamma \right\}  \defi & \; \text{\sf  all paths connecting}\; \boldsymbol 0 \; \text{to}\; \boldsymbol 1\;, \\
& \text{\sf  and that cover a normalized}\; d_i \text{\sf -Hamming distance},\\
& \text{\sf with} \; \gamma_i \; \text{\sf detours}, \\
& \text{\sf while connecting the H-hyperplanes,}\; i=1\dots K\,. \\
\eea \eeq
We now make a specific choice of the free parameters, $\boldsymbol d$ and $\boldsymbol \gamma$, which is naturally justified by the picture canvassed in the above sections. As a matter of fact, we will force polymers to reflect an "extreme" version of the picture. Precisely: 
\begin{itemize}
\item instead of considering polymers which are {\it essentially} directed close to the endpoints (recall in particular Figure \ref{or_vs_unor}) we will consider polymers which are {\it fully} directed in these regimes. We will achieve this by fixing a small $m = 205 \ll K$ (as already mentioned, we will choose $K$ large enough). With $\boldsymbol {\mathsf d} = (\mathsf d_1, \dots, \mathsf d_K)$ the optimal Hamming distance as in \eqref{d_gone} from Theorem \ref{choosing_d} we then set
\beq \bea 
\boldsymbol{ \mathsf d}_{opt}=\left(\underbrace{ 1/K, \dots, 1/K}_{m-\text{times}}, \mathsf d_{m+1}, \mathsf d_{m+2},...,  \mathsf d_{K-m}, \underbrace{ 1/K, \dots, 1/K }_{m-\text{times}}\right)\,,
\eea \eeq
\item instead of considering polymers which are {\it essentially} stretched between the coarse graining H-planes (recall in particular {\sf Insight \ref{geodesics}}), we will consider polymers which proceed through {\it exact geodesics}; this will be achieved by setting 
\beq
{\boldsymbol \gamma}_{opt} \defi (0, \dots, 0)\,.
\eeq
\end{itemize}
Denoting by $\mathsf L_{opt}$ the normalized length of paths in $\mathcal P_{n,K}\left\{ \boldsymbol{ \mathsf d}_{opt} , {\boldsymbol \gamma}_{opt} \right\}$, it holds that
\beq
\mathsf L_{opt}= \| \boldsymbol{ \mathsf d}_{opt}\|_1\,.
\eeq
We then focus on the ensuing subset  $\mathcal P_{n,K}\left\{ \boldsymbol{ \mathsf d}_{opt} , {\boldsymbol \gamma}_{opt} \right\} \subset \widetilde \Pi_{n, \mathsf L_{opt}n}$.  A graphical rendition of these polymers, which are only marginally shorter than $\mathsf L=\sqrt{2}\mathsf E$ (see  \eqref{diff_length} below for more on this), is given in Figure \ref{full_monty}.

\begin{figure}[!h]
    \centering
\includegraphics[scale=0.3]{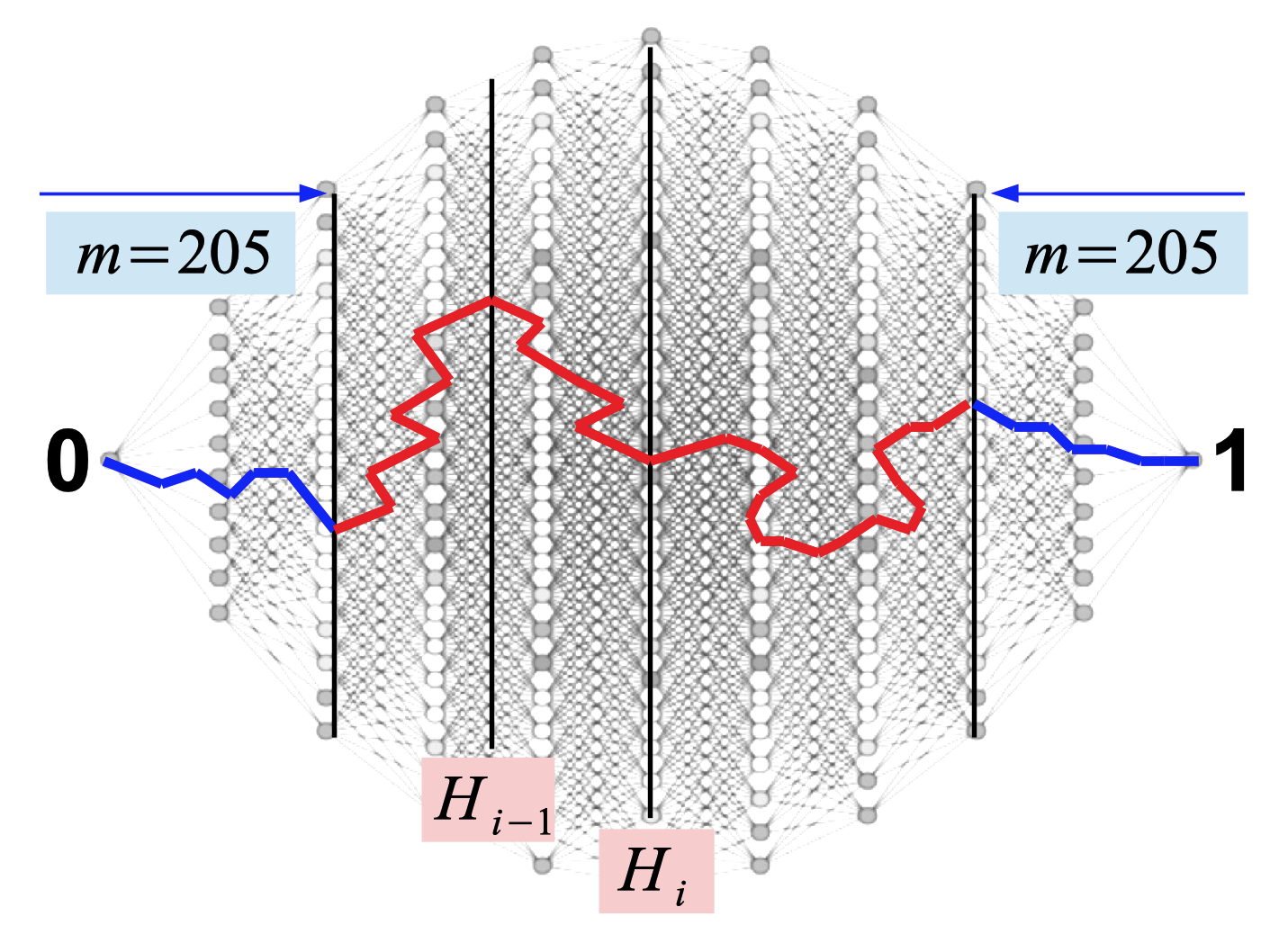}
    \caption{A polymer in $\mathcal P_{n,K}$: the blue substrand is fully directed. The red substrands connect the H-planes of the coarse graining through stretched paths, i.e. geodesics. }
\label{full_monty}
\end{figure}
Since Hamming-depths and detours are specified, we lighten henceforth notation by
\beq
\mathcal P_{n,K}  \defi \mathcal P_{n,K} \left\{ \boldsymbol{ \mathsf d}_{opt} , {\boldsymbol \gamma}_{opt} \right\}.\eeq  
Let now $\e >0$, and consider the  subset of polymers
\beq
\mathcal E_{n, K}^{\e} \defi  \pi\in \mathcal P_{n,K}\;\text{\sf with energies}\;  X_{\pi}\leq \mathsf E+\epsilon\,,
\eeq
namely those paths which {\it i)} are fully directed close to the endpoints, {\it ii)} connect the coarse graining H-planes in the core of the hypercube through geodesics, {\it iii)} and which reach an $\e$-neighborhood of the ground state energy. Our main result states that such polymers do, in fact, exist:

\begin{thm}  \emph{(The geometry of optimal polymers).} \label{geo_optimal_paths}
\label{fpp1} For $\e>0$ there exists  $K = K(\e) \in \N$ such that
\beq \label{Martinsson+} 
\lim_{n\to \infty} {\PP\left( \#\, \mathcal E_{n, K}^{\e} \geq 1 \right)}=1.
\eeq 
\end{thm}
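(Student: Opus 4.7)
The plan is to apply the Paley--Zygmund inequality to the counting variable $N \defi \# \mathcal E_{n,K}^\e$, yielding $\PP(N \geq 1) \geq (\E N)^2 / \E(N^2)$. The theorem thus reduces to two claims: a lower bound $\E N \geq \exp(c_\e n)$ with $c_\e > 0$, and an upper bound $\E(N^2) \lsim (\E N)^2$. Once these are established, $\PP(N \geq 1)$ is bounded below by a positive constant, and the upgrade to probability $1$ in the limit follows either by sharpening the second moment estimate to $\E(N^2) = (1+o(1))(\E N)^2$, or by a separate monotonicity/concentration argument. The carefully engineered definition of $\mathcal P_{n,K}$ (directed at both endpoints, stretched in the middle) is dictated entirely by the needs of the second moment estimate.

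For the first moment I would enumerate paths in $\mathcal P_{n,K}$ by their H-plane vertex sequence $(\boldsymbol v^{(0)}, \dots, \boldsymbol v^{(K)})$ with $\boldsymbol v^{(0)} = \boldsymbol 0$, $\boldsymbol v^{(K)} = \boldsymbol 1$. Because each substrand is monotone in Hamming level (being either directed or a geodesic), the $K$ substrands of a single path occupy disjoint edge subsets, so both the path counting and the low-energy probability factorize over scales. A boundary (directed) scale contributes $(n/K)!$ substrands; a middle (geodesic) scale contributes $(\mathsf d_i n)!$ substrands, each being a permutation of the flipped coordinates. Applying the tail estimate \eqref{law_i} at threshold $\ma_i(\mathsf E + \e)$ and rearranging via Stirling reproduces, per scale, the $i$-th factor of $\mathcal F_{\boldsymbol{\ma}, K}(\boldsymbol{\mathsf d})$ inflated by $(1 + \e/\mathsf E)^{\ma_i \mathsf L n}$ due to the $\e$-slack. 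Since $\mathcal F_{\boldsymbol{\ma}, K}(\boldsymbol{\mathsf d}) = 1$ by Theorem \ref{choosing_d}, the leftover is genuine exponential growth:
$$
\E N \geq \exp(c_\e n), \qquad c_\e > 0,
$$
valid for $K$ large and $m$ fixed as in the statement.

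The second moment is handled by the multiscale refinement. Writing
$$
\E(N^2) = \sum_{\pi, \pi' \in \mathcal P_{n,K}} \PP\left( X_\pi \leq \mathsf E + \e, \; X_{\pi'} \leq \mathsf E + \e \right),
$$
I would stratify pairs by their joint H-plane vertex sequences and by the overlap profile $\boldsymbol \kappa = (\kappa_1, \dots, \kappa_K)$, where $\kappa_i n$ is the number of edges shared by $\pi, \pi'$ at scale $i$. Conditionally on the vertex sequences, the combinatorics and the joint energy tail both factorize across scales. Within a middle scale, the bijection between geodesics and permutations of the flipped coordinates makes the count of pairs of geodesics with prescribed overlap an elementary object. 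The joint tail, after conditioning on the sum over shared edges, reduces via \eqref{law_i} applied to three independent sums (shared, only in $\pi$, only in $\pi'$) to an expression that is exponentially suppressed in $\kappa_i$. Summing over $\boldsymbol \kappa$, the dominant contribution then comes from $\kappa_i \to 0$ and reproduces $(\E N)^2$ up to subexponential corrections.

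The crux of the argument, and the reason for the restrictive definition of $\mathcal P_{n,K}$, is the behaviour of the second moment at the boundary scales $i \to 1$ or $i \to K$: here $\ma_i \sim 1/(K\mathsf L)$ and $\mathsf d_i \sim 1/K$ coincide to leading order, the \emph{cone} of available edges is too narrow, and the within-scale ratio $\E N_i^2 / (\E N_i)^2$ does not stay bounded. The fully directed buffers of $m$ scales at each endpoint bypass this difficulty: in these scales the relevant sub-problem is directed FPP on an $(mn/K)$-dim sub-hypercube, whose contribution can simply be absorbed into a subexponential factor of the first-moment bound (there is ample exponential room, given the surplus $e^{c_\e n}$). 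Only the $K - 2m$ middle stretched scales then need a true second moment, and there the geodesic/permutation structure makes the combinatorial bookkeeping tight. Combining the ingredients, Paley--Zygmund delivers \eqref{Martinsson+}.
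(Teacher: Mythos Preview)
Your overall architecture (Paley--Zygmund plus multiscale decomposition along the $H$-planes, with the first moment controlled via Theorem~\ref{choosing_d}) matches the paper's. However, two concrete gaps would prevent your proof from closing.

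\textbf{The upgrade to probability $1$.} You correctly note that a plain second moment bound $\E(N^2) \lsim (\E N)^2$ only yields $\PP(N\geq 1)\geq c>0$, and that one must sharpen to $\E(N^2)=(1+o(1))(\E N)^2$. But you do not say how, and the paper shows this is exactly where the work lies. The obstruction is at the endpoints: two paths starting at $\boldsymbol 0$ share their first edge with probability $\sim 1/n$, while the energy allocated to the directed buffer is $\overline{\ma}_m\mathsf E \sim m/K$, so the overlap-probability gain from one shared edge is $\sim K/m$. These effects \emph{do not} cancel to $o(1)$; the $k\geq 1$ contribution to the second moment is of the same order as $(\E N)^2$, not smaller. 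The paper's remedy (Section~\ref{energies_sec} and Remark~\ref{facteur_n}) is to strip off the first and last edges, give each an $O(\e)$ energy budget, condition on which paths survive this thinning, and only \emph{then} run Paley--Zygmund on the remaining strand. This converts the $k=1$ boundary contribution into a genuine $O(1/n)$ term. Your claim that the directed buffers can be ``absorbed into a subexponential factor'' via the $e^{c_\e n}$ surplus is a non-sequitur: Paley--Zygmund is a ratio, so exponential room in $\E N$ does not help with the $(1+o(1))$ requirement.

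\textbf{Combinatorics in the stretched phase.} Your assertion that the geodesic/permutation bijection makes the overlap counting ``an elementary object'' underestimates the difficulty. A geodesic between $\boldsymbol v\in H_{i-1}$ and $\boldsymbol w\in H_i$ may cross $H_{i-1}$ or $H_i$ several times (backsteps near the slab boundaries), so two such geodesics need not visit common edges in the same order, and the pair combinatorics does not factor cleanly over slabs. The paper handles this by a further refinement: an inner coarse-graining into $K'$ sub-slabs with backsteps spread evenly ($\mathcal P_{n,K,K'}$), and a ``repulsive'' constraint forcing each $H$-plane to be crossed exactly once ($\mathcal P_{n,K,K'}^{\sf rep}$, Lemma~\ref{rem}). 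Only then does the directionality Lemma~\ref{rem1} apply, reducing the overlap counting to the directed estimates of Lemma~\ref{path_counting}. Your proposal contains no analogue of this machinery.
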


The proof of Theorem \ref{geo_optimal_paths}, which eventually boils down to an application of the  Paley-Zygmund  inequality, is both technically demanding and long, and will be given in the next sections. Before seeing how this goes, some comments are in order. 

First, we remark that the length of the substrands connecting the H-planes (which is related to the $\mathsf a's$) does not appear explicitely in the statement of Theorem \ref{geo_optimal_paths}, and neither do the sub-energies. This is again due to the fact that, in line with {\sf Insight \ref{uni-energy}}, uniformly spread lengths/energies will be hiding behind the optimal Hamming-depths. 

Second, we point out that Theorem \ref{fpp1}, when combined with the simple {\it lower bound} discussed in the Introduction,  yields a constructive proof of Martinsson's Theorem. 

Lastly, and with the unsettled issue of fluctuations in mind, we shall dwell on a conceptually intricate aspect of the theorem, namely the nature of the parameter $K$ encoding the density of hyperplanes for the coarse graining. One perhaps expects that larger constants lead to more accurate pictures, but this is only to some extent correct. In fact, too large hyperplane-density would even lead to inconsistencies: higher and higher densities "unbend" the strands, ultimately to the point of complete directedness, but this, in turn, would starkly contradict the crucial feature of optimal polymers, namely that their length is {\it larger} than the dimension. A delicate balance must therefore be met. As we will see in the course of the second moment implementation, see
\eqref{27}, \eqref{vanish2}, \eqref{finitooooo} and \eqref{choixK} below, for the present purpose of analyzing the ground state to leading order, it indeed suffices to take a  large but {\it finite} $K =  \max \left\{ 2\times10^7, m\e^{-2} \right\}$. How fast (in the dimension $n$) the hyperplane-density can be allowed to grow is  an interesting, and important issue, which unfortunately eludes us. \\

We conclude this section with the aforementioned result concerning the concentration of the length of optimal polymers, as this provides a neat round-off of the picture. To this end, remark that Theorem \ref{fpp1} involves paths of length $\mathsf L_{opt}$; by a more detailed study of taylor's remainder term in \eqref{surprise}, \eqref{small_i_a} and \eqref{small_i_d}, 
and recalling that $\mathsf L = \sqrt{2} \mathsf E$, it can be plainly checked that
\beq \label{diff_length}
0 \leq \mathsf L - \mathsf L_{opt} \leq \frac{m}{K}\,.
\eeq
In other words, for large hyperplane density, the difference between $\mathsf L_{opt}$ and $\mathsf L$ is vanishing. Our second main result states that the length $\mathsf L$ is, in fact, optimal:

\begin{thm} \emph{(Concentration of the polymer's length).} \label{stronger_statement} 
For $\epsilon>0$ and $a>\frac{\mathsf E}{2}+\sqrt2\mathsf E+\frac{1}{\sqrt2}$,  
\beq\bea \label{concentration}
\lim_{n\to \infty} \PP\left(  \#\left\{\pi \in \Pi_{n}:\;  X_{\pi}\leq \mathsf E+\epsilon^2,  \frac{1}{n} | l_\pi(\boldsymbol 0, \boldsymbol 1) - \mathsf L n| \geq a \epsilon \right\} \geq 1  \right) = 0\,.
\eea\eeq 
\end{thm}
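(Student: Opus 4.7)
The plan is a first-moment (Markov) argument coupled with a saddle-point analysis of Stanley's M-bound. Writing $\mathcal N$ for the random cardinality in \eqref{concentration}, Markov's inequality, a union bound over admissible path-lengths, and the tail estimate \eqref{law_i} yield
\[
\PP(\mathcal N\geq 1)\leq \E[\mathcal N]\lesssim \sum_{\substack{l\geq n\\|l-\mathsf L n|\geq a\epsilon n}} M_{n,l}\,\frac{(\mathsf E+\epsilon^2)^l}{l!},
\]
since $\#\Pi_{n,l}\leq M_{n,l}$. For each such $l$ one would then invoke Stanley's M-bound \eqref{general_inf} at the minimizer $x^*=x^*(l)$ of $\sinh(x)^n/x^l$, defined by $x^*/\tanh(x^*)=l/n$; under the parametrization $l=(\mathsf L+\delta)n$ this gives the exponential estimate $\exp(nF(\delta))$ with
\[
F(\delta)\defi \log\sinh(x^*(\delta))+(\mathsf L+\delta)\log\frac{\mathsf E+\epsilon^2}{x^*(\delta)}\,.
\]

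The rate function $F$ is driven by the observation that $x^*(0)=\mathsf E$ (as derived in Section \ref{candidate_opt_l}), at which $\sinh(x^*)=1$, so $F(0)=\mathsf L\log(1+\epsilon^2/\mathsf E)=\sqrt{2}\epsilon^2+O(\epsilon^4)$. The envelope theorem yields $F'(\delta)=\log\bigl((\mathsf E+\epsilon^2)/x^*(\delta)\bigr)$, whence $F'(0)=\epsilon^2/\mathsf E+O(\epsilon^4)$, and implicit differentiation of $x/\tanh(x)=\mathsf L+\delta$ at $x=\mathsf E$ (using $\tanh(\mathsf E)=1/\sqrt{2}$ and $\cosh(\mathsf E)=\sqrt{2}$) produces $dx^*/d\delta|_{0}=1/(\sqrt{2}-\mathsf E)$ and hence $F''(0)=-1/[\mathsf E(\sqrt{2}-\mathsf E)]$. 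Thus $F$ is strictly concave near $0$, with unique maximum at $\delta_*=(\sqrt{2}-\mathsf E)\epsilon^2+O(\epsilon^4)$ of value $F(\delta_*)=\sqrt{2}\epsilon^2+O(\epsilon^4)$, and quadratic decay governed by $|F''|$.

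The technical heart of the proof is a uniform quantitative rate-function estimate: for every $\delta$ with $|\delta|\geq a\epsilon$, $F(\delta)\leq -c(a)\epsilon^2$ for some $c(a)>0$. I would prove this via a third-order Taylor expansion of $F$ at $\delta_*$ with explicit bookkeeping of the cubic remainder for $\delta$ close to $\delta_*$, together with a separate (crude) argument in the \emph{far-tail} regime $|\delta|=\Theta(1)$, where one exploits that $F\to -\infty$ via the super-factorial decay of $(\mathsf E+\epsilon^2)^l/l!$ for $l\gg n$ and the monotonicity of $F$ past its maximum. Once this is done, the hypothesis $a>\mathsf E/2+\sqrt{2}\mathsf E+1/\sqrt{2}$ delivers the required uniform bound; inserting it back into the first display, whose sum contains at most $O(n)$ non-negligible terms, gives $\PP(\mathcal N\geq 1)\lesssim n\exp(-c(a)\epsilon^2 n)\to 0$, as required. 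The main obstacle is precisely this quantitative rate-function analysis: the leading quadratic computation alone suggests a threshold on $a$ of order $\sqrt{4\mathsf E-2\sqrt{2}\mathsf E^2}\approx 1.15$, strictly smaller than the stated conservative constant, so producing the explicit value in the theorem requires careful bookkeeping of the cubic remainder together with the asymmetry $\delta_*\neq 0$ between the two sides $\delta>0$ and $\delta<0$.
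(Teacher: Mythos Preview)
Your first-moment reduction is identical to the paper's, but from that point the two arguments diverge. You optimize Stanley's bound in $x$ for each $l$ separately (the saddle $x^*(l)$), and then analyze the resulting rate function $F(\delta)$ near its maximum. The paper does something cruder and much simpler: it picks a \emph{single} value of $x$ for each of the two tails, namely $x=\mathsf E+\epsilon^2-\epsilon$ for the sum over $l\le(\mathsf L-a\epsilon)n$ and $x=\mathsf E+\epsilon^2+\epsilon$ for the sum over $l\ge(\mathsf L+a\epsilon)n$. With these choices the ratio $(\mathsf E+\epsilon^2)/x$ is respectively $>1$ and $<1$, so each sum is dominated by (or is an honest geometric series starting from) its boundary term $l=(\mathsf L\pm a\epsilon)n$. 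One is then left with a single exponent $\log\sinh(\mathsf E+\epsilon^2\mp\epsilon)+(\mathsf L\mp a\epsilon)\log\bigl(1\mp\epsilon/(\mathsf E+\epsilon^2)\bigr)$, whose Taylor expansion to order $\epsilon^2$ is $\epsilon^2\bigl(\tfrac12+\sqrt2+\tfrac{1}{\sqrt2\mathsf E}-\tfrac{a}{\mathsf E}\bigr)$, and the stated threshold $a>\mathsf E/2+\sqrt2\mathsf E+1/\sqrt2$ drops out immediately.

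Your approach is not wrong; it is in fact sharper. Your puzzlement at the end is the tell: the quadratic analysis of $F$ that you carried out correctly already yields negativity for $|\delta|\gtrsim 1.15\,\epsilon$, which is strictly stronger than what the theorem asserts. The constant $\mathsf E/2+\sqrt2\mathsf E+1/\sqrt2\approx 2.4$ is not a sharp threshold but simply what the paper's sub-optimal (but convenient) choices $x=\mathsf E+\epsilon^2\pm\epsilon$ happen to produce; no cubic bookkeeping is needed. The trade-off is that the paper's argument is a two-line computation per tail, whereas yours, to be complete, still owes a uniform control of $F$ away from a neighborhood of $\delta_*$ (including the far tail $l\to\infty$) and a rigorous passage from the pointwise rate estimate to the full sum---routine, but more to write down than the paper's geometric-series trick.
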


\begin{rem}
The proof of the above Theorem, which is given in Section \ref{section_concentration} below, suggests (albeit feebly) that the $(\epsilon^2, \epsilon)$-scaling in \eqref{concentration} is, in fact, optimal, and this in turn suggests that a central limit theorem applies for the optimal length. 
\end{rem}

The rest of the paper is organised as follows. In the next Section \ref{choosing_sec} we will provide a proof of Theorem \ref{choosing_d}. In Section \ref{taming}, and for technical reasons which will become clear in the course of the treatment, some additional restrictions on the candidate optimal polymers will be specified: this will lead to the identification of a subset of $\mathcal P_{n, K}$ on which we will henceforth focus our attention.  Specifying these additional requirements will have an impact on the first moment as controlled in Theorem \ref{choosing_d}, and these modifications will be dealt with in Section \ref{changing_deal}. Section \ref{proof_thm_two} forms the main body of the paper: there we will set up the second moment approach, postponing, however, the highly technical issues concerning the required path-counting to Section \ref{combinatorial_estimates}. Finally, the proof of optimality of the length $\mathsf L$ is given in Section \ref{section_concentration}.

\section{The optimal Hamming distance: proof of Theorem \ref{choosing_d}} \label{choosing_sec}
Recall that $\varphi(x)= x^x$ for $x\geq  0$, with the convention $0^0 = 1$. We shorten 
\beq\bea
g_{j,K}(x)\defi \frac{{\sinh(\ma_j \mathsf E)}^{x} {\cosh(\ma_j \mathsf E)}^{(1-x)} \varphi\left(\frac{j-1}{K}\right)\varphi\left(1-\frac{j-1}{K}\right)}{\varphi\left(\frac{x}{2}-\frac{1}{2K}\right)\varphi\left(\frac{j-1}{K}-(\frac{x}{2}-\frac{1}{2K})\right) \varphi\left(\frac{x}{2}+\frac{1}{2K}\right) \varphi\left(1-\frac{j-1}{K}-(\frac{x}{2}+\frac{1}{2K})\right)} \,,
\eea\eeq
in which case, in virtue of $\eqref{elementaire}$, we may represent the $\mathcal F$-function as
\beq\bea
\mathcal F_{\boldsymbol{\ma}, K}(\boldsymbol d)=\prod_{j=1}^{K}g_{j,K}(d_j)\,.
\eea\eeq
Since the terms in the product on the r.h.s. are non-interacting, we clearly have
\beq\bea
\max_{\boldsymbol d} \{ \mathcal F_{\boldsymbol{\ma}, K}(\boldsymbol d) \}=\prod_{j=1}^{K}\max_{x \geq 0} \{ g_{j,K}(x) \}.
\eea\eeq
We now claim that
\beq\bea\label{claim1}
\prod_{j=1}^{K}\max_{x\geq 0} \{g_{j,K}(x) \} =1,
\eea\eeq
and 
\beq\bea\label{claim2}
\arg\max_{x\geq 0} \ \{g_{j,K}(x)\}= \mathsf d_j, 
\eea\eeq
with $\mathsf d_j$ as in \eqref{d_gone}. \\

\noindent We will prove \eqref{claim2} first. We begin with the cases $j=1, K$ and claim that
\beq \label{claimweirdo}
\arg \max_{x \geq 0} g_{1, K}(x) = \arg \max_{x\geq 0} g_{K, K}(x) = \frac{1}{K}\,,
\eeq
and 
\beq \label{claimweirdo_two}
\frac{1}{K} = \mathsf d_1 = \mathsf d_K.
\eeq
In fact, $g_{1, K}(x)$ involves the terms 
\beq \bea
\varphi\left(\frac{x}{2}-\frac{1}{2K}\right), \\
\varphi\left(\frac{j-1}{K}-\left\{\frac{x}{2}-\frac{1}{2K}\right\} \right)\Big|_{j=1} & = \varphi\left(\frac{1}{2K} - \frac{x}{2}\right),
\eea \eeq 
but for both to be properly defined it must hold 
\beq
\frac{x}{2}-\frac{1}{2K} \geq 0, \quad \text{and} \quad \frac{1}{2K} - \frac{x}{2}\geq 0,
\eeq
implying $x= \frac{1}{K}$. A similar reasoning applies to $g_{K,K}$, and \eqref{claimweirdo} is settled. 
Claim \eqref{claimweirdo_two} follows from \eqref{fundamental} for the $j=1$ case, whereas the 
$j=K$ case follows by symmetry, see in particular \eqref{sym_k}. \\

\noindent Concerning the other indices, we fix $j \in \{2, \dots, K-1\}$ and shorten, for $x \geq 0$, 
\beq\bea
g_{j,K}(x) \defi \frac{N_{j, K}(x)}{D_{j,K}(x)},
\eea\eeq
where
\beq\bea
N_{j, K}(x) \defi {\sinh(\ma_j \mathsf E)}^{x} {\cosh(\ma_j \mathsf  E)}^{(1-x)} \varphi\left(\frac{j-1}{K}\right)\varphi\left(1-\frac{j-1}{K}\right),
\eea\eeq
and
\beq\bea
D_{j,K}(x) \defi \varphi \left(\frac{x}{2}-\frac{1}{2K} \right)\varphi\left(\frac{j}{K}-\frac{x}{2}-\frac{1}{2K}\right) \varphi\left(\frac{x}{2}+\frac{1}{2K}\right) \varphi\left(1-\frac{j}{K}-\frac{x}{2}+\frac{1}{2K}\right).
\eea\eeq
Taking the $x$-derivative, we see that 
\beq\bea\label{derivee}
g_{j,K}(x)'>0 \iff {N_{j, K} (x)}' D_{j,K}(x)>N_{j, K} (x){D_{j,K}(x)}'.
\eea\eeq
An elementary computation then yields
\beq\bea\label{derivee1}
{N_{j, K} (x)}'=N_{j, K} (x) \log(\tanh(\ma_j \mathsf E)),
\eea\eeq
and 
\beq\bea\label{derivee2}
{D_{j,K}(x)}'=\frac{1}{2} D_{j,K}(x) \log\left\{ \frac{(\frac{x}{2}-\frac{1}{2K})(\frac{x}{2}+\frac{1}{2K})}{(\frac{j}{K}-\frac{x}{2}-\frac{1}{2K})(1-\frac{j}{K}-\frac{x}{2}+\frac{1}{2K})} \right\} .
\eea\eeq
Combining \eqref{derivee}, \eqref{derivee1} and \eqref{derivee2}, we therefore get
\beq\bea \label{derivee3}
g_{i,K}(x)'>0 \iff {\tanh(\ma_j \mathsf  E)}^2>\frac{(\frac{x}{2}-\frac{1}{2K})(\frac{x}{2}+\frac{1}{2K})}{(\frac{j}{K}-\frac{x}{2}-\frac{1}{2K})(1-\frac{j}{K}-\frac{x}{2}+\frac{1}{2K})}.
\eea\eeq
Consider now 
\beq\bea \label{fpoint}
{\tanh(\ma_j \mathsf E)}^2=\frac{(\frac{x}{2}-\frac{1}{2K})(\frac{x}{2}+\frac{1}{2K})}{\left(\frac{j}{K}-\frac{x}{2}-\frac{1}{2K}\right)(1-\frac{j}{K}-\frac{x}{2}+\frac{1}{2K})}.
\eea\eeq
This is a quadratic equation (in $x$), whose unique positive solution is given by 
\beq\bea \label{solution}
\hat x \defi -{\sinh(\ma_j \mathsf  E)}^2+\sqrt{{\sinh(\ma_j \mathsf  E)}^4+4{\sinh(\ma_j \mathsf  E)}^2\left\{\frac{2j-1}{2K}-\frac{j(j-1)}{K^2}\right\}+\frac{1}{K^2}}.
\eea\eeq
A straightforward analysis shows that the quotient on the r.h.s. of \eqref{derivee3} is, in fact, increasing in $x$: in other words, the $x$-derivative $g_{i,K}'$ is positive for $x < \hat x$ and negative for $x>\hat x$, implying that $\hat x$ is indeed the extremal point. To finish the proof of \eqref{claim2} it thus remains to show that $\hat x = \mathsf d_j$, i.e. that $\hat x = \sinh(\mathsf a_j \mathsf E) \cosh((1-\mathsf a_j) \mathsf  E)$. In order to do so, we will avoid the use of the explicit formulation \eqref{solution}, but rely rather on the expression \eqref{fpoint} and the following
\begin{lem} \label{allequivalent} Let $d \in \R$ satisfy
\beq \label{e_1}
\frac{d}{2}-\frac{1}{2K}=\sinh(\overline{\ma}_{j-1}  \mathsf E)\sinh(\ma_{j}  \mathsf E)\sinh(\underline{\ma}_{j}  \mathsf E)\,.
\eeq
Then the above, and the following relations are all equivalent: 
\beq \label{e_3}
1-\frac{j}{K}-\frac{d}{2}+\frac{1}{2K}=\cosh(\overline{\ma}_{j-1} \mathsf E)\cosh(\ma_{j} \mathsf E)\sinh(\underline{\ma}_{j} \mathsf E),
\eeq
\beq\bea \label{e_2}
 & \frac{d}{2}+\frac{1}{2K} =\cosh(\overline{\ma}_{j-1}  \mathsf E)\sinh(\ma_{j}  \mathsf E)\cosh(\underline{\ma}_{j}  \mathsf E),
\eea\eeq
\beq \label{e_4}
\frac{j}{K}-\frac{d}{2}-\frac{1}{2K}=\sinh(\overline{\ma}_{j-1}  \mathsf E)\cosh(\ma_{j}  \mathsf E)\cosh(\underline{\ma}_{j} \mathsf E).
\eeq
It follows in particular, that for such $d$ it holds $d = \hat x$, and $d= \mathsf d_j$. 
\end{lem}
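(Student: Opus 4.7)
The plan is to reduce all four identities \eqref{e_1}--\eqref{e_4} to linear relations among the four triple products of sines and cosines of the angles $\overline{\ma}_{j-1}\mathsf E$, $\ma_j\mathsf E$, $\underline{\ma}_j\mathsf E$, which by construction sum to $\mathsf E$. Setting $A:=\sinh(\overline{\ma}_{j-1}\mathsf E)$, $A':=\cosh(\overline{\ma}_{j-1}\mathsf E)$, and analogously $B,B',C,C'$, the right-hand sides of \eqref{e_1}--\eqref{e_4} read $ABC$, $A'BC'$, $A'B'C$, and $AB'C'$ respectively.

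First I would derive three linear relations among these four products. The hyperbolic addition formulas applied twice to $\sinh(\mathsf E)=\sinh(\overline{\ma}_{j-1}\mathsf E+\ma_j\mathsf E+\underline{\ma}_j\mathsf E)$ give $ABC+A'BC'+A'B'C+AB'C'=\sinh(\mathsf E)=1$. Writing $\underline{\ma}_{j-1}=\ma_j+\underline{\ma}_j$ and invoking \eqref{fundamental} at index $j-1$ yields $ABC+AB'C'=(j-1)/K$. Similarly, writing $\overline{\ma}_{j}=\overline{\ma}_{j-1}+\ma_j$ and applying \eqref{fundamental} at index $j$ yields $AB'C'+A'BC'=j/K$.

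Next I would check by direct inspection that the four left-hand sides of \eqref{e_1}--\eqref{e_4}, namely $\tfrac{d}{2}-\tfrac{1}{2K}$, $\tfrac{d}{2}+\tfrac{1}{2K}$, $1-\tfrac{j}{K}-\tfrac{d}{2}+\tfrac{1}{2K}$, and $\tfrac{j}{K}-\tfrac{d}{2}-\tfrac{1}{2K}$, satisfy exactly the same three linear relations, regardless of the value of $d$. Three linearly independent linear relations among four quantities leave a single degree of freedom, so the moment one equality between an LHS and its paired RHS is imposed, the other three equalities are automatic. This settles the mutual equivalence.

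Finally I would identify the common value of $d$. Summing \eqref{e_1} and \eqref{e_2} and applying $\cosh(\alpha+\beta)=\cosh\alpha\cosh\beta+\sinh\alpha\sinh\beta$ together with $\overline{\ma}_{j-1}+\underline{\ma}_j=1-\ma_j$, one computes $d=ABC+A'BC'=B(AC+A'C')=\sinh(\ma_j\mathsf E)\cosh((1-\ma_j)\mathsf E)=\mathsf d_j$. To match with $\hat x$, I would substitute this same $d$ into \eqref{fpoint}: the numerator factors as $ABC\cdot A'BC'=AA'B^2CC'$ and the denominator as $AB'C'\cdot A'B'C=AA'(B')^2CC'$, so their ratio collapses to $\tanh(\ma_j\mathsf E)^2$, as required. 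Positivity of $\mathsf d_j$ together with uniqueness of the positive root of \eqref{fpoint} then forces $d=\hat x$. The main obstacle here is almost entirely bookkeeping; the one substantive input is the simultaneous use of the hyperbolic addition formulas and \eqref{fundamental} at two successive indices, which is exactly what pins down the constants $(j-1)/K$ and $j/K$ in the linear relations.
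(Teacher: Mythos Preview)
Your argument is correct and arrives at the same conclusions as the paper, but the organization is genuinely cleaner. The paper proceeds by three separate pairwise equivalences: it shows \eqref{e_1}$\iff$\eqref{e_3} via \eqref{fundamental2} at index $j$, then \eqref{e_2}$\iff$\eqref{e_4} by an analogous computation, and finally \eqref{e_1}$\iff$\eqref{e_2} via the auxiliary identity $\tfrac{1}{K}=\sinh(\underline{\ma}_{j-1}\mathsf E)\cosh(\overline{\ma}_{j-1}\mathsf E)-\sinh(\underline{\ma}_j\mathsf E)\cosh(\overline{\ma}_j\mathsf E)$ together with several addition-formula manipulations. You instead isolate the three affine constraints $ABC+A'BC'+A'B'C+AB'C'=1$, $ABC+AB'C'=(j-1)/K$, $AB'C'+A'BC'=j/K$, observe that the four left-hand sides satisfy the very same constraints for every $d$, and then invoke that three independent affine relations on four unknowns leave a one-dimensional solution set, so a single matched coordinate forces all four. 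This linear-algebraic repackaging is more transparent and avoids the somewhat ad hoc intermediate identity \eqref{astuce}. One small point worth making explicit: your last step requires that each coordinate genuinely parametrizes the solution line, i.e.\ that the direction vector of the affine line has no zero entry; here the direction is $(-1,-1,1,1)$, so this is immediate. The identification of $d$ with $\mathsf d_j$ and with $\hat x$ is then carried out exactly as in the paper.
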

\begin{proof}[Proof of Lemma \ref{allequivalent}]
We first prove the equivalence of
\beq \bea \label{allequivalent_two}
\eqref{e_1} \iff \eqref{e_3} \iff \eqref{e_2} \iff \eqref{e_4},
\eea\eeq
Indeed, by \eqref{fundamental} and the fact that
\beq \bea \label{fundie}
\sinh(\overline{\ma_j} \mathsf E)\cosh(\underline{\ma}_j \mathsf E)+\cosh(\overline{\ma}_j \mathsf E)\sinh(\underline{\ma}_j \mathsf E)=1,
\eea\eeq
it holds:
\beq\label{fundamental2}
\sinh(\underline{\ma}_j \mathsf E)\cosh(\overline{\ma}_j \mathsf E)=1-\frac{j}{K}
\eeq
for all  $j =1 \dots  K$. Relation \eqref{e_1} therefore implies that
\beq \bea \label{simpl1}
1-\frac{j}{K}-\frac{d}{2}+\frac{1}{2K} &=1-\frac{j}{K}-\sinh(\overline{\ma}_{j-1} \mathsf E)\sinh(\ma_{j} \mathsf E)\sinh(\underline{\ma}_{j} \mathsf E)\\ 
&=\left\{ \cosh(\overline{\ma}_{j}\mathsf E)-\sinh(\overline{\ma}_{j-1} \mathsf E) \sinh(\ma_{j} \mathsf E)  \right\} \sinh(\underline{\ma}_{j}\mathsf E)\\
&=\cosh(\overline{\ma}_{j-1} \mathsf E)\cosh(\ma_{j} \mathsf E)\sinh(\underline{\ma}_{j} \mathsf E)
\eea\eeq
the second equality with \eqref{fundamental2} and the last  by the addition formula $\cosh(a+b)=\cosh(a)\cosh(b)+\sinh(a)\sinh(b)$. Thus,
\beq \bea
\eqref{e_1} \iff \eqref{e_3}.
\eea\eeq
A similar computation gives that
\beq \bea
\eqref{e_2} \iff  \eqref{e_4}.
\eea\eeq
It remains to prove that 
\beq \bea
\eqref{e_1} \iff \eqref{e_2}.
\eea\eeq
To see this we note that \eqref{e_1} yields
\beq \bea \label{a}
&\frac{d}{2}+\frac{1}{2K}=\sinh(\overline{\ma}_{j-1} \mathsf E)\sinh(\ma_{j} \mathsf E)\sinh(\underline{\ma}_{j} \mathsf E)+\frac{1}{K}
\eea\eeq
but combining the fundamental r.h.s (\ref{fundamental}) and (\ref{fundamental2}) gives that
\beq \bea \label{astuce}
\frac{1}{K}=\sinh(\underline{\ma}_{j-1}  \mathsf E)\cosh(\overline{\ma}_{j-1}  \mathsf E)-\sinh(\underline{\ma}_{j} \mathsf E)\cosh(\overline{\ma}_{j}  \mathsf E)
\eea\eeq
Thus, by \eqref{astuce}, we see that
\beq \bea \label{b}
\eqref{a}&=\sinh(\underline{\ma}_{j}  \mathsf E)\left(\sinh(\overline{\ma}_{j-1}  \mathsf E)\sinh(\ma_{j} E)-\cosh(\overline{\ma}_{j}  \mathsf E)\right)+\sinh(\underline{\ma}_{j-1}  \mathsf E)\cosh(\overline{\ma}_{j-1}  \mathsf E)\\
&=-\sinh(\underline{\ma}_{j}  \mathsf E)\cosh(\ma_{j}  \mathsf E)\cosh(\overline{\ma}_{j-1}  \mathsf E)+\sinh(\underline{\ma}_{j-1}  \mathsf E)\cosh(\overline{\ma}_{j-1}  \mathsf E),
\eea\eeq
the last equality again by the addition formula $\cosh(a+b)=\cosh(a)\cosh(b)+\sinh(a)\sinh(b)$. Hence
\beq \bea
\eqref{b}&=\left(-\sinh(\underline{\ma}_{j}  \mathsf E)\cosh(\ma_{j}  \mathsf E)+\sinh(\underline{\ma}_{j}  \mathsf E+\ma_{j}  \mathsf E)\right)\cosh(\overline{\ma}_{j-1} \mathsf E)\\
&=\cosh(\underline{\ma}_{j}  \mathsf E)\sinh(\ma_{j}  \mathsf E)\cosh(\overline{\ma}_{j-1}  \mathsf E).
\eea\eeq
and \eqref{allequivalent_two} is established. \\

Let now $d$ satisfy any of the equivalent \eqref{e_1}-\eqref{e_4}. It holds:
\beq\bea\label{check2}
& \frac{(\frac{d}{2}-\frac{1}{2K})(\frac{d}{2}+\frac{1}{2K})}{(\frac{j}{K}-\frac{d}{2}-\frac{1}{2K})(1-\frac{j}{K}-\frac{d}{2}+\frac{1}{2K})}=  \\
&\qquad \qquad = \frac{\sinh(\overline{\ma}_{j-1} \mathsf E)\sinh(\ma_{j} \mathsf E)\sinh(\underline{\ma}_j \mathsf E) \times \cosh(\overline{\ma}_{j-1} \mathsf E)\sinh(\ma_{j} \mathsf E)\cosh(\underline{\ma}_j \mathsf E)}{\sinh(\overline{\ma}_{j-1} \mathsf E)\cosh(\ma_{j} \mathsf E)\cosh(\underline{\ma}_j \mathsf E)\times \cosh(\overline{\ma}_{j-1} \mathsf E)\cosh(\ma_{j} \mathsf E)\sinh(\underline{\ma}_j \mathsf E)}\\
& \qquad \qquad =\tanh(\ma_j \mathsf E)^2\,,
\eea\eeq
hence, by uniqueness of the (positive) solution of \eqref{fpoint}, we deduce that $d= \hat x$. \\

\noindent Finally, it holds: 
\beq\bea
d & =\frac{d}{2}+\frac{1}{2K}+\frac{d}{2}-\frac{1}{2K}\\
&=\sinh(\ma_{i} \mathsf E)\cosh(\overline{\ma}_{j-1} \mathsf E)\cosh(\underline{\ma}_j \mathsf E)+ \sinh(\ma_{i} \mathsf E) \sinh(\overline{\ma}_{j-1} \mathsf E)\sinh(\underline{\ma}_j \mathsf E)\,,
\eea \eeq
the last equality by \eqref{e_1} and \eqref{e_2},  hence
\beq \bea
d &=\sinh(\ma_{i}  \mathsf E) \times \left\{\cosh(\overline{\ma}_{j-1} \mathsf E)\cosh(\underline{\ma}_j \mathsf E)+ \sinh(\overline{\ma}_{j-1} \mathsf E)\sinh(\underline{\ma}_j \mathsf E) \right\} \\
& = \sinh(\ma_{j} \mathsf E) \times \cosh((1-\ma_{j}) \mathsf E),
\eea\eeq
by the addition formula for hyperbolic functions (and using that $\overline{\ma}_{j-1} + \underline{\ma}_{j} = 1-\mathsf a_j$, by definition), settling the claim that $d=\mathsf d_j$.

\end{proof}

The remaining Claim \eqref{claim1} is taken care of by the  following Lemma,
which tracks the evolution of the $g$-product while changing the hyperplane-index.

\begin{lem}[Evolution Lemma]\label{f} For any $i=1 \dots K$, it holds:
\beq\bea  \label{f_f}
\prod_{j=1}^{i} g_{j,K}(\mathsf{ \mathsf d_j})={\left[ \frac{\sinh(\overline{\ma}_i  \mathsf E)}{\frac{i}{K}}\right]}^{\frac{i}{K}} {\left[ \frac{\cosh(\overline{\ma}_i  \mathsf E)}{1-\frac{i}{K}}\right]}^{1-\frac{i}{K}}\,.
\eea\eeq
Furthermore,
\beq \label{full_produce}
\prod_{j=1}^{K} g_{j,K}(\mathsf{ \mathsf d_j})= 1\,.
\eeq
\end{lem}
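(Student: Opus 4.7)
\medskip
\noindent\emph{Proof plan.} The natural approach is induction on $i$, the core of the matter being the telescoping identity
\begin{equation*}
g_{i, K}(\mathsf d_i) \; = \; \frac{\mathrm{RHS}_i}{\mathrm{RHS}_{i-1}}, \qquad i = 2, \dots, K,
\end{equation*}
where $\mathrm{RHS}_i$ denotes the right-hand side of \eqref{f_f} at level $i$. The base case $i = 1$ is direct: since $\mathsf d_1 = 1/K$ by \eqref{claimweirdo_two}, two of the four $\varphi$-factors in the denominator of $g_{1,K}(1/K)$ degenerate to $\varphi(0) = 1$, while in the numerator $\varphi(0) = \varphi(1) = 1$, and what remains is easily seen to equal $\mathrm{RHS}_1$ in view of \eqref{fundamental} at $j=1$. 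The final statement \eqref{full_produce} then follows by specializing \eqref{f_f} to $i = K$: since $\overline{\ma}_K = 1$ by \eqref{if_one} at $i=K$, one has $\sinh(\overline{\ma}_K \mathsf E) = \sinh \mathsf E = 1$, and with the convention $0^0 = 1$ both factors on the right-hand side collapse to $1$.

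For the telescoping step the crucial algebraic input is Lemma \ref{allequivalent}, which identifies each of the four arguments of the denominator $\varphi$-factors of $g_{i,K}(\mathsf d_i)$ as a triple product of hyperbolic functions at $\overline{\ma}_{i-1}\mathsf E$, $\ma_i\mathsf E$ and $\underline{\ma}_i\mathsf E$. Abbreviating $A := \sinh(\overline{\ma}_{i-1}\mathsf E)$, $B := \cosh(\overline{\ma}_{i-1}\mathsf E)$, $C := \sinh(\ma_i\mathsf E)$, $D := \cosh(\ma_i\mathsf E)$, $E := \sinh(\underline{\ma}_i\mathsf E)$, $F := \cosh(\underline{\ma}_i\mathsf E)$, those four arguments are
\begin{equation*}
\alpha := ACE, \quad \beta := ADF, \quad \gamma := BCF, \quad \delta := BDE.
\end{equation*}
The triple-angle expansion of $\sinh \mathsf E = 1$ yields $\alpha+\beta+\gamma+\delta = 1$; the $\cosh$-addition formula applied to $\mathsf d_i = \sinh(\ma_i\mathsf E)\cosh((1-\ma_i)\mathsf E)$ gives $\mathsf d_i = \alpha+\gamma$; and \eqref{fundamental} at level $i-1$ gives $(i-1)/K = \alpha+\beta$. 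In particular $\alpha+\delta = 1 - i/K$ and $\beta+\gamma = i/K$.

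Taking logarithms and using $\log \varphi(x) = x \log x$, one expands $\alpha\log\alpha + \beta\log\beta + \gamma\log\gamma + \delta\log\delta$ as a linear combination of $\log A, \dots, \log F$. The $\log C$ and $\log D$-coefficients cancel against the numerator factor $\sinh(\ma_i\mathsf E)^{\mathsf d_i}\cosh(\ma_i\mathsf E)^{1 - \mathsf d_i} = C^{\alpha+\gamma}D^{\beta+\delta}$, whereas the $\log A$ and $\log B$-coefficients combine with the numerator terms $\varphi(\alpha+\beta)$ and $\varphi(\gamma+\delta)$ into the two log-ratios $(\alpha+\beta)\log\tfrac{\alpha+\beta}{A}$ and $(\gamma+\delta)\log\tfrac{\gamma+\delta}{B}$, which by \eqref{fundamental} at $i-1$ combined with the addition formulas $\cosh(\underline{\ma}_{i-1}\mathsf E) = DF+CE$, $\sinh(\underline{\ma}_{i-1}\mathsf E) = CF+DE$ equal $\tfrac{i-1}{K}\log\cosh(\underline{\ma}_{i-1}\mathsf E)$ and $\bigl(1-\tfrac{i-1}{K}\bigr)\log\sinh(\underline{\ma}_{i-1}\mathsf E)$ respectively. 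The remaining $\log E$ and $\log F$-terms contribute $-(1-i/K)\log\sinh(\underline{\ma}_i\mathsf E) - (i/K)\log\cosh(\underline{\ma}_i\mathsf E)$, and rewriting
\begin{equation*}
\mathrm{RHS}_i \; = \; \cosh(\underline{\ma}_i\mathsf E)^{-i/K}\sinh(\underline{\ma}_i\mathsf E)^{-(1-i/K)}
\end{equation*}
via \eqref{fundamental} immediately shows that $\log g_{i,K}(\mathsf d_i) = \log \mathrm{RHS}_i - \log\mathrm{RHS}_{i-1}$, closing the induction.

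The main obstacle is purely computational: the six atomic quantities $A, \dots, F$ and the four composites $\alpha, \beta, \gamma, \delta$ must be tracked meticulously through the $\varphi$-expansions. The ``miracle'' that the $\log A, \log B, \log C, \log D$-terms all cancel into a clean telescoping rests entirely on Lemma \ref{allequivalent} together with the hyperbolic addition formulas; no further conceptual idea is required.
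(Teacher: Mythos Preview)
Your proof is correct and follows the same inductive strategy as the paper: both reduce to the telescoping identity $g_{i,K}(\mathsf d_i) = \mathrm{RHS}_i/\mathrm{RHS}_{i-1}$, and both rely on Lemma~\ref{allequivalent} together with the fundamental relation~\eqref{fundamental} to verify it. The execution differs slightly: the paper works in multiplicative form, rewriting both sides of the telescoping identity into matching blocks (displays~\eqref{left}--\eqref{right}), whereas you pass to logarithms and organize the computation by collecting the coefficients of $\log A,\dots,\log F$ --- arguably a more transparent bookkeeping device, though the underlying identities are identical. One minor point: at $i=K$ the second factor of $\mathrm{RHS}_K$ is of the form $(\sqrt2/0)^0$, so your ``convention $0^0=1$'' is really the limit $\cosh(\overline{\ma}_i\mathsf E)^{1-i/K}\cdot\varphi(1-i/K)^{-1}\to 1$; the paper sidesteps this by stopping the induction at $i=K-1$ and handling $g_{K,K}(\mathsf d_K)$ directly via the symmetry $\ma_K=\ma_1$.
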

\begin{proof}
We will proceed by induction over $i$. The cases $K=1, 2$ are trivial, so let $K\geq3$. Recalling that  $\mathsf  d_1=\frac{1}{K}$, we therefore have that 
\beq
g_{1,K}(\mathsf d_1)={\left[\frac{\sinh(\ma_1 \mathsf E)}{\frac{1}{K}}\right]}^{\frac{1}{K}}{\left[\frac{\cosh(\ma_1 \mathsf E)}{1-\frac{1}{K}}\right]}^{1-\frac{1}{K}},
\eeq
which settles the base case $i = 1$. We thus assume that \eqref{f_f} holds for an $i \in \{1,K-2\}$, and show 
that this implies the validity of the $(i+1)$-case, namely that
\beq\bea \label{remain1}
{\left[ \frac{\sinh(\overline{\ma}_i \mathsf E)}{\frac{i}{K}}\right]}^{\frac{i}{K}} {\left[\frac{\cosh(\overline{\ma}_i \mathsf E)}{1-\frac{i}{K}}\right]}^{1-\frac{i}{K}} g_{i+1,K}(\mathsf  d_{i+1})={\left[ \frac{\sinh(\overline{\ma}_{i+1}\mathsf E)}{\frac{i+1}{K}}\right] }^{\frac{i+1}{K}} {\left[ \frac{\cosh(\overline{\ma}_{i+1} \mathsf  E)}{1-\frac{i+1}{K}}\right]}^{1-\frac{i+1}{K}}.
\eea\eeq
Remark that by \eqref{fpoint},
\beq \bea
& {\sinh(\ma_{i+1} \mathsf E)}^{\mathsf d_{i+1}} {\cosh(\ma_ {i+1} \mathsf E)}^{1-\mathsf d_{i+1}} = 
{\tanh(\ma_{i+1} \mathsf E)}^{\mathsf d_{i+1}} {\cosh(\ma_ {i+1} \mathsf E)} \\
& \qquad = \left[ \frac{(\frac{d_{i+1}}{2}-\frac{1}{2K})(\frac{d_{i+1}}{2}+\frac{1}{2K})}{\left(\frac{i+1}{K}-\frac{d_{i+1}}{2}-\frac{1}{2K}\right)(1-\frac{i+1}{K}-\frac{d_{i+1}}{2}+\frac{1}{2K})}\right]^{\frac{d_{i+1}}{2}} \cosh(\ma_ {i+1} \mathsf E) \,.
\eea \eeq
By definition of $g_{i+1,K}$, the above, and simple rearrangements, we thus have
\beq\bea
g_{i+1,K}(\mathsf d_{i+1}) & = \frac{{(\frac{{\mathsf  d_{i+1}}}{2}-\frac{1}{2K})}^{\frac{1}{2K}}\cosh(\ma_{i+1} E){\left(\frac{i}{K}\right)}^{\frac{i}{K}}{(1-\frac{i}{K})}^{1-\frac{i}{K}}}{{(\frac{i+1}{K}-\frac{{\mathsf  d_{i+1}}}{2}-\frac{1}{2K})}^{\frac{i+1}{K}-\frac{1}{2K}}{(\frac{\mathsf  d_{i+1}}{2}+\frac{1}{2K})}^{\frac{1}{2K}}{(1-\frac{i+1}{K}-\frac{\mathsf  d_{i+1}}{2}+\frac{1}{2K})}^{1-\frac{i+1}{K}+\frac{1}{2K}}}.
\eea\eeq
Thus (\ref{remain1}) is equivalent to prove that
\beq\bea \label{remain2}
&{\left[\frac{\frac{\mathsf d_{i+1}}{2}+\frac{1}{2K}}{\frac{\mathsf d_{i+1}}{2}-\frac{1}{2K}}\right]}^{\frac{1}{2K}}{\left[\frac{i+1}{K}-\frac{\mathsf d_{i+1}}{2}-\frac{1}{2K}\right]}^{\frac{i+1}{K}-\frac{1}{2K}}{\left[1-\frac{i+1}{K}-\frac{\mathsf d_{i+1}}{2}+\frac{1}{2K}\right]}^{1-\frac{i+1}{K}+\frac{1}{2K}}\\
& \ \ \ \ \ \ \ \ \ \ \ \ \ \ \ \ \ \ \ \ \ \ \ \ =\frac{{\sinh(\overline{\ma}_i\mathsf E)}^{\frac{i}{K}}{\cosh(\overline{\ma}_i \mathsf E)}^{1-\frac{i}{K}}\cosh(\ma_{i+1}\mathsf E)}{{\left[\frac{\sinh(\overline{\ma}_{i+1} \mathsf E)}{\frac{i+1}{K}}\right]}^{\frac{i+1}{K}} {\left[\frac{\cosh(\overline{\ma}_{i+1} \mathsf E)}{1-\frac{i+1}{K}}\right]}^{1-\frac{i+1}{K}}}\,.
\eea\eeq
We now rewrite the term on the l.h.s.  \eqref{remain2} as
\beq\bea\label{left}
&{\left[\frac{(\frac{ \mathsf d_{i+1}}{2}+\frac{1}{2K})(\frac{i+1}{K}-\frac{ \mathsf d_{i+1}}{2}-\frac{1}{2K})}{(\frac{\mathsf d_{i+1}}{2}-\frac{1}{2K})(1-\frac{i+1}{K}-\frac{ \mathsf d_{i+1}}{2}+\frac{1}{2K})}\right]}^{\frac{1}{2K}} \\
& \hspace{3cm} \times {\left[\frac{\frac{i+1}{K}-\frac{ \mathsf d_{i+1}}{2}-\frac{1}{2K}}{1-\frac{i+1}{K}-\frac{ \mathsf d_{i+1}}{2}+\frac{1}{2K}}\right]}^{\frac{i}{K}}\\
&\hspace{6cm} \times \left[ 1-\frac{i+1}{K}-\frac{d_{i+1}}{2}+\frac{1}{2K}\right],\\
\eea\eeq
and the term on the r.h.s. of \eqref{remain2} as
\beq\bea\label{right}
&\left[ \left(\frac{\frac{i+1}{K}\cosh(\overline{\ma}_{i+1} \mathsf E)}{(1-\frac{i+1}{K})\sinh(\overline{\ma}_{i+1} \mathsf E)}\right)^2\right]^{\frac{1}{2K}} \\
& \hspace{3cm} \times {\left[\frac{\frac{i+1}{K}\tanh(\overline{\ma}_i \mathsf E)\cosh(\overline{\ma}_{i+1} \mathsf E)}{(1-\frac{i+1}{K})\sinh(\overline{\ma}_{i+1} \mathsf E)}\right]}^{\frac{i}{K}}\\
& \hspace{6cm} \times \frac{\cosh(\overline{\ma}_i E)\cosh(\ma_{i+1} E)(1-\frac{i+1}{K})}{\cosh(\overline{\ma}_{i+1} E)}\\
& = \left[\left(\frac{\cosh(\underline{\ma}_{i+1} \mathsf E)}{\sinh(\underline{\ma}_{i+1} \mathsf E)}\right)^2\right]^{\frac{1}{2K}} \\
& \hspace{3cm} \times {\left[ \frac{\tanh(\overline{\ma}_i \mathsf E)\cosh(\underline{\ma}_{i+1} \mathsf E)}{\sinh(\underline{\ma}_{i+1} \mathsf E)}\right]}^{\frac{i}{K}}\\
&\hspace{6cm} \times \cosh(\overline{\ma}_i \mathsf E)\cosh(\ma_{i+1} \mathsf E)\sinh(\underline{\ma}_{i+1} \mathsf E)\,,
\eea\eeq 
the last step by \eqref{fundamental} and \eqref{fundamental2}. But by \eqref{e_1}, \eqref{e_3}, \eqref{e_2} and \eqref{e_4}, the terms raised to the same powers in \eqref{left} and the r.h.s. of \eqref{right} coincide,  settling the induction step.  \\

\noindent We now move to \eqref{full_produce}. It holds:
\beq\bea\label{prof}
\prod_{j=1}^{K} g_{j,K}( \mathsf d_j)&=\prod_{j=1}^{K-1} g_{j,K}( \mathsf d_j) \  g_{K,K}( \mathsf d_K)\\
& = {\left[\frac{\sinh(\overline{\ma}_{K-1} \mathsf E)}{1-\frac{1}{K}}\right]}^{1-\frac{1}{K}} {
\left[\frac{\cosh(\overline{\ma}_{K-1} \mathsf E)}{\frac{1}{K}}\right]}^{\frac{1}{K}}{\sinh(\ma_K \mathsf E)}^{\frac{1}{K}}{\cosh(\ma_K \mathsf E)}^{1-\frac{1}{K}}\\
&={\left[\frac{\sinh(\overline{\ma}_{K-1} \mathsf E)\cosh(\ma_K \mathsf E)}{1-\frac{1}{K}}\right]}^{1-\frac{1}{K}} {\left[\frac{\cosh(\overline{\ma}_{K-1} \mathsf E)\sinh(\ma_K \mathsf E)}{\frac{1}{K}}\right]}^{\frac{1}{K}}\,,
\eea\eeq
the second equality by the induction step, and the third by simple rearrangements. 
By the $\ma's$ symmetry \eqref{sym_k},  and the normalization $\sum_{i=1}^K \ma_i=1$, it thus holds 
\beq\bea
\eqref{prof}&={\left[\frac{\sinh(\overline{\ma}_{K-1} \mathsf E)\cosh(\underline{\ma}_{K-1} \mathsf E)}{1-\frac{1}{K}}\right]}^{1-\frac{1}{K}} {\left[ \frac{\cosh((1-\ma_1) \mathsf E)\sinh(\ma_1 \mathsf E)}{\frac{1}{K}}\right]}^{\frac{1}{K}}=1,
\eea\eeq
the last equality by the fundamental \eqref{fundamental}.
\end{proof}

\section{Taming optimal polymers}  \label{taming}
In order to prove our main result Theorem \ref{geo_optimal_paths}, we will show non-emptiness of a {\it subset} of $\mathcal P_{n,K}$, whose paths satisfy additional properties. As a matter of fact, we will introduce two additional restrictions: the first one, which is explained in Section \ref{sprinckle}, concerns the {\it geometry} of paths, i.e. their combinatorial properties. The second restriction, explained in Section \ref{energies_sec},  concerns the way {\it energies} are distributed along the paths.  Both restrictions will be of course inspired by/in line with the above {\sf Insights}. We emphasize that the reason for restricting the candidate polymers further is here chiefly technical: the additional requirements we are about to introduce will in fact lead to a considerable simplification of some otherwise daunting combinatorial estimates. 

\subsection{A sprinckle of microstructure} \label{sprinckle}
We introduce yet another coarse graining: for $ i=0\dots K-1$, we  split the region between two consecutive hyperplanes $H_{i-1}$ and $H_i$ further, into $K'$ additional slabs:
\beq
H_{i,j}' \defi \left\{v \in V_n, d(0,v)=  \left(i+\frac{j}{K'} \right)\hat n_K  \right\}, \quad \  \quad j=0\dots K' \,,
\eeq 
(remark that $H_{i,0}' =H_{i}$ and $H_{i,K'}'=H_{i+1}$), and focus henceforth on the subset
\beq \bea \label{defi_pkkprime}
\mathcal P_{n,K,K'}  \defi & \; \text{\sf all polymers}\; \pi \in \mathcal P_{n,K} \; \text{\sf which cover} \\
& \text{\sf a (normalized) Hamming distance}\; \left( \mathsf{{ef}_i}+\mathsf{{eb}_i}\right) / K' \\
& \text{\sf while connecting the hyperplanes} \; H_{i,j}' \; \text{\sf and} \; H_{i,j+1}', \\
& \text{\sf for}\; j=0\dots K'\; \text{\sf and}\; i=1\dots K-1. \\
\eea \eeq
The subset $\mathcal P_{n,K,K'}$ is of course motivated by {\sf Insight \ref{insight_rare}}: adding an additional level of coarse graining and spreading the backsteps as evenly as possible among the $K'$-slabs, allows to rule out polymers where backsteps tend to accumulate, cfr. Figure \ref{no_cumulation} and \ref{no_cumulation2} below. \\

\begin{figure}
\centering
\begin{minipage}{.56\textwidth}
  \centering
 \includegraphics[scale=0.3]{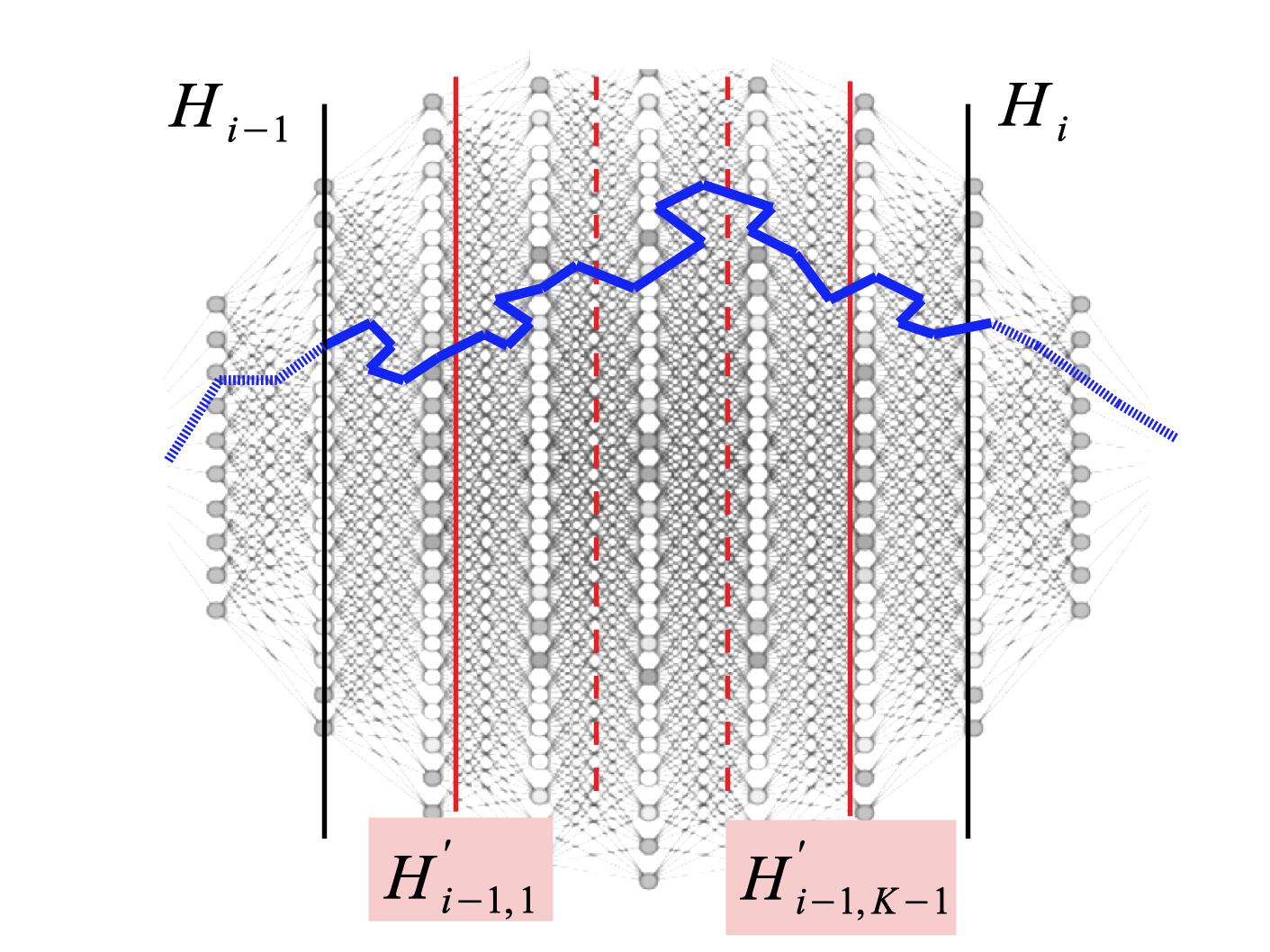}
  \captionof{figure}{The backsteps (five in total) are spread as evenly as possible: one for each sublayer $H'$.}
  \label{no_cumulation}
\end{minipage}%
\begin{minipage}{.56\textwidth}
  \centering
  \includegraphics[scale=0.3]{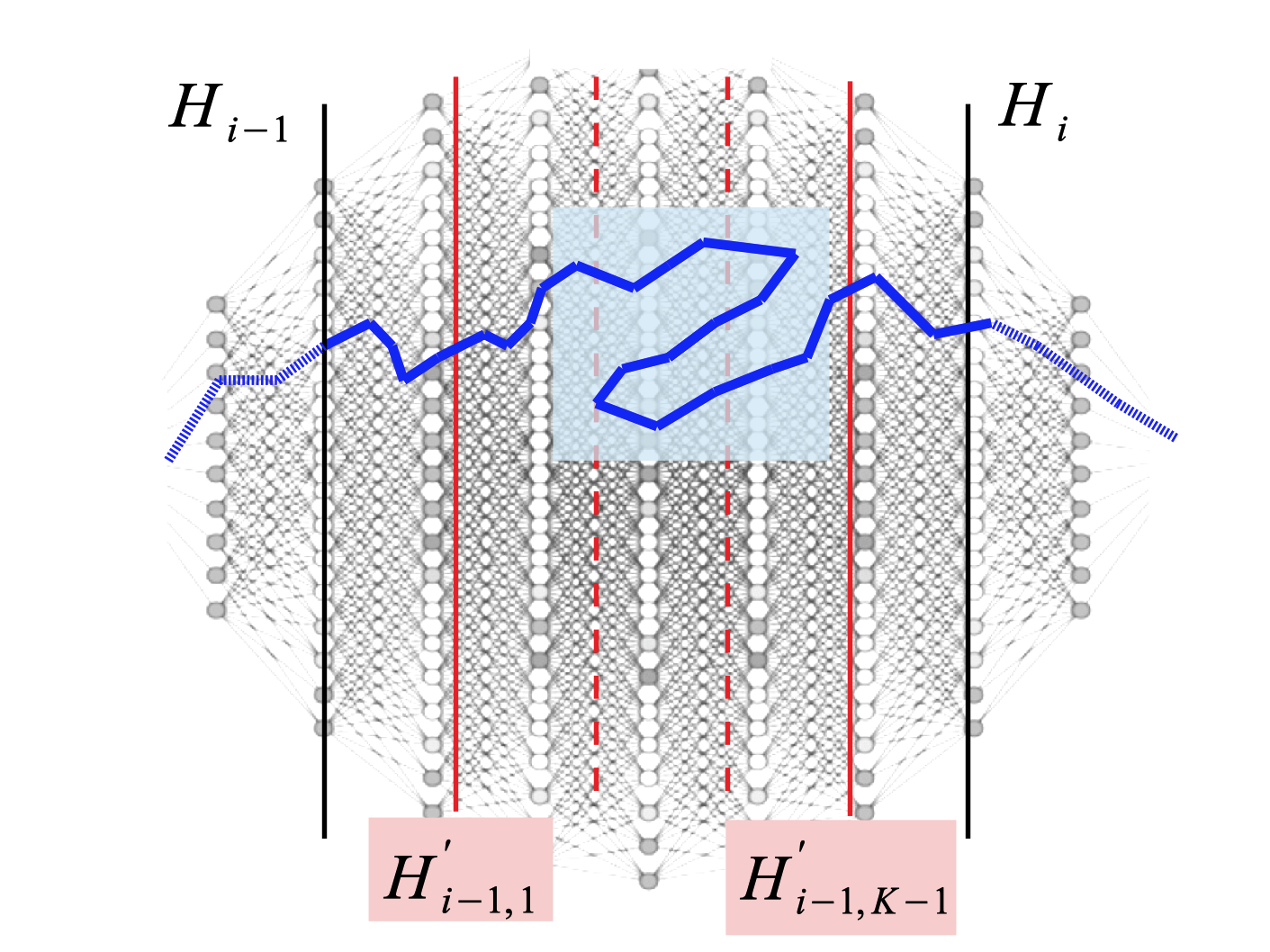}
 \captionof{figure}{The five  backsteps are lumped together: this polymer wouldn't belong to $\mathcal P_{n,K,K'}$.}
   \label{no_cumulation2}
\end{minipage}
\end{figure}

\noindent Finally, we render the $H$-hyperplanes (of the coarser layer) {\it repulsive}, i.e. we force paths to cross them only once. As we will see shortly, see Lemma \ref{rem} below, this can be achieved by considering the following (sub)subset of polymers:
\beq \bea
\mathcal P^{\sf rep}_{n,K,K'}  \defi & \;  \text{\sf all polymers}\; \pi \in \mathcal P_{n,K, K'} \; \text{\sf which connect the hyperplanes}\; H_{i,0}' \; \text{and} \; H_{i,1}' \\
&\text{\sf by  first making} \; (\mathsf{ef}_i \; \hat n_{K'}) \; \text{\sf steps forward} \; \text{\sf and only then} \; (\mathsf{eb}_i  \; \hat n_{K'}) \; \text{\sf backsteps}, \\
& \text{\sf and which connect the hyperplanes}\; H_{i,K'-1}' \; \text{and} \; H_{i,K'}' \\
&\text{\sf by  first making} \; (\mathsf{eb}_i \; \hat n_{K'}) \; \text{\sf backsteps, and only then} \; (\mathsf{ef}_i  \; \hat n_{K'}) \; \text{\sf steps forward}\,, \\
& \text{\sf for}\; i=1\dots K. 
\eea \eeq
Note that $\mathcal P^{\sf rep}_{n,K,K'}$ is still a deterministic set. A graphical rendition is given in Figure \ref{noback} below.

\begin{figure}[!h]
    \centering
        \includegraphics[scale=0.32]{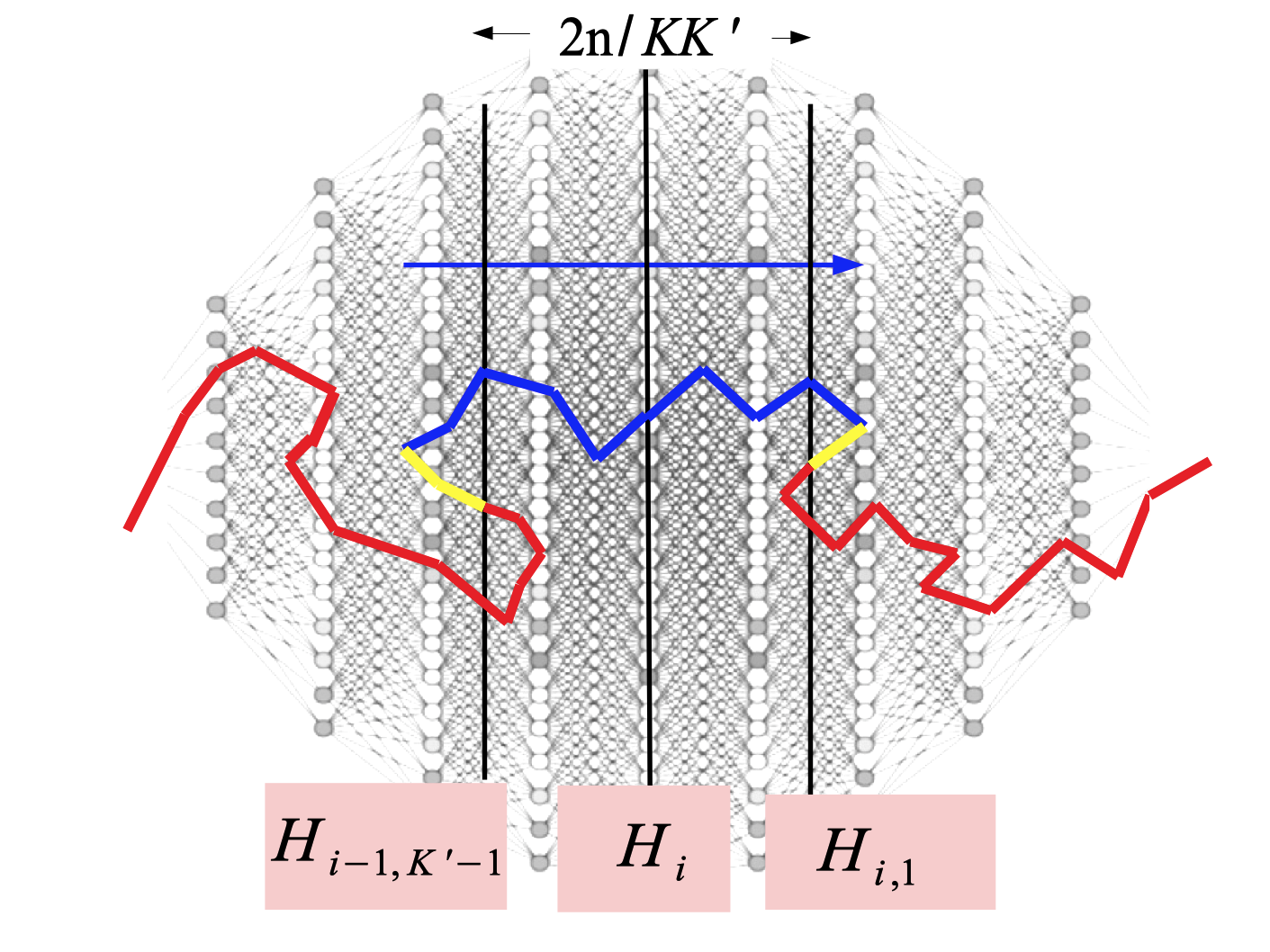}
    \caption{A path in $\mathcal P_{n,K,K'}^{\sf rep}$: red edges correspond to the free evolution of the path, yellow edges are backsteps, and blue edges are forward steps.}
\label{noback}
\end{figure}

Remark that, by construction, 
\beq \label{inclusions}
\mathcal P^{\sf rep}_{n,K,K'} \subset \mathcal P_{n,K, K'} \subset \mathcal P_{n, K}\left\{ \boldsymbol{ \mathsf d}_{opt} , {\boldsymbol \gamma}_{opt} \right\}\,.
\eeq
Our main Theorem \ref{geo_optimal_paths} will therefore follow as soon as we prove that one can find 
polymers in $\mathcal P^{\sf rep}_{n,K,K'}$ which reach the ground state energy. Before seeing how this goes, here is the aforementioned result stating that $H$-hyperplanes are indeed repulsive: 

\begin{lem}\label{rem} For $K \geq 1$ the following holds true: a polymer $\pi \in \mathcal P_{n,K,K'}^{\sf rep}$ crosses the hyperplanes $H_1, \dots, H_K$ only once. 
\end{lem}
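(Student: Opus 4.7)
The plan is to follow the Hamming distance of the current vertex from $\boldsymbol{0}$ along any $\pi \in \mathcal P^{\sf rep}_{n,K,K'}$ and to show that, within each region delimited by two consecutive coarse hyperplanes $H_i$ and $H_{i+1}$, this quantity is strictly confined to the open interval $(in/K,\, (i+1)n/K)$ except at the two endpoints. A trivial comparison of these Hamming ranges then forces $\pi$ to touch each $H_i$ only at the unique transition between consecutive regions, which is precisely the content of the lemma. The structural ingredient enabling the analysis is that $\boldsymbol{\gamma}_{opt} = (0,\dots,0)$: each substrand between $H_i$ and $H_{i+1}$ is a Hamming-geodesic, so no coordinate is toggled twice inside a single region and the Hamming distance from $\boldsymbol{0}$ changes by exactly $+1$ at every effective forward step and by $-1$ at every effective backstep.

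For the boundary indices $i \leq m$ and $i \geq K-m+1$ the prescription $\boldsymbol{\mathsf d}_{opt}$ imposes $\mathsf d_i = 1/K$, so by \eqref{elementaire} $\mathsf{eb}_i = 0$: the substrand is purely forward, the Hamming distance is strictly increasing throughout the region, and the conclusion is immediate. The core of the argument concerns middle indices $i \in \{m+1,\dots,K-m\}$, which I handle by splitting the region between $H_i = H'_{i,0}$ and $H_{i+1} = H'_{i,K'}$ into the $K'$ sub-slabs bounded by the finer hyperplanes $H'_{i,j}$ and analysing the $\pm 1$-walk of the Hamming distance inside each sub-slab separately.

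In the first sub-slab $H'_{i,0}\to H'_{i,1}$ the repulsive prescription places the $\mathsf{ef}_i\,\hat n_{K'}$ forward steps before the $\mathsf{eb}_i\,\hat n_{K'}$ backsteps, so the Hamming distance first rises monotonically to $in/K + \mathsf{ef}_i\,\hat n_{K'}$ and then falls monotonically to $in/K + \hat n_K/K'$, staying strictly above $in/K$ past the initial vertex and (provided $K' > K\mathsf{ef}_i$) strictly below $(i+1)n/K$. The last sub-slab $H'_{i,K'-1}\to H'_{i,K'}$ is handled symmetrically: backsteps come first and drive the walk down to $(i+1)n/K - (1+K\mathsf{eb}_i)\hat n_K/K'$, then forward steps climb monotonically back to $(i+1)n/K$, so $H_i$ is never revisited and $H_{i+1}$ is met only at the terminal vertex, as soon as $K' > 1 + K\mathsf{eb}_i$. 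For an intermediate sub-slab $H'_{i,j}\to H'_{i,j+1}$ with $1 \leq j \leq K'-2$ the order of steps is unconstrained, but the entry vertex sits at Hamming depth $in/K + j\,\hat n_K/K'$ and the walk can drop by at most $\mathsf{eb}_i\,\hat n_{K'}$: the minimum is therefore at least $in/K + (j - K\mathsf{eb}_i)\,\hat n_K/K'$, which exceeds $in/K$ as soon as $j > K\mathsf{eb}_i$, and a symmetric estimate bounds the maximum strictly below $(i+1)n/K$ whenever $j < K' - K\mathsf{ef}_i$.

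The main obstacle is therefore the uniform a priori bound $K\mathsf{eb}_i < 1$, together with its companion $K\mathsf{ef}_i < K'$, for every middle index $i$; under these bounds the three conditions above collapse to $j\geq 1$ and $K'\geq 2$. These uniform estimates follow from \eqref{control_alpha}, which gives $\mathsf a_i\mathsf E \leq 1/K$, together with the elementary $\sinh(x)\leq x\cosh(x)$ and $\cosh((1-\mathsf a_i)\mathsf E) \leq \cosh(\mathsf E) = \sqrt{2}$: one obtains $\mathsf d_i \leq \sqrt{2}\,\cosh(1/K)/K$, whence $K\mathsf{eb}_i = (K\mathsf d_i - 1)/2 \leq (\sqrt{2}\cosh(1)-1)/2 < 1$ for every $K\geq 1$, and likewise $K\mathsf{ef}_i \leq (\sqrt{2}\cosh(1)+1)/2 < 2$. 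With these in hand the case analysis above goes through and the lemma follows.
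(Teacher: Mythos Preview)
Your proof is correct and follows essentially the same approach as the paper: track the Hamming height of the walk through the $K'$ sub-slabs, use the repulsive prescription in the first and last sub-slabs, and reduce the intermediate sub-slabs to the key inequality $K\mathsf{eb}_i < 1$. The only cosmetic difference is that you bound $\mathsf{eb}_i$ via the formula $\mathsf d_i = \sinh(\mathsf a_i \mathsf E)\cosh((1-\mathsf a_i)\mathsf E)$ together with $\sinh x \le x\cosh x$, whereas the paper uses the product representation \eqref{e_1} and the bound $\sinh(\mathsf E/2)^2=(\sqrt 2 -1)/2$ to obtain the slightly sharper $K\mathsf{eb}_i \le 1/2$; both routes rest on $\mathsf a_i\mathsf E \le 1/K$ from \eqref{control_alpha} and lead to the same conclusion.
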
 
\begin{proof}
The statement is trivial in the directed phase, so let $i \in \{m \dots K-m\}$. 

There is of course a certain {\it directivity} in the polymers' evolution: this is captured by the fact that $\mathsf{ef}_i > \mathsf{eb}_i$ for all $i=1\dots K$ (see in particular the second relation in \eqref{b-f}), and graphically represented by evolutions "from the left to the right". 

Sticking to this graphical convention, we begin with the case "to the right of the $H_i$-hyperplane":  after crossing this hyperplane, a path $\pi \in \mathcal P_{n,K,K'}$ is bound to first make $(\mathsf {ef}_i \; \hat n_{K'})$ steps to the right (forward) and only then to make $(\mathsf {eb}_i \; \hat n_{K'})$ steps to the left (backwards).  At this point, and by construction, the polymer will find itself on $H_{i,1}'$. 
Continuing its evolution, the polymer will eventually reach from there the next hyperplane $H_{i,2}'$, again through 
$(\mathsf {ef}_i \; \hat n_{K'})$ steps to the right, and $(\mathsf {eb}_i \; \hat n_{K'})$ steps to the left. Since in this phase no restriction is imposed on the {\it order} of back- and forwardsteps, it could thus happen that the polymer first performs  all available steps to the left, in one fell swoop: this would increase the proximity of the polymer to $H_i$, with the hyperplane potentially even crossed for a second time. However, we claim that even in such worst case scenario, the polymer will find itself well to the right of $H_i$. In other words we claim that
\beq\label{boo'}
\mathsf{ef}_in_{K'}-2\mathsf{eb}_i n_{K'} >0,
\eeq
or, which is the same, that 
\beq \label{boo}
\mathsf{ef}_i-2\mathsf{eb}_i>0.
\eeq 
Indeed, it follows from  \eqref{elementaire} that
\beq\bea \label{bobo}
\mathsf{ef}_i-2\mathsf{eb}_i & = \frac{\mathsf{d}_i}{2}+\frac{1}{2K}-2\left(\frac{\mathsf d_i}{2}-\frac{1}{2K} \right)   \\
& =   \frac{1}{K} - \left( \frac{\mathsf d_i}{2} -\frac{1}{2K}\right) \\
& = \frac{1}{K}-\mathsf{eb}_i , 
\eea\eeq
the last step again by \eqref{elementaire}. Our new claim thus states that for large enough $K$,
\beq \label{fundi}
\frac{1}{K}-\mathsf{eb}_i > 0.
\eeq
To see this, we recall that by \eqref{e_1}, the number of effective backsteps between hyperplanes in the stretched phase satisfies
\beq \label{bs_2}
\mathsf{eb}_i=\sinh(\overline{\ma}_{i-1} \mathsf E)\sinh(\ma_{i} \mathsf E)\sinh(\underline{\ma}_i \mathsf E).
\eeq
Real analysis shows that 
\beq \label{m_sinh}
\arg \max_{y \in [0,1]} \sinh(y \mathsf E)\sinh((1-y)\mathsf E) = \frac{1}{2}\,.
\eeq
Furthermore, by \eqref{control_alpha}, 
\beq \label{control_alpha_2}
 \ma_i \mathsf E \leq \frac{1}{K}\,,
\eeq
which, together with an elementary large-$K$ Taylor expansion, implies that
\beq \label{t_sinh}
\sinh(\ma_i \mathsf E)=\ma_i \mathsf E+\frac{(\ma_i \mathsf E)^3}{6} \leq \frac{1}{K}+ \frac{1}{6 K^3}\leq \frac{2}{K}\,,
\eeq 
for $K \geq 1$. Using \eqref{t_sinh} in \eqref{bs_2} we get
\beq\bea\label{b2}
\mathsf{eb}_i &\leq \sinh(\overline{\ma}_i \mathsf E)\sinh(\underline{\ma}_i \mathsf E) \times \frac{2}{K} \\
&\leq \sinh\left( \frac{\mathsf E}{2} \right)^2  \times \frac{2}{K}. \\
\eea\eeq
the second inequality by \eqref{m_sinh}. The first term on the r.h.s. above can be easily estimated: 
\beq \bea \label{easy_est}
\sinh\left(  \frac{\mathsf E}{2} \right)^2  & = \frac{1}{4}\left( e^{\mathsf E/2}-e^{-\mathsf E/2} \right)^2  
= \frac{1}{4} \left( e^{\mathsf E} - 2 + e^{-\mathsf E} \right) \\
& = \frac{1}{2}\left( \cosh(\mathsf E) -1 \right) = \frac{1}{2} \left( \sqrt{1+ \sinh^2(\mathsf E)} -1 \right)\\ 
& = \frac{1}{2} \left( \sqrt{2} - 1\right),
\eea \eeq
the step before last by the Pythagorean's identity for hyperbolic functions, and the last since $\sinh(\mathsf E)=1$ by definition. In particular, we see that 
\beq
\sinh\left(  \frac{\mathsf E}{2} \right)^2  \leq \frac{1}{4}\,.
\eeq
Using this in \eqref{b2} we thus get $\mathsf{eb}_i \leq \frac{1}{2K}$, hence 
\beq
\frac{1}{K}-\mathsf{eb}_i \geq \frac{1 }{2K} > 0,
\eeq
settling claim \eqref{fundi}, and therefore \eqref{boo}. \\

\noindent Summarizing the upshot of these considerations, we thus see that {\it after} crossing an $H$-plane for the first time, the polymer will forever remain "to its right". But by symmetry, a similar line of reasoning  holds also for the case "to the left", i.e. for paths making $(\mathsf{eb}_i \hat n_{K'})$ steps to the left,  and then $(\mathsf{ef}_i \hat n_{K'})$ steps to the right {\it before} reaching such hyperplane. Lemma \ref{rem} is therefore established. 
\end{proof}

\begin{rem}\label{remloopless} Polymers in $\mathcal P_{n,K,K'}^{\sf rep}$ are, in fact, loopless: this follows from Lemma \ref{rem}, and the property that paths make no detours between $H$-planes.
\end{rem}

\subsection{Partitioning the energy} \label{energies_sec}
We will eventually implement the multiscale refinement of the second moment method \cite{kistler}, a procedure which involves a number of steps. The first, and key, step is to break the self-similarity of the underlying random field: this can be achieved here by allowing the first and last edges of the polymers to carry an unusually large fraction of the energy, and handling these on different footing. This procedure has already been succesfully implemented for the problem of (directed)  first passage percolation in \cite{kss}, see also Remark \ref{facteur_n} below for more on this issue.  

We need some additional notation: since a path $\pi \in \mathcal P_{n,K,K'}^{\sf rep}$ consists of a set of edges which uniquely characterises the vertices visited by the polymer, by a a slight abuse of notation we will denote by $\pi \cap H_i$ the vertices that lie both in $H_i$ and between two edges of the $\pi$-path. 

For a polymer $\pi \in \mathcal P_{n,K,K'}^{\sf rep}$, we begin by writing its energy as
\beq
X_\pi = \mathcal F_\pi + \left\{ X_m(\pi) + \left[ \sum_{j=m+1}^{K-m} X_{j-1, j}(\pi) \right] + X_{K-m+1}(\pi) \right\} + \mathcal L_\pi\,,
\eeq
with the following notational conventions: 
\begin{itemize}
\item $\mathcal F_\pi \defi X_{[\pi]_1}$ is the energy of the first edge of the path;
\item $X_m(\pi) \defi \sum_{j=2}^{ m \hat n_K} \xi_{[\pi]_j}$ is the energy of the substrand connecting the second visited vertex to the $m^{th}$-hyperplane, i.e. $\boldsymbol 0$ to the $m^{th}$-hyperplane, {\it but with the first edge excluded};  
\item For $i=m+1\dots K-m$, 
\beq\bea
X_{i-1, i}(\pi) &\defi X_\pi( \pi \cap H_{i-1},\pi \cap H_{i})
\eea\eeq
is the energy of the substrand connecting consecutive $H$-hyperplanes;
\item $X_{K-m+1}(\pi)$ is the energy of the substrand connecting the $(K-m)^{th}$-hyperplane to 
$\boldsymbol 1$, {\it but with the last edge excluded};
\item $\mathcal L_\pi$ is the energy of the last edge of the path. 
\end{itemize}
For $\e>0$, recalling $\{\mathsf a_i\}_{ i=1}^{K}$ solutions of \eqref{if_one} and the convention 
$\overline{\mathsf a}_m=\sum_{i\leq m} \mathsf a_i$, we set 
\beq 
\tilde \ma_{m,\e} \defi \overline{\ma}_{m}\left(\mathsf E+\frac{\epsilon}{5}\right)+\frac{\epsilon}{5}\,, 
\eeq 
and 
\beq 
\tilde \ma_{K-m+1,\e} \defi \tilde \ma_{m,\e}\,, 
\eeq
and for $i=m+1 \dots K-m$,
\beq
\ma_{i,\e}\defi \ma_{i}\left(\mathsf E+\frac{\epsilon}{5}\right) \,. 
\eeq
We then introduce the following subsets of polymers:
\beq \bea
\mathcal E_{n, K, K'}^{1, \e}  \defi \; & \pi \in \mathcal P_{n, K, K'}^{\sf rep} \; \text{\sf such that} \;
\mathcal F_\pi, \; \mathcal L_\pi \leq \e/5. 
\eea \eeq

\beq \bea
\mathcal E_{n, K, K'}^{2, \e}   \defi \; & \pi \in \mathcal P_{n, K, K'}^{\sf rep} \; \text{\sf such that} \\
& X_m(\pi), \; X_{K-m+1}(\pi) \leq \tilde \ma_{m,\e}, \\
& X_{i-1, i}(\pi) \leq  \ma_{i,\e} \; \text{\sf for}\;  i=m+1 \dots K-m\,.
\eea \eeq
Recalling that $\overline{\mathsf a}_m+ \sum_{i=m+1}^{K-m} \mathsf a_i +\underline{\mathsf a}_{K-m} =1$, 
we emphasize that the newly constructed subset consists of polymers with sub-energies 
\beq \label{sub_en_mk}
\overline{X}_{m}^{K-m+1}(\pi) \defi X_m(\pi) + \left[ \sum_{j=m+1}^{K-m} X_{j-1, j}(\pi) \right] + X_{K-m+1}(\pi)   \leq \mathsf E+ \frac{3}{5}\e\,,
\eeq
and with first resp. last edges carrying unusually large an energy (potentially up to $\e/5$).  At last, we consider the sub-subset
\beq \bea
\overline{\mathcal E}_{n, K, K'}^{\e} \defi \; & \mathcal E_{n, K, K'}^{1, \e}\cap \mathcal E_{n, K, K'}^{2, \e} \,.
\eea \eeq
Thus, by definition, the polymers in $\overline{\mathcal E}_{n, K, K'}^{\e}$ have energies less than $\mathsf E+\e$. A graphical rendition of this set is given in Figure \ref{energies_spread} below.  
\begin{figure}[!h]
    \centering
\includegraphics[scale=0.3]{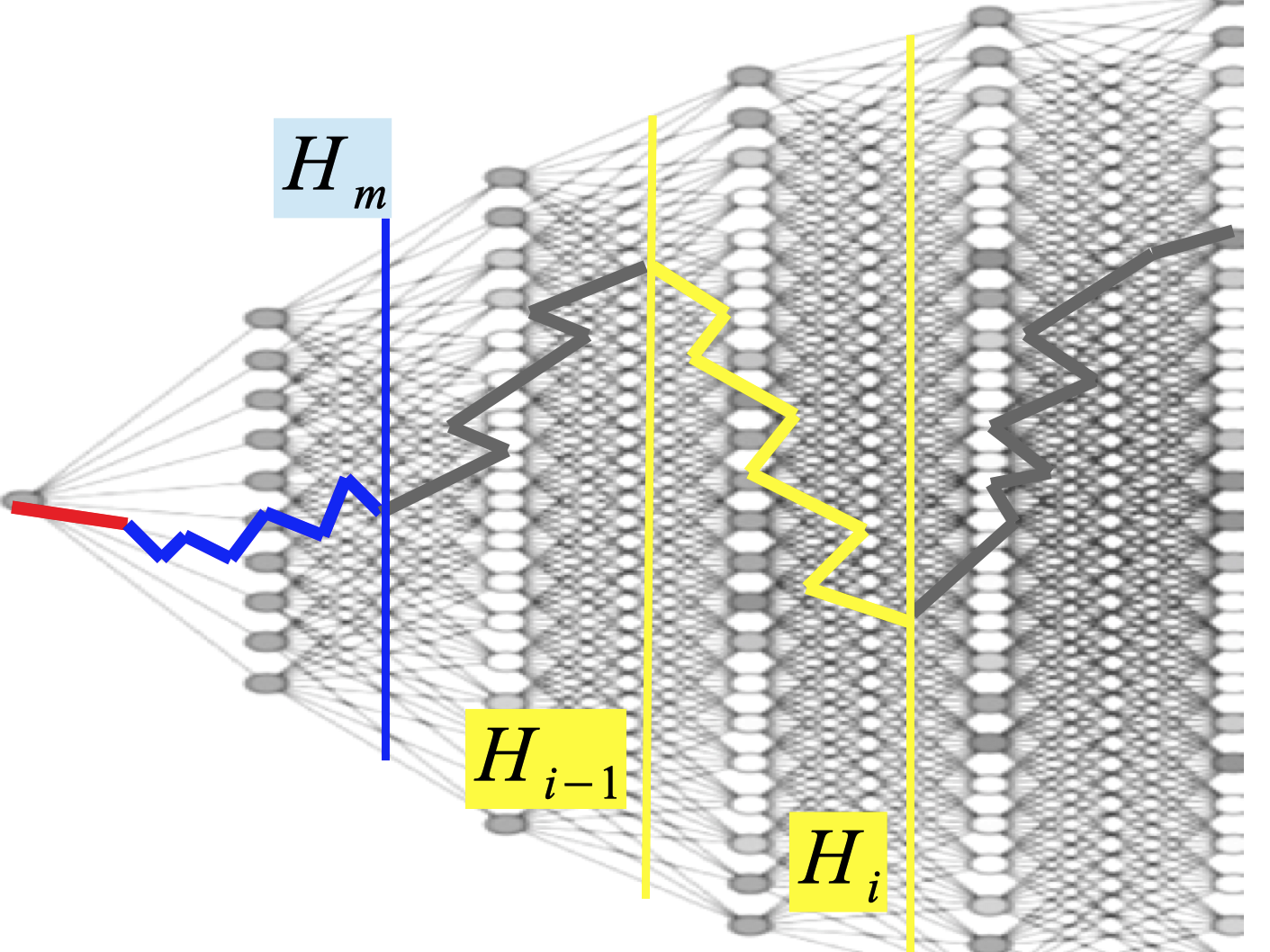}
    \caption{Distributing the energy in the first half of the hypercube. The first edge (red) has energy less than $\e/5$. The blue strand
is in the directed phase, and corresponds to $X_m(\pi) \leq \overline{\mathsf a}_m$. The yellow strand is in the stretched phase, it connects two consecutive $H$-hyperplanes with sub-energy less than ${\mathsf a}_{i,\e}$. For the second half of the hypercube, an analogous (mirror) picture holds. 
}
\label{energies_spread}
\end{figure}

\subsection{Connecting first and last region} \label{connec}
By definition, and recalling the inclusions \eqref{inclusions}, it clearly holds that 
\beq
\overline{\mathcal E}_{n, K, K'}^{\e} \subset \mathcal E_{n, K}^{\e}\,.
\eeq
In particular, non-emptiness of $\overline{\mathcal E}_{n, K, K'}^{\e} $ will  immediately yield our main Theorem \ref{geo_optimal_paths}, and this is indeed the route we take. Precisely, we will show that one can connect the first and last edges through polymers satisfying the energy requirements in the directed/stretched phases.  
To see how this goes, we begin with the observation that
\beq \bea \label{twrd}
& \PP\left( \#\overline{\mathcal E}_{n, K, K'}^{\e} \geq 1 \right)  \geq 
\PP\left( \# \overline{\mathcal E}_{n, K, K'}^{\e} \geq 1, \; \# \mathcal E_{n, K, K'}^{1, \e}   \geq \left \lfloor \frac{ \E \# \mathcal E_{n, K, K'}^{1, \e}  }{2} \right \rfloor \right) \\ 
& \quad = \PP\left( \# \overline{\mathcal E}_{n, K, K'}^{\e} \geq 1 \middle| \# \mathcal E_{n, K, K'}^{1, \e}   \geq \left \lfloor \frac{\E \# \mathcal E_{n, K, K'}^{1, \e} }{2}  \right \rfloor \right) \PP\left( \# \mathcal E_{n, K, K'}^{1, \e}   \geq \left \lfloor \frac{\E \# \mathcal E_{n, K, K'}^{1, \e} }{2}  \right \rfloor \right)\,.
\eea \eeq
By independence, it clearly holds that 
\beq \bea
\E \# \mathcal E_{n, K, K'}^{1, \e} &= \PP\left( \mathcal F_\pi \leq {\e \over 5} \right) \PP\left( \mathcal L_\pi \leq {\e \over 5} \right)  \# \mathcal P_{n, K, K'}^{\sf rep} = C(\e)^2  \# \mathcal P_{n, K, K'}^{\sf rep},
\eea \eeq
where 
\beq
C(\e) \defi  1- \exp(-\e/5)\,.
\eeq
We now claim that 
\beq\bea \label{new_g_2}
\lim_{n \to \infty} \PP\left( \# \mathcal E_{n, K, K'}^{1, \e}   \geq \left \lfloor \frac{\E \# \mathcal E_{n, K, K'}^{1, \e} }{2}  \right \rfloor \right) = 1.
\eea\eeq
Indeed, by Chebycheff's inequality, and for $\de >0$,
\beq\bea \label{cheby_1}
\PP\left(\left|\frac{\# \mathcal E_{n, K, K'}^{1, \e}}{\E(\# \mathcal E_{n, K, K'}^{1, \e})}-1\right|\geq \de \right)\leq \frac{1}{\de^2}\left\{ \frac{\E\left({\# \mathcal E_{n, K, K'}^{1, \e}}^2\right)}{\E\left(\# \mathcal E_{n, K, K'}^{1, \e}\right)^2}-1\right\}\,.
\eea\eeq
Let now $\pi$ $\in \mathcal P_{n, K, K'}^{\sf rep}$ and denote by 
\beq
f_{\pi}(n,k) \defi \text{\sf the number of paths in} \; \mathcal P_{n, K, K'}^{\sf rep} \; \text{\sf sharing}\;  k \; \text{\sf weigthed edges with} \; \pi.
\eeq
Since for paths in $\mathcal E_{n, K, K'}^{1, \e}$ only the first and the last edges are weighted,
\beq \bea
\E\left({\# \mathcal E_{n, K, K'}^{1, \e}}^2\right) \leq \E\left({\# \mathcal E_{n, K, K'}^{1, \e}}\right)^2
+ {\# \mathcal P_{n, K, K'}^{\sf rep}} \left\{ C(\e)^3 f_{\pi}(n,1)+C(\e)^2 f_{\pi}(n,2)\right\}\,,
\eea \eeq
the first term on the r.h.s. corresponding to the case of $k=0$ shared edges. Using that $C(\e) \leq 1$ and that
$f_\pi(n, 2) \leq f_\pi(n,1)$, the above becomes
\beq \bea
\E\left({\# \mathcal E_{n, K, K'}^{1, \e}}^2\right) & \leq \E\left({\# \mathcal E_{n, K, K'}^{1, \e}}\right)^2 + 2 
{\# \mathcal P_{n, K, K'}^{\sf rep}} f_{\pi}(n,1)\,. 
\eea \eeq
Therefore,  for the r.h.s. of \eqref{cheby_1} we have
\beq \label{cheby_2}
\frac{\E\left({\# \mathcal E_{n, K, K'}^{1, \e}}^2\right)}{\E\left(\# \mathcal E_{n, K, K'}^{1, \e}\right)^2}-1
\leq \frac{2 
 {\# \mathcal P_{n, K, K'}^{\sf rep}} f_{\pi}(n,1)}{  \E\left(\# \mathcal E_{n, K, K'}^{1, \e}\right)^2 } = \frac{2}{C(\e)^4} \frac{f_{\pi}(n,1)}{ \# \mathcal P_{n, K, K'}^{\sf rep}}\,.
\eeq
Let now $f_{\pi}^{l}(n,1)$ be the number of paths which share one edge with $\pi$ on the {\it left} of the hypercube. Clearly, $f_{\pi}^{l}(n,1) = 2 f_{\pi}(n,1)$, hence
\beq \label{cheby_3}
\frac{f_{\pi}(n,1)}{ \# \mathcal P_{n, K, K'}^{\sf rep}} = 2 \frac{f_{\pi}^l(n,1)}{ \# \mathcal P_{n, K, K'}^{\sf rep}} \leq 2 \frac{(m \hat n_K-1)!}{(m \hat n_K)!} = \left(\frac{2K}{m}\right) \frac{1}{n},
\eeq
where for the key inequality we have used that there are $(m \hat n_K)!$ possibilities to reach a given (admissible) vertex on the $H_m$-plane, but specifying the first edge reduces such possibilities to $(m \hat n_K-1)!$. Using \eqref{cheby_3} in \eqref{cheby_2} and then \eqref{cheby_1} we thus obtain
\beq
\PP\left(\left|\frac{\# \mathcal E_{n, K, K'}^{1, \e}}{\E(\# \mathcal E_{n, K, K'}^{1, \e})}-1\right|\geq \de \right)\lesssim 
\frac{1}{n} \longrightarrow 0\,,
\eeq
as $n\uparrow \infty$, which settles claim \eqref{new_g_2}. Using the latter in  \eqref{twrd} then yields
\beq \bea \label{new_g_3}
 \PP\left( \#\overline{\mathcal E}_{n, K, K'}^{\e} \geq 1 \right)  
 & \geq  \PP\left( \# \overline{\mathcal E}_{n, K, K'}^{\e} \geq 1 \middle| \# \mathcal E_{n, K, K'}^{1, \e}   \geq \left \lfloor \frac{\E \# \mathcal E_{n, K, K'}^{1, \e} }{2}  \right \rfloor \right)-o_n(1).
\eea \eeq
Now, for any $\mathsf J \leq  \# \mathcal P_{n, K, K'}^{\sf rep}$, it holds that
\beq \label{new_g_4}
\PP\left( \# \overline{\mathcal E}_{n, K, K'}^{\e} \geq 1 \middle| \# \mathcal E_{n, K, K'}^{1, \e} \geq \mathsf J \right)
\geq \PP\left( \# \overline{\mathcal E}_{n, K, K'}^{\e} \geq 1 \middle| \# \mathcal E_{n, K, K'}^{1, \e} = \mathsf J \right)\,,
\eeq
since the more paths survive the "thinning procedure" via the energy condition on first and last edge, the higher the chance to find at least a connecting polymer which satisfies the imposed energy requirements. See Figure \ref{l_f} for a graphical rendition. 
\begin{figure}[!h]
    \centering
\includegraphics[scale=0.32]{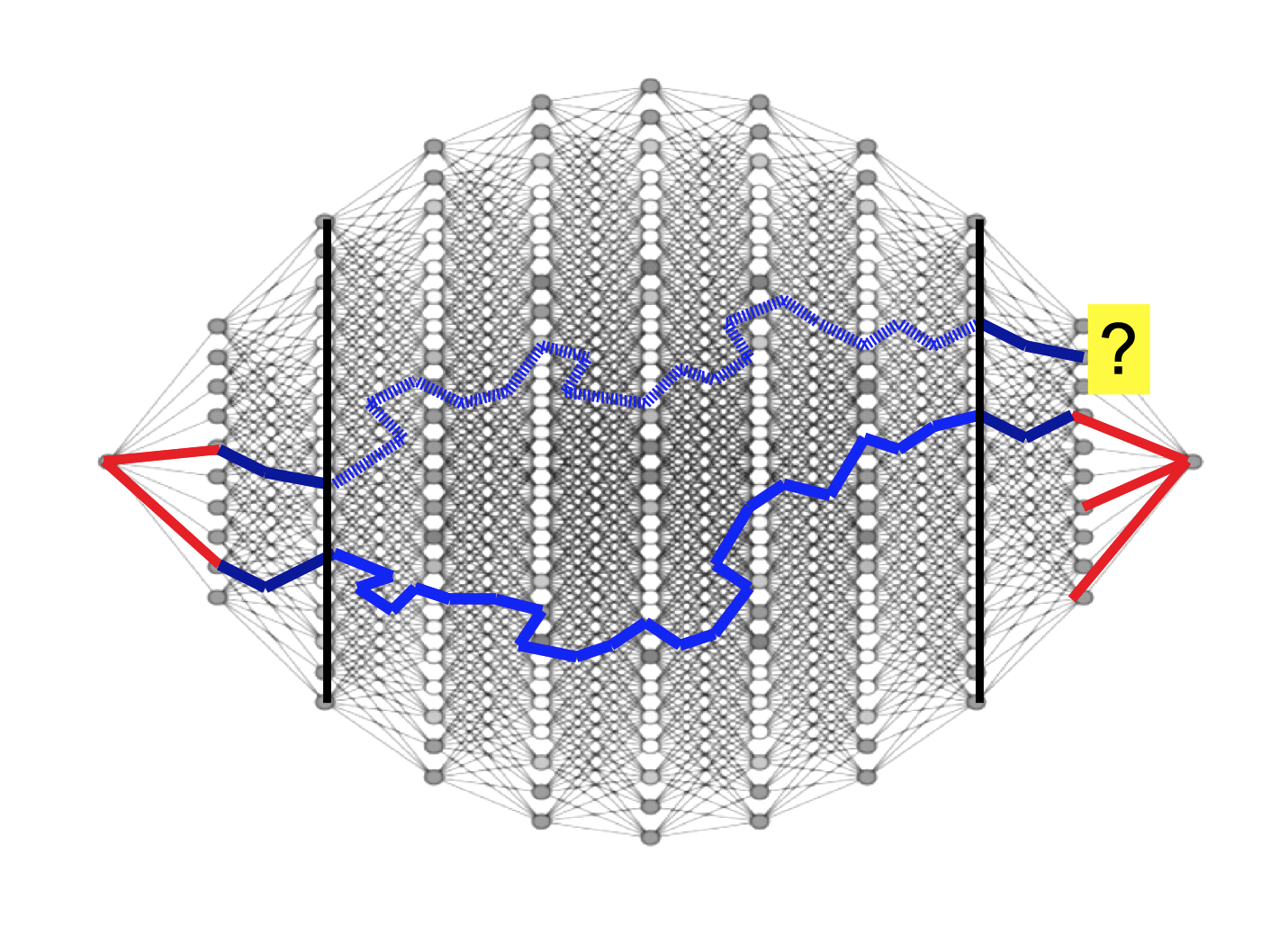}
    \caption{The first and last edges  carrying an energy less than $\e/5$ (hence surviving the thinning procedure) are drawn in red. The continuous blue strand manages to connect these edges while satisfying the energy constraints, whereas the dashed strand does not.}
\label{l_f}
\end{figure}

Using \eqref{new_g_4} with 
\beq
\mathsf J \defi \left\lfloor\frac{\E\# \mathcal E_{n, K, K'}^{1, \e}}{2}\right\rfloor \,, 
\eeq
and by the Paley-Zygmund inequality, we thus get
\beq\label{PZC}
\PP\left(\# \overline{\mathcal E}_{n, K, K'}^{\e} \geq 1 \middle| \# \mathcal E_{n, K, K'}^{1, \e} = \mathsf J \right) \geq \frac{\E\left(\# \overline{\mathcal E}_{n, K, K'}^{\e} \middle| \# \mathcal E_{n, K, K'}^{1, \e} = \mathsf J \right)^2}{\E\left( {\#\overline{\mathcal E}_{n, K, K'}^{\e}}^2 \middle| \# \mathcal E_{n, K, K'}^{1, \e} = \mathsf J \right)}.
\eeq
Consider now  {\it any} deterministic set $\mathcal J \subset \mathcal P_{n, K, K'}^{\sf rep}$ with cardinality $\# \mathcal J =\mathsf J$, and the subset
\beq \label{intersectingg}
 {\mathcal E}_{n, K, K'}^{\e} \defi \mathcal E_{n, K, K'}^{2, \e} \cap \mathcal J \,,
\eeq
which is obtained from $\mathcal E_{n, K, K'}^{2, \e}$ via thinning procedure. We shorten $\#{\mathcal E}_{n, K, K'}^{\e} \defi  {\mathcal N}_{n, K, K'}^{\e}$. By independence of the sigma algebras issued from first and last edges, and the sigma algebra involving all other edges, we clearly have that
\beq\label{PZC'}
\E\left(\# \overline{\mathcal E}_{n, K, K'}^{\e} \middle| \# \mathcal E_{n, K, K'}^{1, \e} = \mathsf J \right)= \E\left({\mathcal N}_{n, K, K'}^{\e}\right)
\eeq
and 
\beq\label{PZC''}
\E\left( {\#\overline{\mathcal E}_{n, K, K'}^{\e}}^2 \middle| \# \mathcal E_{n, K, K'}^{1, \e} = \mathsf J \right) = \E\left( {{\mathcal N}_{n, K, K'}^{\e}}^2 \right).
\eeq
Using \eqref{PZC'} and \eqref{PZC''} in \eqref{PZC}, and by \eqref{new_g_3}, we see that
\beq \bea 
 \PP\left( \#\overline{\mathcal E}_{n, K, K'}^{\e} \geq 1 \right)  
 & \geq  \frac{\E\left({\mathcal N}_{n, K, K'}^{\e}\right)^2}{\E\left( {{\mathcal N}_{n, K, K'}^{\e}}^2 \right)}-o_n(1).
\eea \eeq
Therefore, our main result Theorem \ref{fpp1}, will be an immediate consequence of
 
\begin{thmbis}{geo_optimal_paths}\label{connecting}  
For $\e>0$ there exists $K = K(\e) \in \N$ such that 
\beq \label{gainingind}
\lim_{n\to \infty} \frac{\E\left({\mathcal N}_{n, K, K'}^{\e}\right)^2}{\E\left( {{\mathcal N}_{n, K, K'}^{\e}}^2 \right)} =1,
\eeq
for any $K'>2 \log(2) \mathsf{L}K^2$.
\end{thmbis}


\section{$\Pi$ vs. $\mathcal P$, and a lower bound to the first moment} \label{changing_deal}

In Sections \ref{sprinckle}-\ref{energies_sec} we have altered the path-properties derived in Section \ref{gathering_insights}, and this of course has relevant consequences. The following result precisely quantifies the changes to the first moment as given in Theorem \ref{choosing_d} (which has been instrumental to all our considerations so far) once these modifications have been taken into account. 

\begin{thmbis}{choosing_d} \label{choosing_dprime}
For $\e>0$, shorten
\beq \label{defi_eee}
\e_E \defi \frac{\epsilon}{5 \mathsf E}, \qquad \e_{m, \mathsf E} \defi \frac{\epsilon}{5 \mathsf E}+\frac{\e}{5\overline{\ma}_m \mathsf E}\,.
\eeq
Let furthermore
\beq \label{entrop_stret}
S_{n,K, m} \defi \exp-n \left( \frac{1}{\sqrt{2}K}+\frac{\sqrt{2}m(m-1)}{K^2} \right)\,, \qquad R_{n,K} \defi \exp\left(- \frac{n}{K^2} \right)\,,
\eeq
and set
\beq \label{defi_ccc}
C_{n,K, m} \defi R_{n,K} \times S_{n,K, m}\,.
\eeq
Then for any $K'>2 \log(2)\mathsf{L} K^2$,
\beq\bea
\E\left( {\mathcal N}_{n, K, K'}^{\e} \right) &\geq C_{n,K,m} {(1+\e_{ \mathsf E})}^{\sum_{i=m+1}^{K-m}n\mathsf d_i} {(1+\e_{m, \mathsf E})}^{2m    
\hat n_K }  \frac{Q_n}{P_n}\,,
\eea\eeq
where $Q_n$ and $P_n$ are finite degree polynomials. 
\end{thmbis}

\begin{rem} It will become clear in the course of the proof that the $S$-term in Theorem \ref{choosing_dprime}  encodes the entropic cost for \emph{stretching} the paths in $\Pi$ in order to construct $\mathcal P_{n, K}$, whereas the  $R$-term relates to the entropic cost for rendering the $H$-planes \emph{repulsive}, i.e. in order to construct $\mathcal P_{n, K, K'}^{\sf rep}$ out of $\mathcal P_{n,K}$.
\end{rem}

\begin{proof}[Proof of Theorem \ref{choosing_dprime}]
We begin by computing the cardinality of $\mathcal P_{n, K}$. To do so, we recall that paths in this set are {\it directed} in the $m$ first (and last) H-planes: since there are $\left( m  \hat n_K \right)!$ ways to reach a vertex on the $m^{th}$-hyperplane starting from $\boldsymbol{0}$, and 
\beq 
\dbinom{n}{     m  \hat n_K   }
\eeq 
vertices on such hyperplane, we have, altogether,
\beq\bea
\left( m  \hat n_K   \right)! \dbinom{n}{  m  \hat n_K}
\eea\eeq
subpaths connecting $\boldsymbol{0}$ to $H_m$. Furthermore, there are  are 
\beq\bea
\left( m  \hat n_K   \right)!
\eea\eeq
subpaths connecting a given vertex in $H_{K-m}$ to $\boldsymbol 1$.  

As for the {\it stretched} phase, we will heavily rely on the fact already mentioned in Figure \ref{full_monty}, namely that a natural representation of paths in terms of permutations is available.  First we remark that for any two vertices $\boldsymbol v, \boldsymbol w$ of the hypercube, 
\beq \bea\label{Stretched}
\# \; \text{stretched paths between $\boldsymbol v$ and $\boldsymbol w$} =(n d(\boldsymbol v, \boldsymbol w))!\,,
\eea \eeq
and therefore, by definition of $\mathcal P_{n, K}$, 
\beq \label{PnK}
\# \mathcal P_{n, K}= \underbrace{\left( m  \hat n_K   \right)! \dbinom{n}{  m  \hat n_K}}_{\text{directed}} \underbrace{\left(\sum_{(\star_m)} \prod_{i=m+1}^{K-m} (n \mathsf d_i)! \right)}_{\text{stretched}}\underbrace{\left( m  \hat n_K   \right)!}_{\text{directed}},
\eeq
where the $(\star_m)$-sum runs over all possible vertices $\boldsymbol v \in H_i$. By definition, the  subpaths in $\mathcal P_{n, K}$ going through a given vertex of the $H_{i-1}$-plane can reach the same number of vertices on the  $H_{i}$-plane as  the subpaths in $\Pi_{\{1\dots K\}}^{\boldsymbol d}[\boldsymbol 0  \to \boldsymbol 1]$: the $(\star_m)$-sum thus runs over the same vertices as the $(\star)$-sum  in \eqref{cardi}, hence  
\beq \bea \label{card_m}
\# (\star_m)  & =   \prod_{i=m+1}^{K-m} \dbinom{\frac{i-1}{K} n}{\mathsf{eb}_i n} \dbinom{\left(1-\frac{i-1}{K}\right) n}{\mathsf{ef}_i n}.
\eea \eeq
Combining \eqref{PnK} and \eqref{card_m} thus yields
\beq \label{cardPnK}
\# \mathcal P_{n, K}=\left( m  \hat n_K   \right)!^2 \dbinom{n}{  m  \hat n_K}  \prod_{i=m+1}^{K-m} \dbinom{\frac{i-1}{K} n}{\mathsf{eb}_i n} \dbinom{\left(1-\frac{i-1}{K}\right) n}{\mathsf{ef}_i n} (n {\mathsf d}_i)!\,.
\eeq
We now quantify the difference in cardinality between $\mathcal P_{n,K}$ and $\mathcal P_{n,K,K'}$, and then, in a 
second step, between $\mathcal P_{n,K,K'}$  and $\mathcal P_{n, K, K'}^{\sf rep}$. To do so, the following observation is helpful:  in the stretched phase, since by  \eqref{b-f} it holds that $\mathsf{eb}_i+\mathsf{ef}_i = \mathsf d_i$, we may re-write the r.h.s. of \eqref{Stretched} as 
\beq \bea \label{simply_so}
(n \mathsf d_i)!=(n\mathsf{eb}_i)!(n\mathsf{ef}_i)!\dbinom{n \mathsf d_i}{n\mathsf{eb}_i}.
\eea \eeq
This elementary algebraic identity can be given an {\it interpretation} which proves useful for the purpose of computing the cardinality of $\mathcal P_{n, K, K'}$. To see this, let us assume that each step of the polymer is a ball which is both coloured {\it and}  labeled: backsteps are red whereas forward steps are blue; the labels correspond to which coordinate switches its value during the considered step: there are thus $(n \mathsf{eb}_i)$ labels for the red balls, and $(n \mathsf{ef}_i)$ labels for the blue balls. The first factorial on the r.h.s. of \eqref{simply_so}  then stands for the number of possible ways of listing the red balls while discriminating according to the labels, 
and similarly for the second factorial corresponding to the blue balls. Finally, the binomial factor on the r.h.s. of \eqref{simply_so} accounts for the number of ways to place the red and blue balls, but {\it without} discriminating among labels. 

Now, the subset $\mathcal P_{n,K,K'}$ is constructed out of $\mathcal P_{n, K}$ by adding an additional layer of coarse graining,  and modifying the order of appearance of balls while discriminating according to their colors, but disregarding the labels.  Adapting the interpretation of \eqref{simply_so} discussed in the previous paragraph, it is clear that there are now
\beq \bea\label{modif1}
(n\mathsf{eb}_i)!(n\mathsf{ef}_i)!{\dbinom{   \mathsf d_i \hat n_{K'} }{     \mathsf{eb}_i \hat n_{K'}     }}^{K'}.
\eea \eeq
subpaths in $\mathcal P_{n,K,K'}$ connecting two vertices in $H_{i-1}$ and $H_{i}$ at Hamming distance $n \mathsf d_i$.

The subset $\mathcal P_{n, K, K'}^{\sf rep}$ differs from $\mathcal P_{n,K,K'}$ in that the order of backsteps and forward steps between $H_{i-1}$ and $H_{i-1,1}'$, and between $H_{i-1,K'-1}'$ and $H_{i}$, is totally specified. This evidently reduces the cardinality: instead of \eqref{modif1}, there are only
\beq \bea\label{modif2}
(n\mathsf{eb}_i)!(n\mathsf{ef}_i)!{\dbinom{   \mathsf d_i \hat n_{K'} }{     \mathsf{eb}_i \hat n_{K'}     }}^{K'-2}.
\eea \eeq
subpaths between any two given vertices connecting the $H_{i-1}$ and $H_{i}$ hyperplanes. 

\noindent To compare quantitatively the cardinality of all these sets we write
\beq \bea\label{quotient}
\frac{\# \mathcal P_{n, K}}{\# \mathcal P_{n, K, K'}^{\sf rep}}=\frac{\# \mathcal P_{n, K}}{\# \mathcal P_{n, K, K'}}\times \frac{\# \mathcal P_{n, K,K'}}{\# \mathcal P_{n, K, K'}^{\sf rep}}.
\eea \eeq
By  \eqref{modif1}, it holds that
\beq \bea\label{quotient1}
\frac{\# \mathcal P_{n, K}}{\# \mathcal P_{n, K, K'}}& = \prod_{i=m+1}^{K-m}
\frac{(n \mathsf d_i)!}{(n\mathsf{eb}_i)!(n\mathsf{ef}_i)!{\dbinom{   \mathsf d_i \hat n_{K'} }{     \mathsf{eb}_i \hat n_{K'}     }}^{K'}}\\
&=\prod_{i=m+1}^{K-m} \frac{(n\mathsf{eb}_i)!(n\mathsf{ef}_i)!\dbinom{n \mathsf d_i}{n\mathsf{eb}_i}}{(n\mathsf{eb}_i)!(n\mathsf{ef}_i)!{\dbinom{  \mathsf d_i \hat n_{K'} }{   \mathsf{eb}_i \hat n_{K'}  } }^{K'}}\\
&\lesssim \prod_{i=m+1}^{K-m} \frac{\sqrt{2\pi n \mathsf d_i}}{\sqrt{2\pi n\mathsf{eb}_i}\sqrt{2\pi n\mathsf{ef}_i}} {\left(\frac{\sqrt{2\pi  \mathsf{eb}_i \hat n_{K'}  }\sqrt{2\pi 
 \mathsf{ef}_i \hat n_{K'} }}{\sqrt{2\pi   \mathsf{d}_i \hat n_{K'} }} \right)}^{K'}\,,
\eea \eeq
the last step by elementary Stirling approximation (this time including the lower order, polynomial terms). The r.h.s. of \eqref{quotient1} is, up to irrelevant numerical constant, {\it at most} 
\beq \bea\label{quotient1'}
\eqref{quotient1} \lesssim \prod_{i=m+1}^{K-m} n^{\frac{K'-1}{2}}=  n^{\frac{(K'-1)(K-2m)}{2}}.
\eea\eeq
Furthermore, one has
\beq\bea \label{obsi_o}
\frac{\# \mathcal P_{n, K,K'}}{\# \mathcal P_{n, K, K'}^{\sf rep}}&= \prod_{i=m+1}^{K-m}{\dbinom{   \mathsf{d}_i \hat n_{K'}  }{\mathsf{eb}_i \hat n_{K'} }^2} \\
&\lesssim {\left\{\prod_{i=m+1}^{K-m}\left(1-\frac{\mathsf{ef}_i}{\mathsf d_i}\right)^{\mathsf d_i-\mathsf{ef}_i}\left(\frac{\mathsf{ef}_i}{\mathsf d_i}\right)^{\mathsf{ef}_i}\right\}}^{ -2 \hat n_{K'} }\prod_{i=m+1}^{K-m}\frac{K' \mathsf d_i}{2\pi n \mathsf{eb}_i \mathsf{ef}_i},
\eea\eeq
the last inequality again by Stirling approximation. Since the term in the curly bracket is raised to a negative power, we will use the following lower bound 
\beq\bea \label{obsi}
\prod_{i=m+1}^{K-m}\left \{\left(1-\frac{\mathsf{ef}_i}{\mathsf d_i}\right)^{1-\frac{\mathsf{ef}_i}{\mathsf d_i}}\left(\frac{\mathsf{ef}_i}{\mathsf d_i}\right)^{\frac{\mathsf{ef}_i}{\mathsf d_i}}\right \}^{\mathsf d_i} \geq \prod_{i=m+1}^{K-m}{\left(\frac{1}{2}\right)}^{\mathsf d_i} \geq {\left(\frac{1}{2}\right)}^{\mathsf L}\,,
\eea\eeq
where the second inequality holds true since the function  $x \mapsto (1-x)^{1-x} x^x$  is convex, and attains its minimal value $1/2$ in $x=1/2$, as can be plainly checked. Plugging the bound \eqref{obsi} in \eqref{obsi_o} then yields
\beq\bea\label{quotient2}
\frac{\# \mathcal P_{n, K,K'}}{\# \mathcal P_{n, K, K'}^{\sf rep}}&\lesssim n^{-(K-2m)} \exp \left( \frac{\mathsf L n}{K'}  2 \log 2 \right).
\eea\eeq
Remark that for any $K'>2K^2L\log 2$, it holds that 
\beq\bea\label{quotient3}
 \exp \left( \frac{\mathsf L n}{K'}  2 \log 2 \right) \leq \exp\left( \frac{n}{K^2}\right).
\eea\eeq
Combining \eqref{quotient}, \eqref{quotient1'}, \eqref{quotient2} and \eqref{quotient3} therefore implies that the entropic cost for rendering the hyperplanes repulsive is 
\beq \bea\label{quotient4}
\frac{\# \mathcal P_{n, K}}{\# \mathcal P_{n, K, K'}^{\sf rep}}\lesssim n^{(\frac{K'-1}{2}-1)(K-2m) } \times 
\exp\left( \frac{n}{K^2}\right).
\eea \eeq
Using \eqref{cardPnK} in \eqref{quotient4} then yields
\beq \bea \label{cardrep}
\# \mathcal P_{n, K, K'}^{\sf rep}\gtrsim (  m  \hat n_K)!^2\dbinom{n}{  m  \hat n_K}\prod_{i=m+1}^{K-m} (n \mathsf d_i)!\dbinom{\frac{i-1}{K} n}{\mathsf eb_i n} \dbinom{(1-\frac{i-1}{K}) n}{\mathsf ef_i n}\times \frac{R_{n,K}}{P_n},
\eea \eeq
where we have shortened 
\beq
P_n \defi n^{(\frac{K'-1}{2}-1)(K-2m)}, \quad \quad  R_{n, K} \defi \exp\left( -\frac{n}{K^2}\right).
\eeq
Recall that by Remark \ref{remloopless},  polymers in $\mathcal P_{n, K, K'}^{\sf rep}$ are loopless: this property, the ensuing  independence of the sub-energies, and \eqref{cardrep} thus yield
\beq\bea\label{s2}
\E(\# \mathcal E_{n, K, K'}^{2, \e}) &\gtrsim(  m  \hat n_K)!\PP\left(X_m(\pi)\leq \ma_{m,\e}\right)\dbinom{n}{  m  \hat n_K}\\
& \hspace{2cm} \times \prod_{i=m+1}^{K-m} (n \mathsf d_i)! \PP\left(X_{i-1,i} \leq \ma_{i,\e}\right) \dbinom{\frac{i-1}{K} n}{\mathsf eb_i n} \dbinom{(1-\frac{i-1}{K}) n}{\mathsf ef_i n}\\
& \qquad \times (   m  \hat n_K )!\PP\left(X_{K-m+1}\leq \ma_{m,\e}\right)\frac{R_{n,K}}{P_n}.\\
\eea\eeq
Further, recalling that by the thinning procedure, it holds 
\beq\bea\label{s2'}
 \E\left({\mathcal N}_{n, K, K'}^{\e}\right)=\frac{C(\e)^2}{2}E(\# \mathcal E_{n, K, K'}^{2, \e})\,,
\eea\eeq 
and by the usual tail estimates, we thus see that
\beq\bea\label{s3}
&\E({\mathcal N}_{n, K, K'}^{\e})\gtrsim  \prod_{i=m+1}^{K-m}{(\ma_{i,\e})}^{nd_i} \dbinom{\frac{i-1}{K} n}{\mathsf{eb}_i n} \dbinom{(1-\frac{i-1}{K}) n}{\mathsf{ef_i} n}\dbinom{n}{  m  \hat n_K} {\ma_{m,\e}}^{2   m  \hat n_K -2} (  m  \hat n_K)^2 \frac{R_{n,K}}{P_n}\,.
\eea\eeq
But since $(m  \hat n_K)^2{\ma_{m,\e}}^{-2}>1$ for $n$ large enough, we have, altogether, that
\beq\bea\label{s4}
 \E\left({\mathcal N}_{n, K, K'}^{\e}\right) \gtrsim& \prod_{i=m+1}^{K-m} {(\ma_i \mathsf E)}^{n \mathsf d_i} \dbinom{\frac{i-1}{K} n}{\mathsf{eb}_i n} \dbinom{(1-\frac{i-1}{K}) n}{\mathsf{ef}_i n} \times \\
& \hspace{1cm} \times \dbinom{n}{   m  \hat n_K} { \left(\overline{\ma}_{m} \mathsf E\right)}^{2   m  \hat n_K}
 {(1+\e_{\mathsf E})}^{\sum_{i=m+1}^{K-m}n \mathsf d_i}{(1+\e_{m, \mathsf E})}^{2  m  \hat n_K  } \frac{R_{n,K}}{P_n}\,.
\eea\eeq

The first term on the r.h.s. of \eqref{s4} is reminiscent of the expression appearing in Theorem \ref{choosing_d}, but contrary to the latter, we are facing here  a product which runs over the indeces $i=m+1 \dots K-m$ only. The natural idea is thus to modify and then extend this partial product to a full product in order to  exploit the control already established in Theorem \ref{choosing_d}. To do so we first note that, since on the positive axis it holds that $x \geq \tanh(x)$, 
\beq
{(\ma_i \mathsf E)}^{n \mathsf d_i}  \geq {\tanh(\ma_i \mathsf E)}^{n \mathsf d_i}, \quad \text{and} \quad { \left(\overline{\ma}_{m} \mathsf E\right)}^{2   m  \hat n_K} \geq  {\tanh(\overline{\ma}_{m} \mathsf E)}^{2   m  \hat n_K}\,.
\eeq
Using this in \eqref{s4} yields
\beq\bea \label{sss}
 \E\left({\mathcal N}_{n, K, K'}^{\e}\right) \gtrsim& \prod_{i=m+1}^{K-m} {\tanh(\ma_i \mathsf E)}^{n \mathsf d_i} \dbinom{\frac{i-1}{K} n}{\mathsf{eb_i}n} \dbinom{(1-\frac{i-1}{K}) n}{\mathsf{ef}_i n} \\
& \quad \times \dbinom{n}{   m  \hat n_K} {\tanh(\overline{\ma}_{m} \mathsf E)}^{2   m  \hat n_K} {(1+\e_{\mathsf E})}^{\sum_{i=m+1}^{K-m}n \mathsf d_i}{(1+\e_{m, \mathsf E})}^{2   m  \hat n_K}\frac{R_{n,K}}{P_n}\,.
\eea \eeq
The new (partial) product is closer yet not quite the same as that appearing in  Theorem \ref{choosing_d}, so we  artificially introduce some $\cosh$-terms which however leave the r.h.s. above as a whole unaltered. Precisely, we rewrite \eqref{sss} as
\beq \bea \label{ssss}
& \E\left({\mathcal N}_{n, K, K'}^{\e}\right)  \gtrsim \prod_{i=m+1}^{K-m} {\tanh(\ma_i \mathsf E)}^{n \mathsf d_i}{\left(\frac{\cosh(\ma_i \mathsf E)}{\cosh(\ma_i \mathsf E)}\right)}^{n} \dbinom{\frac{i-1}{K} n}{\mathsf{eb_i}n} \dbinom{(1-\frac{i-1}{K}) n}{\mathsf{ef}_i n} \times \\
& \quad \times \dbinom{n}{   m  \hat n_K} {\tanh(\overline{\ma}_{m} \mathsf E)}^{2   m  \hat n_K} {\left(\frac{\cosh(\overline{\ma}_{m} \mathsf E)}{\cosh(\overline{\ma}_{m} \mathsf E)}\right)}^{2n}{(1+\e_{\mathsf E})}^{\sum_{i=m+1}^{K-m}n \mathsf d_i}{(1+\e_{m, \mathsf E})}^{2   m  \hat n_K}\frac{R_{n,K}}{P_n};
\eea\eeq
We can now move to the aforementioned procedure of extending the product to {\it all} indeces $i=1 \dots K$. This naturally requires a good control of the missing terms, i.e. for $i\leq m$ (a case which is referred to below as {\sf First}), and for $i\geq K-m+1$ ({\sf Second case}). \\

\noindent {\sf First case.} We begin noting that by the Evolution Lemma \ref{f},
\beq\bea\label{eq1}
\left[ \prod_{i=1}^{m} g_{i,K}( \mathsf d_i) \right]^n &={\left(\frac{\sinh(\overline{\ma}_{m} \mathsf E)}{\frac{m}{K}}\right)}^{m \hat n_K} {\left(\frac{\cosh(\overline{\ma}_{m} \mathsf E)}{1-\frac{m}{K}}\right)}^{n-m \hat n_K}\\
&={\tanh(\overline{\ma}_{m} \mathsf E)}^{m \hat n_K}\cosh(\overline{\ma}_{m} \mathsf E)^n \frac{n^n}{{( m\hat n_{K})}^{m \hat n_K}{(n- m\hat n_{K} )}^{n-m \hat n_K}}\,,
\eea\eeq
the second equality by elementary rearrangement. But  by "reverse" Stirling-approximation,
\beq
\frac{n^n}{{( m\hat n_{K})}^{m \hat n_K}{(n- m\hat n_{K} )}^{n-m \hat n_K}} \propto \sqrt{n} {n \choose m \hat n_K},
\eeq
and therefore 
\beq \label{eq1.5}
\left[ \prod_{i=1}^{m} g_{i,K}( \mathsf d_i) \right]^n  \propto \sqrt{n} {\tanh(\overline{\ma}_{m} \mathsf E)}^{m \hat n_K} {\cosh(\overline{\ma}_{m} \mathsf E)}^{n} \dbinom{n}{   m  \hat n_K }\,.
\eeq
Furthermore, by definition of the $g$-functions, and taking into account the lower orders in the Stirling approximation of the binomial factors, one also plainly checks that
\beq\bea\label{eq1.6}
\left[ \prod_{i=1}^{m} g_{i,K}( \mathsf d_i)\right]^n & \propto \sqrt{n} n^{m-1} \prod_{i=1}^{m} {\tanh(\ma_i \mathsf E)}^{n \mathsf d_i}{\cosh(\ma_i \mathsf E)}^n \dbinom{\frac{i-1}{K} n}{\mathsf{eb_i} n} \dbinom{(1-\frac{i-1}{K}) n}{\mathsf{ef}_i n} 
\,.
\eea\eeq
Equating  \eqref{eq1.5} and \eqref{eq1.6} therefore yields the asymptotic identity
\beq \bea \label{antoi}
& {\tanh(\overline{\ma}_{m} \mathsf E)}^{m \hat n_K} {\cosh(\overline{\ma}_{m} \mathsf E)}^{n} \dbinom{n}{   m  \hat n_K }  \\
& \hspace{2cm} \propto n^{m-1} \prod_{i=1}^{m} {\tanh(\ma_i \mathsf E)}^{n \mathsf d_i}{\cosh(\ma_i \mathsf E)}^n \dbinom{\frac{i-1}{K} n}{\mathsf{eb_i} n} \dbinom{(1-\frac{i-1}{K}) n}{\mathsf{ef}_i n}\,.
\eea \eeq
Remark, in particular, that what lies behind the l.h.s. above (these are terms contributing to \eqref{ssss}) are thus  the first $m$-terms (up to irrelevant, for our purposes below) polynomial factors, of the product analysed in Theorem \ref{choosing_d}.\\

\noindent {\sf Second case.}
Again by the Evolution Lemma \ref{f} it holds that
\beq
1= \prod_{i=1}^{K-m} g_{i,K}(\mathsf d_i) \times \prod_{i=K-(m-1)}^{K} g_{i,K}(\mathsf d_i)\,,
\eeq
and therefore 
\beq\bea\label{eq3}
\prod_{i=K-(m-1)}^{K} g_{i,K}(\mathsf d_i)&=\left[ \prod_{i=1}^{K-m} g_{i,K}(\mathsf d_i)\right]^{-1}\\
&=\left[ 
{\left(\frac{\sinh(\overline{\ma}_{K-m} \mathsf E)}{\frac{K-m}{K}}\right)}^{\frac{K-m}{K}} {\left(\frac{\cosh(\overline{\ma}_{K-m} \mathsf E)}{1-\frac{K-m}{K}}\right)}^{1-\frac{K-m}{K}}\right]^{-1}\,,
\eea \eeq
the second equality in virtue of \eqref{f_f}.  In order to get a handle on the r.h.s. above we use the fundamental relation \eqref{fundamental} which states that
\beq \label{bas}
\sinh\left(\overline{\mathsf a}_{K-m} \mathsf E \right)\cosh\left(\underline{\mathsf a}_{K-m} \mathsf E \right) = \frac{K-m}{K}\,,
\eeq
implying, in particular, that 
\beq \label{cu}
\left[ {\left(\frac{\sinh(\overline{\ma}_{K-m} \mathsf E)}{\frac{K-m}{K}}\right)}^{\frac{K-m}{K}} \right]^{-1} = {\cosh(\underline{\ma}_{K-m} \mathsf E)}^{\frac{K-m}{K}}\,.
\eeq
Furthermore, the following "mirror" version of \eqref{bas} holds in virtue of the addition formula for hyperbolic functions (see \eqref{fundie} for the detailed derivation):
\beq
\cosh\left(\overline{\mathsf a}_{K-m} \mathsf E \right)\sinh\left(\underline{\mathsf a}_{K-m} \mathsf E \right) = 1 - \frac{K-m}{K}\,,
\eeq 
hence
\beq \label{cd}
\left[ {\left(\frac{\cosh(\overline{\ma}_{K-m} \mathsf E)}{1-\frac{K-m}{K}}\right)}^{1-\frac{K-m}{K}} \right]^{-1} = {\sinh(\underline{\ma}_{K-m} \mathsf E)}^{1-\frac{K-m}{K}}\,.
\eeq
Using \eqref{cu} and \eqref{cd} in \eqref{eq3} we thus have 
\beq \bea \label{bbbbb}
\prod_{i=K-(m-1)}^{K} g_{i,K}(\mathsf d_i) &= {\cosh(\underline{\ma}_{K-m} \mathsf E)}^{\frac{K-m}{K}}  {\sinh(\underline{\ma}_{K-m} \mathsf E)}^{1-\frac{K-m}{K}} \\
& = {\cosh(\overline{\ma}_{m} \mathsf E)}^{\frac{K-m}{K}} {\sinh(\overline{\ma}_{m} \mathsf E)}^{1-\frac{K-m}{K}}\,,
\eea \eeq
the second identity since $\sum_{i=1}^{K}\ma_i=1$ and by symmetry of the $\ma's$. Raising \eqref{bbbbb}  to the $n^{th}$- power, and by simple rearrangement, we thus see that 
\beq \label{antoine}
\left[ \prod_{i=K-(m-1)}^{K} g_{i,K}( \mathsf d_i)\right]^n = {\tanh(\overline{\ma}_{m} \mathsf E)}^{m\hat n_{K}} {\cosh(\overline{\ma}_{m} \mathsf E)}^{n}\,.
\eeq
Again by the definition of the $g$-functions, and taking into account the lower orders in the Stirling approximation of the binomial factors, one plainly checks that 
\beq\bea\label{eq2}
\left[ \prod_{i=K-(m-1)}^{K} g_{i,K}( \mathsf d_i)\right]^n & \propto n^{m-1} \prod_{i=K-(m-1))}^{K} {\tanh(\ma_i \mathsf E)}^{n \mathsf d_i}{\cosh(\ma_i \mathsf E)}^n \dbinom{\frac{i-1}{K} n}{\mathsf{eb_i} n} \dbinom{(1-\frac{i-1}{K}) n}{\mathsf{ef}_i n} 
\,,
\eea\eeq
and therefore, equating \eqref{antoine} and \eqref{eq2}, we also obtain the following asymptotic equivalence 
\beq\bea\label{eq4}
&{\tanh(\overline{\ma}_{m} \mathsf E)}^{m\hat n_{K}} {\cosh(\overline{\ma}_{m} \mathsf E)}^{n} \\
& \hspace{1.5cm} \propto n^{m-1} 
 \prod_{i=K-(m-1)}^{K} {\tanh(\ma_i \mathsf E)}^{nd_i}{\cosh(\ma_i \mathsf E)}^n \dbinom{\frac{i-1}{K} n}{\mathsf{eb}_i n} \dbinom{(1-\frac{i-1}{K}) n}{\mathsf{ef}_i n}\,.
\eea\eeq
In full analogy to \eqref{antoi}, we therefore see that behind the l.h.s. above (these are also terms contributing to \eqref{ssss}) hide in fact  the last $m$-terms of the product analysed in Theorem \ref{choosing_d}. \\

\noindent Thanks to both \eqref{antoi} and \eqref{eq4}, we may now replace the corresponding terms on the r.h.s. of \eqref{ssss}: this indeed allows to extend the product to all indeces $i=1,\dots, K$, and seamlessly leads to the lower bound
\beq\bea\label{eq5}
\E\left({\mathcal N}_{n, K, K'}^{\e}\right)&\gtrsim \prod_{i=1}^{K} {\tanh(\ma_i \mathsf E)}^{n \mathsf d_i}{\cosh(\ma_i \mathsf E)}^{n} \dbinom{\frac{i-1}{K} n}{\mathsf{eb_i} n} \dbinom{(1-\frac{i-1}{K}) n}{\mathsf{ef}_i n} \\
&\quad \times \frac{Q_nR_{n,K}}{P_n\cosh(\overline{\ma}_{m} \mathsf E)^{2n}}\prod_{i=m+1}^{K-m}\frac{1}{\cosh(\ma_i \mathsf E)} {(1+\e_{\mathsf E})}^{\sum_{i=m+1}^{K-m}n \mathsf d_i} {(1+\e_{m, \mathsf E})}^{2 m\hat n_{K}},
\eea\eeq
where $Q_n\defi n^{2(m-1)}$ is yet another polynomial term. 

The  full product in the first line of the r.h.s. of \eqref{eq5} is easily taken care of. In fact, by elementary
rearrangement, it holds that
\beq \bea \label{choosing_d1}
& \prod_{i=1}^{K} {\tanh(\ma_i \mathsf E)}^{n \mathsf d_i}{\cosh(\ma_i \mathsf E)}^{n} \dbinom{\frac{i-1}{K} n}{\mathsf{eb_i} n} \dbinom{(1-\frac{i-1}{K}) n}{\mathsf{ef}_i n} \\
& \hspace{2.5cm} = \prod_{i=1}^{K} {\sinh(\ma_i \mathsf E)}^{n \mathsf d_i}{\cosh(\ma_i \mathsf E)}^{n(1-\mathsf d_i)} \dbinom{\frac{i-1}{K} n}{\mathsf{eb}_i n} \dbinom{(1-\frac{i-1}{K}) n}{\mathsf{ef}_i n}\,,
\eea \eeq
and by Stirling approximation to  second order, the r.h.s. of \eqref{choosing_d1} equals 
\beq\bea \label{choosing_dd}
\prod_{i=1}^{K} \left\{ \frac{ \sinh(\ma_i \mathsf E)^{\mathsf d_i} {\cosh(\ma_i \mathsf E)}^{1-\mathsf d_i}               \varphi\left( \frac{i-1}{K}\right) \varphi\left( 1-\frac{i-1}{K}\right)}{
\varphi(\mathsf{eb}_i)\varphi\left(\frac{i-1}{K}-\mathsf{eb}_i\right) \varphi\left(\mathsf{ef}_i\right) \varphi\left(1-\frac{i-1}{K}-\mathsf{ef}_i\right)} 
\right\}^n \times S_{n,K},
\eea\eeq
where $S_{n,K}$ corresponds to the lower order (polynomial) terms in the approximation. But by Theorem \ref{choosing_d}, the first term of \eqref{choosing_dd}, i.e. the full product, equals unity, whereas an elementary inspection of the polynomial terms further shows that
\beq\bea\label{Sa}
S_{n,K}&\gtrsim \frac{1}{\sqrt{\frac{2\pi n}{K}(1-\frac{1}{K})}}\prod_{i=2}^{K-1} {\left(\frac{(2\pi n)^2(\frac{i-1}{K})(1-\frac{i-1}{K})}{(2\pi n)^4 \mathsf{eb}_i(\frac{i}{K}-\mathsf{ef}_i)\mathsf{ef}_i(1-\frac{i}{K}-\mathsf{eb}_i)}\right)}^{\frac{1}{2}}\\
&\gtrsim  \frac{1}{{n}^{K-2+\frac{1}{2}}}\,.
\eea \eeq
Using all this in \eqref{eq5} yields 
\beq\bea\label{eq55}
\E\left({\mathcal N}_{n, K, K'}^{\e}\right) \geq& \frac{Q_nR_{n,K}}{P_n\cosh(\overline{\ma}_{m}\mathsf E)^{2n}}\prod_{i=m+1}^{K-m}\frac{1}{\cosh(\ma_i \mathsf E)^{n}}{(1+\e_{\mathsf E})}^{\sum_{i=m+1}^{K-m}n \mathsf d_i} {(1+\e_{\overline{\ma}_m,\mathsf E})}^{2 m\hat n_{K} }\,,
\eea\eeq
where $P_n\defi P_n{n}^{K-2+\frac{1}{2}}$ is yet another polynomial term. \\

It thus remains to control the $\cosh$-terms in \eqref{eq55}. To see how this goes we observe that by Taylor expanding  the $\cosh$-function to second order, 
\beq\bea\label{tl1}
\cosh(\overline{\ma}_{m} \mathsf E)^{-1} & = \exp\left[ - \log \cosh(\overline{\ma}_{m} \mathsf E) \right] \\
& \geq \exp -\log\left\{ 1+\frac{(\overline{\ma}_{m} \mathsf E)^{2}\cosh(\overline{\ma}_{m} \mathsf E)}{2} \right\} \\ & \geq \exp\left( -\frac{(\overline{\ma}_{m} \mathsf E)^{2}}{\sqrt{2}} \right)\,,
\eea\eeq
the second inequality since $\log(1+x)\leq x$, and using that $\cosh(\overline{\ma}_{m} \mathsf E)\leq \cosh(\mathsf E)=\sqrt{2}$. Moreover, by \eqref{control_alpha} it holds that $\ma_i \mathsf E\leq\frac{1}{K}$: summing over $i=1\dots m$ thus leads to $\overline{\ma}_{m} \mathsf E \leq\frac{m}{K}$, which combined with \eqref{tl1} yields
\beq\bea\label{tl2}
\cosh(\overline{\ma}_{m} \mathsf E)^{-1} \geq \exp\left( -\frac{m^2}{\sqrt{2}K^2}\right)\,.
\eea\eeq
A similar reasoning evidently yields 
\beq\bea\label{tl3}
\cosh(\mathsf a_i E)^{-1}  \geq \exp\left( -\frac{1}{\sqrt{2}K^2}\right)\,,
\eea\eeq
for any $i=1\dots K$. By \eqref{tl2} and \eqref{tl3} we thus have that
\beq\bea \label{cosh_weg}
\frac{1}{\cosh(\overline{\ma}_{m} \mathsf E)^{2n}} \times \prod_{i=m+1}^{K-m}\frac{1}{\cosh(\ma_i \mathsf E)^{n}} & \geq  \exp\left\{-\frac{n}{\sqrt{2}K}-\frac{2nm(m-1)}{\sqrt{2}K^2}\right\}\,,
\eea\eeq
which we recognize as the $S_{n,K,m}$-term announced in \eqref{entrop_stret}: {\it the entropic cost for  stretching the paths}.  Using 
\eqref{cosh_weg} in \eqref{eq55} finally yields
\beq\bea
\E\left({\mathcal N}_{n, K, K'}^{\e}\right) \geq {(1+\e_E)}^{\sum_{i=m+1}^{K-m}n \mathsf d_i} {(1+\e_{\overline{\ma}_m,\mathsf E})}^{2 m\hat n_{K}} \frac{S_{n,K,m} R_{n,K}Q_n}{P_n}\,,
\eea\eeq
and Theorem \ref{choosing_dprime} is thus settled. 
\end{proof}

\section{The second moment, and proof of Theorem \ref{connecting}} \label{proof_thm_two}
The goal of this section is to provide a proof of Therem \ref{connecting}. We begin with a technical input, concerning  tail estimates for the probability of two correlated sums of exponentials. 
\begin{lem}[Overlap probability] \label{op}
Consider independent  standard exponentials  $\{\xi_i\}$, and let $X_l \defi \sum_{i=1}^l \xi_i$.
Denote by $X'_l$ the sum of $l$ such $\xi$-exponentials, and assume that $X'_l$ shares exactly k edges with $X_l$. Then for $x>0$, it holds:
\beq
\PP\left( X_l \leq x, X'_l \leq x \right)\propto \frac{x^{2l-k}}{(l-k)!l!}g\left(\frac{k}{l}\right)^l.
\eeq
where 
\beq \label{defi_ggg}
\gamma \in [0,1] \mapsto g(\gamma) \defi \frac{   {\left\{ 4(1-\gamma)\right\} }^{1-\gamma}    }{ {\left\{ 2-\gamma\right\}}^{2-\gamma} }\,.
\eeq
In particular, $\|g \|_\infty \leq 1$.
\end{lem}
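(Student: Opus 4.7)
The plan is to reduce the joint tail probability to a Beta--type integral by splitting the two sums into their shared and unshared parts. Writing $X_l = Y + Z$ and $X_l' = Y + Z'$, where $Y$ is the sum of the $k$ shared exponentials and $Z,Z'$ are independent sums of $l-k$ exponentials (also independent of $Y$), conditioning on $Y$ and using independence gives
\begin{equation}
\PP(X_l \leq x, X_l' \leq x) = \int_0^x f_Y(y)\, \PP(Z \leq x-y)^2\, dy,
\end{equation}
with $f_Y(y) = y^{k-1} e^{-y}/(k-1)!$. Inserting the tail estimate \eqref{law_i} for the two $\PP(Z \leq x-y)$ factors, and using that for $x$ in a bounded range the exponential factors $e^{-y}$, $e^{-2(x-y)}$ are $\propto 1$, this reduces up to $\propto$-constants to the classical Beta integral
\begin{equation}
\int_0^x y^{k-1}(x-y)^{2(l-k)}\, dy \;=\; x^{2l-k}\,\frac{(k-1)!\,(2(l-k))!}{(2l-k)!}.
\end{equation}

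Plugging this back in, I would re-group the ensuing factorials as
\begin{equation}
\PP(X_l \leq x, X_l' \leq x) \;\propto\; \frac{x^{2l-k}}{(l-k)!\,l!}\cdot \frac{l!\,(2(l-k))!}{(l-k)!\,(2l-k)!},
\end{equation}
so that the stated prefactor $x^{2l-k}/((l-k)!\, l!)$ is already identified. For the remaining factorial quotient I would then apply Stirling's formula: setting $\gamma = k/l$, so that $l-k = (1-\gamma)l$ and $2l-k = (2-\gamma)l$, the exponential-in-$l$ contribution collapses to
\begin{equation}
\left[\frac{\bigl(4(1-\gamma)\bigr)^{1-\gamma}}{(2-\gamma)^{2-\gamma}}\right]^l \;=\; g(\gamma)^l,
\end{equation}
while the leftover $\sqrt{l/(2l-k)}$ polynomial prefactor is uniformly bounded in $[1/\sqrt{2},1]$ and hence absorbed into the $\propto$. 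This settles the main identity.

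For the second assertion $\|g\|_\infty \leq 1$, I would compute $\log g(\gamma) = (1-\gamma)\log(4(1-\gamma)) - (2-\gamma)\log(2-\gamma)$, which vanishes at both $\gamma = 0$ and $\gamma = 1$ (under the convention $0\log 0 = 0$). Differentiating yields $(\log g)'(\gamma) = \log\frac{2-\gamma}{4(1-\gamma)}$, whose unique zero on $(0,1)$ is $\gamma = 2/3$; direct evaluation gives $g(2/3) = 3/4 < 1$, so this interior critical point is a minimum, and consequently $g \leq 1$ on $[0,1]$ with equality only at the endpoints.

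The main technical obstacle, as usual with $\propto$-statements for the joint law of Erlang sums, is simply a careful bookkeeping of constants: one must verify that the $e^{-\cdot}$ factors inside the conditional integral, the multiplicative error $1+K(x-y, l-k)$ from \eqref{law_i}, and the Stirling polynomial corrections remain bounded uniformly in $k$ and $l$ throughout the relevant bounded range of $x$, so that absorbing them into a single numerical constant is legitimate.
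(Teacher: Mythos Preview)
Your proof is correct and follows essentially the same route as the paper: condition on the shared trunk $Y$, use the tail estimate \eqref{law_i} to reduce to the Beta integral, evaluate it exactly, regroup the factorials, and apply Stirling to extract $g(\gamma)^l$. Your explicit verification that $\|g\|_\infty \leq 1$ via the critical point at $\gamma=2/3$ is in fact more detailed than the paper, which simply asserts the bound without proof.
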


\begin{proof} Without loss of generality we may write 
\beq
X_l' = \sum_{i=1}^k \xi_i + \sum_{i=k+1}^l \xi_i' \,,
\eeq
for independent $\xi'$'s, which are also independent of the $\xi$-family. Remark that the first sum, the common trunk, is a $\Gamma(k,1)$-distributed r.v., whereas the second sum is 
$\Gamma(l-k,1)$-distributed. By conditioning on the common trunk, and by independence, it thus holds: 
\beq\bea \label{blaaa}
\PP\left( X_l \leq x, X'_l \leq x \right)&= \int_{0}^{+\infty}\PP\left(t+X_{l-k}\leq x\right)^2 
\PP(X_k \in dt)\\
& = \int_{0}^{+\infty}\PP\left(X_{l-k}\leq x-t \right)^2\frac{t^{k-1}e^{-t}}{(k-1)!}dt\\
&\propto \frac{1}{{(l-k)!}^2 (k-1)!}\int_{0}^{x}{\left(x-t\right)}^{2(l-k)}t^{k-1} dt\,,
\eea\eeq
the last step by the standard tail-estimates. Integration by parts then yields
\beq\bea
\int_{0}^{x}{\left(x-t\right)}^{2(l-k)}t^{k-1}dt= \frac{(k-1)!(2(l-k))!}{(2l-k)!}x^{2l-k}\,,
\eea\eeq
and therefore
\beq\bea \label{est1}
\PP\left( X_l \leq x, X'_l \leq x \right)&\propto  \frac{(2(l-k))!}{(2l-k)!{(l-k)!}^2}x^{2l-k}\\
& \propto \frac{x^{2l-k}}{(l-k)!l!}\frac{l!(2(l-k))!}{(2l-k)!{(l-k)!}}.\\
&\propto \frac{x^{2l-k}}{(l-k)!l!}\frac{(1-\frac{k}{l})^{l-k}}{2^k(1-\frac{k}{2l})^{2l-k}}\,,
\eea\eeq
the last inequality by Stirling approximation. 

Remark that with $\gamma \defi k/l \in [0,1]$, the second factor in the last term above can be written as
\beq \bea \label{g-fct}
\frac{(1-\frac{k}{l})^{l-k}}{2^k(1-\frac{k}{2l})^{2l-k}}={\left\{\frac{{(4(1-\gamma))}^{(1-\gamma)}}{{(2-\gamma)}^{(2-\gamma)}}\right\}}^l\defi g(\gamma)^l\,,
\eea \eeq
and using this in \eqref{est1} yields
\beq\bea \label{estimate}
\PP\left( X_l \leq x, X'_l \leq x \right)&\propto \frac{x^{2l-k}}{(l-k)!l!}g\left(\frac{k}{l}\right)^l\,,
\eea\eeq
concluding the proof of the estimate for the overlap probability. 
\end{proof}

\noindent We now address the second moment of $\mathcal N_{n, K, K'}^\e$, as required for a proof of Theorem \ref{connecting}. For this, some notation is needed: recall from \eqref{intersectingg} that $\mathcal J$ is a deterministic subset of polymers with cardinality $\# \mathcal J = \mathsf J = \left\lfloor \E\# \mathcal E_{n, K, K'}^{1, \e}/ 2 \right\rfloor$. Given a path $\pi \in \mathcal J $, we shorten:
\beq \bea
\mathcal J_{\pi}(n, k)  \defi & \;  \text{\sf all paths}\; \pi' \in \mathcal J\; \\
& \text{\sf which share}\; k \; \text{\sf edges with} \; \pi,\\
& \text{\sf whithout considering the first and the last edge,} 
\eea \eeq
and for its cardinality
\beq
f_{\pi}(n,k) \defi \# \mathcal J_{\pi}(n, k)\,.
\eeq
Analogously we shorten
\beq \bea
\mathcal J_{\pi}^{(d)}(n,k)   \defi & \;  \text{\sf all paths}\; \pi' \in \mathcal J\; \text{\sf which share} \; k \; \text{\sf edges with}\; \\
& \; \pi \; \text{\sf only in the directed phase, i.e between} \; \\
&\boldsymbol 0 \; \text{\sf and} \; H_m \; \text{\sf or} \; H_{K-m}  \;  \text{\sf and} \boldsymbol  \;  1,\\
& \text{\sf but without considering  first and  last edge,} 
\eea \eeq
and let
\beq
f_{\pi}^{(d)}(n,k) \defi \#  \mathcal J_{\pi}^{(d)}(n,k)\,,
\eeq
denote its cardinality. 

And finally, 
\beq \bea
\mathcal J_{\pi}^{(s)}(n,k)  \defi & \;  \text{\sf number of paths}\; \pi' \in \mathcal J\; \text{\sf which share} \; k \; \text{\sf edges with}\; \\
& \; \pi \; \text{\sf with at least one common edge in the stretched}  \\
& \text{\sf  phase, i.e between} \; H_m \; \text{\sf and} \; H_{K-m},\\
& \text{\sf but without considering  first and  last edge,} 
\eea \eeq
analogously shortening for its cardinality
\beq
f_{\pi}^{(s)}(n,k) \defi \sharp \mathcal J_{\pi}^{(s)}(n,k)\,.
\eeq

Remark that
\beq\bea\label{22}
f_{\pi}(n,k)=f_{\pi}^{(d)}(n,k)+f_{\pi}^{(s)}(n,k)\,.
\eea\eeq

We will also need the "worst case scenarios"
\beq \bea
f(n,k) & \defi \sup_{\pi \in \mathcal J}f_{\pi}(n,k)\,, \\
f^{(d)}(n,k) &\defi \sup_{\pi \in \mathcal J}f_{\pi}^{(d)}(n,k)\,,\\
f^{(s)}(n,k) & \defi \sup_{\pi \in \mathcal J}f_{\pi}^{(s)}(n,k)\,.
\eea \eeq
in which case it holds, in particular, that
\beq\bea\label{22}
f(n,k)\leq f^{(d)}(n,k)+f^{(s)}(n,k).
\eea\eeq

\noindent For $i=m+1\dots K-m$, and two polymers $\pi, \pi' \in \mathcal J$, we shorten 
\beq 
\PP_i(\pi) \defi \PP\left( X_{i-1,i}(\pi) \leq {\mathsf a}_{i, \e}\right),
\eeq
and
\beq 
\PP_i(\pi, \pi') \defi \PP\left( X_{i-1,i}(\pi) \leq {\mathsf a}_{i, \e}, \;  X_{i-1,i}(\pi') \leq {\mathsf a}_{i, \e} \right)\,.
\eeq
Furthermore, we shorten
\beq\bea
\PP_m(\pi)& \defi \PP\left( X_{m}(\pi) \leq {\mathsf a}_{m, \e}\right),\\
\PP_{K-m+1}(\pi)& \defi \PP\left( X_{K-m+1}(\pi) \leq {\mathsf a}_{K-m+1, \e}\right),
\eea\eeq
and 
\beq\bea
 \PP_m(\pi, \pi') & \defi \PP\left( X_{m}(\pi),  X_{m}(\pi') \leq {\mathsf a}_{m, \e}\right) \\
\PP_{K-m+1}(\pi, \pi') & \defi \PP\left( X_{K-m+1}(\pi), X_{K-m+1}(\pi') \leq {\mathsf a}_{K-m+1, \e}\right)\,,
\eea\eeq
as well as 
\beq \bea
\PP(\pi) & \defi \PP\Big(X_m(\pi) \leq {\mathsf a}_{m, \e}, \; X_{i-1,i}(\pi) \leq {\mathsf a}_{i, \e}\;
 i=m+1 \dots K-m,\; X_{K-m+1}(\pi) \leq {\mathsf a}_{K-m+1, \e} \Big),  
\eea \eeq
and
\beq\bea
\PP(\pi, \pi') \defi & \PP(X_m(\pi), \; X_m(\pi') \leq {\mathsf a}_{m, \e}, \; X_{i-1,i}(\pi),X_{i-1,i}(\pi') \leq {\mathsf a}_{i, \e}\; \text{for} \\
&\qquad \; i=m+1 \dots K-m, \; X_{K-m+1}(\pi), X_{K-m+1}(\pi') \leq {\mathsf a}_{K-m+1, \e})\,.
\eea\eeq
Remark that for loopless paths the substrand-energies are independent, hence, and with the above notation, 
\beq\label{first_out'}
\PP(\pi) = \prod_{i=m}^{K-m+1} \PP_i(\pi), \qquad \PP(\pi, \pi') = \prod_{i=m}^{K-m+1} \PP_i(\pi, \pi')\,,
\eeq
In particular, it holds that 
\beq\bea\label{first_out}
\E\left({\mathcal N}_{n, K, K'}^{\e}\right)={\mathsf J} \PP(\pi) = {\mathsf J} \prod_{i=m}^{K-m+1} \PP_i(\pi) \,.
\eea\eeq

\noindent Concerning the second moment, we write
\beq \bea \label{comincia}
\E\left({{\mathcal N}_{n, K, K'}^{\e}}^2\right) & = \sum_{\pi, \pi' \in \mathcal J } \PP(\pi, \pi') \\
& = \sum_{\pi \in \mathcal J}\sum_{k=0}^{\mathsf L_{opt}n-2} \sum_{\pi' \in \mathcal J_\pi(n,k)}   \PP(\pi, \pi')\,,
\eea \eeq
by arranging the sum according to the possible overlap-regimes. \\

\noindent The case $k=0$ is both crucial and easily taken care of by the following observations: first remark that
the distribution of the energies of a pair of polymers depends solely on the number of common edges; furthermore
the number of pairs of polymers with zero common edges is at most $\mathsf J^2$. Therefore, for any $(
\hat \pi, \tilde \pi) \in (\mathcal J, \mathcal J_{\hat \pi}(n,0))$ it holds: 
\beq \bea \label{k=0}
\sum_{\pi \in \mathcal J} \sum_{\pi' \in \mathcal J_\pi(n,0)}  \PP(\pi, \pi') & \leq {\mathsf J}^2 \PP(\hat \pi, \tilde \pi) =   {\mathsf J}^2 \PP(\hat \pi)^2\,,
\eea \eeq
the last equality holding true since in case of non-overlapping paths, the $\hat \pi, \tilde \pi$-energies are independent and identically distributed.  Using \eqref{first_out} in \eqref{k=0} therefore yields 
\beq \bea \label{yuhe}
\sum_{\pi \in \mathcal J} \sum_{\pi' \in \mathcal J_\pi(n,0)}  \PP( \pi,  \pi') \leq  \E\left({{\mathcal N}_{n, K, K'}^{\e}}\right)^2\,,
\eea \eeq
This settles the $k=0$ regime. \\

\begin{rem}\label{facteur_n} 
Recovering the first moment squared as in \eqref{yuhe} is absolutely crucial for the whole approach, and the main reason for treating first and last edge on different footing. Without such different treatment, one would get the first moment squared  \emph{up to a constant only}, and this would nullify the proof of Theorem \ref{geo_optimal_paths}. This feature is common to virtually all models in the REM-class, see  \cite{kistler} for more on this delicate issue.
\end{rem}

\noindent As for the remaining overlap-regimes, we will distinguish between 
\begin{itemize}
\item $1 \leq k\leq 200 \hat n_{K} $: this corresponds to the case of weak correlations (the overlap between the two polymers  is small); 
\item $k>200  \hat n_{K}$: this corresponds to the case of strong correlations (the two polymers 
strongly overlap).
\end{itemize}
We now rearrange the second moment according to the above dichotomy. Henceforth,
given $\pi \in \mathcal J$, and with $k\in\N$, we denote by $\pi_k^{(d)} \in \mathcal J_\pi^{(d)}(n,k)$ a polymer which shares $k$ edges with $\pi$, and in full analogy for $\pi_k^{(s)} \in \mathcal J_\pi^{(s)}(n,k)$ and $\pi_k \in \mathcal J_\pi(n,k)$.  With this notation, again using that specifying the number of common edges fixes the distribution of the pair of paths, and by \eqref{yuhe}, we thus have
\beq\bea\label{21}
\E\left({{\mathcal N}_{n, K, K'}^{\e}}^2\right) & \leq \E\left({{\mathcal N}_{n, K, K'}^{\e}}\right)^2+ \\
& \hspace{1.5cm} + \mathsf J \sum_{k=1}^{ 200  \hat n_{K}}f^{(d)}(n,k) \PP\left( \pi, \pi_k^{(d)}\right) \\
& \hspace{3cm} + \mathsf J \sum_{k=1}^{ 200  \hat n_{K}}f^{(s)}(n,k) \PP\left(\pi, \pi_k^{(s)}\right) \\
& \hspace{5cm} + \mathsf J \sum_{k= 200  \hat n_{K} +1}^{\mathsf L_{opt}n-2}f(n,k) \PP\left( \pi, \pi_k \right)\,.
\eea \eeq
On the other hand, by Jensen inequality it holds
\beq
\E\left({{\mathcal N}_{n, K, K'}^{\e}}^2\right) \geq \E\left({{\mathcal N}_{n, K, K'}^{\e}} \right)^2. 
\eeq
In order to establish Theorem \ref{connecting} it therefore suffices to show that the last three sums on the r.h.s. of \eqref{21} are of lower order when compared with the first moment squared. This is indeed our key claim: since its proof is long and technical, we formulate it in the form of three Propositions. 

\begin{prop}\label{sum1}
For any $K>m\e^{-2}$, it holds 
\beq\bea
\mathsf J \sum_{k=1}^{ 200  \hat n_{K}}f^{(d)}(n,k) \PP\left( \pi, \pi_k^{(d)}\right) = o\left( \E\left({{\mathcal N}_{n, K, K'}^{\e}}\right)^2 \right)\,,
\eea\eeq
for $n\to \infty$.
\end{prop}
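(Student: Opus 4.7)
My plan is to exploit the fact that if $\pi$ and $\pi'$ overlap only in the directed phases, then in every stretched slab $i \in \{m+1,\dots,K-m\}$ the two paths share no edges, so by looplessness the energies $X_{i-1,i}(\pi), X_{i-1,i}(\pi')$ are independent. Combined with \eqref{first_out'}, this gives the factorisation
\beq \label{facto_plan}
\frac{\PP(\pi,\pi_k^{(d)})}{\PP(\pi)^2} \;=\; \frac{\PP_m(\pi,\pi_k^{(d)})}{\PP_m(\pi)^2}\cdot\frac{\PP_{K-m+1}(\pi,\pi_k^{(d)})}{\PP_{K-m+1}(\pi)^2}\,,
\eeq
so the whole stretched phase collapses and only the two directed phases must be controlled. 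I then decompose $k = k_1 + k_2$ where $k_j$ counts the edges shared in the left resp.~right directed phase, with $0 \leq k_j \leq m\hat n_K - 1$ (the $-1$ accounting for the excluded first/last edge).

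Next I apply the overlap-probability estimate of Lemma \ref{op}, with length $l = m\hat n_K - 1$ and threshold $x = \tilde{\mathsf a}_{m,\e}$, to each factor in \eqref{facto_plan}. Combined with the standard tail estimate $\PP_m(\pi) \propto \tilde{\mathsf a}_{m,\e}^{l}/l!$ (times a $1+K(x,l)$ correction that is harmless here), this yields
\beq
\frac{\PP_m(\pi,\pi_k^{(d)})}{\PP_m(\pi)^2} \;\lesssim\; \frac{l!}{(l-k_1)!\,\tilde{\mathsf a}_{m,\e}^{\,k_1}}\,g\!\left(\frac{k_1}{l}\right)^{\!l} \;\lesssim\; \left(\frac{l}{\tilde{\mathsf a}_{m,\e}}\right)^{\!k_1}\!g\!\left(\frac{k_1}{l}\right)^{\!l}\,,
\eeq
and analogously for the right directed phase with $k_2$.

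The main obstacle is the combinatorial count of $f^{(d)}(n,k)$ relative to $\mathsf J$. Here I use that directed substrands from $\boldsymbol 0$ to a chosen vertex of $H_m$ correspond bijectively to permutations of the $m\hat n_K$ coordinates to be flipped (cf.~\eqref{Stretched}). Sharing a prescribed set of $k_1$ edges with $\pi$ forces the endpoints of these shared edges to lie on $\pi$'s trajectory and freezes the relative order of the flipped coordinates at those positions; a direct count (choose the $k_1$ positions out of $m\hat n_K$, then permute the remaining coordinates in the residual $(m\hat n_K - k_1)!$ orderings, while noting that the $\mathcal P_{n,K,K'}^{\sf rep}$-restriction in the directed phase is trivially preserved) gives
\beq
\frac{f^{(d)}(n,k)}{\mathsf J} \;\lesssim\; \binom{m\hat n_K}{k_1}\binom{m\hat n_K}{k_2}\,\frac{(m\hat n_K - k_1)!(m\hat n_K - k_2)!}{(m\hat n_K)!^2} \;\lesssim\; \frac{1}{k_1!\,k_2!}\,.
\eea
Multiplying with \eqref{facto_plan}, using $l/\tilde{\mathsf a}_{m,\e} \lesssim n$ and $l!/[k_j!(l-k_j)!] \lesssim (en/k_j)^{k_j}$, the summand is dominated (up to the prefactor $C_{n,K,m}(1+\e_{m,\mathsf E})^{2m\hat n_K}$ hidden in $\PP(\pi)^2$) by a product of two terms of the form
\beq
\left(\frac{l}{\tilde{\mathsf a}_{m,\e}}\cdot\frac{1}{k_j}\right)^{\!k_j}\!g\!\left(\frac{k_j}{l}\right)^{\!l}\,,
\eeq
which, after summing a geometric-type series in $k_j \in \{1,\dots,200\hat n_K\}$ and using $\|g\|_\infty \leq 1$ with the strict inequality $g(\gamma) < 1$ for $\gamma > 0$, is $o(1)$ provided $K$ is large enough relative to $m\e^{-2}$ — the condition $K > m\e^{-2}$ ensures that the polynomial prefactor $Q_n/P_n$ and the entropic factor $C_{n,K,m}$ from Theorem \ref{choosing_dprime}$^\prime$ do not destroy this smallness. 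Summing finally over $k_1 + k_2 = k \leq 200\hat n_K$ and multiplying by $\mathsf J$, I obtain the claim.
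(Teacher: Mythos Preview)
Your factorisation of $\PP(\pi,\pi_k^{(d)})/\PP(\pi)^2$ into the two directed-phase ratios and your application of Lemma~\ref{op} are correct and match the paper's route. The gap is in your combinatorial estimate
\[
\frac{f^{(d)}(n,k)}{\mathsf J}\;\lesssim\;\frac{1}{k_1!\,k_2!}\,.
\]
This bound is false. Your argument ``choose the $k_1$ positions out of $m\hat n_K$, then permute the remaining coordinates in the residual $(m\hat n_K-k_1)!$ orderings'' treats a directed substrand from $\boldsymbol 0$ to $H_m$ as a free permutation of $m\hat n_K$ \emph{fixed} coordinates with $k_1$ slots frozen. That miscounts in two directions: sharing an edge at step $r$ forces $\pi'$ to coincide with $\pi$ as a \emph{vertex} at step $r{-}1$, which pins down the entire \emph{set} of the first $r{-}1$ flipped coordinates (not a single slot); and after the last shared edge, $\pi'$ need not head for $\pi$'s target on $H_m$ but may flip any of the $n-r$ remaining zero-coordinates. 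The correct ratio, established in Lemma~\ref{comb1} via the Fill--Pemantle path-counting of Lemma~\ref{path_counting}, carries the additional factor $(n-1-\lceil k/2\rceil)!/n!$, i.e.
\[
\frac{f^{(d)}(n,k)}{\mathsf J}\;\lesssim\;\frac{(m\hat n_K-\lfloor k/2\rfloor)!\,(n-1-\lceil k/2\rceil)!}{(m\hat n_K)!\,n!}\,l(k)\,,
\]
which is smaller than your bound by a polynomial factor in $n$.

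That factor is essential, not cosmetic. With $l=m\hat n_K-1$ one has $l/\tilde{\mathsf a}_{m,\e}\asymp n$ while $g(1/l)^l\to e^{-\log 2}=\tfrac12$, so your summand at $(k_1,k_2)=(1,0)$ is $\tfrac{1}{1!}\cdot\tfrac{l}{\tilde{\mathsf a}_{m,\e}}\,g(1/l)^l\asymp n$, and the sum diverges already at $k=1$. In the paper the extra $(n-2)!/n!\asymp n^{-2}$ turns this term into $O(1/n)$; more generally the combination of Lemma~\ref{comb1} with the probability ratio produces the expression in \eqref{25.5}--\eqref{28}, where the $(m/K)^{k/2}$ of \eqref{26'} together with the prefactor $1/(n-k/2)$ is what makes the geometric condition $\sqrt{K/m}\,\e>1$ (i.e.\ $K>m\e^{-2}$) bite. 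Your invocation of $K>m\e^{-2}$ at the end has no leverage, because the divergence occurs before that parameter ever enters.
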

\begin{prop}\label{sum3}
For any $K>\max(2\times 10^7,m\e^{-2})$ and $K'>2\log(2)\mathsf LK^2$, it holds
\beq\bea
\mathsf J \sum_{k= 200  \hat n_{K} +1}^{\mathsf L_{opt}n-2}f(n,k) \PP\left( \pi, \pi_k \right)=o\left( \E\left({{\mathcal N}_{n, K, K'}^{\e}}\right)^2 \right)\,,
\eea\eeq
for $n\to \infty$.
\end{prop}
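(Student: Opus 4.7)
The plan is to control each summand $\mathsf J\, f(n,k)\,\PP(\pi,\pi_k)$ and show it is of smaller order than $(\mathsf J\,\PP(\pi))^2 = \E(\mathcal N_{n,K,K'}^{\e})^2$. The first step is to invoke Lemma \ref{op} substrand by substrand: looplessness, together with the partition of the polymer into the $K-2m$ stretched substrands (and the two directed ends), ensures independence across substrands, so if $\pi$ and $\pi_k$ share $k_i$ edges in the $i$-th substrand (of length $l_i$) with $\sum_i k_i = k$, then
\beq
\PP(\pi,\pi_k) \lesssim \prod_{i} \frac{(\ma_{i,\e})^{2l_i - k_i}}{(l_i-k_i)!\,l_i!}\, g\!\left(\frac{k_i}{l_i}\right)^{l_i}.
\eeq
Comparing with $\PP_i(\pi) \lesssim (\ma_{i,\e})^{l_i}/l_i!$ (tail estimate \eqref{law_i}) leads to the decisive ratio
\beq
\frac{\PP(\pi,\pi_k)}{\PP(\pi)^2} \lesssim \prod_i \frac{l_i!}{(l_i-k_i)!}\,\frac{g(k_i/l_i)^{l_i}}{(\ma_{i,\e})^{k_i}}.
\eeq

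Next I would split the sum over $k\geq 200\,\hat n_K$ into regimes according to how $(k_1,\ldots,k_{K-2m})$ distributes among substrands, and import the combinatorial bounds on $f(n,k)$ deferred to Section \ref{combinatorial_estimates}. The representation of stretched substrands as permutations of coordinate-switches (cf.\ the discussion following \eqref{Stretched}) should yield an estimate essentially of the form $f(n,k) \lesssim \mathsf J \prod_i \binom{l_i}{k_i}^{-1}\, e^{o(n)}$; combined with the probability ratio above, this should produce, for each admissible overlap pattern, a bound $\mathsf J\, f(n,k)\,\PP(\pi,\pi_k) \leq \E(\mathcal N_{n,K,K'}^{\e})^2\, e^{-c n}$ with $c>0$ depending on the pattern. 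Summing over the (polynomial in $n$) number of patterns and over $k$ would then close the estimate.

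The main obstacle is the behaviour of $g(\gamma)$ near the endpoints $\gamma=0$ and $\gamma=1$, where $g(\gamma)\to 1$ and the overlap probability provides no exponential gain. Near $\gamma\to 0$, the constraint $k > 200\,\hat n_K$ forces at least one stretched substrand to carry an overlap fraction bounded away from $0$ (since the average overlap per substrand is of order $200/K$); this restores the exponential gain through $g(\gamma_i)^{l_i}$, and the numerical threshold $200$ is calibrated so that this gain beats the factor $(\ma_{i,\e})^{-k_i}\approx e^{O(k_i\log K)}$ coming from the factorial ratio. Near $\gamma_i\to 1$ the factorial $l_i!/(l_i-k_i)!$ blows up, but the combinatorial bound $\binom{l_i}{k_i}^{-1}$ compensates, and the problem reduces to counting polymers agreeing with $\pi$ on all but a vanishing fraction of edges, whose count is super-exponentially small. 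The choices $K>2\times 10^7$ and $K'>2\log(2)\mathsf LK^2$ are dictated by the requirement that the residual losses $R_{n,K}$ and $S_{n,K,m}$ from Theorem \ref{choosing_dprime} (appearing in the denominator of the target ratio through $\E(\mathcal N_{n,K,K'}^{\e})^2$) remain strictly subdominant to the $e^{-cn}$ gain delivered above; balancing these constants across all overlap regimes is the delicate quantitative task.
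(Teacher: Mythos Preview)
Your broad strategy --- bound $\PP(\pi,\pi_k)$ via the overlap estimate, import a combinatorial bound on $f(n,k)$, and compare against the first moment squared using Theorem~\ref{choosing_dprime} --- is the right one and matches the paper at that level. However, two of your key steps do not go through as stated.

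First, the combinatorial bound you posit, $f(n,k)\lesssim \mathsf J\prod_i\binom{l_i}{k_i}^{-1}e^{o(n)}$, is not what Section~\ref{combinatorial_estimates} delivers and is in fact too optimistic. The paper's Lemma~\ref{r2} gives a bound of a structurally different form, namely
\[
f(n,k)\leq P_n\, n^{Kn^\alpha}\,\tanh\!\Bigl(\mathsf E\bigl(1-\tfrac{k}{\mathsf L_{opt}n}\bigr)\Bigr)^{\max(n-k,\,(\mathsf L_{opt}n-k)/4)}\cosh\!\Bigl(\mathsf E\bigl(1-\tfrac{k}{\mathsf L_{opt}n}\bigr)\Bigr)^n\Bigl(\tfrac{\mathsf L_{opt}n}{e\mathsf E}\Bigr)^{\mathsf L_{opt}n-k},
\]
obtained by re-injecting Stanley's M-bounds into the Fill--Pemantle counts. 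The $\tanh/\cosh$ structure is essential: it is what makes the product with the global $g(k/\mathsf L_{opt}n)^{\mathsf L_{opt}n}$ collapse into the single scalar function $\widehat\Theta$ studied in Lemma~\ref{function}. There is no substrand-product $\prod_i\binom{l_i}{k_i}^{-1}$ available.

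Second, your $\gamma\to0$ argument misidentifies the mechanism. For $k$ just above $200\hat n_K$, the relevant overlap fraction is $k/(\mathsf L_{opt}n)\approx 200/(K\mathsf L_{opt})$, and since $K>10^7$ this is \emph{not} bounded away from $0$; no substrand is forced to carry a macroscopic overlap. What actually saves the day is that $g(\gamma)^{l}\approx e^{-(\log 2)\,k}$ near $\gamma=0$ --- the gain scales with $k$ itself, not with the fraction. The paper packages this (together with the $\tanh/\cosh$ from $f$) into the bound $\widehat\Theta(x)\leq e^{-x/100}$ for $x\leq 1/5$ (Lemma~\ref{function}), which at $k_{\min}=200\hat n_K$ yields $e^{-2n/K}$; this beats $C_{n,K,m}^{-2}\approx e^{\sqrt 2 n/K}$ precisely because $2>\sqrt 2$. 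For $k>\mathsf L_{opt}n/5$ the paper switches mechanism entirely: it uses only $\widehat\Theta\leq 1$ and relies on the $(1+\e_{\mathsf E})^{-k}$ factor coming from Lemma~\ref{prob}, which for $K>\e^{-2}$ dominates $C_{n,K,m}^{-2}$. Relatedly, the paper does \emph{not} apply Lemma~\ref{op} substrand by substrand: it first uses Lemma~\ref{prob} to strip off the $\epsilon$, then collapses all sub-energies by monotonicity into the single event $\{\overline X_m^{K-m+1}(\pi),\overline X_m^{K-m+1}(\pi_k)\leq\mathsf E\}$, and applies Lemma~\ref{op} once globally. This collapsing is what produces a single $g(k/\mathsf L_{opt}n)^{\mathsf L_{opt}n}$ that fits with the global $f(n,k)$ bound.
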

\begin{prop}\label{sum2}
For any $K>2\times 10^7$ and $K'>2\log(2)\mathsf LK^2$, it holds
\beq\bea
\mathsf J \sum_{k=1}^{ 200  \hat n_{K}}f^{(s)}(n,k) \PP\left(\pi, \pi_k^{(s)}\right)=o\left( \E\left({{\mathcal N}_{n, K, K'}^{\e}}\right)^2 \right) ,
\eea\eeq
for $n\to \infty$.
\end{prop}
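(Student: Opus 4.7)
The plan is to factorize the joint tail probability substrand by substrand using looplessness (Remark \ref{remloopless}), apply the overlap bound of Lemma \ref{op} to each stretched substrand where $\pi$ and $\pi'$ share edges, control the combinatorial multiplicity $f^{(s)}(n,k)/\mathsf J$ through the geodesic permutation representation \eqref{Stretched} together with the $K'$-slab refinement \eqref{modif2}, and close the argument by showing that the $g$-factor decay of Lemma \ref{op}, together with the combinatorial scarcity of overlapping paths in the stretched regime, makes the off-diagonal contribution $o\bigl(\E(\mathcal N_{n,K,K'}^\e)^2\bigr)$ once $K$ is taken large enough.

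Writing $k=k_d+k_s$ with $k_s=\sum_{i=m+1}^{K-m}k_i\geq 1$ distributing the stretched overlap across substrands of length $l_i\defi n\mathsf d_i$, independence of the sub-energies and Lemma \ref{op} yield per stretched substrand
\[
\frac{\PP_i(\pi,\pi_k^{(s)})}{\PP_i(\pi)^2}\lesssim\frac{l_i!}{(l_i-k_i)!\,\ma_{i,\e}^{k_i}}\,g\!\left(\frac{k_i}{l_i}\right)^{l_i},
\]
with the directed-phase factor ($k_d\geq 0$) treated exactly as in Proposition \ref{sum1}. After dividing by $\E(\mathcal N_{n,K,K'}^\e)^2=\mathsf J^2\,\PP(\pi)^2$ and using $l_i/\ma_{i,\e}\propto\sqrt 2\,n$ from \eqref{surprise}, the proposition reduces to showing
\[
\sum_{k_s=1}^{200\hat n_K}\;\sum_{(k_i):\sum k_i=k_s}\;\frac{\#\{\pi'\in\mathcal J:|\pi\cap\pi'|=k,\;\text{distribution }(k_i)\}}{\mathsf J}\;\prod_{i=m+1}^{K-m}(\sqrt 2\,n)^{k_i}\,g\!\left(\frac{k_i}{l_i}\right)^{l_i}\longrightarrow 0.
\]

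The required decay will be assembled from three ingredients: the $g$-factor of Lemma \ref{op}, which is strictly below $1$ on $(0,1)$ with minimum at $\gamma=2/3$ and Taylor expansion $\log g(\gamma)=-(\log 2)\gamma+O(\gamma^2)$ near zero, providing $g(k_i/l_i)^{l_i}\leq\exp(-c\,k_i+O(k_i^2/l_i))$; the combinatorial scarcity bound on $f^{(s)}(n,k)/\mathsf J$, obtained via the permutation-of-$l_i$-coordinate-switches representation of geodesic substrands (with cancellation of the $K'$-repulsivity polynomial factors from \eqref{modif2} against the corresponding term in $\mathsf J$ via Theorem \ref{choosing_dprime}); and the exponential growth of $\E(\mathcal N_{n,K,K'}^\e)^2$ coming from the factor $(1+\e_E)^{\sum d_i n}$ in Theorem \ref{choosing_dprime}. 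The main obstacle is the precise combinatorial estimate on $f^{(s)}(n,k)/\mathsf J$: sharing $k_i$ hypercube edges in a stretched substrand is not identical to sharing $k_i$ positions in the associated permutation (the two geodesics must also visit the common vertex at matching intermediate patterns), and the $K'$-subplane consistency constraints couple the per-substrand counts, producing combinatorial interactions that must be disentangled by careful Stirling bookkeeping. The balancing of these three ingredients is what fixes the numerical threshold $K>2\times 10^7$, while the hypothesis $K'>2\log(2)\mathsf LK^2$ is precisely what is needed for the $R_{n,K}$-cancellation between numerator and denominator. A secondary but persistent difficulty is the degenerate small-$k_i$ regime where the $g$-gain nearly vanishes and one must rely instead on the $1/k_i!$ combinatorial scarcity together with the $\mathsf J^{-1}$ normalisation; the opposite extreme $k_i$ close to $l_i$ is ruled out by $k_s\leq 200\hat n_K$ combined with $l_i\propto n/K$ for fixed large $K$.
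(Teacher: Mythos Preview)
Your proposal is an outline rather than a proof, and it has a genuine gap: you explicitly flag the combinatorial estimate on $f^{(s)}(n,k)/\mathsf J$ as ``the main obstacle'' and then never establish it. More seriously, your three ingredients (the per-substrand $g$-factor, permutation scarcity, and the $(1+\e_E)$-growth of the first moment) miss the mechanism that actually drives the result, and without it the bookkeeping cannot close. For a single shared stretched edge ($k_s=1$) your $g$-gain is only $g(1/l_i)^{l_i}\approx 1/2$, and any permutation-based scarcity is at best polynomial in $n$; meanwhile the denominator carries the correction $C_{n,K,m}^{-2}=\exp\bigl(O(n/K)\bigr)$ from Theorem~\ref{choosing_dprime}, which your ingredients cannot absorb.

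The paper proceeds quite differently. It does \emph{not} work substrand by substrand: via Lemma~\ref{prob} it relaxes all sub-energy constraints into the single global event $\overline X_m^{K-m+1}(\pi),\overline X_m^{K-m+1}(\pi_k)\leq\mathsf E$ and applies Lemma~\ref{op} once. The decisive input is then Lemma~\ref{r}, which bounds $f^{(s)}(n,k)$ by the same expression that controls $f(n,k)$ (Lemma~\ref{r2}) times an extra factor $(3/4)^{(m-200)\hat n_K}$. This factor is produced by a $\tanh$-splitting trick: the existence of at least one shared edge in the stretched phase forces every competing path through a bottleneck vertex lying between $H_m$ and the core, and the elementary inequality $\tanh(x)\leq \tfrac34\tanh(x+y)$ for $0<x\leq y\leq\mathsf E$ (applied to the two halves of the Stanley-type product at that bottleneck) yields $(3/4)^{m\hat n_K-k}\leq(3/4)^{(m-200)\hat n_K}$. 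With $m=205$ this gives $(3/4)^{5\hat n_K}$; the numerical check $5\log(3/4)+\sqrt 2<-1/100$ then beats the $C_{n,K,m}^{-2}$ term once $K>2\times10^7$, which is exactly where that threshold comes from. Your per-substrand decomposition also glosses over the fact that $\pi$ and $\pi'$ need not cross the $H$-planes at the same vertices, so the shared-edge combinatorics in the stretched phase is governed by the first/last shared vertices $v_{h(i)}^{\text{fi}},v_{h(i)}^{\text{la}}$ and the directionality dichotomy of Lemma~\ref{rem1}, not by a simple permutation count.
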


The following three sections are devoted to the proofs of the above statements. We anticipate that each proposition/treatment will require a good control of the asymptotics of the $f^{(d)}, f$- and $f^{(s)}$-terms: these will be formulated in the form of Lemmata whose proofs, relying on extremely technical combinatorial estimates, are however postponed to Section \ref{combinatorial_estimates}. 

The reason for tackling the $f$-regime before the $f^{(s)}$-one is that the treatment of the latter will require some technical inputs which are obtained in the analysis of the the former. 

\subsection{Proof of Proposition \ref{sum1}}
The goal is  to prove that
\beq \label{sum1_claim}
\lim_{n\to \infty }\frac{\mathsf J \sum_{k=1}^{ 200  \hat n_{K}}f^{(d)}(n,k) \PP\left( \pi, \pi_k^{(d)}\right)}{\E\left({{\mathcal N}_{n, K, K'}^{\e}}\right)^2 } = 0. 
\eeq
The combinatorial input here is  the following

\begin{lem} \label{comb1}
For all $k\leq 200\hat n_{K}$, one has
\beq\bea
&f^{(d)}(n,k)\leq \frac{\mathsf J (m\hat n_{K}-\lfloor \frac{k}{2} \rfloor)!(n-1-\lceil \frac{k}{2} \rceil)!}{{(m \hat n_{K})!}n!}l(k)\,,
\eea\eeq
where 
\beq \label{defi_l}
l(k) \defi \begin{cases}
32(k+1)^3 & k \leq n^{1/4} \\
16n^{13}(k+1) & \text{otherwise}. 
\end{cases}
\eeq
\end{lem}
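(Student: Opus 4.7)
My plan is to decompose the $k$ shared edges according to whether they sit in the left directed phase (between $\boldsymbol 0$ and $H_m$, excluding the first edge of the polymer) or in the right directed phase (between $H_{K-m}$ and $\boldsymbol 1$, excluding the last edge). Writing $k=k_1+k_2$, the union bound over the $k+1$ possible splits contributes one of the polynomial factors to $l(k)$; for each split I will estimate, separately, the number of $\pi'$ that share exactly $k_1$ edges with $\pi$ on the left and $k_2$ on the right, leaving the middle (stretched) part of $\pi'$ entirely free.

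The key observation is that each directed subpath of length $m\hat n_K$ admits a canonical representation as an ordered sequence $(i_1,\dots,i_{m\hat n_K})$ of distinct coordinates in $[n]$, namely the coordinates flipped step by step. Sharing the $t$-th edge of $\pi$ with $\pi'$ is then equivalent to the two conditions $\{j_1,\dots,j_{t-1}\}=\{i_1,\dots,i_{t-1}\}$ and $j_t=i_t$. Fixing a set of $k_1$ shared positions $t_1<\cdots<t_{k_1}\subset\{2,\dots,m\hat n_K\}$, the number of compatible left-phase sequences $\vec j$ factorises into $\prod_s(t_{s+1}-t_s-1)!$ from the inner gaps between shared positions (where the endpoint-set constraint forces $\vec j$ to permute the same coordinates as $\vec i$), times a tail factor $(n-t_{k_1})!/(n-m\hat n_K)!$ for the portion beyond the last shared position (where $\vec j$ may pick coordinates outside the target set $T_\pi$, corresponding to $\pi'$ terminating at a different vertex of $H_m$). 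Summing over the $\binom{m\hat n_K-1}{k_1}$ position choices and bounding the product of gap-factorials by $(m\hat n_K-k_1)!$ via the convexity of the factorial gives the basic left-phase estimate, and an entirely symmetric argument handles the right phase. Combining the two directed phases with the (unconstrained) stretched middle---which is absorbed into $\mathsf J$ after dividing by the full directed normalisation $n!(m\hat n_K)!/(n-m\hat n_K)!$---produces exactly the prefactor $\mathsf J/[(m\hat n_K)! n!]$ in the target bound.

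The two factorials $(m\hat n_K-\lfloor k/2\rfloor)!$ and $(n-1-\lceil k/2\rceil)!$ emerge by taking the worst split, which after straightforward optimisation is the balanced one $(k_1,k_2)=(\lfloor k/2\rfloor,\lceil k/2\rceil)$: the former is the gap-factorial bound on one directed phase, the latter the tail factor $(n-t_{k_2})!/(n-m\hat n_K)!$ on the other at its worst choice of last shared position. All remaining combinatorial factors (the split multiplicity, the binomial position counts, and the refinements needed when the endpoints of $\pi$ and $\pi'$ on $H_m$ or $H_{K-m}$ differ) are collected into $l(k)$. The two regimes reflect the dichotomy same vs.\ different endpoints: for $k\leq n^{1/4}$ the same-endpoint subcase is sufficient and yields the clean $32(k+1)^3$, while for $k>n^{1/4}$ one must accommodate the different-endpoints subcase, which introduces an extra polynomial factor that I will crudely control by $n^{13}$.

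I expect the main obstacle to be precisely this different-endpoints subcase. There $\vec j$'s tail may sample coordinates from $[n]\setminus T_\pi$, and the interplay between the branching position $t^\star=\max\{t:\text{edge }t\text{ shared}\}$, the choice of the differing coordinates, and their relative orderings requires a careful sub-case split followed by Stirling-type estimates for the falling factorial $(n-t^\star)!/(n-m\hat n_K)!$. It is this bookkeeping, and nothing else, that dictates the specific constants $13$ and $16$ (and the threshold $n^{1/4}$) appearing in $l(k)$; ensuring the resulting bound is uniform in $k$ is the only technically delicate part of the argument.
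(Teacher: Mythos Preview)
Your decomposition $k=k_1+k_2$ and the overall architecture (analyse each directed phase via the permutation representation, leave the stretched middle free, glue back into $\mathsf J$) is exactly what the paper does. The gap is in the path-counting itself. The bound you describe --- sum over the $\binom{m\hat n_K-1}{k_1}$ position sets and majorise the gap-factorial product by log-convexity --- yields only the crude estimate $F(n,k)\le(n-k)!\binom{n}{k}$. That is \emph{not} enough: the binomial $\binom{m\hat n_K-1}{k_1}$ is of order $n^{k_1}$ for small $k_1$ and exponential in $n$ for $k_1\asymp n/K$, so it cannot be absorbed into $l(k)=32(k+1)^3$ nor even into $16n^{13}(k+1)$. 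The paper does not re-derive these counts; it invokes the Fill--Pemantle lemma (stated as Lemma~\ref{path_counting}), which gives the \emph{sharp} estimates
\[
F(n,k)\le(n-k)!(k+1)(1+o(1)),\qquad F^*(n,k)\le(n-k-1)!(k+1)(1+o(1))
\]
for $k\le n^{1/4}$, and $F(n,k)\le(n-k)!\,n^6$ for $k\le\mathfrak n_e$. These refinements over the naive position-sum are non-trivial (they are Lemmas~2.2 and~2.4 of \cite{Fill_Pemantle} and Lemma~6 of \cite{kss1}) and they are precisely what produces the factors $(k_l+1)(k_r+1)\le(k+1)^2$ in the first regime and $n^6\cdot n^6\cdot n=n^{13}$ in the second.

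Your interpretation of the two regimes is also off. The dichotomy $k\le n^{1/4}$ versus $k>n^{1/4}$ has nothing to do with same-versus-different endpoints on $H_m$; both endpoint cases ($d_l=0$ and $d_l>0$ in the paper's notation) are present and handled in \emph{both} regimes. The threshold $n^{1/4}$ is simply the range of validity of the sharp Fill--Pemantle bound \eqref{path_counting_iii}--\eqref{path_counting_iiii}; beyond it one falls back on the weaker $n^6$ bound \eqref{path_counting_ii}. So the ``main obstacle'' is not the different-endpoint bookkeeping you anticipate, but obtaining (or citing) the sharp directed path-counting estimates in the first place.
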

\noindent The proof of this Lemma is postponed to Section \ref{combinatorial_estimates}.
Coming back to the task of proving \eqref{sum1_claim}, by \eqref{first_out'} and \eqref{first_out} we write
\beq\bea\label{23}
\frac{\mathsf J \sum_{k=1}^{ 200  \hat n_{K}}f^{(d)}(n,k) \PP\left( \pi, \pi_k^{(d)}\right)}{\E\left({{\mathcal N}_{n, K, K'}^{\e}}\right)^2 }&=\frac{\mathsf J \sum_{k=1}^{ 200  \hat n_{K}}f^{(d)}(n,k) \prod_{i=m}^{K-m+1}\PP_i\left( \pi, \pi_k^{(d)}\right)}{{\mathsf J}^2 \prod_{i=m}^{K-m+1} \PP_i(\pi)^2}\,,
\eea\eeq
In the considered regime, polymers share no edges but in the directed phase: the probabilities indexed by $i \in \{m+1, \dots, K-m\}$ therefore factor out in virtue of the ensuing independence, and the r.h.s. of \eqref{23} then takes the neater form 
\beq\bea\label{23'}
\sum_{k=1}^{ 200  \hat n_{K}}\frac{f^{(d)}(n,k)\PP_m\left( \pi, \pi_k^{(d)}\right)\PP_{K-m+1}\left( \pi, \pi_k^{(d)}\right) }{\mathsf J  \PP_m\left(\pi\right)^2 \PP_{K-m+1}\left(\pi\right)^2}\,.
\eea\eeq
Now, for $\pi \in \mathcal J$ and $\pi_k^{(d)} \in \mathcal J_\pi^{(d)}(n,k)$, let us denote by $k_l$ the number of common edges between $\boldsymbol 0$ and $H_m$, and by $k_r$ the number of common edges between $H_{K-m}$ and $\boldsymbol 1$ (in which case it evidently holds that $k=k_l+k_r$). By the estimates for the overlap probabilities from Lemma  \ref{op} (using the rough bound $\|g\|_\infty \leq 1$), it steadily follows that
\beq\bea\label{23''}
\PP_m\left( \pi, \pi_k^{(d)}\right)\PP_{K-m+1}\left( \pi, \pi_k^{(d)}\right)\lesssim \frac{{\ma_{m,\e}}^{4(m\hat n_{K}-1)-k}}{(m\hat n_{K}-1-k_l)!(m\hat n_{K}-1-k_r)!(m \hat n_{K}-1)!^2}.
\eea\eeq
We now proceed by worst case scenario and {\it maximize} the r.h.s. over all possible $(k_l, k_r)$-choices. This can be seamlessly identified thanks to the well-known log-convexity of factorials, which we recall is the property that for any $a \geq b \geq j\geq 0$ it holds
\beq \label{log_convexity}
(a+j)!(b-j)! \geq a! b! \,.
\eeq
Using \eqref{log_convexity} with
\beq
a \defi m\hat n_{K}-1-\lfloor \frac{k}{2} \rfloor, \qquad \text{and} \qquad b\defi m\hat n_{K}-1-\lceil {\frac{k}{2}} \rceil,
\eeq 
we see that the worst case on the r.h.s. of  \eqref{23''} is attained in $k_r \in \{\lfloor \frac{k}{2} \rfloor, \lceil \frac{k}{2} \rceil\}$, which is equivalent to $k_l \in \{\lfloor \frac{k}{2} \rfloor, \lceil \frac{k}{2} \rceil\}$ because $k=k_l+k_r$ , hence
\beq\bea\label{23'''}
\PP_m\left( \pi, \pi_k^{(d)}\right)\PP_{K-m+1}\left( \pi, \pi_k^{(d)}\right)  \leq \frac{{\ma_{m,\e}}^{4(m\hat n_{K}-1)-k}}{(m\hat n_{K}-1-\lfloor \frac{k}{2}\rfloor)!(m\hat n_{K}-1-\lceil \frac{k}{2}\rceil)!(m \hat n_{K}-1)!^2}.
\eea\eeq
Using the latter in \eqref{23'}, and by the usual tail estimates, we obtain
\beq\bea\label{24}
\frac{\mathsf J \sum_{k=1}^{ 200  \hat n_{K}}f^{(d)}(n,k) \PP\left( \pi, \pi_k^{(d)}\right)}{\E\left({{\mathcal N}_{n, K, K'}^{\e}}\right)^2 }
&\lesssim \sum_{k=1}^{ 200  \hat n_{K} } \frac{f^{(d)}(n,k)( m  \hat n_{K}-1)!^2}{\mathsf J(m\hat n_{K}-1-\lfloor \frac{k}{2}\rfloor)!(m\hat n_{K}-1-\lceil \frac{k}{2}\rceil)! (\ma_{m,\e})^{k}}.\\
\eea\eeq
To get a handle on the factorials in the r.h.s. above we employ the bound
\beq \label{ineq_fac}
 \frac{( m  \hat n_{K}-1)!^2}{(m\hat n_{K}-1-\lfloor \frac{k}{2}\rfloor)!(m\hat n_{K}-1-\lceil \frac{k}{2}\rceil)!}\leq \frac{( m  \hat n_{K})!^2}{(m\hat n_{K}-\lfloor \frac{k}{2}\rfloor)!(m\hat n_{K}-\lceil \frac{k}{2}\rceil)!}\,,
\eeq 
which can be plainly checked by writing out, and simplifying.  Using \eqref{ineq_fac}, and the combinatorial estimates of Lemma \ref{comb1} for the $f^{(d)}$-term, yields
\beq\bea\label{25}
(\ref{24})&\lesssim \sum_{k=1}^{ 200  \hat n_{K} } \frac{(m\hat n_{K}-\lfloor \frac{k}{2} \rfloor)!(n-1-\lceil \frac{k}{2} \rceil)!l(k)(m  \hat n_{K})!^2}{( m  \hat n_{K} )!n!(m\hat n_{K}-\lfloor \frac{k}{2}\rfloor)!(m\hat n_{K}-\lceil \frac{k}{2}\rceil)!(\ma_{m,\e})^{k}}\\&= \sum_{k=1}^{ 200  \hat n_{K} }\frac{(n-1-\lceil \frac{k}{2} \rceil)!l(k)( m  \hat n_{K} )!}{n!(m\hat n_{K}-\lceil \frac{k}{2}\rceil)!(\ma_{m,\e})^{k}}\,.
\eea \eeq
the second step in virtue of elementary, term by term, simplifications. 

Using  $(a-1)! = a!/a$ for the first factorial-term in the numerator on the r.h.s. above yields 
\beq \bea\label{25.5}
\eqref{25} &= \sum_{k=1}^{ 200  \hat n_{K} }\frac{(n-\lceil \frac{k}{2}\rceil)!l(k)( m  \hat n_{K} )!}{(n-\lceil \frac{k}{2}\rceil)n!( m  \hat n_{K}-\lceil \frac{k}{2}\rceil)!(\ma_{m,\e})^{k}}\\
&\lesssim \sum_{k=1}^{200\hat n_{K} }\frac{l(k)}{(n-\lceil \frac{k}{2}\rceil)} \cdot \frac{(1-\frac{1}{n}\lceil \frac{k}{2}\rceil)^{n-\lceil \frac{k}{2}\rceil}( \frac{m}{K})^{ m  \hat n_{K} }}{{(\frac{m}{K}-\frac{1}{n}\lceil \frac{k}{2}\rceil)}^{(m\hat n_{K}-\lceil \frac{k}{2}\rceil)}}\cdot \frac{1}{(\ma_{m,\e})^{k}}
\eea\eeq
the last inequality by Stirling's approximation. 

We  now focus on the middle term on the r.h.s. above. Omitting the rounding operation, and shortening
\beq
Q(x) \defi (1-x)\log(1-x)-\frac{m}{K}(1-x\frac{K}{m})\log\left(1-x\frac{K}{m} \right),
\eeq
we may rewrite this middle term as 
\beq\bea\label{26}
& \frac{(1-\frac{k}{2n})^{n-\frac{k}{2}}(\frac{m}{K})^{m \hat n_K}}{{(\frac{m}{K}-\frac{k}{2n})}^{( m \hat n_K-\frac{k}{2})}} = \left(\sqrt{\frac{m}{K}}\right)^{k} \exp n Q \left(\frac{k}{n} \right) \,.
\eea\eeq
It is plainly checked that, for $k/n \in [0,1]$, the $Q$-function is in fact {\it negative} (for $K>m$), hence 
\beq\bea\label{26'}
\eqref{26}&\leq \left(\sqrt{\frac{m}{K}}\right)^{k}.
\eea\eeq
By definition, 
\beq
(\ma_{m,\e})^k = \left( \overline{\ma}_{m}(\mathsf E+\epsilon)+\epsilon \right)^k \geq \e^k\,,
\eeq
the inequality by elementary minorization: this, as well as the bound  \eqref{26'}, imply that \eqref{25.5} is {\it at most}
\beq\bea\label{27}
\sum_{k=1}^{ 200 \hat n_{K} }\frac{l(k)}{(n-\frac{k}{2})} \frac{1}{\left(\sqrt{\frac{K}{m}}\epsilon\right)^{k}}
&= \left( \sum_{k=1}^{n^{\frac{1}{4}}}+\sum_{k=n^{\frac{1}{4}}+1}^{ 200 \hat n_{K}} \right) \frac{l(k)}{(n-\frac{k}{2})} \frac{1}{\left(\sqrt{\frac{K}{m}}\epsilon \right)^{k}}.\\
\eea\eeq
If we now take $K$ large enough such that $\sqrt{\frac{K}{m}}\epsilon>1$, to wit:
\beq \label{27}
K>m\e^{-2},
\eeq
and recalling the definition of $l(k)$ as in \eqref{defi_l}, we obtain
\beq\bea\label{28}
(\ref{27})&\lesssim \frac{1}{(n-n^{\frac{1}{4}})}\sum_{k=1}^{n^{\frac{1}{4}}}\frac{(k+1)^3}{(\sqrt{\frac{K}{m}}\epsilon)^{k}}+\sum_{k=n^{\frac{1}{4}}+1}^{ 200 \hat n_{K}  }n^{13} \frac{(n+1)}{(\sqrt{\frac{K}{m}}\epsilon)^{k}}\,.
\eea\eeq
The first sum on the r.h.s is, in the large-$n$ limit, obviously convergent: its contribution therefore vanishes in virtue of the $(n-n^{1/4})$-normalization. The second sum converges exponentially fast to $0$. All in all, the r.h.s. of \eqref{28} tends to $0$ as $n\to \infty$: this settles the proof of claim \eqref{sum1_claim}, and therefore of Proposition \ref{sum1}.\\
${} \hfill \square$

\subsection{Proof of Proposition \ref{sum3}}
We will need here two technical inputs. The first one is similar in nature to Lemma \ref{op}, and provides tail-estimates for the energies of overlapping polymers. As the proof is short and elementary, it will be given right away. 

\begin{lem}\label{prob}
Consider independent  standard exponentials  $\{\xi_i\}$, and let $X_l \defi \sum_{i=1}^l \xi_i$.
Denote by $X'_l$ the sum of $l$ such $\xi$-exponentials, and assume that $X'_l$ shares exactly k edges with $X_l$. 
Then, for $a, b>0$, it holds:
\beq\bea\label{30}
&\PP\left( X_l \leq a+b,\; X_l'\leq a+b\right)\lesssim \PP\left( X_l \leq a,\;  X_l' \leq a\right)\left(1+\frac{b}{a}\right)^{2l-k}\,.
\eea\eeq
\end{lem}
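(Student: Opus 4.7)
The plan is to compare numerator and denominator via the same conditioning on the common trunk already used in the proof of Lemma \ref{op}. Writing the shared exponentials as $T = \sum_{i=1}^k \xi_i \sim \Gamma(k,1)$, conditioning on $T = t$ turns both probabilities into integrals of the same shape:
\[
\PP(X_l \leq x,\, X_l' \leq x) \;=\; \int_0^x \PP(X_{l-k} \leq x-t)^2 \,\frac{t^{k-1} e^{-t}}{(k-1)!}\,dt,
\]
for $x = a+b$ and for $x = a$ respectively. The whole point is that, once one bounds the integrand on the two sides asymmetrically, the factorial prefactors align perfectly.

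For the numerator I would use the trivial upper bound $\PP(X_{l-k}\leq y) \leq y^{l-k}/(l-k)!$ (drop $e^{-y}$ in the Gamma density) together with $e^{-t}\leq 1$, reducing the integral to the elementary Beta integral $\int_0^{a+b}(a+b-t)^{2(l-k)}t^{k-1}dt = (a+b)^{2l-k}(k-1)!(2(l-k))!/(2l-k)!$. For the denominator I would conversely use the matching lower bound $\PP(X_{l-k}\leq y) \geq e^{-y}y^{l-k}/(l-k)!$ and note that $e^{-2(a-t)}e^{-t}=e^{-2a+t}\geq e^{-2a}$ on $[0,a]$. The identical Beta integral evaluates to $a^{2l-k}(k-1)!(2(l-k))!/(2l-k)!$. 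Dividing the two bounds, the combinatorial prefactor $(2(l-k))!/\{(2l-k)![(l-k)!]^2\}$ cancels exactly, leaving
\[
\frac{\PP(X_l\leq a+b, X_l'\leq a+b)}{\PP(X_l\leq a, X_l'\leq a)} \;\leq\; e^{2a}\left(1+\frac{b}{a}\right)^{2l-k},
\]
which is the claim once the $e^{2a}$ is absorbed into the $\lesssim$.

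The only subtle point — and the closest thing to an obstacle — is that this absorption is legitimate only as long as $a$ stays bounded. In every application of the lemma that appears later, $a$ is a sub-energy of a candidate optimal polymer, so $a \leq \mathsf{a}_{i,\e}$ or $a \leq \tilde{\mathsf a}_{m,\e}$ and hence uniformly bounded in terms of $\mathsf E$ and $\e$ alone. Thus the constant $e^{2a}$ is a genuine numerical constant independent of $l$, $k$, and $n$, so the implicit $\lesssim$ in the conclusion is uniform in those parameters — which is exactly what is needed when the lemma is inserted into the second-moment estimates of the next section.
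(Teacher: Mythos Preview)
Your proof is correct and follows essentially the same route as the paper: both arguments rest on the conditioning-on-the-common-trunk integral and the resulting Beta evaluation. The paper packages this by invoking Lemma~\ref{op} twice (once at level $a+b$ for the upper bound, once at level $a$ via the reverse direction of the $\propto$), whereas you unwind the integral directly and do the upper/lower bounds by hand. Your version is in fact more transparent about the hidden $e^{2a}$ factor---the paper's $\propto$ in Lemma~\ref{op} conceals exactly the same dependence---and your remark that $a$ is uniformly bounded by $\mathsf E+\epsilon$ in all applications is the correct justification for absorbing it into the $\lesssim$.
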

\begin{proof} 
Recalling that 
\beq
g(\gamma) \defi \frac{{\left\{ 4(1-\gamma)\right\}}^{1-\gamma}}{{\left\{2-\gamma\right\}}^{2-\gamma}},
\eeq 
by Lemma \ref{op}, it holds
\beq\bea\label{31}
&\PP\left( X_l \leq a+b,\; X_l'\leq a+b\right) \lesssim  \frac{\left(a+b\right)^{2l-k}}{(l-k)!l!}g\left(\frac{k}{l}\right)^{l}\,.
\eea\eeq
Using that $\left(a+b\right)^{2l-k} =a^{2l-k} \left(1+\frac{b}{a} \right)^{2l-k}$, we rephrase the r.h.s. of \eqref{31}, to wit
\beq\bea \label{pluplu}
\PP\left( X_l \leq a+b,\; X_l'\leq a+b\right) &\lesssim \frac{a^{2l-k}}{(l-k)!l!}g\left(\frac{k}{l}\right)^{l}{\left(1+\frac{b}{a}\right)}^{2l-k}.
\eea \eeq
Again by Lemma \ref{op}, for the first two terms on the r.h.s. above we have that
\beq \bea
\frac{a^{2l-k}}{(l-k)!l!}g\left(\frac{k}{l}\right)^{l} \lesssim \PP\left( X_l\leq a,\; X_l' \leq a\right)\,,
\eea\eeq
and plugging this in \eqref{pluplu} yields the claim of the Lemma. 
\end{proof}
The second technical input concerns the asymptotic of the $f$-terms. Here and below, we will denote by $P_n, Q_n$ finite degree polynomials, not necessarily the same at different occurences, and which depend on the hypercube dimension only. 

\begin{lem}\label{r2}
For all $k\leq \mathsf L_{opt}n$, it holds
\beq\bea
&f(n,k) \leq {\tanh\left( \mathsf E\left\{1-\frac{k}{\mathsf L_{opt}n}\right\}\right)}^{\max \left(n-k,\frac{\mathsf L_{opt}n-k}{4} \right)}\\
& \hspace{3cm} {\cosh\left( \mathsf E\left\{1-\frac{k}{\mathsf L_{opt}n}\right\}\right)}^{n}{\left(\frac{\mathsf L_{opt}n}{e \mathsf E}\right)}^{\mathsf L_{opt}n-k} n^{Kn^{\alpha}}P_n,
\eea\eeq
where $P_n$ is polynomial with finite degree and $\alpha \defi \frac{5}{6}$.
\end{lem}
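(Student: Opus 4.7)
The plan is to fix $\pi\in\mathcal J$ and estimate the number $f_\pi(n,k)$ of $\pi'\in\mathcal J$ sharing $k$ non-endpoint edges with $\pi$ uniformly. Using $\mathcal J\subset\mathcal P_{n,K,K'}^{\sf rep}$ I would reconstruct a generic such $\pi'$ as a concatenation of the $k$ \emph{shared edges} with a collection of \emph{detour subpaths} that bypass $\pi$ between consecutive shared-edge positions. Indexing the detours by $i=1,\dots,s$ and writing their lengths $l_i$ and endpoint-to-endpoint Hamming distances $d_i$, one has $\sum_i l_i=\mathsf L_{opt}n-k$ and $\sum_i d_i = d$ for the aggregate Hamming displacement $d$. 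Stanley's M-bound \eqref{sf} applied to each detour at the common point $x\defi \mathsf E\bigl(1-k/(\mathsf L_{opt}n)\bigr)$ gives $M_{n,l_i,d_i}\leq \sinh(x)^{d_i}\cosh(x)^{n-d_i}\,l_i!/x^{l_i}$, and multiplying across detours produces the global factor $\tanh(x)^{d}\cosh(x)^{n}\,\prod_i l_i!/x^{l_i}$. The choice of $x$ is made precisely so that, via elementary Stirling, $\prod_i l_i!/x^{l_i}$ becomes $\bigl(\mathsf L_{opt}n/(e\mathsf E)\bigr)^{\mathsf L_{opt}n-k}$ up to a polynomial factor absorbed into $P_n$.

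The exponent $\max\bigl(n-k,(\mathsf L_{opt}n-k)/4\bigr)$ of $\tanh(x)\in(0,1)$ -- recall that a larger exponent on a number in $(0,1)$ sharpens the bound -- will follow from two independent lower bounds on $d$. First, both $\pi$ and $\pi'$ run from $\boldsymbol 0$ to $\boldsymbol 1$, and the shared edges flip at most $k$ distinct coordinates, so the detours must jointly flip at least $n-k$ coordinates, giving $d\ge n-k$. Second, by {\sf Insight \ref{geodesics}} and the stretched structure encoded in \eqref{b-f}--\eqref{elementaire}, within each slab of $\mathcal P_{n,K,K'}^{\sf rep}$ the backstep fraction $\mathsf{eb}_i/\mathsf d_i$ is bounded away from $1/2$; consequently a detour of length $l_i$ realises a Hamming displacement of at least $\simeq l_i/4$, and summing over $i$ gives $d\ge (\mathsf L_{opt}n-k)/4$. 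Taking the larger of the two bounds yields the desired exponent.

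The remaining task, which I expect to be the main obstacle, is the combinatorial accounting behind the $n^{Kn^\alpha}$-factor ($\alpha=5/6$): namely the enumeration of admissible \emph{placement patterns} -- where on $\pi$ the $k$ shared edges can lie and where the detour endpoints land on the coarse-grain hyperplanes $H_i$ and $H'_{i,j}$ of \eqref{defi_pkkprime}. My plan is to exploit the repulsivity of the $H$-planes (Lemma \ref{rem}) to confine detour endpoints to well-defined strata, and then perform a slab-by-slab multinomial count constrained by the prescribed $(\mathsf{ef}_i,\mathsf{eb}_i)$-counts from \eqref{elementaire}. A careful splitting between short detours (carrying negligible exponential content but combinatorially expensive) and long detours (already controlled by Stanley), together with absorbing irrelevant polynomial prefactors into $P_n$, should yield the stated subexponential correction $n^{Kn^\alpha}$. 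Assembling this configuration count with the Stanley--Stirling estimate from the first paragraph then produces the claimed inequality.
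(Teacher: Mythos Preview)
Your outline contains a genuine gap in the very first step. If you apply Stanley's bound \eqref{sf} to each detour at a \emph{common} point $x=\mathsf E(1-k/(\mathsf L_{opt}n))$, then multiplying over the $s$ detours gives
\[
\prod_{i=1}^s M_{n,l_i,d_i}\;\le\;\tanh(x)^{d}\,\cosh(x)^{\,sn}\,\prod_i \frac{l_i!}{x^{l_i}},
\]
and \emph{not} $\tanh(x)^d\cosh(x)^n$ as you write. Since your detours sit between consecutive shared edges, $s$ can be as large as $k+1$, and the extra factor $\cosh(x)^{(s-1)n}$ is exponentially huge and destroys the bound. The paper avoids this by choosing a \emph{different} $x_i\propto l_i$ for each piece (see \eqref{stima_uno}--\eqref{e8''}) and then invoking the supermultiplicativity $\prod_i\cosh(y_i)\le\cosh\bigl(\sum_i y_i\bigr)$, cf.\ \eqref{e9}; only after this does the single power $\cosh(\cdot)^n$ emerge. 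This also forces a coarser decomposition: the paper groups the shared edges slab-by-slab (at most $K$ groups), not edge-by-edge, so that the number of pieces stays bounded by $O(K)$ and the polynomial/Stirling corrections remain under control.

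Two further points. First, your decomposition tacitly assumes $\pi'$ traverses the shared edges in the same order as $\pi$; in the stretched phase this can fail, and the paper needs Lemma \ref{rem1} (same directionality/order \emph{or} its full reversal) plus the $\boldsymbol\sigma$-sum to handle it. Second, you misattribute the subexponential factor $n^{Kn^\alpha}$: it does \emph{not} come from the placement-pattern count (those sums in \eqref{somme2}--\eqref{somme1} contribute only a genuine polynomial, absorbed in $P_n$). The $n^{Kn^\alpha}$ arises instead from the Fill--Pemantle directed path-counting bound $F(n,k)\le (n-k)!\binom{n}{\mathfrak n_e}$ applied \emph{within each slab} to the stretched segment between the first and last shared edge there (see \eqref{Fnk}--\eqref{esterror}). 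Your lower bound $d\ge(\mathsf L_{opt}n-k)/4$ is the right target, but the argument via ``backstep fraction bounded from $1/2$'' is far too soft; in the paper this is Lemma \ref{distance}, and its proof requires the repulsive/fuzzy-zone case analysis based on the $K'$-refinement.
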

The proof of this Lemma is also postponed to Section \ref{combinatorial_estimates}: here we will  use it for the

\begin{proof}[Proof of Proposition \ref{sum3}] By  \eqref{first_out'}, it holds that
\beq\bea\label{129}
\frac{\mathsf J \sum_{k=200 \hat n_{K}+1}^{ \mathsf L_{opt}n-2}f(n,k) \PP\left(\pi, \pi_k\right)}{\E\left({\mathcal N}_{n, K, K'}^{\e}\right)^2}=\frac{\mathsf J \sum_{k=200 \hat n_{K}+1}^{ \mathsf L_{opt}n-2}f(n,k) \prod_{i=m}^{K-m+1}\PP_i\left(\pi, \pi_k\right)}{\E\left({\mathcal N}_{n, K, K'}^{\e}\right)^2}\,.
\eea\eeq
We claim that the r.h.s. of \eqref{129} converges to $0$ as $n \to \infty$. To see this, some notation is needed: given two paths $\pi,\pi' \in \mathcal J$ which share $k$ edges, we denote by 
\begin{itemize}
\item $k_l$ the number of common edges between $\boldsymbol 0$ and $H_m$, 
\item $k_m$ the number of common edges between $H_m$ and $H_{K-m}$, 
\item $k_r$ the number of shared edges between $H_{K-m}$ and $\boldsymbol 1$. 
\end{itemize}
It clearly holds that $k=k_l+k_m+k_r$.  Using Lemma \ref{prob}, we obtain
\beq\bea\label{132}
\prod_{i=m}^{K-m+1}\PP_i\left(\pi, \pi_k\right) & \lesssim \PP\left( X_m(\pi), X_m(\pi_k)\leq \overline{\ma}_m \mathsf E \right)\times \\
& \hspace{2cm} \times  \prod_{i=m+1}^{K-m} \PP\left( X_{i-1,i}(\pi), X_{i-1,i}(\pi_k)\leq \ma_i \mathsf E \right) \times \\
& \hspace{1cm} \times \PP\left( X_{K-m+1}(\pi), X_{K-m+1}(\pi_k)\leq \overline{\ma}_m \mathsf E \right) \times \\ 
& \hspace{2cm} \times (1+\e_{\mathsf E})^{2\sum_{i=m+1}^{K-m}\mathsf nd_i-k_m}(1+\e_{m, \mathsf E} )^{4m\hat n_{K}-2-k_l-k_r}.
\eea\eeq
By definition of $\e_{m, \mathsf E}$ and $\e_{\mathsf E}$, see \eqref{defi_eee}, the following lower bound plainly holds  
\beq\bea\label{133}
1+\e_{m, \mathsf E} &\geq 1+\e_{\mathsf E}\,.
\eea\eeq
Using the independence of sub-energies we rewrite 
\beq\bea \label{133min}
&\PP\left( X_m(\pi), X_m(\pi_k)\leq \overline{\ma}_m \mathsf E \right)\times \\
& \hspace{2cm} \times  \prod_{i=m+1}^{K-m} \PP\left( X_{i-1,i}(\pi), X_{i-1,i}(\pi_k)\leq \ma_i \mathsf E \right) \times \\
& \hspace{1cm} \times \PP\left( X_{K-m+1}(\pi), X_{K-m+1}(\pi_k)\leq \overline{\ma}_m \mathsf E \right) \\
& = \PP\Bigg( X_m(\pi), X_m(\pi_k)\leq \overline{\ma}_m \mathsf E, \\
& \hspace{3cm} X_{i-1,i}(\pi), X_{i-1,i}(\pi_k)\leq \ma_i \mathsf E, \; i=m+1\dots K-m, \\
& \hspace{2cm}  X_{K-m+1}(\pi), X_{K-m+1}(\pi_k)\leq \overline{\ma}_m \mathsf E \Bigg)\,.
\eea\eeq
Since $\sum_{i=1}^K \mathsf a_i =1$, and by monotonicity of the probabilities, the r.h.s. of \eqref{133min} is {\it at most} 
\beq \label{133bis}
\PP\left( \overline{X}_{m}^{K-m+1}(\pi), \overline{X}_{m}^{K-m+1}\left( \pi_k\right) \leq \mathsf E \right).
\eeq
Using \eqref{133} and \eqref{133bis} in \eqref{132} thus yields
\beq\bea \label{133'}
\prod_{i=m}^{K-m+1}\PP_i\left(\pi, \pi_k\right) & \leq \PP\left( \overline{X}_{m}^{K-m+1}(\pi), \overline{X}_{m}^{K-m+1}\left( \pi_k\right) \leq \mathsf E \right) \times \\
& \hspace{1cm} \times \frac{(1+\e_E)^{2\sum_{i=m+1}^{K-m} \mathsf nd_i}(1+\e_{\overline{\ma}_m,E} )^{4 m \hat n_{K}-2}}{(1+\e_{\mathsf E})^k}\,,
\eea\eeq
which no longer depends on $k_l,k_r, k_m$, but only on their total sum. Using Lemma \ref{op} in \eqref{133'} we thus obtain 
\beq\bea\label{134} 
\prod_{i=m}^{K-m+1}\PP_i\left(\pi, \pi_k\right) \lesssim&\frac{\mathsf E^{2 \mathsf L_{opt}n-2-k}g(\frac{k}{\mathsf L_{opt}n-2})^{\mathsf L_{opt}n-2}}{(\mathsf L_{opt}n-2)!(\mathsf L_{opt}n-2-k)!}\frac{(1+\e_{\mathsf E})^{2\sum_{i=m+1}^{K-m} \mathsf nd_i}(1+\e_{m, \mathsf E} )^{4m \hat n_{K}-2}}{(1+\e_{\mathsf E})^k}.
\eea\eeq

We now come back to \eqref{129}: using the lower bound to the first moment of ${\mathcal N}_{n, K, K'}^{\e}$ established in Theorem \ref{choosing_dprime} for the denominator, and \eqref{134} for the numerator,  we see that
\beq\bea\label{135'}
&\eqref{129}\leq \frac{P_n^2}{Q_n^2}\mathsf J \sum_{k=200 \hat n_{K}+1}^{ \mathsf L_{opt}n-2} \frac{f(n,k) {\mathsf E}^{2\mathsf L_{opt}n-2-k}{g(\frac{k}{\mathsf L_{opt}n-2})^{\mathsf L_{opt}n-2}}
}{{(1+\e_{E})}^{k} (\mathsf L_{opt}n-2)!(\mathsf L_{opt}n-2-k)! C_{n,K,m}^2 }.\\
\eea\eeq
(Recall the convention that $P_n$ stands for some finite degree polynomial, not necessarily the same at different occurences).
It is immediate to check that the following inequality holds 
\beq \label{1gleft}
g\left(\frac{k}{ \mathsf L_{opt}n-2} \right)^{\mathsf L_{opt}n-2}<g\left(\frac{k}{ \mathsf L_{opt}n}\right)^{\mathsf L_{opt}n} P_n\,.
\eeq 
 Furthermore, 
\beq \label{1ohoh}
(\mathsf L_{opt}n-2)! = \frac{(\mathsf L_{opt}n)!}{(\mathsf L_{opt}n) (\mathsf L_{opt}n-1) } = \frac{(\mathsf L_{opt}n)!}{P_n},
\eeq
where $P_n$ is a polynomial of finite (quadratic) degree, and analogously 
\beq \label{1ohohoh}
(\mathsf L_{opt}n-2-k)!  = \frac{(\mathsf L_{opt}n- k)!}{P_n}.
\eeq
Using \eqref{1gleft}, \eqref{1ohoh}, and \eqref{1ohohoh}, we thus see that 
\beq\bea\label{135}
&\eqref{135'} \leq \frac{P_n}{Q_n}\mathsf J \sum_{k=200 \hat n_{K}+1}^{ \mathsf L_{opt}n-2} \frac{f(n,k) {\mathsf E}^{2\mathsf L_{opt}n-k}{g(\frac{k}{\mathsf L_{opt}n})^{\mathsf L_{opt}n}}
}{{(1+\e_{E})}^{k} (\mathsf L_{opt}n)!(\mathsf L_{opt}n-k)! C_{n,K,m}^2 }\,,
\eea\eeq
for some (modified, but still finite degree) polynomials $P_n,Q_n$. 

The inclusion $\mathcal J \subset \Pi_{n,\mathsf L_{opt}n}$ holds by construction, hence
\beq\bea\label{136}
\mathsf J  \leq M_{n,\mathsf L_{opt}n}\leq {\sinh(\mathsf E)}^{n}\frac{(\mathsf L_{opt}n)!}{ \mathsf E^{\mathsf L_{opt}n}}=\frac{(\mathsf L_{opt}n)!}{ \mathsf E^{\mathsf L_{opt}n}}\,,
\eea\eeq
the second inequality by Stanley's M-bound \eqref{sf}  with $x := \mathsf E$, and the last step since $\mathsf E$ satisfies $\sinh(\mathsf E)=1$. Plugging \eqref{136} into \eqref{135}, we obtain 
\beq\bea\label{137'}
(\ref{135})&\leq \frac{P_n}{C_{n,K,m}^2 Q_n} \sum_{k=200 \hat n_{K}+1}^{ \mathsf L_{opt}n-2}\frac{f(n,k) {\mathsf E}^{\mathsf L_{opt}n-k}{g(\frac{k}{\mathsf L_{opt}n})^{\mathsf L_{opt}n}}}{{(1+\e_{E})}^{k}(\mathsf L_{opt}n-k)!}\\
& \leq \frac{P_n}{C_{n,K,m}^2Q_n} \sum_{k=200 \hat n_{K}+1}^{ \mathsf L_{opt}n} \frac{f(n,k) {(e \mathsf E)}^{\mathsf L_{opt}n-k}{g(\frac{k}{\mathsf L_{opt}n})^{\mathsf L_{opt}n}}}{{(1+\e_{E})}^{k}(\mathsf L_{opt}n-k)^{\mathsf L_{opt}n-k}},
\eea\eeq
the last inequality by Stirling's approximation, and extending the sum up to $\mathsf L_{opt}n$ (the terms are positive anyhow). The estimates of Lemma \ref{r2} applied to \eqref{137'} yield
\beq\bea\label{137}
&\eqref{137'}\leq \frac{n^{Kn^{\alpha}}P_n}{C_{n,K,m}^2 Q_n} \sum_{k=200 \hat n_{K}+1}^{ \mathsf L_{opt}n}\Bigg[ {\tanh\left( \mathsf E\left\{1-\frac{k}{\mathsf L_{opt}n}\right\}\right)}^{\max \left(n-k,\frac{\mathsf L_{opt}n-k}{4} \right)}\times \\
& \hspace{7cm} \times \frac{{\cosh\left( \mathsf E\left\{1-\frac{k}{\mathsf L_{opt}n}\right\}\right)}^{n}g\left(\frac{k}{\mathsf L_{opt}n}\right)^{\mathsf L_{opt}n}}{{(1+\e_{\mathsf E})}^{k}{\left(1-\frac{k}{\mathsf L_{opt}n}\right)}^{\mathsf L_{opt}n-k}} \Bigg].
\eea\eeq
Recalling the definition \eqref{defi_ggg} of the $g$-function, one plainly checks that 
\beq \bea \label{1simpler_g}
\frac{g\left(\frac{k}{\mathsf L_{opt}n}\right)^{\mathsf L_{opt}n}}{
\left(1-\frac{k}{\mathsf L_{opt}n}\right)^{\mathsf L_{opt}n-k} } & = \left[ 
\frac{4^{1 - \frac{k}{\mathsf L_{opt} n} }}{\left( 2- \frac{k}{\mathsf L_{opt} n} \right)^{2- \frac{k}{\mathsf L_{opt} n} }} \right]^{\mathsf L_{opt} n}\,.
\eea \eeq
We lighten notation by setting, for $x\in [0,1]$, 
\beq \label{1teta}
\widehat \Theta(x) \defi \frac{ 4^{1-x}  }{(2-x)^{2-x}} {\tanh\left( \mathsf E\left\{1-x\right\}\right)}^{\max \left(\frac{1}{\mathsf L_{opt}}-x,\frac{1-x}{4} \right)} {\cosh\left( \mathsf E\left\{1-x\right\}\right)}^{\frac{1}{\mathsf L_{opt}}}\,.
\eeq
With this notation, the r.h.s. of \eqref{137}  then reads
\beq\bea\label{140}
& \frac{n^{Kn^{\alpha}}P_n}{C_{n,K,m}^2Q_n} \sum_{k=200 \hat n_{K}+1}^{ \mathsf L_{opt}n} \frac{1}{{(1+\e_{E})}^{k}} \widehat \Theta \left(\frac{k}{\mathsf L_{opt}n} \right)^{ \mathsf L_{opt}n}   \\
& \qquad\qquad =\frac{n^{Kn^{\alpha}}P_n}{C_{n,K,m}^2Q_n} \left(\sum_{k= 200 \hat n_{K}+1}^{\frac{\mathsf L_{opt}n}{5}}+\sum_{k=\frac{\mathsf L_{opt}n}{5}+1}^{\mathsf L_{opt}n} \right) \frac{1}{{(1+\e_{E})}^{k}} \widehat \Theta\left(\frac{k}{\mathsf L_{opt}n} \right)^{ \mathsf L_{opt}n} \\
&\qquad \qquad =: (A)+(B),
\eea\eeq
say. In order to prove that these two terms vanish as $n \uparrow \infty$, we need the following

\begin{lem}\label{function} It holds:
\beq\bea\label{1f2}
\sup_{x \leq 1} \; \widehat \Theta(x) \leq 1\,.
\eea\eeq
Furthermore, for $x \leq \frac{1}{5}$,
\beq\bea\label{1f3}
\widehat \Theta(x) \leq \exp\left({-\frac{x}{100}}\right)\,.
\eea\eeq
\end{lem}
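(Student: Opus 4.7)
The plan is to analyze $\ell(x) \defi \log\widehat\Theta(x)$ on $[0,1]$, treating the two branches of the $\max$ in $e(x) = \max(1/\mathsf L_{opt}-x,(1-x)/4)$ separately. Setting the crossover $x_{\circ} \defi (4/\mathsf L_{opt}-1)/3$ and using the identity $\tanh^{a}\cosh^{b} = \sinh^{a}\cosh^{b-a}$, the first move is to rewrite
$$\widehat\Theta(x) = \frac{4^{1-x}}{(2-x)^{2-x}}\sinh(\mathsf E(1-x))^{1/\mathsf L_{opt}-x}\cosh(\mathsf E(1-x))^{x} \qquad (x\le x_{\circ}),$$
$$\widehat\Theta(x) = \frac{4^{1-x}}{(2-x)^{2-x}}\sinh(\mathsf E(1-x))^{(1-x)/4}\cosh(\mathsf E(1-x))^{1/\mathsf L_{opt}-(1-x)/4} \qquad (x\ge x_{\circ}),$$
the two expressions agreeing at $x=x_{\circ}$. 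Boundary values are then immediate: at $x=0$ the prefactor is $4/2^{2}=1$ and $\sinh(\mathsf E)^{1/\mathsf L_{opt}}\cosh(\mathsf E)^{0}=1$ since $\sinh(\mathsf E)=1$, giving $\widehat\Theta(0)=1$; and at $x=1$ the three factors become $1$, $0^{0}=1$, and $1^{1/\mathsf L_{opt}}=1$, giving $\widehat\Theta(1)=1$.

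For the decay bound \eqref{1f3} on $[0,1/5]$, which lies inside branch (a) once $K$ is sufficiently large, I would differentiate the first displayed formula at $x=0$ and use $\tanh(\mathsf E)=1/\sqrt 2$, $\coth(\mathsf E)=\sqrt 2$, $\cosh(\mathsf E)=\sqrt 2$ to obtain
$$\ell'(0) = -\tfrac12\log 2 + 1 - \sqrt{2}\,\mathsf E/\mathsf L_{opt}.$$
By \eqref{diff_length} one has $\mathsf L_{opt}\le \mathsf L = \sqrt 2 \mathsf E$, hence $\sqrt 2 \mathsf E/\mathsf L_{opt}\ge 1$, so $\ell'(0)\le -\tfrac12\log 2\approx -0.35$, comfortably below $-1/100$. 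Since $\ell$ is smooth on $[0,1/5]$ with all hyperbolic functions bounded away from their singular values, a direct evaluation of $\ell'$ at $x=1/5$ (or, alternatively, a crude sup bound on $\ell''$ over this compact interval) upgrades the pointwise bound at $0$ to $\ell'(x)\le -1/100$ throughout $[0,1/5]$ (provided $K$ is large enough that $\mathsf L_{opt}$ is close to $\mathsf L$), and integrating from $0$ delivers \eqref{1f3}.

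For the global bound \eqref{1f2} I would combine the boundary identities with a uniqueness-of-extremum analysis for $\ell$. The goal is to show that $\ell'$ has exactly one zero in $(0,1)$: Step~2 already gives $\ell'(0)<0$, while a symmetric computation on branch (b) yields $\ell'(x)\to +\infty$ as $x\uparrow 1$, since the term $\tfrac{1-x}{4}\log\sinh(\mathsf E(1-x))$ contributes a $(1-x)\log(1-x)$-singularity whose $x$-derivative blows up. Given continuity of $\ell'$ across the crossover $x_{\circ}$, a sign-change analysis of $\ell'$ on each of the two branches (either via convexity of $\ell$ on each branch, verified by explicit computation of $\ell''$, or by direct monotonicity) forces $\ell$ to be first decreasing, then increasing, with $\ell(0)=\ell(1)=0$, and hence $\ell\le 0$ on $[0,1]$.

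The hard part is the global bound: a numerical check gives $\widehat\Theta(3/4)\approx 0.99$, so the margin above $1$ is thin near the crossover $x_{\circ}$, and the analysis must be correspondingly tight there. A second delicate point is the $0^{0}$-type behavior of $\sinh(\mathsf E(1-x))^{(1-x)/4}$ as $x\uparrow 1$ on branch (b): extracting the dominant $(1-x)\log(1-x)$ contribution and carefully tracking it in $\ell'$ is essential for certifying $\ell'(1^{-})>0$, and any failure of convexity of $\ell$ on a sub-interval would force a more technical direct sign-change argument for $\ell'$.
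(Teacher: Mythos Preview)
Your outline has the right shape, and the computation $\ell'(0)=-\tfrac12\log 2+1-\sqrt{2}\,\mathsf E/\mathsf L_{opt}\le -\tfrac12\log 2$ is correct. But there is a concrete error in the plan for \eqref{1f2}: you assert that $\ell'$ is continuous across the crossover $x_\circ$, and it is not. Writing $e(x)=\max(1/\mathsf L_{opt}-x,(1-x)/4)$, the term $e(x)\log\tanh(\mathsf E(1-x))$ contributes $e'(x)\log\tanh(\mathsf E(1-x))$ to $\ell'$; since $e'$ jumps from $-1$ to $-1/4$ at $x_\circ$ and $\log\tanh<0$, one has
\[
\ell'(x_\circ^{+})-\ell'(x_\circ^{-})=\tfrac34\,\log\tanh\bigl(\mathsf E(1-x_\circ)\bigr)<0,
\]
a \emph{downward} jump. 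This is precisely the wrong direction for a single negative-to-positive sign change: unless you separately establish $\ell'(x_\circ^{-})\le 0$, the jump could create a second sign change and the unimodality argument collapses. Combined with the fact that your convexity claim ``on each branch'' is only asserted and not checked (and that you yourself flag $\widehat\Theta(3/4)\approx 0.99$ as delicate), the global bound is not proved.

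The paper's route is different and sidesteps the kink entirely. For \eqref{1f3} it does not differentiate: it uses the chord inequalities $\sinh(\mathsf E(1-x))\le 1-x$ and $\cosh(\mathsf E(1-x))\le \sqrt2+(1-\sqrt2)x$, then algebraic manipulation (factoring $1-x\le(1-x/2)^{2}$) to reach $\widehat\Theta(x)\le(3/4)^{x}(1-x/2)^{-2(1-1/\mathsf L_{opt})}$, from which the exponential bound follows by elementary numerics. For \eqref{1f2} it first replaces $\mathsf L_{opt}$ by the numerical envelopes $1.25$ and $1.24$ to define $K$-independent majorants $g_{1},g_{2}$ of the two branches, and then proves \emph{log-convexity} of each $g_{i}$ on overlapping sub-intervals $[0.12,0.73]$ and $[0.71,1]$ by writing $(\log g_{i})''$ as a sum of increasing functions and checking positivity at the left endpoint; together with numerical checks $g_{1}(0.12),g_{1}(0.73),g_{2}(0.71)\le 1$ and $g_{2}(1)=1$, this forces $\min(g_{1},g_{2})\le 1$ on $[0.12,1]$, and \eqref{1f3} covers $[0,0.2]$. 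The overlap $[0.71,0.73]$ is what absorbs the kink, with no appeal to continuity of $\ell'$. If you want to salvage your approach, you would need to prove $\ell'(x_\circ^{-})\le 0$ directly and then run the convexity argument separately on each branch; the paper's envelope-and-overlap device is the cleaner fix.
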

The proof of Lemma \ref{function} is given at the end of this section. We first use it to conclude the proof of  Proposition \ref{sum3}: using the bound \eqref{1f3} for the $(A)$-term yields
\beq\bea\label{461}
(A) &\leq \frac{n^{Kn^{\alpha}}P_n}{C_{n,K,m}^2Q_n} \sum_{k= 200 \hat n_{K}+1}^{\frac{\mathsf L_{opt}n}{5}} \frac{\exp{-\frac{k}{100}}}{{(1+\e_{E})}^{k}}\\
& \leq \frac{\exp{-\frac{200 n}{100 K}}}{C_{n,K,m}^2} \frac{n^{Kn^{\alpha}}P_n}{Q_n}\sum_{k= 200 \hat n_{K}+1}^{\frac{\mathsf L_{opt}n}{5}} \frac{1}{{(1+\e_{E})}^{k}},
\eea \eeq
since $x \mapsto \exp(-x)$ is decreasing.  Furthemore using that the above sum is convergent we thus see that
\beq\label{461.5}
(A)  \lesssim \frac{\exp{- 2 \frac{n}{K}}}{C_{n,K,m}^2} \frac{n^{Kn^{\alpha}}P_n}{Q_n} \,,
\eeq
Finally plugging the definition \eqref{defi_ccc} of $C_{n, K, m}$ into \eqref{461.5}, yields
\beq\bea\label{vanish2}
(A) & \leq \exp n \left[\frac{\sqrt{2}-2}{K}+\frac{2\sqrt{2}m(m-1)+2}{K^2} \right] \times \frac{n^{Kn^{\alpha}}P_n}{Q_n}\,.
\eea\eeq
But for $K>10^{7}$, the exponent on the r.h.s. above is $<0$, hence  the $(A)$-term  vanishes as $n \uparrow \infty$, settling the first claim. \\

\noindent As for the $(B)$-term ,  using \eqref{1f2} yields
\beq\bea\label{47}
(B) &\leq \frac{n^{Kn^{\alpha}}P_n}{C_{n,K,m}^2Q_n}\times \sum_{k=\frac{\mathsf L_{opt}n}{5}+1}^{\mathsf L_{opt}n} \frac{1}{{(1+\e_{\mathsf E})}^{k}}\\
&\leq  \frac{n^{Kn^{\alpha}}P_n}{C_{n,K,m}^2Q_n}\times \frac{\mathsf L_{opt}n}{{(1+\e_{\mathsf E})}^{\frac{\mathsf L_{opt}n}{5}}},
\eea\eeq
the last inequality majorizing with the largest term of the sum. Again plugging the definition \eqref{defi_ccc} of $C_{n, K, m}$ in \eqref{47}, and absorbing the $n$-factor in the $P$-polynomial, yields
\beq\bea\label{481}
(B) \leq  \exp n \left[\frac{\sqrt{2}}{K}+\frac{2\sqrt{2}m(m-1)+2}{K^2} \right] \times \frac{1}{{(1+\e_{\mathsf E})}^{\frac{\mathsf L_{opt}n}{5}}}\times \frac{n^{Kn^{\alpha}}P_n}{Q_n}.
\eea\eeq
By \eqref{diff_length}, it holds that $\mathsf L_{opt}> \mathsf L-\frac{m}{K}$, clearly implying that for any
$K>10^5$,
\beq\bea\label{l'}
1.25 \geq \mathsf L_{opt}\geq 1.24\,.
\eea\eeq
Using this in \eqref{481} yields 
\beq\bea\label{491}
(B) &\leq  \exp n \left[\frac{\sqrt{2}}{K}+\frac{2\sqrt{2}m(m-1)+2}{K^2} \right] \times \frac{1}{{(1+\e_{\mathsf E})}^{\frac{1.24 n}{5}}}\times \frac{n^{Kn^{\alpha}}P_n}{Q_n}\\
&= \exp n \left[\frac{\sqrt{2}}{K}+\frac{2\sqrt{2}m(m-1)+2}{K^2}-\frac{1.24 n}{5}\log\left(1+\e_{\mathsf E}\right) \right] \times \frac{n^{Kn^{\alpha}}P_n}{Q_n}.
\eea\eeq
Using the lower bound $\log(1+x)\geq x-\frac{x^2}{2}$ in \eqref{491} finally yields
\beq\bea \label{finitooooo}
(B)\leq \exp n \left[\frac{\sqrt{2}}{K}+\frac{2\sqrt{2}m(m-1)+2}{K^2}-\left(\e_{\mathsf E}-\frac{\e_{\mathsf E}^2}{2}\right)\frac{1.24}{5} \right] \times \frac{n^{Kn^{\alpha}}P_n}{Q_n}\,.
\eea\eeq
But for $K>\max(10^7,\e^{-2})$, the exponent is definitely strictly negative, hence the $(B)$-terms also vanishes as $n \uparrow \infty$, concluding the proof of the second claim. \\

In order to conclude the proof of Proposition \ref{sum3} we therefore owe to the reader a

\begin{proof}[ Proof of Lemma \ref{function}.] 
We first address claim \eqref{1f3}: since $\mathsf L_{opt}\leq \sqrt{2} \mathsf E \leq 1.25$, one plainly checks that for all $x \leq \frac{1}{5}$ it holds 
\beq
\max \left(\frac{1}{\mathsf L_{opt}}-x,\frac{1-x}{4} \right)=\frac{1}{\mathsf L_{opt}}-x\,,
\eeq
therefore 
\beq\bea\label{41'}
\widehat \Theta(x)&= \frac{{4}^{1-x}}{{(2-x)}^{2-x}} {\tanh\left(\mathsf E\left\{ 1-x\right\} \right)}^{\frac{1}{\mathsf L_{opt}}-x}{\cosh\left(\mathsf E\left\{ 1-x\right\} \right)}^{\frac{1}{\mathsf L_{opt}}}\\
&=\frac{{4}^{1-x}}{{(2-x)}^{2-x}} {\sinh\left(\mathsf E\left\{ 1-x\right\} \right)}^{\frac{1}{\mathsf L_{opt}}-x}{\cosh\left(\mathsf E\left\{ 1-x\right\} \right)}^{x}\,.
\eea\eeq
The following inequalities can be easily checked using the convexity of $x \mapsto \sinh\left(\mathsf E(1-x)\right)$, and of $x \mapsto \cosh\left(\mathsf E(1-x)\right)$, and constructing the corresponding chords between $x=0$ and $x=1$: it holds 
\beq\bea\label{41}
\sinh\left( \mathsf E(1-x)\right)\leq \left(1-x\right)\,, \; \text{and} \; \cosh\left( \mathsf E(1-x)\right)\leq \sqrt{2}+(1-\sqrt{2})x\,,
\eea\eeq
Combining \eqref{41'} and \eqref{41}, we obtain
\beq\bea\label{42}
\widehat \Theta(x) &\leq \frac{{4}^{1-x}}{{(2-x)}^{2-x}} {\left(1-x\right)}^{\frac{1}{\mathsf L_{opt}}-x} {\left(\sqrt{2}+(1-\sqrt{2})x \right)}^{x} \\
& = \frac{{2}^{2(1-x)-(2-x)}{\left(1-x \right)}^{\frac{1}{\mathsf L_{opt}}-x} {\left(\sqrt{2}+(1-\sqrt{2})x \right)}^{x}}{{\left(1-\frac{x}{2} \right)}^{\left(2-x\right)}},
\eea\eeq
the last step by rearrangement. Moreover, it holds that
\beq\bea\label{43}
1-x \leq \left(1-\frac{x}{2}\right)^2\,.
\eea\eeq
Simplifying the exponent of the first term in the numerator on the r.h.s. of \eqref{42}, and using \eqref{43} for the middle term, yields
\beq\bea\label{44}
\widehat \Theta(x)&\leq \frac{{2}^{-x}{\left(1-\frac{x}{2}\right)}^{2(\frac{1}{\mathsf L_{opt}}-x)} {\left(\sqrt{2}+(1-\sqrt{2})x\right)}^{x}}{{\left(1-\frac{x}{2}\right)}^{\left(2-x\right)}}\\
&= \left(\frac{1+\frac{(1-\sqrt{2})}{\sqrt{2}}x}{\sqrt{2}(1-\frac{x}{2})}\right)^{x}\times \frac{1}{\left(1-\frac{x}{2}\right)^{2(1-\frac{1}{\mathsf L_{opt}})}},
\eea\eeq
the last step again by simple rearrangements. 

Elementary inspection of the first derivative shows that, on the interval $[0, 1/5]$, the function
\beq
x \mapsto \frac{1+\frac{(1-\sqrt{2})}{\sqrt{2}}x}{(1-\frac{x}{2})}
\eeq 
is, in fact, increasing: bounding the function with its largest value attained in $x=1/5$, and plugging in \eqref{44}, yields
\beq\bea\label{45}
\widehat \Theta(x) &\leq \left(\frac{1+\frac{(1-\sqrt{2})}{\sqrt{2}}\frac{1}{5}}{\sqrt{2}\frac{9}{10}}\right)^{x}\times \frac{1}{(1-\frac{x}{2})^{2(1-\frac{1}{\mathsf L_{opt}})}}\\
&\leq \left(\frac{3}{4}\right)^{x}\times \frac{1}{(1-\frac{x}{2})^{2(1-\frac{1}{\mathsf L_{opt}})}}\,,
\eea\eeq
the second inequality by elementary numerical estimates.  Exponentiating the second term on the r.h.s. above then leads to 
\beq\bea\label{45'} 
\widehat \Theta(x) &\leq \left(\frac{3}{4}\right)^x \exp\left[ -2(1-\frac{1}{\mathsf L_{opt}})\log \left(1-\frac{x}{2} \right)\right]\\
&\leq\left(\frac{3}{4}\right)^x \exp\left[x \left(1-\frac{1}{\mathsf L_{opt}}\right) 2\log(2) \right],
\eea\eeq
where in the second step we have used that  
\beq 
-\log(1-\frac{x}{2})\leq x\log(2),
\eeq 
which is an immediate consequence of the convexity of $x \mapsto -\log(1-\frac{x}{2})$.  Recalling \eqref{diff_length}, and the ensuing elementary estimate $\mathsf L_{opt} < \sqrt{2} \mathsf E < 1.25$,
we thus see that
\beq\bea\label{46}
\widehat \Theta(x) &\leq \exp{ x\left[\log\left(\frac{3}{4} \right)+\left( 1-\frac{1}{1.25} \right) 2\log(2)\right] } \leq \exp\left[{-\frac{x}{100}}\right],
\eea\eeq
the second inequality by straightforward numerical evaluation: claim \eqref{1f3} is thus settled. \\

\noindent We now move to claim \eqref{1f2}. We recall that
\beq\bea\label{expl}
\widehat \Theta(x)&=\frac{ 4^{1-x}  }{(2-x)^{2-x}} {\tanh\left( \mathsf E\left\{1-x\right\}\right)}^{\max \left(\frac{1}{\mathsf L_{opt}}-x,\frac{1-x}{4} \right)} {\cosh\left( \mathsf E\left\{1-x\right\}\right)}^{\frac{1}{\mathsf L_{opt}}}\\
&=\frac{ 4^{1-x}}{(2-x)^{2-x}} {\sinh\left( \mathsf E\left\{1-x\right\}\right)}^{\frac{1}{\mathsf L_{opt}}-x} {\cosh\left( \mathsf E\left\{1-x\right\}\right)}^{x} 1_{\{ \max \left(\frac{1}{\mathsf L_{opt}}-x,\frac{1-x}{4} \right)=\frac{1}{\mathsf L_{opt}}-x\}}\\
&+\frac{ 4^{1-x}}{(2-x)^{2-x}} {\sinh\left( \mathsf E\left\{1-x\right\}\right)}^{\frac{1-x}{4}} {\cosh\left( \mathsf E\left\{1-x\right\}\right)}^{\frac{1}{\mathsf L_{opt}}-\frac{1-x}{4}} 1_{\{ \max \left(\frac{1}{\mathsf L_{opt}}-x,\frac{1-x}{4} \right)=\frac{1-x}{4}\}}.
\eea\eeq
By \eqref{diff_length}, it holds that $\mathsf L_{opt}> \mathsf L-\frac{m}{K}$ and this implies that for any
$K>10^5$,
\beq\bea\label{l}
\frac{1}{1.24}\geq \frac{1}{\mathsf L_{opt}}\geq \frac{1}{1.25}\,.
\eea\eeq
Let now
\beq
g_1(x) \defi \frac{{4}^{1-x}}{{(2-x)}^{2-x}}{\sinh\left(\mathsf E(1-x)\right)}^{\frac{1}{1.25}-x} {\cosh\left( \mathsf E(1-x)\right)}^{x}\,,
\eeq
\beq 
g_2(x)\defi \frac{{4}^{1-x}}{{(2-x)}^{2-x}}{\sinh\left(\mathsf E(1-x)\right)}^{\frac{1-x}{4}}{\cosh\left(\mathsf E(1-x)\right)}^{\frac{1}{1.24}-\frac{1-x}{4}}.
\eeq

In virtue of \eqref{l}, $g_1$ is larger than the first term in $\eqref{expl}$, whereas $g_2$ ls larger than the second one. In particular, setting $g_3\defi \min(g_1,g_2)$,  we see that in order to establish \eqref{1f2}  it suffices to prove that
\beq\bea
\sup_{x \in [0,1]}\; g_3(x)\leq 1\,,
\eea\eeq
which is our new claim. A plot of these two functions is given in Figure \ref{g1_g2} below. 

\begin{figure}[!h]
    \centering
    \includegraphics[scale=0.8]{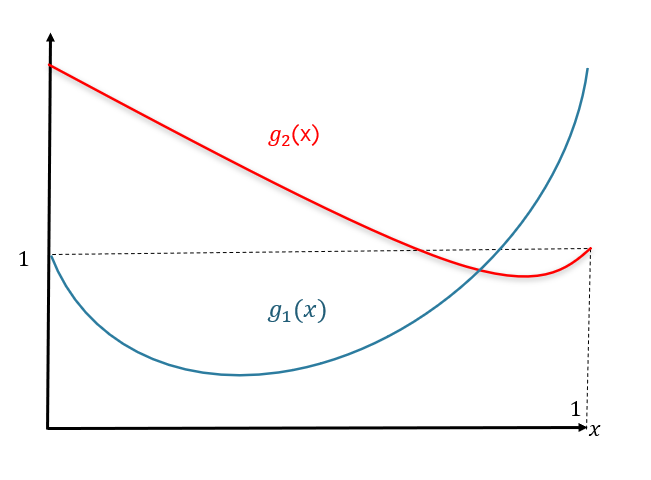}
\caption{The functions $g_1$ and $g_2$. One clearly sees that the minimum of these functions is always below 1. }
\label{g1_g2}
\end{figure}

To see this, we first note that  \eqref{1f3} already shows that 
\beq
\sup_{x\in [0, 1/5]} \; g_1(x)\leq 1\,.
\eeq
We now claim that
\beq\label{claimg1}
g_1 \; \text{is convex on} \; [0.12,0.73], \; g_1(0.12)\leq 1 \; \text{and} \; g_1(0.73) \leq 1.
\eeq
and that
\beq\label{claimg2}
g_2 \; \text{is convex on} \; [0.71,1], \; g_2(0.71) \leq 1 \; \text{and} \; g_2(1)=1.
\eeq
Assuming the validity of these two claims for the time being, it follows that 
\beq\label{g1}
g_1(x)\leq 1 \ \forall x\leq 0.73\,,
\eeq
and 
\beq\label{g2}
g_2(x)\leq 1 \ \forall x\geq 0.71\,.
\eeq
Combining \eqref{g1} and \eqref{g2} thus yields
\beq
\sup_{x \in [0,1]}\; g_3(x)\leq 1,
\eeq
and claim \eqref{1f2} is verified. \\

\noindent To conclude the proof of Lemma \ref{function} it thus remains to prove \eqref{claimg1} and \eqref{claimg2}. We begin with the convexity of $g_1$ on the interval $[0.12,0.73]$. Since $g_1>0$, 
\beq
\frac{d^2 \log(g_1)}{dx^2}=\frac{g_1''g_1-g_1'^2}{g_1^2} \geq 0 \Longrightarrow g_1''(x) \geq 0, 
\eeq
hence convexity of $\log(g_1)$ implies convexity of $g_1$: we will check the former by showing positivity of its second derivative. It holds:
\beq\bea\label{49}
\frac{d^2\log(g_1(x))}{dx^2}=& \frac{d^2}{{dx}^2}\Big[ (1-x)\log(4)+(-2+x)\log(2-x)\Big]+\\
& \hspace{ 3cm} +\frac{d^2}{{dx}^2}\Big[ x\log(\cosh\left(\mathsf E(1-x)\right)) \Big] + \\
& +\frac{d^2}{dx^2} \Big[ \left(-x+\frac{1}{1.25} \right)\log\sinh\left(\mathsf E(1-x)\right)
\Big]\,.
\eea\eeq
By elementary computations, we see that:
\beq\bea\label{50}
\frac{d^2}{dx^2}\Big[ (1-x)\log(4)+(-2+x)\log(2-x) \Big] =\frac{-1}{2-x},
\eea\eeq
\beq\bea\label{51}
\frac{d^2}{dx^2}\Big[ x \log(\cosh\left(\mathsf E(1-x)\right))\Big] &=\frac{d}{dx} \Big[ \log(\cosh\left(\mathsf E(1-x)\right))-x \mathsf E\tanh\left(\mathsf E(1-x)\right) \Big] \\
&=-2E\tanh\left(\mathsf E(1-x)\right)+\frac{x \mathsf E^2}{\cosh\left(\mathsf E(1-x)\right)^2}\,,
\eea\eeq
and finally
\beq\bea\label{52}
& \frac{d^2}{dx^2} \Big[ \left(-x+\frac{1}{1.25} \right)\log \sinh\left(\mathsf E(1-x)\right) \Big] \\
& \qquad =\frac{d}{dx} \Big[ -\log(\sinh\left(\mathsf E(1-x)\right))+\mathsf  E \left(x-\frac{1}{1.25} \right)\coth\left(\mathsf E(1-x)\right)\Big] \\
&\qquad =2 \mathsf E\coth\left(\mathsf E(1-x)\right)+\frac{\mathsf  E^2(x-\frac{1}{1.25})}{\sinh\left(\mathsf E(1-x)\right)^2}\,.
\eea\eeq
Since $1/5 \geq 0.12$, say, by the previous considerations we see that $g_1(0.12) \leq 1$. We may thus restrict to 
to $x \in [0.12, 0.73]$: we first note that the first function on the r.h.s. of \eqref{50} is decreasing. In particular, it holds that
\beq \label{49bis}
\frac{-1}{2-x} \geq \frac{-1}{2-0.73} \geq - 0.8\,.
\eeq

Plugging \eqref{50}-\eqref{52} in \eqref{49}, and then using \eqref{49bis} and the fact that $\frac{x \mathsf E^2}{\cosh\left(\mathsf E(1-x)\right)^2}\geq 0$, thus yields
\beq\bea\label{53}
\eqref{49}&\geq -0.8 -2 \mathsf E\tanh\left(\mathsf E(1-x)\right)+2 \mathsf E\coth\left(\mathsf E(1-x)\right)+\frac{\mathsf E^2(x-\frac{1}{1.25})}{\sinh\left(\mathsf E(1-x)\right)^2} 
\eea\eeq
We now make two observations.
\begin{itemize}
\item First of all we note that the r.h.s. of \eqref{53} consists of three increasing functions.
\item Furthermore, by Taylor expansions to fifth order, and some elementary yet tedious numerical estimates (which will be here omitted) one plainly checks that in $x=0.12$ the r.h.s. of \eqref{53} is, in fact, {\it positive}, whereas $g_1(0.73) \leq 1$. 
\end{itemize}
Combining the above items we see, in particular, that  $g_1$ is indeed convex on $[0.12,0.73]$, and the proof of claim \eqref{claimg1} is therefore concluded.\\

\noindent We now move to the analysis of $g_2$. Simple computations show that 
\beq\bea\label{54}
\frac{d^2}{dx^2}\Big[ \log(g_2(x))\Big] &=\frac{-1}{2-x}-\frac{\mathsf E}{2}\tanh\left(\mathsf E(1-x)\right)+\frac{\mathsf E^2(\frac{1}{1.24}+\frac{x-1}{4})}{\cosh\left(\mathsf E(1-x)\right)^2}+\frac{\mathsf E}{2}\coth\left(\mathsf E(1-x)\right)\\
&\hspace{ 3cm} +\frac{\mathsf E^2(x-1)}{4\sinh\left(\mathsf E(1-x)\right)^2}\\
&\geq -1-\frac{\mathsf E}{2}\tanh\left(\mathsf E(1-x)\right)+\frac{\mathsf E^2(\frac{1}{1.24}+\frac{x-1}{4})}{\cosh\left(\mathsf E(1-x)\right)^2}+\frac{\mathsf E}{2}\coth\left(\mathsf E(1-x)\right)\\
&\hspace{ 3cm} +\frac{\mathsf E^2(x-1)}{4\sinh\left(\mathsf E(1-x)\right)^2}\,,
\eea\eeq
the last inequality using that $\frac{-1}{2-x} \geq -1$. We now proceed in full analogy to \eqref{53}:
\begin{itemize}
\item 
First we note that the r.h.s. of  \eqref{54} consists of four increasing functions.
\item Furthermore, and again by some tedious yet elementary numerical estimates via Taylor expansions to fifth order (also omitted), one plainly checks that in $x=0.71$, say, the r.h.s. of  \eqref{54} is, in fact, {\it positive}, and $g_2(0.71) \leq 1$. 
\end{itemize}
Since the above items clearly imply, in particular, that $g_2$ is convex on $[0.71, 1]$, the second claim \eqref{claimg2} is also settled, and the proof of Lemma \ref{function} is thus concluded.
\end{proof}
\end{proof}

\subsection{Proof of Proposition \ref{sum2}}
We first state the technical input concerning the asymptotic of the $f^{(s)}$-terms. (As usual, $P_n, Q_n$ stand for finite degree polynomials, not necessarily the same at different occurences).

\begin{lem} \label{r}
For any $k\leq 200\hat n_{K}$, it holds
\beq\bea
f^{(s)}(n,k)\leq & {\left(\frac{3}{4}\right)}^{(m-200)\hat n_{K}} {\tanh\left( \mathsf E(1-\frac{k}{\mathsf L_{opt}n})\right)}^{n-k} \times \\
& \qquad \times {\cosh\left(\mathsf E(1-\frac{k}{\mathsf L_{opt}n})\right)}^{n}{\left(\frac{\mathsf L_{opt}n}{e \mathsf E}\right)}^{\mathsf L_{opt}n-k}n^{Kn^{\alpha}} P_n \,.
\eea\eeq
\end{lem}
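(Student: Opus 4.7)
My plan is to prove Lemma \ref{r} by leveraging the structural decomposition of paths in $\mathcal P_{n,K,K'}^{\sf rep}$ into directed and stretched phases, and by refining the strategy that underlies Lemma \ref{r2}. Given $\pi \in \mathcal J$ and $\pi' \in \mathcal J_\pi^{(s)}(n,k)$, I would write $k = k_d + k_s$ where $k_d$ is the number of edges shared inside the two fully directed regions (between $\boldsymbol 0$ and $H_m$, and between $H_{K-m}$ and $\boldsymbol 1$) and $k_s$ is the number of edges shared in the stretched region (between $H_m$ and $H_{K-m}$). By definition of $f^{(s)}$, it holds $k_s \geq 1$, hence $k_d \leq k-1 \leq 200 \hat n_K - 1$. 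The total count then decomposes as a sum over admissible $(k_d, k_s)$, with each term factoring into an estimate on the directed phases times an estimate on the stretched phase; the sum produces only a polynomial overhead absorbed in the $P_n$ factor.

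For the stretched phase contribution, I would adapt verbatim the combinatorial machinery behind Lemma \ref{r2}, using Stanley's $M$-bound \eqref{sf}, Stirling approximations, and the permutation-representation of geodesics discussed after Theorem \ref{choosing_d}. This produces the factor
\beq
{\tanh\left( \mathsf E(1-k_s/\mathsf L_{opt}n)\right)}^{n-k_s}{\cosh\left(\mathsf E(1-k_s/\mathsf L_{opt}n)\right)}^{n}{\left(\frac{\mathsf L_{opt}n}{e \mathsf E}\right)}^{\mathsf L_{opt}n-k_s}n^{Kn^{\alpha}}\,.
\eeq
The crucial improvement over Lemma \ref{r2} is that one can use $n - k_s$ in place of $\max(n-k_s, (\mathsf L_{opt}n-k_s)/4)$ in the $\tanh$-exponent: in the regime $k_s \leq 200 \hat n_K$ the bound $n - k_s \geq (\mathsf L_{opt}n - k_s)/4$ is easily checked for $K$ large, since $\mathsf L_{opt} \leq 5/4$. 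Moreover $k_s \leq k$, and $1 - k/\mathsf L_{opt}n \leq 1 - k_s/\mathsf L_{opt}n$ allows to monotonize the above in $k$, matching the form stated in Lemma \ref{r}.

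The genuinely new contribution is the control of the directed phases with at most $k_d \leq 200 \hat n_K - 1$ common edges on $2m\hat n_K$ bonds. Here I would use the fact that a directed subpath from $\boldsymbol 0$ to a vertex $\boldsymbol v \in H_m$ is encoded by a permutation of the $m\hat n_K$ coordinates to be switched; two such subpaths share $j$ edges precisely when their permutations agree on the first $j'$ positions after each shared prefix (so the shared edges form consecutive blocks rooted at shared vertices). The point is that requiring the overlap to stay below $200 \hat n_K$, while $m \hat n_K$ is much larger, is a large deviation event for the uniform measure on these permutations. A standard combinatorial large-deviation estimate, analogous to the derangement count, yields a saving of the form $(3/4)^{(m-200)\hat n_K}$ relative to the unconstrained cardinality $(m\hat n_K)!\binom{n}{m\hat n_K}$; multiplied by the latter and then combined with the stretched estimate above, this produces the claimed bound.

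The main obstacle will be the last step: turning the heuristic large-deviation statement into the explicit constant $3/4$. I expect the cleanest route is to count directly, for each prescribed pattern of shared edges (characterized by locations of the $k_d$ agreements among $m\hat n_K$ permutation-positions), the number of remaining permutations avoiding further agreements, and then sum over patterns. The $3/4$ arises as the base of the exponential decay that one gets after bounding the inner ``avoidance'' count by $(3/4)^{m\hat n_K - k_d}(m\hat n_K - k_d)!$, a bound that is just strong enough to survive the multiplication by the binomial number of patterns; everything else, including the polynomial Stirling corrections and the $n^{Kn^\alpha}$ slack inherited from the stretched estimate, gets absorbed into $P_n$.
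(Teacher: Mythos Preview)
Your proposal has a genuine gap in the final step, and it is not the mere issue of identifying the constant: the mechanism you propose for the factor $(3/4)^{(m-200)\hat n_K}$ is incorrect.

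You argue that the saving comes from an ``avoidance'' count in the directed phase: the number of directed subpaths from $\boldsymbol 0$ to $H_m$ sharing exactly $k_d \leq 200\hat n_K$ edges with a fixed $\pi$ should be at most $(3/4)^{m\hat n_K - k_d}(m\hat n_K - k_d)!$. This is false. By the Fill--Pemantle bounds (Lemma \ref{path_counting}), $F(m\hat n_K, k_d) \asymp (m\hat n_K - k_d)!(k_d+1)$ for small $k_d$, which is \emph{larger}, not exponentially smaller, than $(m\hat n_K - k_d)!$. More conceptually: a uniformly random permutation shares $O(1)$ positions with a fixed one, so having few shared edges is the \emph{typical} event, not a large-deviation event. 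There is simply no exponential saving available from the directed region.

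In the paper, the factor $3/4$ has nothing to do with the directed phase. It arises from a $\tanh$ identity. One reuses the full decomposition of Lemma \ref{r2} for the whole path $\boldsymbol 0 \to \boldsymbol 1$ (not just the stretched portion), but now singles out the index $q$ of the first shared edge lying beyond $H_m$. The product of $\tanh$-factors is then split at $q$ into two pieces, with arguments $x$ and $y$ satisfying $x+y = \mathsf E(\mathsf L_{opt}n - k)/\mathsf L_{opt}n$. The addition formula gives
\[
\frac{\tanh(x)}{\tanh(x+y)} = \frac{1+\tanh(x)\tanh(y)}{1+\tanh(y)/\tanh(x)} \leq \frac{1+\tanh^2(\mathsf E)}{2} = \frac{3}{4},
\]
since $\tanh(\mathsf E)=1/\sqrt 2$. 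The exponent on this $3/4$ is $\min(\widehat{\mathcal D}_{q-1}+\mathcal D_q,\ \widehat{\mathcal D}_b - \widehat{\mathcal D}_{q-1} + \mathcal D_b - \mathcal D_q)$, and the presence of a shared edge beyond $H_m$ forces this minimum to be at least $m\hat n_K - k \geq (m-200)\hat n_K$. So the saving is geometric, coming from the fact that the shared edge deep inside the cube splits the path into two pieces whose individual $M$-bounds (in terms of $\tanh$) are each strictly worse than the combined one; your directed/stretched factorization misses this entirely.
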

The proof of this Lemma is also postponed to Section \ref{combinatorial_estimates}.

\begin{proof}[Proof of Proposition \ref{sum2}]. By  \eqref{first_out'}, it holds that
\beq\bea\label{29}
\frac{\mathsf J \sum_{k=1}^{ 200  \hat n_{K}}f^{(s)}(n,k) \PP\left(\pi, \pi_k^{(s)}\right)}{\E\left({\mathcal N}_{n, K, K'}^{\e}\right)^2}=\frac{\mathsf J \sum_{k=1}^{ 200  \hat n_{K}}f^{(s)}(n,k) \prod_{i=m}^{K-m+1}\PP_i\left(\pi, \pi_k^{(s)}\right)}{\E\left({\mathcal N}_{n, K, K'}^{\e}\right)^2}\,.
\eea\eeq
We claim that the r.h.s. of \eqref{29} converges to $0$ as $n \to \infty$. To see this, we follow {\it exactly} the same steps which from \eqref{129} lead to \eqref{137'}, this time of course with $f^{(s)}$ instead of $f$. Omitting the details, the upshot is that the r.h.s. of \eqref{29} is {\it at most} 
\beq\bea\label{37'}
\frac{P_n}{C_{n,K,m}^2Q_n} \sum_{k=1}^{200 \hat n_{K}}\frac{f^{(s)}(n,k) {(e \mathsf E)}^{\mathsf L_{opt}n-k}{g(\frac{k}{\mathsf L_{opt}n})^{\mathsf L_{opt}n}}}{{(1+\e_{E})}^{k}(\mathsf L_{opt}n-k)^{\mathsf L_{opt}n-k}},
\eea\eeq
The estimates from Lemma \ref{r} applied to \eqref{37'} then yield
\beq\bea\label{37}
&\eqref{37'}\leq \frac{{(\frac{3}{4})}^{(m-200)\hat n_{K}}n^{Kn^{\alpha}}P_n}{C_{n,K,m}^2 Q_n} \sum_{k=1}^{200 \hat n_{K}} \frac{{{\tanh(\mathsf E(1-\frac{k}{\mathsf L_{opt}n}))}^{n-k} {\cosh(\mathsf E(1-\frac{k}{\mathsf L_{opt}n}))}^{n}g\left(\frac{k}{\mathsf L_{opt}n}\right)^{\mathsf L_{opt}n}}}{{(1+\e_{\mathsf E})}^{k}{\left(1-\frac{k}{\mathsf L_{opt}n}\right)}^{\mathsf L_{opt}n-k}}.
\eea\eeq
As in \eqref{1simpler_g}, it holds that
\beq \bea \label{simpler_g}
\frac{g\left(\frac{k}{\mathsf L_{opt}n}\right)^{\mathsf L_{opt}n}}{
\left(1-\frac{k}{\mathsf L_{opt}n}\right)^{\mathsf L_{opt}n-k} } & = \left[ 
\frac{4^{1 - \frac{k}{\mathsf L_{opt} n} }}{\left( 2- \frac{k}{\mathsf L_{opt} n} \right)^{2- \frac{k}{\mathsf L_{opt} n} }} \right]^{\mathsf L_{opt} n}\,.
\eea \eeq
We lighten notation by setting, for $x\in [0,1/\mathsf L_{opt}]$, 
\beq \label{teta}
\Theta(x) \defi \frac{ 4^{1-x}  }{(2-x)^{2-x}} \tanh\left(\mathsf E(1-x) \right)^{\frac{1}{\mathsf L_{opt}} -x} \cosh\left( \mathsf E(1-x)\right)^{\frac{1}{\mathsf L_{opt}}}\,.
\eeq
Using this,  together with  \eqref{simpler_g}, the r.h.s. of \eqref{37} then takes the neater form 
\beq \label{38}
{\left(\frac{3}{4}\right)}^{(m-200)\hat n_{K}} \frac{n^{Kn^{\alpha}}P_n}{C_{n,K,m}^2Q_n} \sum_{k=1}^{200 \hat n_{K}} \frac{1}{(1+\e_{\mathsf E})^{k}}  \Theta \left( \frac{k}{\mathsf L_{opt} n}\right)^{\mathsf L_{opt} n}
\eeq

We recall that
\beq \label{choice_K}
K > 2 \times 10^7.
\eeq 
Thus, in the regime $k \leq 200 \hat n_K$, and since $\mathsf L_{opt}\geq 1$, we have
\beq
\frac{k}{\mathsf L_{opt} n} \leq \frac{200 n}{\mathsf L_{opt} K n} \leq \frac{200}{K} \leq 10^{-5}.
\eeq
We now claim that for all $x \leq 10^{-5}$,
\beq\label{equality}
\Theta(x)= \widehat \Theta(x)\,.
\eeq 
In fact,  for any $x \leq 10^{-5}$, 
\beq
\max\left(\frac{1}{\mathsf L_{opt}}-x, \frac{1-x}{4}\right)=\frac{1}{\mathsf L_{opt}}-x\,,
\eeq
as a simple numerical inspection shows: this proves \eqref{equality}.  

Combining Lemma \ref{function} and \eqref{equality}, thus yields
\beq\bea\label{f2}
\sup_{x \leq 10^{-5}}\; \Theta(x) \leq 1.
\eea\eeq
Using \eqref{f2} in \eqref{38} then  gives that
\beq\bea\label{bb}
\eqref{38} &\leq {\left(\frac{3}{4}\right)}^{(m-200)\hat n_{K}} \frac{n^{Kn^{\alpha}}P_n}{C_{n,K,m}^2Q_n} \sum_{k=1}^{200 \hat n_{K}} \frac{1}{(1+\e_{\mathsf E})^{k}} \\
& \lesssim {\left(\frac{3}{4}\right)}^{(m-200)\hat n_{K}} \frac{n^{Kn^{\alpha}}P_n}{C_{n,K,m}^2Q_n}\,,
\eea\eeq
since the sum is evidently convergent. Furthermore recalling the definition \eqref{defi_ccc} of $C_{n, K, m}$, we thus see that
\beq \bea
\eqref{bb} &  \lesssim {\left(\frac{3}{4}\right)}^{(m-200)\hat n_{K}}  \times \exp n \left[\frac{\sqrt{2}}{K}+\frac{2\sqrt{2}m(m-1)+2}{K^2} \right] \times \frac{n^{Kn^{\alpha}}P_n}{Q_n}\\
& =  \exp n \left[    \frac{1}{K} \left\{ (m-200) \log\left( \frac{3}{4} \right) + \sqrt{2} \right\}+\frac{2\sqrt{2}m(m-1)+2}{K^2} \right] \times \frac{n^{Kn^{\alpha}}P_n}{Q_n}
\eea \eeq
Since $m= 205$, 
\beq\bea\label{num}
(m-200)\log\left(\frac{3}{4}\right)+\sqrt{2}<-\frac{1}{100}\,,
\eea\eeq
(this bound is, as a matter of fact, the reason for choosing $m$ as we do), plugging \eqref{num} in \eqref{bb}, yields
\beq\bea\label{finito}
&(\ref{bb})\lesssim \exp{n\left[-\frac{1}{100K}+\frac{2\sqrt{2}m(m-1)+2}{K^2}\right]} \times \frac{n^{Kn^{\alpha}}P_n}{Q_n}.
\eea\eeq
But again in virtue of \eqref{choice_K}, and with $m=205$, 
\beq\label{choixK}
-\frac{1}{100K}+\frac{2\sqrt{2}m(m-1)+2}{K^2} < 0\,,
\eeq
as can be immediately checked:  the r.h.s. of \eqref{finito} is therefore vanishing as $n\uparrow \infty$, and the proof of Proposition \ref{sum2} is concluded. 
\end{proof}

\section{Combinatorial estimates} \label{combinatorial_estimates}

To control the asymptotics of the $f^{(d)}, f^{(s)}$ and $f$-terms requires some delicate path-counting.

\subsection{Counting directed paths, and proof of Lemma \ref{comb1}} \label{counting_dir}

Key to the whole treatment are estimates for the number of pairs of {\it directed} paths with prescribed overlaps which are formulated in Lemma \ref{path_counting} below. We shall emphasize that the estimates \eqref{path_counting_ii} and \eqref{path_counting_iiii} have been established by Fill and Pemantle \cite[Lemma 2.2, 2.4]{Fill_Pemantle}, whereas \eqref{path_counting_iii} can be found in \cite[Lemma 6]{kss1}.

\begin{lem}[Path counting directed, Fill and Pemantle] \label{path_counting} Let $\pi'$ be any reference path on the $n$-dim hypercube connecting $\boldsymbol{0}$ and
$\boldsymbol 1$, say $\pi'=12...n$. For $k \geq 1$, denote by $F(n, k)$ the number of directed paths $\pi$ that share \emph{precisely} $k$ edges with $\pi'$, and by $F^*(n, k)$ the number of paths that share $k$ edges with $\pi'$, without considering the first and the last edge. Finally, shorten $\mathfrak{n_{e}} \defi n-5e(n+3)^{2/3}$. It holds: 
\begin{itemize}
\item For all $k \geq  \mathfrak{n_{e}}$, we have
\beq \label{path_counting_i}
F(n,k)\leq (n-k)! {{n}\choose{ \mathfrak{n_{e}}}} \,,
\eeq
\item suppose $k\leq \mathfrak{n_{e}}$ for $n\geq 25$. Then, it holds
\beq \label{path_counting_ii}
F(n,k)\leq (n-k)! n^6 \,.
\eeq
\item For $k \leq n^{1/4}$, the stronger bounds hold
\beq \label{path_counting_iiii}
F(n,k) \leq (n-k)!(k+1)(1+o_n(1)), \, 
\eeq
and
\beq \label{path_counting_iii}
F^*(n,k) \leq (n-k-1)! (k+1)(1+o_n(1)), \, 
\eeq 
as $n \uparrow \infty$, uniformly in $k$. 
\end{itemize}
\end{lem}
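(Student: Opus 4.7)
The plan is to reduce the problem to counting certain permutations. Identify each directed path $\pi$ from $\boldsymbol 0$ to $\boldsymbol 1$ with a permutation $\sigma\in S_n$, where $\sigma(j)$ records the coordinate flipped at step $j$; under this identification the reference path $\pi'=12\ldots n$ corresponds to the identity, and an explicit comparison of starting vertices and directions shows that the $j$-th edge of $\pi'$ is shared with $\pi$ iff $\sigma$ maps $\{1,\ldots,j-1\}$ bijectively to itself \emph{and} $\sigma(j)=j$. Introducing $S(\sigma):=\{0\}\cup\{j:\sigma(\{1,\ldots,j\})=\{1,\ldots,j\}\}$, this is equivalent to $\{j-1,j\}\subseteq S(\sigma)$. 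Under the standard decomposition of $\sigma$ into indecomposable blocks, shared edges correspond precisely to size-$1$ blocks, and the total count admits the exact formula
\beq
F(n,k)=\sum_{r\geq 0}\binom{k+r}{k}\sum_{\substack{b_1+\cdots+b_r=n-k\\ b_i\geq 2}}\prod_{i=1}^r I_{b_i}\,,
\eeq
where $I_b$ counts indecomposable permutations of $\{1,\ldots,b\}$, and $\binom{k+r}{k}$ accounts for the interleavings of $k$ size-$1$ blocks with $r$ larger indecomposable blocks.

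For the bound \eqref{path_counting_i}, an elementary count suffices: choosing the $k$ positions of shared edges and freely permuting the remaining $n-k$ coordinates yields $F(n,k)\leq \binom{n}{k}(n-k)!$. Since $\mathfrak{n_{e}}\leq k$ forces $n-k\leq n-\mathfrak{n_{e}}=5e(n+3)^{2/3}\leq n/2$ for $n$ large enough, monotonicity of $j\mapsto \binom{n}{j}$ on $[0,n/2]$ gives $\binom{n}{k}=\binom{n}{n-k}\leq \binom{n}{\mathfrak{n_{e}}}$, closing the case.

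The bounds \eqref{path_counting_ii}, \eqref{path_counting_iiii} and \eqref{path_counting_iii} require finer combinatorial estimates and I would invoke (or adapt) the cited works. The structural starting point is that the $r=1$ summand in the exact formula above contributes $(k+1)I_{n-k}$, and the classical identity $\sum_{n\geq 0} n!x^n=\bigl(1-\sum_{b\geq 1}I_b x^b\bigr)^{-1}$ (Comtet) yields $I_m=m!(1-o(1))$ as $m\to\infty$. This immediately gives the leading term $(k+1)(n-k)!(1+o_n(1))$ of \eqref{path_counting_iiii}; the starred bound \eqref{path_counting_iii} follows by the same line of reasoning, with the bookkeeping that excluding the first and last edges forces the boundary positions $1$ and $n$ to lie inside a block of size $\geq 2$, which loses a factor of order $n-k$ and produces the $(n-k-1)!$ prefactor. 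The intermediate bound \eqref{path_counting_ii} is obtained by applying the same decomposition with the crude estimate $I_{n-k}\leq (n-k)!$ together with polynomial-type control of the $r\geq 2$ contributions, giving the $n^6$ prefactor.

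The main obstacle is to show that the $r\geq 2$ tail is indeed negligible in the regime $k\leq n^{1/4}$, namely
\beq
\sum_{r\geq 2}\binom{k+r}{k}\sum_{\substack{b_1+\cdots+b_r=n-k\\ b_i\geq 2}}\prod_{i=1}^r I_{b_i}=o\bigl((k+1)(n-k)!\bigr)\,.
\eeq
This rests on a delicate balance between the growth of $\binom{k+r}{k}$ and the decay of the inner composition sum, reducible via Abel-type partial summation to properties of the generating function of $\{I_m\}$. Since these technicalities are already handled in \cite[Lemmas 2.2, 2.4]{Fill_Pemantle} and \cite[Lemma 6]{kss1}, I would simply quote them and focus the remainder of the exposition on deploying the path-counting bounds in the second-moment estimates of Section~\ref{proof_thm_two}.
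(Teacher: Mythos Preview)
Your proposal is correct and mirrors the paper's approach: the paper also only proves \eqref{path_counting_i} directly, via the crude bound $F(n,k)\leq \binom{n}{k}(n-k)!$ followed by monotonicity of the binomial for $k\geq \mathfrak{n_e}$, and cites Fill--Pemantle and \cite{kss1} for the remaining three estimates. The paper reaches the crude bound through the gap-sequence $\textbf{r}=(r_0,\ldots,r_{k+1})$ and log-convexity of factorials rather than your indecomposable-block language, but these are two phrasings of the same decomposition; your generating-function heuristic for the leading term $(k+1)I_{n-k}$ in \eqref{path_counting_iiii} is a nice bonus the paper does not spell out.
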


\begin{proof} As mentioned, we only need to prove  \eqref{path_counting_i}: to this end, consider a directed path $\pi$ which shares precisely $k$ edges with the reference path  $\pi' =12\dots n$. We set $r_i = l$ if the $l^{th}$ traversed edge by $\pi$ is the $i^{th}$ edge shared by $\pi$ and $\pi'$. (We set by convention $r_0 \defi 0$, and $r_{k+1} \defi n+1$). Furthemore let $\textbf{r}\defi \textbf{r}(\pi)=(r_0,...,r_{k+1})$. For any sequence $\textbf{r}_0=(r_0,...,r_{k+1})$ with 0 = $r_0<r_1<...<r_k<r_{k+1}=n+1$, let $C(\textbf{r}_0)$ denote the number of paths $\pi$ with $\textbf{r}(\pi)=\textbf{r}_0$.  Since the values $\pi_{r_i+1},...,\pi_{r_{i+1}-1}$ must be a permutation of $\{r_i+1,...,r_{i+1}-1\}$,
it clearly holds that $C(\textbf{r})\leq G(\textbf{r})$, where 
\beq\label{G}
G(\textbf{r})\defi \prod\limits_{i=0}^{k}(r_{i+1}-r_i-1)! \,.
\eeq
Iterating the log-convexity \eqref{log_convexity} of factorials in its simplest form: $a! b! \leq (a+b)!$, yields
\beq\label{G}
G(\textbf{r})\leq \left(\sum_{i=0}^{k} r_{i+1}-r_i-1\right)!=\left(n+1-(k+1)\right)!=\left(n-k\right)! \,,
\eeq
which implies, in particular, that there are at most $(n-k)!$ paths sharing $k$ edges with a reference-path $\pi'$ for {\it given} \textbf{r}-sequence. But since there are $\binom {n}{k}$ ways to choose such  \textbf{r}-sequences
we obtain
\beq\bea
F(n,k)\leq (n-k)! \binom {n}{k}\,.
\eea\eeq
Since the factorial term on the r.h.s. above is decreasing in $k$ for $k \geq \lceil \frac{n}{2}\rceil$, we deduce that for $k\geq  \mathfrak{n_{e}} \gg \frac{n}{2}$,
\beq
(n-k)! \binom {n}{k} \leq (n-k)!\binom {n}{ \mathfrak{n_{e}}},
\eeq
settling the proof of \eqref{path_counting_i}. 
\end{proof}

Armed with the above estimates on the number of directed paths with prescribed overlaps, we can move to the 

\begin{proof} [Proof of Lemma \ref{comb1}]
For $\pi \in \mathcal J$ and $\pi_k^{(d)} \in \mathcal J_\pi^{(d)}(n,k)$, let us denote by $k_l$ the number of common edges between $\boldsymbol 0$ and $H_m$, and by $k_r$ the number of common edges between $H_{K-m}$ and $\boldsymbol 1$ (in which case it evidently holds that $k=k_l+k_r$). Furthermore, let
\beq \bea
f^{(d)}_{\pi}(n,k,k_l)\defi & \;  \text{\sf all paths}\; \pi' \in \mathcal J\; \text{\sf which share} \; k \; \text{\sf edges with}\; \\
& \; \pi \; \text{\sf only in the directed phase, i.e between} \; \\
&\boldsymbol 0 \; \text{\sf and} \; H_m \; \text{\sf or} \; H_{K-m}  \;  \text{\sf and} \boldsymbol  \;  1,\\
& \text{\sf with $k_l$ edges in common between $\boldsymbol{0}$ and $H_m$,}\\
& \text{\sf but without considering  first and  last edge.}
\eea \eeq
We have
\beq\bea\label{claimcomb}
f^{(d)}_{\pi}(n,k)&= \sum_{k_l \geq k_r} f_{\pi}^{(d)}(n,k,k_l)+ \sum_{k_l < k_r}  f_{\pi}^{(d)}(n,k,k_l)\\
&\leq \sum_{k_l \geq k_r}  f_{\pi}^{(d)}(n,k,k_l)+ \sum_{k_l \leq k_r}  f_{\pi}^{(d)}(n,k,k_l).
\eea\eeq
We claim that 
\beq\label{claimcomb'}
\sum_{k_l \geq k_r} f_{\pi}^{(d)}(n,k,k_l)=\sum_{k_l \leq k_r}  f_{\pi}^{(d)}(n,k,k_l).
\eeq
This claim is perhaps surprising at first sight, as $k_l$ and $k_r$ cannot be simply swapped. The idea is over to work through bijections relating the (pair) of paths appearing in the first sum to those in the second one.

Indeed, each vertex on the right side of the hypercube stands in one to one correspondence with a vertex on the left side: the (trivial) bijection here amounts to changing the $1's$ into $0's$  (and the $0's$ into $1's$). 

Furthermore, by \eqref{sym_k}, backsteps and forward steps are symmetric around the center of the hypercube, meaning that for $i\in \{m+1, K-m\}$, 
\beq
\mathsf{eb}_i= \mathsf{eb}_{K-i+1} \qquad \text{and} \qquad  \mathsf{ef}_i= \mathsf{eb}_{K-i+1}\,.
\eeq
This, together with the fact that polymers are stretched, implies that the number of subpaths reaching two given vertices between $H_i$ and $H_{i+1}$, and the number of those between $H_{K-(i+1)}$ and $H_{K-i}$ do in fact coincide.  

Finally, we note that the "cone" of vertices in $H_{i+1}$ which are attainable from a vertex in $H_i$  in the first half of the hypercube is in one-to-one correspondence with the vertices in $H_{K-(i+1)}$ which lead to a given vertex in $H_{K-i}$ (this can immediately seen by changing the $1$-coordinates of a vertex into $0$, or the other way around). Thus, for each cone on the left side of the hypercube, we find a cone on the right side which evolves in  the opposite direction, settling claim \eqref{claimcomb'}.

Using  \eqref{claimcomb'} in \eqref{claimcomb} yields
\beq\bea\label{claimsym}
f_{\pi}^{(d)}(n,k)& \leq 2\sum_{k_l \geq k_r}  f_{\pi}^{(d)}(n,k,k_l).
\eea\eeq
We now make the following  key observation: counting the number of directed subpaths which share $k_l$ edges with $\pi$ (disregarding the first edge) between $\boldsymbol{0}$ and any admissible point of $H_m$ is equivalent to counting the number of directed subpaths $\pi'$ that share $k_l$ edges with the directed subpath of $\pi$, but on a hypercube of dimension $m \hat n_K$ (again disregarding the first edge). By symmetry, the same of course holds true for the number of subpaths between $H_{K-m}$ and $\boldsymbol{1}$ (this time disregarding the last edge). The new goal is thus to solve the path-counting problem on these hypercubes of smaller dimensions. In order to do so, we focus on the rightmost edge shared by both polymers, and denote by
\beq
d_l \defi d\left( \pi_{r_{k_l}}, H_{m} \right)
\eeq
its Hamming distance to the $H_m$-plane. We now distinguish between two cases: the first case concerns the situation where $d_l=0$, whereas the second case concerns $d_l>0$.

If $d_l=0$, the rightmost common edge leads directly into the $H_m$-plane. Any subpath sharing $k_l$ edges with $\pi$ can thus reach one vertex only on the target plane: counting the number of subpaths connecting $\boldsymbol{0}$ and this prescribed vertex,  {\it while disregarding the first edge}, is therefore equivalent to estimating the number of directed paths which share $k_l-1$ edges on a hypercube of dimension $m \hat n_K-1$, also disregarding the first edge. We will solve the latter problem with the help of $F$, in which case a small detail must be taken into account. In fact, contrary to our current situation, the first edge does matter in the definition of $F$. We thus have to distinguish between the case whether the first edge is shared, respectively: not shared, by both paths. In both cases we need to specify $k_l-1$ common edges disregarding first and "last" edge: in the first case the number of commond edges is, in fact, $(k_l-1)+1=k_l$, and this leads to at most $F\left(m \hat n_K-1,k_l\right)$ ways to choose them. In the second case the problem of the "hidden" (first) shared edge is not present, and we simply have {\it at most} $F\left(m \hat n_K-1,k_l-1\right)$ possibilities to choose the common edges. All in all, for the number of directed paths sharing $k_l$ common edges (first one excluded), and $d_l=0$,
we have the rough bound
\beq \bea \label{F2}
F\left(m \hat n_K-1,k_l\right)+F\left(m \hat n_K-1,k_l-1\right) \leq 2F\left(m \hat n_K-1,k_l-1\right),
\eea \eeq
using for the inequality that $j \mapsto F(n,j)$ is decreasing. \\

We now move to the case $d_l >0$ and first note that by  definition of $f^{(d)}(n,k)$, 
neither first nor the last edges can be a common edge. The number of subpaths, which are sharing $k_l$ edges between $\boldsymbol{0}$ and $H_m$ with $\pi$ whithout considering the first and the last edge is thus at most
\beq \label{sainttropez}
\left( \# \; \text{admissible vertices in} \; H_m \right) \times F^*(m \hat n_K,k_l) \,.
\eeq
We claim that 
\beq \label{binom}
\# \; \text{admissible vertices in} \; H_m = {{n-(m \hat n_K-d_l)}\choose{d_l}}\,.
\eeq
Indeed, of the $n$ possible $1$-coordinates, $(m \hat n_K-d_l)$ many are already specified by the rightmost common edge; furthermore, in order to reach any of the admissible points on $H_m$ we may switch, out of $n-(m \hat n_K-d_l)$ $0$-coordinates, $d_l$ many into $1's$: \eqref{binom} thus follows by simple counting. 

Next we claim that $j \mapsto {n+ j \choose j}$ is increasing. To see this, we write
\beq
\binom{n+j}{j} = \frac{(n+j) \dots (j+1)}{n!},
\eeq
and observe that the term in the numerator on the r.h.s. above is increasing. It follows in particular, that the r.h.s. of \eqref{binom} is maximized for $d_l = m \hat n_K-k_l-1$ (recall that we are not considering the first edge), and therefore
\beq \label{F3}
\eqref{sainttropez} \leq {{n-k_l-1}\choose{m \hat n_K-k_l-1}} \times F^*(m \hat n_K,k_l)\,.
\eeq
Combining \eqref{F2} and \eqref{F3}, we thus see that the overall number of subpaths sharing $k_l$ edges on the "left side" of the hypercube (i.e. between $\boldsymbol{0}$ and $H_m$, but without considering the first edge) with a reference path $\pi$ is less than
\beq\label{debutdirected}
2 F(m \hat n_K-1,k_l-1)+F^*(m \hat n_K,k_l) \times {{n-k_l-1}\choose{m \hat n_K-k_l-1}}.
\eeq

We next move to the "right side" of the hypercube: in full analogy to the considerations leading to \eqref{F2}, one sees that the number of subpaths sharing $k_r$ edges between a point on $H_{K-m}$ and $\boldsymbol{1}$ with a given reference path $\pi$ (disregarding, in this case, the last edge), is less than
\beq\label{findirected}
F(m \hat n_K,k_r)+F(m \hat n_K,k_r+1) \leq 2F(m \hat n_K,k_r).
\eeq

The bounds \eqref{debutdirected} and \eqref{findirected} address "left" and "right" side of the hypercube on separate footing: for these bounds to be of any use in estimating the $f^{(d)}(n, k, k_l)$-terms appearing in \eqref{claimsym}, left and right side must be connected.  We will do so by slightly "overshooting", insofar we do not take into account the fact that the number of subpaths connecting $H_m$ and $H_{K-m}$ is {\it reduced}, as soon as shared edges on the right region are specified. Recalling that $\mathsf J = \#\mathcal J$ takes the  form
\beq\label{form J}
\mathsf J= \underbrace{\left( m  \hat n_K   \right)! \dbinom{n}{  m  \hat n_K}}_{\text{directed}} \times \underbrace{\mathsf J_s}_{\text{stretched}} \times \underbrace{\left( m  \hat n_K   \right)!}_{\text{directed}},
\eeq
with $\mathsf J_s$ denoting the number of subpaths between a given vertex on $H_m$ and the $H_{K-m}$-plane,  it follows from \eqref{debutdirected}, \eqref{findirected} and the aforementioned overshooting, that 
\beq\bea\label{estimfd}
 f_{\pi}^{(d)}(n,k,k_l)&\leq \left( 2F(m \hat n_K-1,k_l-1)+F^*(m \hat n_K,k_l) \times {{n-k_l-1}\choose{m \hat n_K-k_l-1}} \right) \times\\
& \hspace {4 cm} \mathsf J_s  \times 2F(m \hat n_K, k_r).\\
\eea\eeq

The above is our fundamental estimate. Remark in particular, that it holds uniformly over $k = k_l+ k_r$. To proceed further we will now distinguish two cases: either $k \leq n^{\frac{1}{4}}$ or $k > n^{\frac{1}{4}}$. \\

\noindent {\sf First case: $k \leq n^{\frac{1}{4}}$}. We begin with an estimate for the terms in the large brackets of the r.h.s. of \eqref{estimfd}.  In the considered $k$-regime, we may use the bounds provided by Lemma \ref{path_counting}: display \eqref{path_counting_iiii} yields the bound
\beq \label{tante}
F(m \hat n_K-1,k_l-1) \leq (k_l+1)(m \hat n_K-k_l)! \left[ 1+o_n(1) \right] \leq 2 (k_l+1)(m \hat n_K-k_l)! \,,
\eeq
for $n$ large enough, whereas display \eqref{path_counting_iii} of the same Lemma yields, for the $F^*$-term on the r.h.s. of \eqref{estimfd} the bound
\beq \label{tante1}
F^*(m \hat n_K,k_l) \leq  (k_l+1)(m \hat n_K-k_l-1)! \left[ 1+o_n(1) \right] \leq 2(k_l+1)(m \hat n_K-k_l-1)!\,,
\eeq
which holds again for large enough $n$. Combining \eqref{tante} and \eqref{tante1} we thus get that the terms in the large brackets of the r.h.s. of \eqref{estimfd} are {\it at most}
\beq\bea\label{debutdirected1}
& 4(k_l+1)(m \hat n_K-k_l)!+2(k_l+1)(m \hat n_K-k_l-1)! {{n-k_l-1}\choose{m \hat n_K-k_l-1}} \\
& \hspace{5cm} \leq 4(k_l+1)(m \hat n_K-k_l-1)! \times {{n-k_l-1}\choose{m \hat n_K-k_l-1}},
\eea \eeq
the second inequality since $n-k_l-1 \geq m \hat n_K-k_l-1\geq 5 \hat n_K-1$ (see $m=205$ and $k\leq 200 \hat n_K$) implies that the second term on the l.h.s. above is (exponentially) larger than the first one.

We may again use the bounds provided by Lemma \ref{path_counting}, display \eqref{path_counting_iiii}, akin to \eqref{tante}, and we obtain
\beq\label{findirected1}
2F(m \hat n_K,k_r) \leq 4(k_r+1) (m \hat n_K-k_r)!.
\eeq

Plugging the estimates \eqref{debutdirected1} and \eqref{findirected1} into \eqref{estimfd}, we obtain
\beq\bea\label{find*}
 f_{\pi}^{(d)}(n,k,k_l)&\leq 16 (k_l+1)(m \hat n_K-k_l-1)!{{n-k_l-1}\choose{m \hat n_K-k_l-1}} \mathsf J_s  (k_r+1)(m \hat n_K-k_r)! \\
&=16 (k_l+1)(m \hat n_K-k_l-1)!{{n-k_l-1}\choose{m \hat n_K-k_l-1}}  \frac{\mathsf J (k_r+1)(m \hat n_K-k_r)! }{{( m \hat n_K)!}^2\dbinom{n}{ m \hat n_K}},\\
\eea\eeq
the last equality expressing $\mathsf J_s$ as a function of $\mathsf J$ via the relation \eqref{form J}. Writing out the binomials, and after some elementary simplifications, \eqref{find*} becomes 
\beq\bea\label{find}
 f_{\pi}^{(d)}(n,k,k_l)&\leq 16  (k_l+1)(k_r+1) (n-k_l-1)! (m \hat n_K- k_r)!  \frac{\mathsf J}{( m \hat n_K)!n!}.\\
\eea\eeq
In order to estimate the r.h.s. of  \eqref{find}, we recall that $k=k_r+k_l$, hence
\beq\label{t1}
\left(k_l+1\right)\left(k_r+1\right)\leq \left(k+1\right)^2.
\eeq
Furthermore, we claim that
\beq\bea\label{wc}
(n-k_l-1)! (m \hat n_K- k_r)!&\leq \left(n-\lceil \frac{k}{2} \rceil-1\right)! \left(m \hat n_K-\lfloor \frac{k}{2} \rfloor \right)!.\\
\eea\eeq
To see this, we will make use of the log-convexity \eqref{log_convexity} with $a\defi \lceil n-k_l-1 \rceil$ and $b \defi \lfloor m \hat n_K- k_r \rfloor$, in which case it clearly holds that $a>b$ for any choice of $k_l=k-k_r$ with $k \leq 200 \hat n_K$. By log-convexity we thus obtain
\beq\bea\label{wc'}
(n-k_l-1)! (m \hat n_K- k_r)!&\leq (n-k_l-1+1)! (m \hat n_K- k_r-1)!\\
&=\left(n-k_l'-1\right)! \left(m \hat n_K- k_r'\right)!\, ,
\eea\eeq
where $k_l'\defi k_l-1$ and $k_r' \defi k_r+1$. Iterating the $\log$-convexity as in \eqref{wc'} and taking into account that $k_l \geq k_r$ gives that the r.h.s. of  \eqref{wc'} is maximized in $k_l=\lceil \frac{k}{2} \rceil$, 
settling the claim \eqref{wc}.

Plugging \eqref{t1}  and \eqref{wc} into \eqref{find} then yields
\beq\bea\label{find1}
 f_{\pi}^{(d)}(n,k,k_l)&\leq 16 (k+1)^2 \left(n-\lceil \frac{k}{2} \rceil-1 \right)! \left(m \hat n_K- \lfloor \frac{k}{2} \rfloor \right)!  \frac{\mathsf J}{( m \hat n_K)!n!}.\\
\eea\eeq
All in all, using \eqref{claimsym} and \eqref{find1}, we have
\beq\bea\label{find11}
f_{\pi}^{(d)}(n,k)&\leq 2 \sum_{k_l\geq k_r} 16(k+1)^2 \left(n-\lceil \frac{k}{2} \rceil-1 \right)! \left(m \hat n_K- \lfloor \frac{k}{2} \rfloor\right)!  \frac{\mathsf J}{( m \hat n_K)!n!}\\
& \leq 32(k+1)^3 \left(n-\lceil \frac{k}{2} \rceil-1 \right)! \left(m \hat n_K- \lfloor \frac{k}{2} \rfloor \right)!  \frac{\mathsf J}{( m \hat n_K)!n!},
\eea\eeq
the last inequality since $k_l+k_r = k$, implying that the sum consists at most of $k+1$ terms.\\

\noindent {\sf Second case: $k > n^{\frac{1}{4}}$}. Note that we additionally require that $k \leq  200 \hat n_K$.
On the other hand,  $200 \hat n_K \leq \mathfrak{n_{e}}$, by definition.  This implies, in particular, that $k \leq \mathfrak{n_{e}}$: we are thus in the \eqref{path_counting_ii}-regime. Recalling the definition of $F^{*}$, the upperbound clearly holds 
\beq\bea\label{tante1*}
F^*(m \hat n_K,k_l) &\leq F(m \hat n_K,k_l)+ F(m \hat n_K,k_l+1)+F(m \hat n_K,k_l+2)\\
& \leq n^6 (n-k_l)! \left(1+\frac{1}{(n-k_l)}+\frac{1}{(n-k_l)(n-k_l-1)} \right) \\ 
& \leq n^7 (n-k_l-1)!2,  \hspace{4cm} \text{(n large enough)}
\eea\eeq
the second inequality by \eqref{path_counting_ii}. Following {\it exactly} the same steps which lead from \eqref{estimfd}  to \eqref{find11}, again using the Lemma \ref{path_counting} but this time with the estimate \eqref{path_counting_ii} and replacing \eqref{tante1} by \eqref{tante1*}, one immediately obtains 
\beq\bea
f_{\pi}^{(d)}(n,k)&\leq 16n^{13} (k+1) \left(n-1-\lceil \frac{k}{2} \rceil \right)! \left(m \hat n_K- \lfloor \frac{k}{2} \rfloor \right)!  \frac{\mathsf J}{( m \hat n_K)!n!},
\eea\eeq
for all $\pi \in \mathcal J$, concluding the proof of Lemma \ref{comb1}.
\end{proof}

\subsection{Counting undirected paths, and proofs of Lemmata \ref{r2} and \ref{r}} \label{counting_undir}
Thanks to the repulsive nature of the $H$-planes, if two paths share two edges between a different pair of $H$-planes, the common edge with the smaller Hamming distance to $\boldsymbol 0$ is evidently crossed first. Given that paths eventually proceed according to the inherent directivity of the problem ("from left to right"), one may ask a similar question for the way two (or more) common edges between two successive $H$-planes (in the stretched phase) are crossed. To address this question, we will distinguish between two concepts: {\it i)} {\sf directionality}, i.e. whether the path performs, while crossing the considered edge, a forward- or a backstep, and {\it ii)} {\sf order} in which the considered edges are crossed\footnote{In hindsight, we only need two distinctions here: either the two paths cross the edges in the same, or in reverse order. We will avoid explicit definitions for this intuitive concept, but provide an example: assuming that the common edges are labeled {\sf a,b,c,d, etc.}, the order in which a path crosses them is simply  the order of the labels: assume the path $\pi$ crosses the edges in the order {\sf a-b-c-d}; the path $\pi'$ can cross the same edges either in exactly the same order {\sf a-b-c-d}, or in reverse order {\sf d-c-b-a}.}.

\begin{lem}\label{rem1} 
Let $\pi, \pi' \in \mathcal J$ share edges between the $H_{i-1}$- and the $H_i$-plane, for some $i \in \{m+1, \dots, K-m\}$, and assume that the $\pi$-path crosses the common edges in a certain directionality and order. Then the $\pi'$-path has to cross the edges either 
\begin{itemize}
\item in the same directionality \emph{and} order, \\
or 
\item in opposite directionality \emph{and} reverse order. 
\end{itemize}
\end{lem}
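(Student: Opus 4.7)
The plan is to prove the lemma by reducing the global statement to a pairwise dichotomy for two common edges, and then to analyze the finitely many configurations using the \emph{geodesic} property of substrands in the stretched phase.

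First I would exploit the crucial property of $\mathcal{P}_{n,K,K'}^{\sf rep}$: between two consecutive $H$-planes $H_{i-1}$ and $H_i$ (for $i\in\{m+1,\dots,K-m\}$), each substrand of $\pi$ (respectively of $\pi'$) is a geodesic, i.e.\ detours vanish, so every coordinate in $\{1,\dots,n\}$ is flipped \emph{at most once} along this substrand. In particular, once we fix the substrand's starting vertex $v^{\pi}_{i-1}\in H_{i-1}$, the direction of traversal of any common edge $e$ (flipping some coordinate $j$) by $\pi$ is entirely determined: forward if $v^{\pi}_{i-1}[j]=0$, backward if $v^{\pi}_{i-1}[j]=1$. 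The repulsiveness of $H$-planes (Lemma \ref{rem}) also ensures that the substrand of each polymer between $H_{i-1}$ and $H_i$ is unambiguous.

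The next step is to reduce the claim to a pairwise statement: it suffices to show that for any two common edges $e_a,e_b$ traversed by $\pi$ in this order with directions $(d_a,d_b)\in\{+,-\}^2$, the path $\pi'$ traverses them either in the same order with directions $(d_a,d_b)$, or in reverse order $(e_b,e_a)$ with opposite directions $(-d_b,-d_a)$. Once this pairwise dichotomy is in place, a global consistency argument follows by transitivity: if some pair $(e_a,e_b)$ were of the ``same'' type while $(e_b,e_c)$ were of the ``reverse'' type, then the pair $(e_a,e_c)$ would violate both options, a contradiction. Hence either all pairs are of the same type, giving identical order and directions, or all are of the opposite type, giving reverse order and opposite directions.

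The heart of the proof is thus the pairwise case analysis. Let $e_a,e_b$ have associated coordinates $j_a,j_b$ and vertex pairs $\{u_a^{\pm}\},\{u_b^{\pm}\}$, with $\pi$ crossing $e_a$ before $e_b$ in fixed directions $(d_a,d_b)$. Using the single-flip property applied to $\pi$, the values of $j_b$ at $u_a^{\pm}$ and of $j_a$ at $u_b^{\pm}$ are pinned down (since none of $j_a,j_b$ can be flipped between $e_a$ and $e_b$ along $\pi$). I would then enumerate the six configurations for $\pi'$ other than the two allowed and, for each, track the values of $j_a$ and $j_b$ through $\pi'$'s substrand. In every forbidden case, one of these two coordinates is forced to flip twice along $\pi'$, contradicting the geodesic property of $\pi'$'s own substrand. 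For example, if $\pi'$ crosses $e_b$ first and $e_a$ second, both forward, then at $\pi'$'s state just before $e_b$ we have $j_a=1$, whereas just before $e_a$ (later along $\pi'$) we need $j_a=0$, forcing an illegal intermediate flip of $j_a$ in addition to the flip performed at $e_a$ itself. The remaining five cases are variations on the same mechanism, simplified by the evident $(\,\text{forward}\leftrightarrow\text{backward}\,)$ symmetry.

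The main obstacle is purely the combinatorial bookkeeping: six sub-cases, each requiring a careful trace of coordinate values along $\pi'$'s trajectory. No deeper idea is needed beyond the geodesic constraint, which provides the obstruction (``coordinate flipped twice'') in every forbidden configuration.
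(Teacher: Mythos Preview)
Your proposal is correct and rests on the same mechanism as the paper's proof: the stretched (geodesic) property between $H_{i-1}$ and $H_i$ forbids any coordinate from being flipped twice, which is exactly what the paper calls a ``detour''. The paper's own proof is a one-paragraph sketch stating that any $\pi'$ violating the dichotomy would be forced to perform a detour; your pairwise reduction together with the explicit six-case bookkeeping is a rigorous fleshing-out of precisely that sketch, so the approaches coincide in substance.
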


\begin{figure}[!h]
    \centering
    \begin{subfigure}[b]{0.45\textwidth}
        \includegraphics[width=\textwidth]{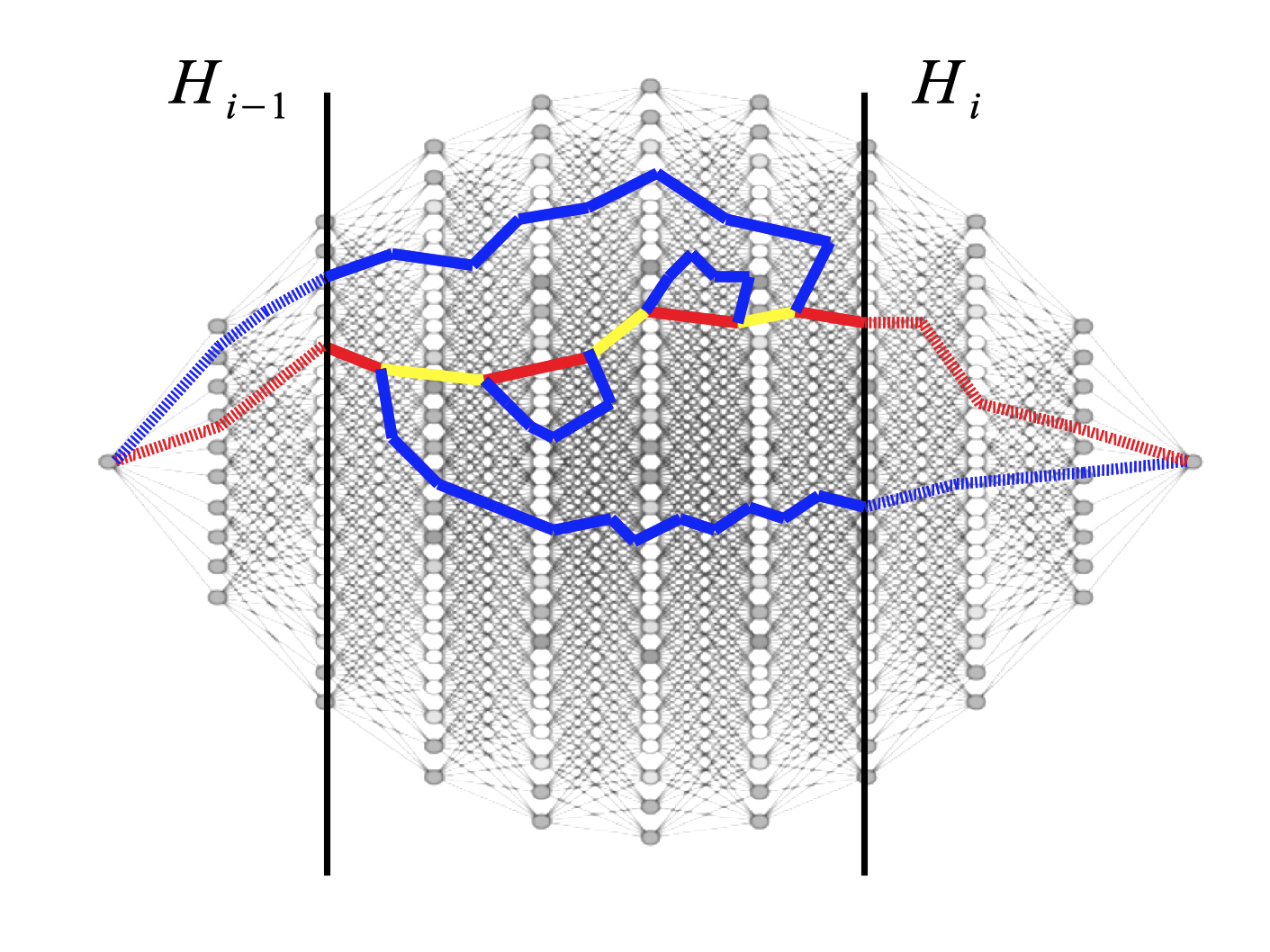}
    \end{subfigure}
    \begin{subfigure}[b]{0.45\textwidth}
        \includegraphics[width=\textwidth]{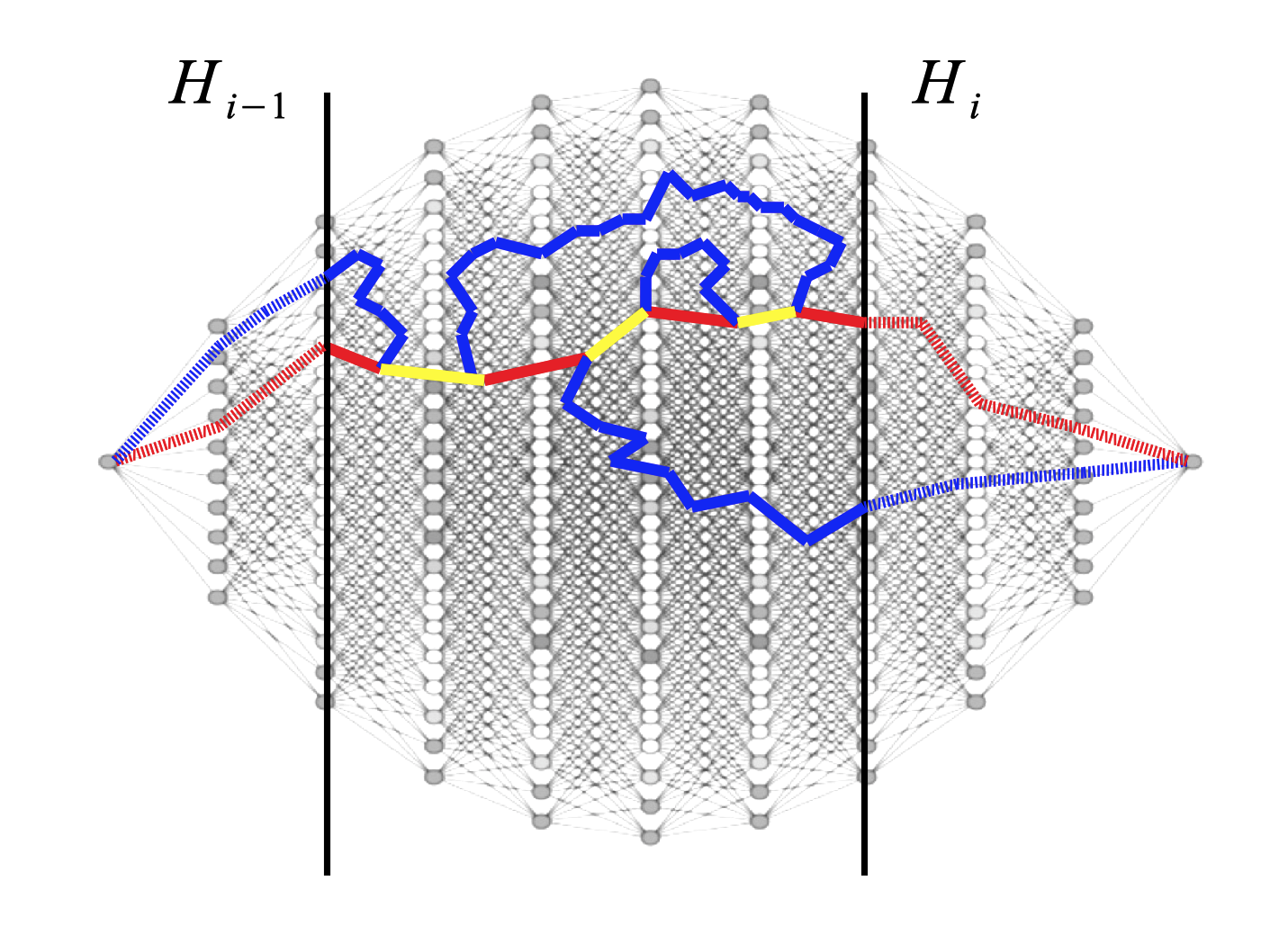}
    \end{subfigure}
    \caption{The yellow edges are shared by both polymers. The picture on the left satisfies the directionality: the red polymer crosses the yellow edges in graphical order "from left to right", while the blue polymer crosses the yellow edges in reversed order and opposite directionality. The picture on the right does not: the blue polymer first crosses the first common edge, but then reverts both order and directionality.}
\label{dir_nodir}
\end{figure}

\begin{proof} [Proof of Lemma \ref{rem1}] 

Consider a path $\pi$, and the associated directionality/order in which it crosses the prescribed, common edges. A second path $\pi'$  which does not follow such directionality and order (nor its complete reversal)  will move away from one of the shared edges which are bound to be crossed in a future step. The second path will thus  have to make up for this "departure", eventually, but this can only happen if it performs, during its evolution, a {\it detour}, i.e. if it goes through an edge (parallel to one of the unit vectors) in {\it both} directions. Since detours are not possible in the stretched phase at hand, the claim follows repeating the line of reasoning.  

\end{proof}

\begin{proof}[Proof of Lemma \ref{r2}]
Consider $\pi \in \mathcal J$, and  $\boldsymbol k=(k_1, k_2, \dots , k_K) \in {\N}^{K}$, such that $k_1+k_2+\dots+k_K=k$.  By a slight abuse of notation we denote by  $f_\pi(n, \boldsymbol k)$ the number of paths which share $k_i$ edges with $\pi$ between the hyperplanes $H_{i-1}$ and $H_{i}, \, i \in \{1,\dots, K\}$. It then holds
\beq\bea \label{estimate1}
f_{\pi}(n,k)=\sum_{\boldsymbol k}f_{\pi}(n, \boldsymbol k).
\eea\eeq
If $k_i>0$, let $v_{i}^{\text{fi}}$ be the {\it first} vertex which $\pi$ hits when crossing the first common edge between $H_{i-1}$ and $H_{i}$, and $v_{i}^{\text{la}}$ the {\it last} vertex from which $\pi$ departs after crossing the last common edge (also between $H_{i-1}$ and $H_{i}$).  Furthermore, denote by
\beq\bea
l_{i}^{\text{fi}}(\pi) \defi d\left(H_{i-1} \cap \pi, v_{i}^{\text{fi}} \right), \qquad l_{i}^{\text{la}}(\pi) \defi d\left(v_{i}^{\text{la}}, H_{i}\cap \pi \right),
\eea\eeq
the Hamming distance from (resp. to) the first (resp. last) vertex to the previous (resp. next) H-plane.  If $k_i=0$, we simply set $l_{i}^{\text{fi}}(\pi) \defi d(H_{i-1} \cap \pi, H_{i} \cap \pi)$ and $l_{i}^{\text{la}}\defi 0$. 

Finally, consider the whole list (vector) of Hamming distances 
\beq
\boldsymbol l(\pi) \defi \left( l_{1}^{\text{fi}}(\pi), l_{1}^{\text{la}}(\pi), l_{2}^{\text{fi}}(\pi), l_{2}^{\text{la}}(\pi), \dots, l_{K}^{\text{fi}}(\pi), l_{K}^{\text{la}}(\pi) \right) \in {\N}^{2K}.
\eeq

Let $f_\pi(n, \boldsymbol k, \boldsymbol l)$ the number of paths sharing $k_i$ edges with $\pi$ between the hyperplanes $H_{i-1}$ and $H_{i}$, $i=1=\dots K$, and with prescribed $\boldsymbol l$-vector. It then holds
\beq\bea \label{estimate2}
f_{\pi}(n,k)=\sum_{\boldsymbol k} \ \sum_{\boldsymbol l}f_{\pi}(n, \boldsymbol k, \boldsymbol l)\\
\eea\eeq
By Lemma \ref{rem1},  a path $\hat \pi \in \mathcal J_\pi(n,k)$ has  two ways only to travel through the common edges between successive H-planes: either in identical, or opposite directionality/order. In order to keep track of this, we consider the $\boldsymbol \sigma \defi ({\sigma}_1, \dots, {\sigma}_K) \in \{-1,1\}^{K}$ with coordinates given by
\beq
{\sigma}_i \defi+1, \qquad \text{\sf if}\; k_i=0, 
\eeq 
and  
\beq
{\sigma}_i \defi
\begin{cases}
+1, & \text{\sf if} \; \hat \pi \; \text{\sf crosses first} \; v_i^{\text{fi}}, \\
-1, & \text{\sf if} \; \hat \pi \; \text{\sf crosses first} \; v_i^{\text{la}}\,.
\end{cases}
\qquad \text{and} \; k_i>0. 
\eeq

We need some additional notation: if $k_i>0$ and in case of identical directionality/order, i.e. ${\sigma}_i =+1$, we set 
\beq \bea
\hat l_{i}^{\text{fi}}(\hat \pi) & \defi \text{\sf length of the substrand connecting the vertices}\; 
H_{i-1} \cap \hat \pi \; \text{\sf and} \; v_i^{\text{fi}}\,, \\
\hat l_{i}^{\text{la}}(\hat \pi) & \defi \text{\sf length of the substrand connecting} \; v_i^{\text{la}}\; \text{\sf and}\; H_{i} \cap \hat \pi, \\
\hat{v}_i^{\text{fi}}& \defi v_i^{\text{fi}}, \\
\hat{v}_i^{\text{la}}& \defi v_i^{\text{la}}\,.
\eea \eeq
If $k_i>0$ and in case of reverse directionality/order, i.e. $\s_i =-1$,  we set 
\beq \bea
\hat l_{i}^{\text{fi}}(\hat \pi) & \defi \text{\sf length of the substrand connecting the vertices}\; 
H_{i-1} \cap \hat \pi \; \text{\sf and} \; v_i^{\text{la}}\,, \\
\hat l_{i}^{\text{la}}(\hat \pi) & \defi \text{\sf length of the substrand connecting} \; H_{i} \cap \hat \pi \; \text{\sf and}\; v_i^{\text{fi}}, \\
\hat{v}_i^{\text{fi}}& \defi v_i^{\text{la}}, \\
\hat{v}_i^{\text{la}}& \defi v_i^{\text{fi}}\,.
\eea \eeq

If $k_i=0$, we simply set 
\beq \bea
\hat l_{i}^{\text{fi}}(\hat \pi) & \defi l_{\pi}(H_{i-1} \cap \pi, H_{i} \cap \pi), \\
\hat l_{i}^{\text{la}}(\hat \pi) & \defi 0. \\
\eea \eeq
Furthermore, let
\beq \bea
\hat v_0^{\text{la}} & \defi \boldsymbol 0, \\
 \hat v_{K+1}^{\text{fi}} &\defi \boldsymbol 1.
\eea \eeq

In full analogy with $\boldsymbol l$, we denote by $\hat{\boldsymbol l}$ the list (vector) of $\hat l$-lengths. \\

Let us now go back to \eqref{estimate2}: with $f_\pi(n, \boldsymbol k, \boldsymbol l, \boldsymbol \sigma,\hat{\boldsymbol l})$ standing for the number of $\hat \pi$-paths which share $k_i$ edges with $\pi$ between the hyperplanes $H_{i-1}$ and $H_{i}$ with prescribed lengths $\boldsymbol l$ (for $\pi$), $\hat{\boldsymbol l}$ (for $\hat \pi$) and with $\boldsymbol \sigma$ directionality/order, it holds
\beq\bea \label{estimate3}
f_{\pi}(n,k)=\sum_{\boldsymbol k} \sum_{\boldsymbol l}  \sum_{\boldsymbol \sigma} \sum_{\hat{\boldsymbol l}}f_\pi(n, \boldsymbol k, \boldsymbol l, \boldsymbol \sigma,\hat{\boldsymbol l}).\\
\eea\eeq
We will now derive a formula for the $f_\pi$-summands on the r.h.s. above in terms of the number of paths satisfying the prescriptions {\it locally}: this requires discriminating between the cases where first and last common edge both lie within the same slab (i.e. between successive H-planes), or in two different slabs. Let $h(i)\defi \min\{a, a \geq i, k_a>0\}$ and $h(i)=K+1$ if $\{a, a\geq i, k_a>0\}$ is empty or $i=K+1$. Finally, $h(0)\defi 0$.
\begin{itemize}
\item {\sf Same slab}. 
\begin{itemize}
\item For $k_{h(i)} \geq 1$, we denote by $ \mathring{f}_\pi(n, \boldsymbol k, \boldsymbol l, \boldsymbol \sigma , \hat{\boldsymbol l},i)$ the number of stretched subpaths sharing $k_{h(i)}$ edges with $\pi$ between $v_{h(i)}^{\text{fi}}$ and  $v_{h(i)}^{\text{la}}$, knowing that first and last edge are in common. 
\end{itemize}
\item {\sf Different slabs.}
\begin{itemize}
\item We denote by $\overline{f}_\pi(n, \boldsymbol k, \boldsymbol l, \boldsymbol \sigma ,\hat{\boldsymbol l},i)$ the number of paths connecting $\hat{v}_{h(i)}^{\text{la}}$ to $\hat{v}_{h(i+1)}^{\text{fi}}$.  
\end{itemize}
\end{itemize}
See below for a graphical rendition:
\begin{figure}[!h]
    \centering
  \includegraphics[scale=0.6]{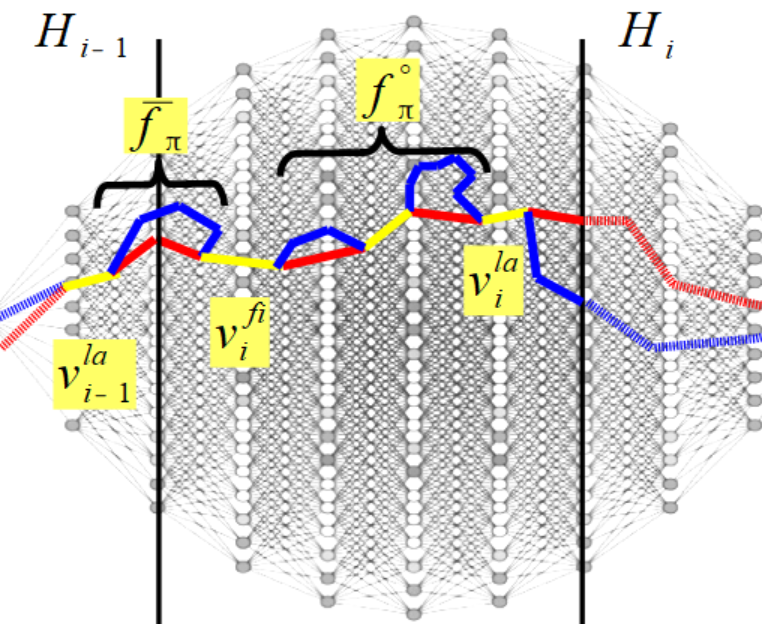}
\caption{The blue and the red paths are admissible paths, which cross the different common edges in yellow.}
\label{combi}
\end{figure}

With these definitions, denoting by $b\defi \#\{i:\; k_i>0\}$, it clearly holds that
\beq \bea \label{stimo}
& f_\pi(n, \boldsymbol k, \boldsymbol l, \boldsymbol \sigma,\hat{\boldsymbol l})  = \prod_{i=1}^b  \mathring{f}_\pi(n, \boldsymbol k, \boldsymbol l, \boldsymbol \sigma , \hat{\boldsymbol l},i)  \prod_{i=0}^{b} \overline{f}_\pi(n, \boldsymbol k, \boldsymbol l, \boldsymbol \sigma, \hat{\boldsymbol l},i)\,.
\eea \eeq
The new goal is to get a handle on the $\mathring{f}_\pi$ and $\overline{f}_\pi$-terms. As for the former,  we claim that for $n$ big enough, for $i, \, k_{h(i)}>0$ and with $\alpha \defi \frac{5}{6}$,
\beq \bea \label{stima_due}
& \mathring{f}_\pi(n, \boldsymbol k, \boldsymbol l, \boldsymbol \sigma , \hat{\boldsymbol l},i)  \leq{\tanh\left(\mathsf E\frac{d(v_{h(i)}^{\text{fi}},v_{h(i)}^{\text{la}})-k_{h(i)}}{\mathsf L_{opt} n}\right)}^{d(v_{h(i)}^{\text{fi}},v_{h(i)}^{\text{la}})-k_{h(i)}} \\
& \hspace{5cm}{ \times \cosh\left(\mathsf E\frac{\left(d(v_{h(i)}^{\text{fi}},v_{h(i)}^{\text{la}}\right)-k_{h(i)})}{\mathsf L_{opt} n}\right)}^{n} \\
& \hspace{7cm} \times {\left(\frac{\mathsf L_{opt} n}{e \mathsf E}\right)}^{d(v_{h(i)}^{\text{fi}},v_{h(i)}^{\text{la}})-k_{h(i)}} n^{n^{\alpha}}n^{\frac{1}{2}}\,.
\eea \eeq
In order to see this, we first observe that substrands are stretched between successive H-planes: the number of subpaths which share $k_{h(i)}\geq 2$ edges with $\pi$ between $v_{h(i)}^{\text{fi}}$ and $v_{h(i)}^{\text{la}}$ therefore equals the number of directed subpaths that share $k_{h(i)}-2$ edges with the subpath of $\pi$ between $v_{h(i)}^{\text{fi}}$ and $v_{h(i)}^{\text{la}}$ on a hypercube of dimension $d\left(v_{h(i)}^{\text{fi}},v_{h(i)}^{\text{la}}\right)-2$. Hence
\beq \bea \label{Fnk}
 \mathring{f}_\pi(n, \boldsymbol k, \boldsymbol l, \boldsymbol \sigma , \hat{\boldsymbol l},i)  \leq F\left(d\left(v_{h(i)}^{\text{fi}},v_{h(i)}^{\text{la}}\right)-2,k_{h(i)}-2\right).
\eea \eeq
Next we note that for $n$ large enough,
\beq 
n^6\leq {{n}\choose{\mathfrak{n_{e}}}},
\eeq
and therefore, by Lemma \ref{path_counting}, the following rough bound holds {\it for all} $k\leq n$:
\beq\label{gles}
F(n,k)\leq (n-k)!{{n}\choose{\mathfrak{n_{e}}}}\,.
\eeq
Using this in  \eqref{Fnk} yields
\beq \label{Fdk}
F\left(d\left(v_{h(i)}^{\text{fi}},v_{h(i)}^{\text{la}}\right)-2,k_{h(i)}-2\right) \leq \left(d\left(v_{h(i)}^{\text{fi}},v_{h(i)}^{\text{la}}\right)-k_{h(i)}\right)!{{n}\choose{\mathfrak{n_{e}}}}\,.
\eeq
Furthermore, 
\beq\label{esterror}
{{n}\choose{\mathfrak{n_{e}}}}\leq \frac{n!}{\mathfrak{n_{e}}!}= \frac{n!}{(n-5e(n+3)^{2/3})!}\leq n^{5e(n+3)^{2/3}}\leq n^{n^{\alpha}}\,,
\eeq
for $n$ big enough, where $\alpha \defi \frac{5}{6}$. Using this in \eqref{Fdk}, and plugging the ensuing estimates in \eqref{Fnk} we obtain
\beq \label{e8_00}
 \mathring{f}_\pi(n, \boldsymbol k, \boldsymbol l, \boldsymbol \sigma , \hat{\boldsymbol l},i) \leq (d(v_{h(i)}^{\text{fi}},v_{h(i)}^{\text{la}})-k_{h(i)})!  n^{n^{\alpha}}\,.
\eeq
By elementary Stirling approximation, 
\beq \bea \label{stirlo}
(d(v_{h(i)}^{\text{fi}},v_{h(i)}^{\text{la}})-k_{h(i)})! & \lesssim \left( d(v_{h(i)}^{\text{fi}},v_{h(i)}^{\text{la}})-k_{h(i)} \right)^{1/2} \left[ \frac{ d(v_{h(i)}^{\text{fi}},v_{h(i)}^{\text{la}})-k_{h(i)}   }{e}\right]^{d(v_{h(i)}^{\text{fi}},v_{h(i)}^{\text{la}})-k_{h(i)}} \\
& \lesssim n^{1/2} \left[ \frac{ d(v_{h(i)}^{\text{fi}},v_{h(i)}^{\text{la}})-k_{h(i)}   }{e}\right]^{d(v_{h(i)}^{\text{fi}},v_{h(i)}^{\text{la}})-k_{h(i)}}, 
\eea \eeq
the last inequality using that the dimension of an hypercube embedded between two hyperplanes is bounded above by their distance, i.e  $d(v_{h(i)}^{\text{fi}},v_{h(i)}^{\text{la}}) \leq \frac{n}{K}<n$. 

Plugging \eqref{stirlo} in \eqref{e8_00} yields
\beq \label{e8_0}
 \mathring{f}_\pi(n, \boldsymbol k, \boldsymbol l, \boldsymbol \sigma , \hat{\boldsymbol l},i) \lesssim 
n^{n^{\alpha}+1/2} \left[ \frac{ d(v_{h(i)}^{\text{fi}},v_{h(i)}^{\text{la}})-k_{h(i)}   }{e}\right]^{d(v_{h(i)}^{\text{fi}},v_{h(i)}^{\text{la}})-k_{h(i)}}\,.
\eeq
The above bound strongly depends on local specifications, which turn out to be rather untractable especially when it comes to the full product \eqref{stimo}. We will circumvent this problem by means of a series of tricks: in a first step we recognize the term involving the $d(v_{h(i)}^{\text{fi}},v_{h(i)}^{\text{la}})$ in \eqref{e8_0} as a constituent part of a Stanley's bound, which we thus introduce artificially. In a second step, we will perform a rather elementary asymptotic analysis of the product \eqref{stimo} which is enabled by some monotonicity properties of the hyperbolic functions. To see how the first step comes about, we note that $\sinh(x) \geq x$ and $\cosh(x) \geq 1$ for all $x>0$, hence the following holds 
\beq \bea
1 \leq \frac{\sinh(y)^d}{y^d} \cosh(y)^{n-d} = \tanh(y)^d \cosh(y)^n \frac{1}{y^d},
\eea \eeq
for any $y>0$ and $d\leq n$. We use this inequality with 
\beq
y := \mathsf E\frac{(d(v_{h(i)}^{\text{fi}},v_{h(i)}^{\text{la}})-k_{h(i)})}{\mathsf L_{opt} n}, \quad d:= d(v_{h(i)}^{\text{fi}},v_{h(i)}^{\text{la}})-k_{h(i)}, 
\eeq
in which case we see that 
\beq \bea \label{boooo}
1 & \leq \tanh\left(\mathsf E\frac{(d(v_{h(i)}^{\text{fi}},v_{h(i)}^{\text{la}})-k_{h(i)})}{\mathsf L_{opt} n}\right) \\
& \hspace{3cm} \times \cosh\left(\mathsf E\frac{(d(v_{h(i)}^{\text{fi}},v_{h(i)}^{\text{la}})-k_{h(i)})}{\mathsf L_{opt} n}\right)^n  \\
&  \hspace{7cm} \times \left[ \frac{\mathsf L_{opt} n}{\mathsf E (d(v_{h(i)}^{\text{fi}},v_{h(i)}^{\text{la}})-k_{h(i)})}\right]^{d(v_{h(i)}^{\text{fi}},v_{h(i)}^{\text{la}})-k_{h(i)}}\,.
\eea \eeq
Artificially upperbounding with the help of this estimate the r.h.s. of \eqref{e8_0}, 
and factoring out the $(d(v_{h(i)}^{\text{fi}},v_{h(i)}^{\text{la}})-k_{h(i)})^{d(v_{h(i)}^{\text{fi}},v_{h(i)}^{\text{la}})-k_{h(i)}}$-terms then yields
\beq\bea\label{e8''}
& \mathring{f}_\pi(n, \boldsymbol k, \boldsymbol l, \boldsymbol \sigma , \hat{\boldsymbol l},i) \lesssim  n^{n^{\alpha}+1/2} {\tanh\left(\mathsf E\frac{d(v_{h(i)}^{\text{fi}},v_{h(i)}^{\text{la}})-k_{h(i)}}{\mathsf L_{opt} n}\right)}^{d(v_{h(i)}^{\text{fi}},v_{h(i)}^{\text{la}})-k_{h(i)}} \times \\
& \hspace{5cm} \times {\cosh\left(\mathsf E\frac{d(v_{h(i)}^{\text{fi}},v_{h(i)}^{\text{la}})-k_{h(i)}}{\mathsf L_{opt} n}\right)}^{n} {\left(\frac{\mathsf L_{opt} n}{e \mathsf E}\right)}^{d(v_{h(i)}^{\text{fi}},v_{h(i)}^{\text{la}})-k_{h(i)}}.
\eea\eeq
Claim \eqref{stima_due} is therefore settled for $k_{h(i)} \geq 2$ and easely holds for $k_{h(i)}=1$.\\

We  now move to estimating the $\overline{f}_\pi(n, \boldsymbol k, \boldsymbol l, \boldsymbol \sigma, \hat{\boldsymbol l},i)$-terms. 
Note that $\boldsymbol l$ fixes the vertices $v_i^{\text{fi}},v_i^{\text{la}}$, and in particular the Hamming distance between two successive commons edges, which are not between the same $H$-planes, $\boldsymbol \sigma$ fixes $\hat v_i^{\text{fi}},\hat v_i^{\text{la}}$, while $\hat{\boldsymbol l}$ gives the length of the subpaths $\hat{\pi}$ between these common edges. \\
For all $i \in \{0\dots K\}$, we set
\beq \bea
\hat{l}_i\defi l_{\hat{\pi}}({\hat{v}_{h(i)}}^{\text{la}},\hat{v}_{h(i+1)}^{\text{fi}}).
\eea \eeq
We claim that
\beq \label{stima_uno}
\overline{f}_\pi(n, \boldsymbol k, \boldsymbol l, \boldsymbol \sigma, \hat{\boldsymbol l},i) \lesssim {\tanh\left( \frac{\mathsf E \hat{l}_i }{\mathsf L_{opt}n}\right)}^{d\left(\hat{v}_{h(i)}^{\text{la}},\hat{v}_{h(i+1)}^{\text{fi}}\right)}{\cosh\left(\frac{ \mathsf E  \hat{l}_i }{\mathsf L_{opt}n}\right)}^{n}{\left(\frac{\mathsf L_{opt}n}{\mathsf E e}\right)}^{\hat{l}_i}n^{\frac{1}{2}}\,.
\eeq
Indeed, it clearly holds that
\beq \label{stima_uno'}
\overline{f}_\pi(n, \boldsymbol k, \boldsymbol l, \boldsymbol \sigma, \hat{\boldsymbol l},i) \leq M_{n,\hat{l_i},d\left(\hat v_{h(i)}^{\text{la}},\hat v_{h(i+1)}^{\text{fi}}\right)}.
\eeq
To get a handle on the r.h.s. above we make use of the following estimate, the derivation of which
follows the by now classical route\footnote{Stanley's bound \eqref{sf} with $x:= \frac{l \mathsf E}{\mathsf L_{opt} n}$ / Stirling approximation / some elementary rearrangements.}, and is thus omitted:
 \beq \label{e7}
M_{n,l,nd} \lesssim n^{\frac{1}{2}} {\tanh\left(\frac{\mathsf E l}{\mathsf L_{opt}n}\right)}^{nd}{\cosh\left( \frac{\mathsf E l }{\mathsf L_{opt}n}\right)}^{n}{\left(\frac{\mathsf L_{opt}n}{\mathsf E e}\right)}^{l}\,.
\eeq
Using \eqref{e7} with $l:= \hat l_i, \, nd:= d(\hat v_{h(i)}^{\text{la}},\hat v_{h(i+1)}^{\text{fi}})$ in  \eqref{stima_uno'} steadily yields the claim \eqref{stima_uno}. \\

Having obtained explicit estimates for the $\mathring{f}_\pi$ and $\overline{f}_\pi$-terms, we need bounds to their products as appearing in \eqref{stimo}. This will be done exploiting the aforementioned monotonicity properties of hyperbolic functions:  for any $y_i,d_i\geq 0$, and $k \in \N$ it holds
\beq\bea\label{e8'''}
 \prod_{i=1}^{k}{\tanh\left(y_i\right)}^{d_i}\leq \prod_{i=1}^{k}{\tanh\left(\sum_{i=1}^k y_i\right)}^{d_i}={\tanh\left(\sum_{i=1}^k y_i\right)}^{\sum_{i=1}^k d_i}\,,
\eea\eeq  
since $\tanh$ is increasing, and
\beq\bea\label{e9}
\prod_{i=1}^{k}{\cosh\left(y_i\right)}\leq \cosh\left(\sum_{i=1}^k y_i\right)\,,
\eea\eeq  
which can be steadily checked iterating $\cosh\left(a+c\right)=\cosh\left(a\right)\cosh\left(c\right)+\sinh\left(a\right)\sinh\left(c\right)\geq \cosh\left(a\right)\cosh\left(c\right),$ for $a,c>0$. 

These bounds allow to remove most of the local dependencies appearing in the products \eqref{stimo}: shortening 
\beq \label{defi_debe}
\mathcal D_b \defi \sum_{i=1}^{b} \left[ d\left(v_{h(i)}^{\text{fi}},v_{h(i)}^{\text{la}}\right)-k_{h(i)} \right],
\eeq
and combining \eqref{e8'''}, \eqref{e9} and  \eqref{stima_due} we get 
\beq\bea\label{prod1}
&  \prod_{i=1}^{b}  \mathring{f}_\pi(n, \boldsymbol k, \boldsymbol l, \boldsymbol \sigma , \hat{\boldsymbol l},i) \lesssim  n^{Kn^{\alpha}+\frac{K}{2}} {\tanh\left( \frac{\mathsf E \mathcal D_b}{\mathsf L_{opt} n}\right)}^{\mathcal D_b} {\cosh\left(\frac{ \mathsf E \mathcal D_b }{\mathsf L_{opt} n}\right)}^{n}
{\left(\frac{\mathsf L_{opt} n}{e \mathsf E}\right)}^{\mathcal D_b}\,.
\eea\eeq
On the other hand, shortening
\beq \label{defi_debe_hat}
\widehat{\mathcal D}_b \defi \sum_{i=0}^{b} d\left(\hat{v}_{h(i)}^{\text{la}},\hat{v}_{h(i+1)}^{\text{fi}}\right)\,, \quad \widehat L_{b} \defi \sum_{i=0}^{b}\hat{l}_i\,,
\eeq
and combining  \eqref{e8'''}, \eqref{e9} with \eqref{stima_uno} we obtain
\beq\bea\label{prod2}
\prod_{i=0}^b \overline{f}_\pi(n, \boldsymbol k, \boldsymbol l, \boldsymbol \sigma, \hat{\boldsymbol l},i)&\lesssim n^{\frac{K+1}{2}} {\tanh\left(\frac{ \mathsf E \widehat L_{b}}{\mathsf L_{opt}n}\right)}^{\widehat{\mathcal D}_b}  {\cosh\left(\frac{\mathsf E \widehat L_{b} }{\mathsf L_{opt}n}\right)}^{n}{\left(\frac{\mathsf L_{opt}n}{\mathsf E e}\right)}^{\widehat L_{b} }.
\eea\eeq
Plugging \eqref{prod1} and \eqref{prod2} in \eqref{stimo} thus leads to 
\beq \bea \label{stimo1}
 f_\pi(n, \boldsymbol k, \boldsymbol l, \boldsymbol \sigma,\hat{\boldsymbol l})  \lesssim 
n^{\frac{2K+1}{2}+ K n^{\alpha}} & {\tanh\left( \frac{\mathsf E \mathcal D_b}{\mathsf L_{opt} n}\right)}^{\mathcal D_b } {\cosh\left(\frac{ \mathsf E \mathcal D_b }{\mathsf L_{opt} n}\right)}^{n}
{\left(\frac{\mathsf L_{opt} n}{ \mathsf E e}  \right)}^{\mathcal D_b} \times \\
& \qquad \times {\tanh\left(\frac{\mathsf E \widehat L_{b} }{\mathsf L_{opt}n}\right)}^{\widehat{\mathcal D}_b}  {\cosh\left(\frac{ \mathsf E \widehat L_{b} }{\mathsf L_{opt}n}\right)}^{n}{\left(\frac{\mathsf L_{opt}n}{\mathsf E e}\right)}^{\widehat L_{b} }\,.
\eea \eeq
The above estimate still involves the product of two $\tanh$-, and two $\cosh$-terms: using once more the monotonicity tricks \eqref{e8'''} and \eqref{e9} we get

\beq \bea \label{stimo_due}
& f_\pi(n, \boldsymbol k, \boldsymbol l, \boldsymbol \sigma,\hat{\boldsymbol l})  \lesssim \\
& \hspace{1cm}  n^{\frac{2K+1}{2}+ K n^{\alpha}} {\tanh\left(\mathsf E \frac{\mathcal D_b+\widehat L_b }{\mathsf L_{opt}n}\right)}^{ \mathcal D_b+ \widehat{\mathcal D}_b}  {\cosh\left(\mathsf E \frac{\mathcal D_b+\widehat L_b }{\mathsf L_{opt}n} \right)}^{n} {\left(\frac{\mathsf L_{opt} n}{\mathsf Ee}\right)}^{\mathcal D_b+\widehat L_b }\,.
\eea \eeq
But paths in $ \mathcal J$ have the same, prescribed length, and it holds that
\beq\label{a)''}
\mathcal D_b+ \widehat L_b =\mathsf L_{opt} n-k.
\eeq
Using this, \eqref{stimo_due} simplifies to 
\beq \bea \label{stimo_tre}
& f_\pi(n, \boldsymbol k, \boldsymbol l, \boldsymbol \sigma,\hat{\boldsymbol l})  \lesssim \\
& \hspace{1cm} n^{\frac{2K+1}{2}+ K n^{\alpha}} {\tanh\left(\mathsf E \frac{ \mathsf L_{opt} n-k}{\mathsf L_{opt} n} \right)}^{\mathcal D_b+ \widehat{\mathcal D}_b} {\cosh\left(\mathsf E \frac{\mathsf L_{opt} n-k}{\mathsf L_{opt} n} \right)}^{n}   {\left(\frac{\mathsf L_{opt} n}{\mathsf Ee}\right)}^{\mathsf L_{opt} n-k} \\
\eea \eeq

Remark, in particular, that the r.h.s. above depends on the  local prescriptions {\it only through the $\tanh$-exponent}. It will come hardly as a surprise that this feature leads to a dramatic simplification of the computations. As a matter of fact, even the exponent depends only very mildly on the local prescriptions: indeed,  we claim that 
\begin{lem} \label{distance}
\beq\bea
\mathcal D_b + \widehat{\mathcal D}_b  \geq \max\left(n-k,\frac{\mathsf L_{opt} n-k}{4}\right).
\eea\eeq
\end{lem}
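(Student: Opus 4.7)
The plan is to establish the two lower bounds independently; the first is clean, the second requires exploiting the slab-wise stretched structure.

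For the bound $\mathcal D_b + \widehat{\mathcal D}_b \geq n-k$, I would apply the Hamming triangle inequality along the full sequence of vertices visited in order by $\hat\pi$, namely
\[
\boldsymbol 0 = \hat v_0^{\text{la}},\; \hat v_1^{\text{fi}},\; \hat v_1^{\text{la}},\; \dots,\; \hat v_b^{\text{fi}},\; \hat v_b^{\text{la}},\; \hat v_{b+1}^{\text{fi}} = \boldsymbol 1.
\]
This yields
\[
n = d(\boldsymbol 0, \boldsymbol 1) \;\leq\; \sum_{i=0}^{b} d(\hat v_i^{\text{la}}, \hat v_{i+1}^{\text{fi}}) + \sum_{i=1}^{b} d(\hat v_i^{\text{fi}}, \hat v_i^{\text{la}}) \;=\; \widehat{\mathcal D}_b + \sum_{i=1}^{b} d(v_{h(i)}^{\text{fi}}, v_{h(i)}^{\text{la}}),
\]
since $\{\hat v_i^{\text{fi}},\hat v_i^{\text{la}}\}=\{v_{h(i)}^{\text{fi}},v_{h(i)}^{\text{la}}\}$ regardless of the sign $\sigma_{h(i)}$. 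Each shared group sits in a single slab by Lemma \ref{rem}, where $\pi$ is stretched, so $d(v_{h(i)}^{\text{fi}}, v_{h(i)}^{\text{la}})$ equals the total group length $k_{h(i)} + (d(v_{h(i)}^{\text{fi}}, v_{h(i)}^{\text{la}}) - k_{h(i)})$, and summing gives $k+\mathcal D_b$.

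For the bound $\mathcal D_b + \widehat{\mathcal D}_b \geq (\mathsf L_{opt}n - k)/4$, I would first use the identity $\mathsf L_{opt}n-k = \mathcal D_b + \widehat L_b$ to reduce the claim to $\widehat L_b \leq 3\mathcal D_b + 4\widehat{\mathcal D}_b$. For each inter-group subpath $\sigma_i$, let $s_i+1$ be the number of slabs it traverses. When $s_i=0$ (single slab), the subpath is stretched and $\hat l_i = d_i$. When $s_i \geq 2$, at least one slab is fully traversed and contributes $\hat n_K$ to the Hamming-from-$\boldsymbol 0$ difference of the endpoints (hence to $d_i$), while each traversed slab contributes length at most $\mathsf d_j n \leq \sqrt 2\,\hat n_K$ by the expansion \eqref{d_dev}; combining these bounds gives $\hat l_i \leq \sqrt 2(s_i+1) \hat n_K$ and $d_i \geq (s_i-1)\hat n_K$, so $\hat l_i / d_i \leq 4$ for $s_i$ large enough. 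By Lemma \ref{rem}, distinct inter-group subpaths span pairwise disjoint ranges of slabs, which provides the global book-keeping needed to sum these bounds.

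The main obstacle is the residual case $s_i = 1$ (subpath crosses exactly one $H$-plane), where $\hat l_i$ may be of order $\hat n_K$ while $d_i$ can be as small as $O(1)$. To handle this, I would use that such short inter-group subpaths occupy disjoint pairs of consecutive slabs, so their number is bounded by $\lfloor (K-2m)/2\rfloor$, and each has length at most $2\sqrt 2\,\hat n_K$; their total contribution to $\widehat L_b$ is therefore $O(n/K)\cdot K = O(n)$, which can be absorbed into $3\mathcal D_b$ by an amortization pairing each short subpath with an adjacent shared group. The numerical slack $\mathsf L_{opt}\leq \sqrt 2\mathsf E < 5/4 \ll 4$ is precisely what makes the factor $4$ sufficient to close the estimate.
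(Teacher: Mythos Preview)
Your argument for $\mathcal D_b + \widehat{\mathcal D}_b \geq n-k$ via the Hamming triangle inequality along the vertex sequence $\hat v_0^{\text{la}}, \hat v_1^{\text{fi}}, \hat v_1^{\text{la}}, \ldots$ is clean and correct; the paper makes the same point less formally.

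The second bound, however, has a genuine gap in the $s_i=1$ case, and your proposed amortization does not close it. First, the claim that the $s_i=1$ subpaths occupy \emph{disjoint} pairs of consecutive slabs is false: one can have a shared group in every slab, giving $s_i=1$ for every inter-group subpath, and the pairs $(j,j+1),(j+1,j+2),\ldots$ overlap. More importantly, the amortization ``into $3\mathcal D_b$'' cannot work because $\mathcal D_b$ can vanish: take each shared group to be a single edge, so $d(v_{h(i)}^{\text{fi}},v_{h(i)}^{\text{la}})=1=k_{h(i)}$ and $\mathcal D_b=0$. In that scenario your reduction $\widehat L_b \le 3\mathcal D_b + 4\widehat{\mathcal D}_b$ becomes $\widehat L_b \le 4\widehat{\mathcal D}_b$, i.e.\ $\sum_i \hat l_i \le 4\sum_i d_i$, which is precisely the pointwise estimate $\hat l_i \le 4 d_i$ that you have not established. (Your $s_i\ge 2$ bound $\hat l_i/d_i \le \sqrt 2(s_i+1)/(s_i-1)$ also exceeds $4$ at $s_i=2$.)

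The missing ingredient is structural: the paper proves $d(\hat v_{h(i)}^{\text{la}},\hat v_{h(i+1)}^{\text{fi}}) \geq \tfrac14\,l_{\hat\pi}(\hat v_{h(i)}^{\text{la}},\hat v_{h(i+1)}^{\text{fi}})$ \emph{pointwise}, and the proof hinges on the finer $K'$-mesh and on the repulsive construction near each $H$-plane. Concretely, it introduces a ``fuzzy zone'' of width $\tfrac{0.23}{KK'}n$ around each $H'$-plane (the maximal overshoot allowed by the backstep budget), and then runs a case analysis on how many $H'$-planes separate the two endpoints. When only one $H$-plane separates them, the repulsive phase (forward-then-back near the plane) forces either a directed stretch or a minimum distance of $\tfrac{0.77}{KK'}n$, which is what ultimately controls the ratio by $4$. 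None of this $K'$-level information appears in your argument, and the slab-level data alone (via \eqref{d_dev}, which is in any case only an $O(1/K^2)$ expansion, not a strict bound) is insufficient to rule out near-total cancellation of forward and back steps across a single $H$-plane.
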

Proving this claim will unfortunately require a fair amount of work, so we assume its validity for the time being.

By monotonicity, 
\beq
\tanh\left(\mathsf E \frac{ \mathsf L_{opt} n-k}{\mathsf L_{opt} n} \right) \leq \tanh\left(\mathsf E \right) = \frac{1}{\sqrt{2}} < 1,
\eeq
hence Lemma \ref{distance} applied to  \eqref{stimo_tre} yields the upperbound
\beq \bea \label{stimo_tree}
& f_\pi(n, \boldsymbol k, \boldsymbol l, \boldsymbol \sigma,\hat{\boldsymbol l})  \lesssim \\
& \hspace{1cm} n^{\frac{2K+1}{2}+ K n^{\alpha}} {\tanh\left(\mathsf E \frac{ \mathsf L_{opt} n-k}{\mathsf L_{opt} n} \right)}^{ \max\left(n-k,\frac{\mathsf L_{opt} n-k}{4}\right)} {\cosh\left(\mathsf E \frac{\mathsf L_{opt} n-k}{\mathsf L_{opt} n} \right)}^{n}   {\left(\frac{\mathsf L_{opt} n}{\mathsf Ee}\right)}^{\mathsf L_{opt} n-k} \,,
\eea \eeq
{\it no longer depends on} $\boldsymbol l, \boldsymbol \sigma, \hat{\boldsymbol l}, \boldsymbol k$; plugging this in \eqref{stimo}, and the ensuing estimate in \eqref{estimate3} therefore leads to
\beq\bea\label{a)}
f_{\pi}(n,k)\lesssim  n^{\frac{2K+1}{2}+ K n^{\alpha}} \sum_{\boldsymbol l} \sum_{\boldsymbol \sigma} \sum_{\hat{\boldsymbol l}} \sum_{\boldsymbol k} \mathfrak T(n,k)\,,
\eea\eeq
where
\beq
\mathfrak T(n,k) \defi  {\tanh\left(\frac{\mathsf E(\mathsf L_{opt} n-k)}{\mathsf L_{opt} n}\right)}^{\max\left(n-k,\frac{\mathsf L_{opt} n-k}{4}\right)} {\cosh\left(\frac{ \mathsf E(\mathsf L_{opt} n-k)}{\mathsf L_{opt} n}\right)}^{n}{\left(\frac{\mathsf L_{opt} n}{\mathsf Ee}\right)}^{\mathsf L_{opt} n-k} \,.
\eeq
Since $\mathfrak T(n,k)$ depends on the number of common edges, but not on the local prescriptions, we thus only need  estimates on the cardinalities of the sums appearing in \eqref{a)}. As for the first sum, since $v_\cdot^{\text{fi}}$ can only move along the path $\pi$ between two successive hyperplanes, the number of ways to place such $v_\cdot^{\text{fi}}$'s is {\it at most} $n$ (the same of course holds true for $v_\cdot^{\text{la}}$), hence
\beq\bea\label{somme2}
\sum_{\boldsymbol l}  \leq n^{2K},
\eea\eeq
and by analogous reasoning 
\beq\bea\label{somme4}
\sum_{\boldsymbol l'} \leq n^{2K}.
\eea\eeq
Moreover, it clearly holds that 
\beq\bea\label{somme3}
\sum_{\boldsymbol \sigma} \leq 2^{K}.
\eea\eeq
Finally, 
\beq\bea\label{tiroir}
\sum_{\boldsymbol k}  = &\sum_{\substack{ k_i,\\ k_1+k_2+...+k_K=k}}  =\binom{k+K-1}{K-1}\lesssim \frac{{(k+K-1)}^{k+K-1}}{{(K-1)}^{K-1}{k}^{k}}, \\
\eea\eeq
by Stirling approximation. Since $(K-1)^{K-1}\geq 1$, and $\log(1+x)\leq x$, we see that
\beq\bea\label{somme1}
\eqref{tiroir}& \leq {k}^{K-1}{\left(1+\frac{K-1}{k}\right)}^{k+K-1} = k^{K-1} \exp\left[ \left(k+K+1 \right) \log\left( 1+ \frac{K-1}{k} \right)\right] \\
& \leq{k}^{K-1}\exp\left[\left(k+K-1\right)\frac{K-1}{k} \right] \leq {k}^{K-1}{\exp{K\left(K-1\right)}}.
\eea\eeq
Combining \eqref{a)}, \eqref{somme2},  \eqref{somme4}, \eqref{somme3} and \eqref{somme1}, we obtain
\beq\bea\label{estimate2'}
f_{\pi}(n,k)\leq P_n n^{K n^{\alpha}}{\tanh\left(\mathsf E\frac{\mathsf L_{opt} n-k}{\mathsf L_{opt} n}\right)}^{\max\left(n-k,\frac{\mathsf L_{opt} n-k}{4}\right)} {\cosh\left(\mathsf E\frac{ \mathsf L_{opt} n-k}{\mathsf L_{opt} n}\right)}^{n}{\left(\frac{\mathsf L_{opt} n}{\mathsf Ee}\right)}^{\mathsf L_{opt} n-k},
\eea\eeq
where $P_n$ is a finite degree polynomial, which is indeed the claim of Lemma  \ref{r2}.
\end{proof}

\begin{proof}[Proof of Lemma \ref{distance}]. Recall that the claim reads 
\beq
\mathcal D_b+ \widehat{\mathcal D}_b  \geq \max\left( n-k, \frac{\mathsf L_{opt} n - k}{4} \right)\,.
\eeq
The validity of the first inequality, to wit
\beq \label{claim_facile}
\mathcal D_b + \widehat{\mathcal D}_b \geq n-k,
\eeq
relies on a self-evident fact, namely that the total distance of shared edges in the directed case is a lower bound for the undirected case. More precisely, since common edges  contribute to the number of steps performed while connecting $\boldsymbol 0$ to $\boldsymbol 1$, as soon as a backstep acts on a shared edge, the total distance between shared edges is bound to increase: the path has eventually to make up for the "lost ground". Another way to put it: the contribution $\mathcal D_b + \widehat{\mathcal D}_b$ is smallest when {\it all} shared edges are steps forward, in which case the total distance between these edges must be {\it at least} the minimal number of steps required to connect $\boldsymbol 0$ to $\boldsymbol 1$. Since this minimal number is clearly the dimension minus the number of shared (prescribed) edges, i.e. $n-k$, \eqref{claim_facile} is settled. \\

\noindent The second inequality
\beq \label{claim_difficile}
\mathcal D_b + \widehat{\mathcal D}_b \geq \frac{\mathsf L_{opt} n - k}{4},
\eeq
requires more work and depends on some key properties of  paths in $\mathcal J$. We begin with a couple of observations: 
\begin{itemize}
\item[i)] First we note that 
\[
d\left(v_{h(i)}^{\text{fi}},v_{h(i)}^{\text{la}}\right) = d\left(\hat v_{h(i)}^{\text{fi}}, \hat v_{h(i)}^{\text{la}}\right)\,,
\]
since inverting directionality clearly has no impact on the distance.
\item[ii)] Furthemore, in a (fully) stretched phase distance and length do, in fact, coincide:
\[ 
d\left(\hat v_{h(i)}^{\text{fi}}, \hat v_{h(i)}^{\text{la}}\right) = l_{\hat \pi}\left( \hat v_{h(i)}^{\text{fi}}, \hat v_{h(i)}^{\text{la}}\right)\,.
\]
\item[iii)] Finally, and by definition, 
\[
\sum_{i=1}^b k_{h(i)} = k\,.
\]
\end{itemize}
Plugging items i-iii) above in the $\mathcal D_b$-definition \eqref{defi_debe} yields
\beq \bea \label{tunnel}
\mathcal D_b+ \widehat{\mathcal D}_b & = 
\sum_{i=1}^{b} l_{\hat \pi} \left( \hat v_{h(i)}^{\text{fi}}, \hat v_{h(i)}^{\text{la}}\right)- k+ \sum_{i=0}^{b} d\left(\hat{v}_{h(i)}^{\text{la}},\hat{v}_{h(i+1)}^{\text{fi}}\right)\,.
\eea \eeq
We now claim that for $i \in \{0,1, \dots b\}$ , it holds: 
\beq \label{weeeird}
d\left(\hat{v}_{h(i)}^{\text{la}},\hat{v}_{h(i+1)}^{\text{fi}}\right) \geq \frac{1}{4} l_{\hat \pi}(\hat  v_{h(i)}^{\text{la}}, \hat v_{h(i+1)}^{\text{fi}}) \,.
\eeq
This is, in fact, our key technical claim, but since its proof requires some involved analysis, we assume its validity for the time being, and first show how it implies \eqref{claim_difficile}:  plugging \eqref{weeeird} in \eqref{tunnel} we obtain
\beq \bea \label{tunnel_two}
\mathcal D_b+ \widehat{\mathcal D}_b  & \geq \sum_{i=1}^{b} l_{\hat \pi} \left( \hat v_{h(i)}^{\text{fi}}, \hat v_{h(i)}^{\text{la}}\right)- k+ \frac{1}{4} \sum_{i=0}^{b}  l_{\hat \pi}(\hat  v_{h(i)}^{\text{la}}, \hat v_{h(i+1)}^{\text{fi}})\,. 
\eea \eeq
But by construction, 
\beq
l_{\hat \pi} \left( \hat v_{h(i)}^{\text{fi}}, \hat v_{h(i)}^{\text{la}}\right) \geq k_{h(i)},
\eeq
hence
\beq
\sum_{i=1}^{b} l_{\hat \pi} \left( \hat v_{h(i)}^{\text{fi}}, \hat v_{h(i)}^{\text{la}}\right)- k \geq \sum_{i=1}^{b} k_{h(i)} - k \geq 0,
\eeq
the last inequality by item iii) above. This positivity implies, in particular, that
\beq
\sum_{i=1}^{b} l_{\hat \pi} \left( \hat v_{h(i)}^{\text{fi}}, \hat v_{h(i)}^{\text{la}}\right)- k \geq \frac{1}{4} \left( \sum_{i=1}^{b} l_{\hat \pi} \left( \hat v_{h(i)}^{\text{fi}}, \hat v_{h(i)}^{\text{la}}\right)- k \right)\,,
\eeq
and using this lower bound in \eqref{tunnel_two} then yields 
\beq \bea
\mathcal D_b+ \widehat{\mathcal D}_b  & \geq 
\frac{1}{4} \left( \sum_{i=1}^{b} l_{\hat \pi} \left( \hat v_{h(i)}^{\text{fi}}, \hat v_{h(i)}^{\text{la}}\right)- k \right)+ \frac{1}{4} \sum_{i=0}^{b}  l_{\hat \pi}(\hat  v_{h(i)}^{\text{la}}, \hat v_{h(i+1)}^{\text{fi}})\\
& = \frac{1}{4} \left(  \underbrace{ \sum_{i=1}^{b} l_{\hat \pi} \left( \hat v_{h(i)}^{\text{fi}}, \hat v_{h(i)}^{\text{la}}\right) + \sum_{i=0}^{b}  l_{\hat \pi}(\hat  v_{h(i)}^{\text{la}}, \hat v_{h(i+1)}^{\text{fi}})}_{= \mathsf L_{opt}n }    -k   \right),
\eea \eeq
which settles our key claim \eqref{claim_difficile}. \\

It thus remains to prove \eqref{weeeird}. Recall that we are considering the situation where shared edges are separated by (at least) one $H$-plane\footnote{as otherwise the claim would be trivial anyhow: if the shared edges lie within two successive $H$-planes, the polymer is in a stretched phase in which case distance (d) and length (l) coincide, with the inequality \eqref{weeeird} thus trivially holding.}.  Since by definition an $H$-plane is also an $H'$-plane, prescribing the number of separating $H'$-planes allows to discriminate among different scenarios. Indeed, introducing, for $ i=0 \dots b$,
\beq \bea
c_{\hat \pi}(i)\defi & \quad \text{\sf number of $H'$-planes which lie between} \; \hat v_{h(i)}^{\text{la}} \; \text{\sf and} \; \hat v_{h(i+1)}^{\text{fi}}\,, 
\eea \eeq
a minute's thought suggests that there are three scenarios which are "structurally" manifestly different:
\begin{itemize}
\item $c_{\hat \pi}(i) > 2$: the common edges are separated by at least one $H$-plane, and multiple $H'$-planes. We will refer to this as the {\sf H'HH'-case}. 

\item $c_{\hat \pi}(i)=2$: in this case the common edges are separated by one $H$-plane, and one $H'$-plane (which is however not an $H$-plane). We will refer to this as the {\sf HH'-case}. 

\item $c_{\hat \pi}(i)=1$: the separating hyperplane must be an $H$-plane: we will refer to this as the {\sf H-case}. 
\end{itemize}

We will establish the validity of \eqref{weeeird} in all three possible scenarios. We anticipate that \eqref{weeeird} becomes more delicate the less hyperplanes are separating the common edges: this is due to the fact that the larger the number of separating hyperplanes the further apart (in terms of Hamming distance $d$) the common edges must lie, a feature which renders \eqref{weeeird} all the more likely. In line with this observation, the $c_{\hat \pi}(i) =1$ will turn out to be the most delicate. We emphasize that the index $i$ is given and fixed. To lighten notation we will thus omit it in the expressions, whenever no confusion can possibly arise. \\

A number of insights are common to the  treatment of all three scenarios. Given the nature of the inequality we are aiming to prove, it will not come as a surprise that we will need a good control - in the form of {\it lower bounds} - on the distance of two common edges, as well as a good control - this time around in the form of {\it upper bounds} - on the length of the substrands connecting the shared edges. \\

A reasonably tight, but what's more: valid for any of the three $c_{\hat \pi}$-scenarios, lower bound for the distance is provided by technical input {\bf (T1)} below. Let  $H_{\text{fi}}'$  be the first hyperplane on the right of $\hat v_{h(i)}^{\text{la}}$ and $H_{\text{la}}'$ be the last hyperplane on the left of  $\hat v_{h(i+1)}^{\text{fi}}$, and shorten $d_{\hat \pi}^{\text{fi}} \defi d(\hat v_{h(i)}^{\text{la}}, H_{\text{fi}}')$, and  $d_{\hat \pi}^{\text{la}} \defi d(H_{\text{la}}', \hat v_{h(i+1)}^{\text{fi}} )$.  A graphical depiction of this is given in Figure \ref{c(i)} below.
\begin{figure}[!h]
    \centering
  \includegraphics[scale=0.7]{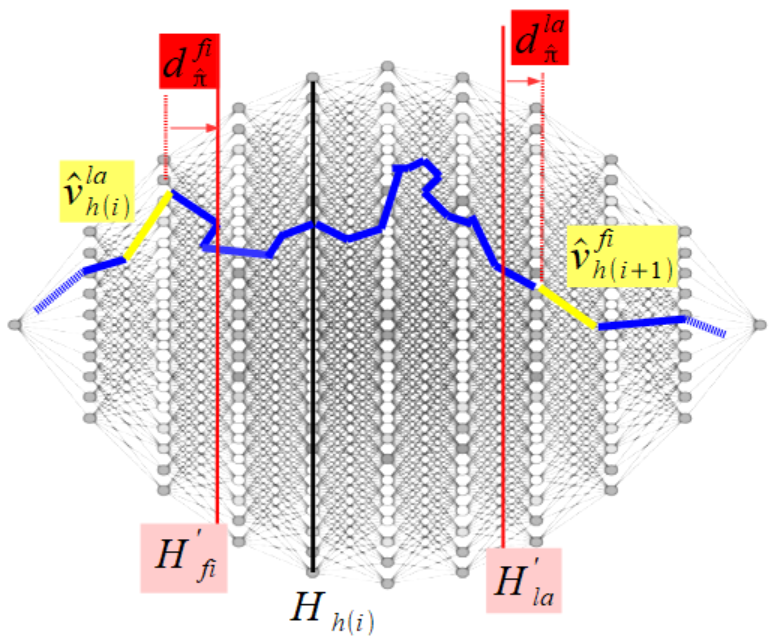}
\caption{$\hat v^{\text{la}}_{h(i)}$ and $\hat v^{\text{fi}}_{h(i+1)}$ separated by three hyperplanes.}
\label{c(i)}
\end{figure} 
The following estimate holds by definition/construction\footnote{it can also immediately evinced from Figure \ref{c(i)}.}
\[
d\left(\hat v_{h(i)}^{\text{la}}, \hat v_{h(i+1)}^{\text{fi}} \right) \geq d_{\hat \pi}^{\text{fi}}+\frac{c_{\hat \pi}(i)-1}{KK'}n+d_{\hat \pi}^{\text{la}} \hspace{2cm} \text{\bf (T1)}\,.
\]
(We note in passing that equality holds if and only if $\hat v_{h(i)}^{\text{la}}, \hat v_{h(i+1)}^{\text{fi}}$ are connected by a directed substrand; since a stretched substrand may have to perform backsteps while connecting these two vertices, {\bf (T1)} is in general only a lower bound). \\

As mentioned, the second technical input, {\bf (T2)} below, concerns {\it upperbounds} on the length of a substrand connecting $H'$-planes.  To see how these come about, let us denote by $\boldsymbol v \in H_{i,j}', \boldsymbol w \in H_{i,j+1}'$  {\it the} vertices  by which the $\hat \pi$-substrand connects the finer mesh. It is important to observe that in virtue of \eqref{defi_pkkprime}, there is no absolutely no ambiguity in the way we identify these vertices: in fact, 
\beq \bea \label{noamb}
& \text{\sf these vertices are unequivocally identified through the \emph{length} of} \\
& \qquad \text{\sf the substrand connecting the successive} \;  H'\;  \text{\sf -planes.}
\eea \eeq
We now claim that 
\[
l_{\hat \pi}(\boldsymbol v,\boldsymbol w) \leq \frac{1.46}{KK'}n \hspace{2cm} \text{\bf (T2)}
\]
The proof of {\bf (T2)} is rather immediate: first recall that in virtue of  \eqref{defi_pkkprime}, 
\beq\label{remf_0}
l_{\hat \pi}(\boldsymbol v,\boldsymbol w)=\left(\mathsf{ef}_i+\mathsf{eb}_i\right) \frac{n}{K'}=\left( \frac{1}{K}+2\mathsf{eb}_i\right) \frac{n}{K'},
\eeq 
the second equality by \eqref{b-f}.  But by \eqref{e_1} (and again  \eqref{b-f}), the number of effective backsteps between $H$-planes in the stretched phase satisfies
\beq \label{bs_2'}
\mathsf{eb}_i=\sinh(\overline{\ma}_{i-1} \mathsf E)\sinh(\ma_{i} \mathsf E)\sinh(\underline{\ma}_i \mathsf E),
\eeq
and by \eqref{t_sinh}, 
\beq \label{t_sinh'}
\sinh(\ma_i \mathsf E)\leq \frac{1}{K}+ \frac{1}{6 K^3}\,,
\eeq 
which combined with \eqref{bs_2'} yields
\beq\bea\label{b2'}
\mathsf{eb}_i &\leq \sinh(\overline{\ma}_i \mathsf E)\sinh(\underline{\ma}_i \mathsf E)  \left(\frac{1}{K}+ \frac{1}{6 K^3}\right) \\
&\leq \sinh\left( \frac{\mathsf E}{2} \right)^2  \left(\frac{1}{K}+ \frac{1}{6 K^3}\right). \\
\eea\eeq
the second inequality by \eqref{m_sinh}.  Since 
\beq
\sinh\left( \frac{\mathsf E}{2} \right)^2 \stackrel{\eqref{easy_est}} = \frac{\sqrt{2}-1}{2} \leq 0.22,
\eeq
and using that $K>10^7$, one plainly checks that
\beq\bea\label{b4_two}
\mathsf{eb}_i \leq 0.23 \times  \frac{1}{K}.\,
\eea\eeq
Plugging \eqref{b4_two} in \eqref{remf_0} settles {\bf (T2)}. \\

If it's true that there is no ambiguity in the way {\it vertices} on the $H'$-plane are identified (recall remark \eqref{noamb} above), it is nonetheless true there there is a certain amount of uncertainty in the way the polymer {\it connects} these planes. This is due to the fact that (contrary to the $H$-planes) the $H'$-planes are not repulsive, hence a polymer might cross them multiple times. Such excursions increase of course the length of the substrand, and introduce some "fuzziness" into the picture. Notwithstanding, we claim that 
\begin{center}
{\sf during one such excursion a polymer can overshoot,\\
in terms of Hamming distance, an $H'$-plane by at most  \\
$\frac{0.23}{KK'}n$ units.} \\
 \hspace{9cm} {\bf (T3)}
\end{center}
 Figure \ref{Confusionzone} below provides an elementary proof of this fact. 
\begin{figure}[!h]
    \centering
  \includegraphics[scale=0.4]{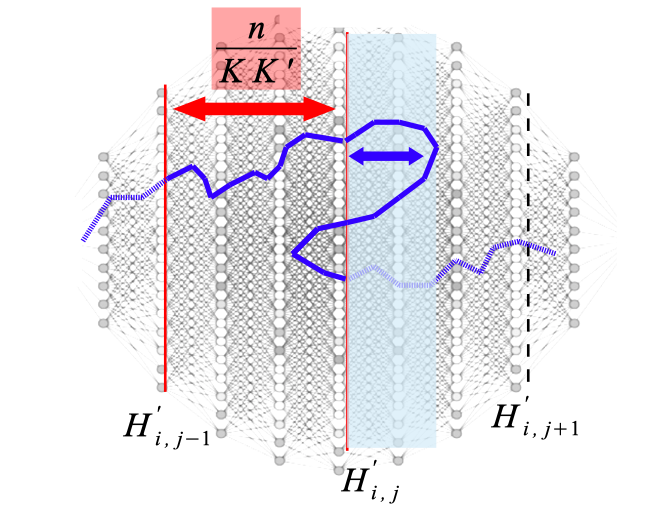}
\caption{The proof of {\bf (T3)} relies on two observations: {\it i)} By {\bf (T2)}, the length of the path connecting first and second $H'$-planes (the continuous blue strand)
is at most $\frac{1.46}{KK'}n$.  {\it ii)} By construction, the Hamming distance of these planes is $\frac{n}{KK'}$. Taking into account that the polymer must return to the second $H'$-plane, we see that the blue arrow is at most half the difference of these quantities, indeed $\frac{0.23}{KK'}n$, as claimed by {\bf (T3)}. (Remark that this case corresponds to a worst-case scenario: the polymer performs first all available forward steps, and only then all availble backsteps). 
}
\label{Confusionzone}
\end{figure} 

The above insight, captured by {\bf (T3)}, suggests to introduce the following set
\beq
{\mathfrak F}_{i,j} \defi \left\{\boldsymbol v \in V_n, \, d(v,H_{i,j}')\leq \frac{0.23}{KK'} n\right\}\,.
\eeq
We emphasize that whenever a common edge lies in this set, it can be crossed by a substrand which {\it either} connects $H_{i,j-1}'$ with $H_{i,j}'$ {\it or} $H_{i,j}'$ with $H_{i,j+1}'$: for this reason, we refer to ${\mathfrak F}_{i,j}$ (which is nothing but "twice" the blue-shaded region in Figure \ref{Confusionzone}) as the  {\sf fuzzy zone}.\\ 

We now record two useful consequences of {\bf (T2)} and {\bf {(T3)}} on the lengths of substrand which will play a role in the proof of \eqref{weeeird}. For reasons which will become clear, we will only need to consider the case where the first common edge lies in the {fuzzy zone} of the $H'-$plane which is on the left of $H_{fi}'$, and/or the other common edge lies on the right of $H_{la}'$. There are two cases: either shared edges lie outside the fuzzy zone, {\sf OuF} for short, or inside, {\sf InF}. \\

\begin{itemize}
\item[{\sf (InF)}] Remark that $\hat v_{h(i)}^{\text{la}}$ being in a fuzzy zone is equivalent to $d_{\hat \pi}^{\text{fi}}\geq \frac{0.77}{KK'}n$. Analogously,  $\hat v_{h(i+1)}^{\text{fi}}$ is in a {fuzzy zone} if and only if $d_{\hat \pi}^{\text{la}}\geq \frac{0.77}{KK'}n$. Furthermore, a path crossing $\hat v_{h(i)}^{\text{la}}$ (or $\hat v_{h(i+1)}^{\text{fi}}$) can cross multiple $H'$-planes besides that to which this vertex belongs: by  {\bf (T3)},
this phenomenon can contribute to the length of the substrand at most $\frac{0.46}{KK'}n$ units.\\
\item[{\sf (OuF)}] If neither $\hat v_{h(i)}^{\text{la}}$ nor $v_{h(i+1)}^{\text{fi}}$ are in a {fuzzy zone}, by {\bf (T2)}, the connecting substrands satisfy 
\[
l_{\hat \pi}(\hat v_{h(i)}^{\text{la}},\hat v_{h(i+1)}^{\text{fi}}) \leq \frac{\left(c_{\hat \pi(i)}+1\right)1.46}{KK'}n \,.
\]
\end{itemize}

We can finally move to the proof of \eqref{weeeird}: this will be done via case-by-case analysis of the three possible $c_{\hat \pi}$-scenarios. \\

\noindent \underline{\sf The H'HH'-case.} \\

\begin{figure}[!h]
    \centering
\includegraphics[scale=0.7]{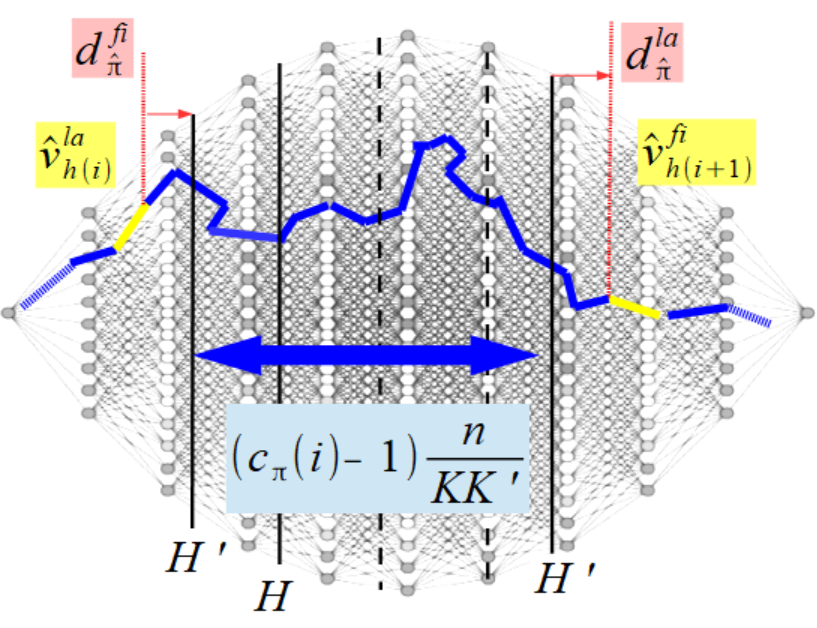}
    \caption{$c(i)\geq 3$: at least three hyperplanes, i.e. at least two $H'$ and one $H$, separating the common edges.}
\label{case_tre}
\end{figure}

\noindent This case is graphically summarized in Figure \ref{case_tre} below: combining {\sf (OuF)} and {\sf (InF)}, we immediately evince from this picture that 
\beq\bea\label{estiml}
l_{\hat \pi}(\hat v_{h(i)}^{\text{la}},\hat v_{h(i+1)}^{\text{fi}}) &\leq \frac{\left(c_{\hat \pi}(i)+1\right)1.46}{KK'}n+\frac{0.46}{KK'}n\left(1_{{d_{\hat \pi}^{\text{fi}}\geq \frac{0.77n}{KK'}}}+1_{d_{\hat \pi}^{\text{la}}\geq \frac{0.77n}{KK'}}\right)\,.
\eea \eeq
The {\sf H'HH'}-scenario at hand is characterized by $c_{\hat \pi}(i)>2$, in which case  the following inequality is immediate: 
\beq 
\frac{\left(c_{\hat \pi}(i)+1\right)1.46}{KK'} \leq \frac{4(c_{\hat \pi}(i)-1)}{KK'}.
\eeq
Using this in \eqref{estiml} we obtain
\beq \label{estiml2}
l_{\hat \pi}(\hat v_{h(i)}^{\text{la}},\hat v_{h(i+1)}^{\text{fi}}) \leq \frac{4(c_{\hat \pi}(i)-1)}{KK'} +\frac{0.46}{KK'}n\left(1_{{d_{\hat \pi}^{\text{fi}}\geq \frac{0.77n}{KK'}}}+1_{d_{\hat \pi}^{\text{la}}\geq \frac{0.77n}{KK'}}\right)\,.
\eeq
Concerning the last two terms on the r.h.s. above, we first observe that obviously
\beq \label{supermagic}
d \geq \frac{0.77}{KK'}n \Longrightarrow 4 d  \geq  \frac{0.46}{KK'}n\,,
\eeq 
hence
\beq
 \frac{0.46}{KK'}n 1_{d_{\hat \pi}^{\text{fi}}\geq \frac{0.77}{KK'}n} \leq 4d_{\hat \pi}^{\text{fi}}(\hat \pi), \qquad 
\frac{0.46}{KK'}n 1_{d_{\hat \pi}^{\text{la}}\geq \frac{0.77}{KK'}n} \leq 4d_{\hat \pi}^{\text{la}}(\hat \pi).
\eeq
Plugging this in \eqref{estiml2} yields 
\beq \bea 
l_{\hat \pi}(\hat v_{h(i)}^{\text{la}},\hat v_{h(i+1)}^{\text{fi}}) & \leq  \frac{4(c_{\hat \pi(i)}-1)}{KK'}n+4d_{\hat \pi}^{\text{fi}}(\hat \pi)+4d_{\hat \pi}^{\text{la}}(\hat \pi)\\
& \leq 4d\left(\hat v_{h(i)}^{\text{la}}, \hat v_{h(i+1)}^{\text{fi}} \right),
\eea \eeq
the last step by {\bf (T1)}. Claim \eqref{weeeird} is therefore settled for the {\sf H'HH'}-case.\\

 \noindent \underline{\sf The HH'-case.} \\

\begin{figure}[!h]
    \centering
\includegraphics[scale=0.7]{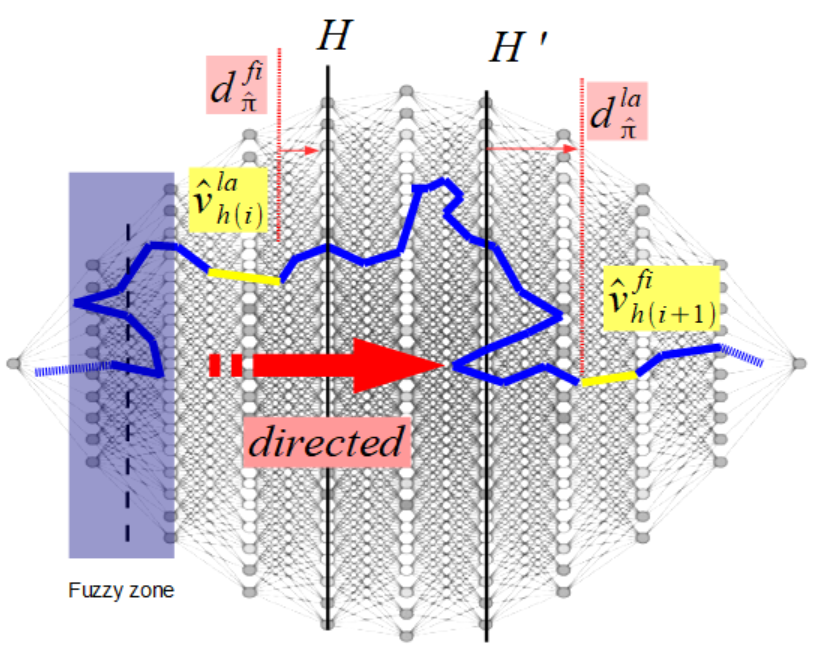}
    \caption{The common edges are separated by $c_{\hat \pi}(i)=2$.}
\label{case_due}
\end{figure}

In this case, see Figure \ref{case_due} below for a graphical rendition, a subpath connecting $\hat v_{h(i)}^{\text{la}}$ and $\hat v_{h(i+1)}^{\text{fi}}$, crosses $c_{\hat \pi(i)} = 2$ many $H'$-planes, one of which is also an $H$-plane. Without loss of generality, we assume that $H_{\text{fi}}'$ is the $H$-plane. We will here distinguish two subcases: $d_{\hat \pi}^{\text{fi}} \geq \frac{0.77}{KK'}n$, and its complement. It holds: \\
\begin{itemize}
\item If  $d_{\hat \pi}^{\text{fi}} \geq \frac{0.77}{KK'}n$,  i.e. the vertex $\hat v_{h(i)}^{\text{la}}$ is in the fuzzy zone, it follows from {\sf (OuF)} and {\sf (InF)} (cfr. also with Figure \ref{case_due}) that 
\beq\bea\label{l2}
l_{\hat \pi}(\hat v_{h(i)}^{\text{la}},\hat v_{h(i+1)}^{\text{fi}})&\leq \frac{3 \times 1.46}{KK'}n+\frac{0.46}{KK'}n\left(1_{d_{\hat \pi}^{\text{fi}}\geq \frac{0.77n}{KK'}}+1_{d_{\hat \pi}^{\text{la}}\geq \frac{0.77n}{KK'}}\right),\\
&=  \frac{4.38}{KK'}n+\frac{0.46}{KK'}n1+\frac{0.46}{KK'}n 1_{d_{\hat \pi}^{\text{la}}\geq \frac{0.77n}{KK'}}\\
& \stackrel{\eqref{supermagic}}{\leq} \frac{4.84}{KK'}n+4d_{\hat \pi}^{\text{la}}\\
& \leq \frac{4}{KK'}n+4\frac{0.77 }{KK'}n+4d_{\hat \pi}^{\text{la}}\\
&\stackrel{\bf{(T1)}}{\leq} 4d(\hat v_{h(i)}^{\text{la}}, \hat v_{h(i+1)}^{\text{fi}})\,.
\eea\eeq
\item If $d_{\hat \pi}^{\text{fi}}<\frac{0.77}{KK'}n$, the vertex $\hat v_{h(i)}^{\text{la}}$ is no longer in the fuzzy zone. However, and crucially, the "complement" of the fuzzy zone is necessarily the repulsive phase, cfr. Figure \ref{case_due} below. This in particular implies that the substrand will connect $\hat v_{h(i)}^{\text{la}}$ with the H-plane in a directed fashion, and therefore 
\beq\bea\label{l1}
l_{\hat \pi}(\hat v_{h(i)}^{\text{la}},\hat v_{h(i+1)}^{\text{fi}})&=l_{\hat \pi}(\hat v_{h(i)}^{\text{la}},H_{h(i)}\cap \hat \pi)+l_{\hat \pi}(H_{h(i)}\cap \hat \pi,\hat v_{h(i+1)}^{\text{fi}})\\
&= d_{\hat \pi}^{\text{fi}}+l_{\hat \pi}(H_{h(i)}\cap \hat \pi,\hat v_{h(i+1)}^{\text{fi}}),\\
\eea\eeq
As before, we estimate the last term on the r.h.s. above by {\sf OuF} and {\sf InF}. Here is the upshot:
\beq\bea
l_{\hat \pi}(\hat v_{h(i)}^{\text{la}},\hat v_{h(i+1)}^{\text{fi}})&\leq d_{\hat \pi}^{\text{fi}}+ \frac{2 \times 1.46}{KK'}n+\frac{0.46}{KK'}n1_{d_{\hat \pi}^{\text{la}}\geq\frac{0.77}{KK'}n}\\
&\stackrel{\eqref{supermagic}}{\leq}  d_{\hat \pi}^{\text{fi}}+\frac{2.92}{KK'}n+4d_{\hat \pi}^{\text{la}}\\
&\leq 4\frac{0.77}{KK'}n+\frac{4}{KK'}n+4d_{\pi}^{\text{la}}\\
&\stackrel{\bf{(T_1)}}{\leq} 4d(\hat v_{h(i)}^{\text{la}}, \hat v_{h(i+1)}^{\text{fi}}).
\eea\eeq
\end{itemize}
The claim \eqref{weeeird} is thus settled for the {\sf HH'}-case. \\

\noindent \underline{\sf The H-case.} 

\begin{figure}[!h]
    \centering
\includegraphics[scale=0.7]{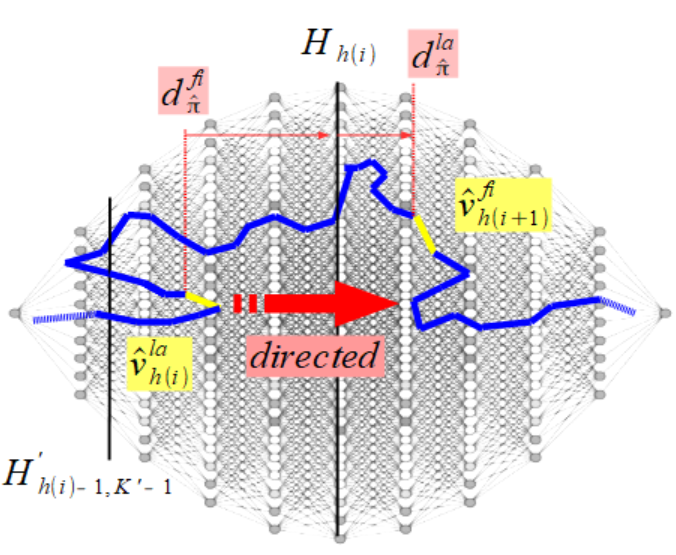}
    \caption{The common edges are separated by $c_{\hat \pi}(i)=1$.}
\label{case_uno}
\end{figure}

In this case, see Figure \ref{case_uno} above, a subpath connecting $\hat v_{h(i)}^{\text{la}}$ and $\hat v_{h(i+1)}^{\text{fi}}$, crosses $c_{\hat \pi(i)} = 1$ many $H'$-planes which is also an $H$-plane. Four subcases are possible:
\begin{itemize}
\item $d_{\hat \pi}^{\text{fi}}<\frac{0.77}{KK'}n$ and $d_{\hat \pi}^{\text{la}}<\frac{0.77}{KK'}n$, i.e. both vertices $\hat v_{h(i)}^{\text{la}}$ and $\hat v_{h(i+1)}^{\text{fi}}$ are in the (same) \textit{repulsive phase}: the substrand thus connects them in directed fashion, in  which case length and distance coincide, and
\beq\label{l3'}
l_{\hat \pi}(\hat v_{h(i)}^{\text{la}},\hat v_{h(i+1)}^{\text{fi}})=d(\hat v_{h(i)}^{\text{la}}, \hat v_{h(i+1)}^{\text{fi}})\leq 4d(\hat v_{h(i)}^{\text{la}}, \hat v_{h(i+1)}^{\text{fi}}).
\eeq
\item $d_{\hat \pi}^{\text{fi}} < \frac{0.77}{KK'}n$ and $d_{\hat \pi}^{\text{la}} \geq \frac{0.77}{KK'}n$. In this case:\\
\begin{itemize}
\item the vertex $\hat v_{h(i)}^{\text{la}}$ is in the repulsive phase (cfr. with the second subcase in the {\sf HH'}-regime above): in this first part of the journey, the substrand thus connects it with the H-plane in directed fashion, where again, and crucially, length and distance coincide.
\item as for the "rest of the journey", i.e. in order to deal with the length of the strand connecting H-plane and target vertex $\hat v_{h(i+1)}^{\text{fi}}$, we proceed exactly as in \eqref{l1}. \\
\end{itemize}
Splitting the substrand in first/second part of the journey, and then by these observations, we get
\beq\bea\label{l3}
l_{\hat \pi}(\hat v_{h(i)}^{\text{la}},\hat v_{h(i+1)}^{\text{fi}})&=l_{\hat \pi}(\hat v_{h(i)}^{\text{la}},H_{h(i)}\cap \hat \pi)+l_{\hat \pi}(H_{h(i)}\cap \hat \pi,\hat v_{h(i+1)}^{\text{fi}})\\
&\leq d_{\hat \pi}^{\text{fi}}+\frac{1.46}{KK'}n+\frac{0.46}{KK'}n\\
&\leq  4d_{\hat \pi}^{\text{fi}}+4 \frac{ 0.77}{KK'}n\\
&\stackrel{\bf{(T1)}}{\leq} 4d(\hat v_{h(i)}^{\text{la}}, \hat v_{h(i+1)}^{\text{fi}}).
\eea\eeq
\item $d_{\hat \pi}^{\text{fi}} \geq \frac{0.77}{KK'}n$ and $d_{\hat \pi}^{\text{la}} < \frac{0.77}{KK'}n$: this case is, by symmetry, equivalent to the previous.\\
\item $d_{\hat \pi}^{\text{fi}} \geq \frac{0.77}{KK'}n$ and $d_{\hat \pi}^{\text{la}}\geq \frac{0.77}{KK'}n$: both vertices being in the fuzzy zone,  we proceed exactly as in \eqref{estiml} to obtain 
\beq\bea\label{l4}
l_{\hat \pi}(\hat v_{h(i)}^{\text{la}},\hat v_{h(i+1)}^{\text{fi}})&\leq 2 \frac{1.46}{KK'}n+ 2\frac{0.46}{KK'}n\\
&\leq 4 \frac{0.77}{KK'}n+4 \frac{0.77}{KK'}n\\
&\stackrel{\bf{(T1)}}{\leq} 4d(\hat v_{h(i)}^{\text{la}}, \hat v_{h(i+1)}^{\text{fi}},
\eea\eeq
\end{itemize}
Claim \eqref{weeeird} thus holds true for all possible sub-scenarios of the third (and last) {\sf H}-case: this finishes the proof of Lemma \ref{distance}.
\end{proof}

\begin{proof} [Proof of Lemma \ref{r}]
We want now to estimate $f_{\pi}^{(s)}(n,k)$: Let $f^{(s)}_{l,\pi}(n,k)$ (respectively $f^{(s)}_{r,\pi}(n,k)$) the number of paths which are sharing $k$ edges with $\pi$ with at least one common edge betweeen $H_m$ and the middle of the hypercube ( respectively between the middle of the hypercube and $H_{K-m}$) but without considering first and last edge. It holds
\beq\bea\label{symtrick}
f^{(s)}_{\pi}(n,k)= f^{(s)}_{l,\pi}(n,k)+f^{(s)}_{r,\pi}(n,k)=2 f^{(s)}_{l,\pi}(n,k),
\eea\eeq
the last equality by symmetry (see \eqref{claimcomb'} ). Using \eqref{stimo}, \eqref{stima_due} and\eqref{stima_uno}, it clearly holds
\beq\bea\label{gainmil1}
f^{(s)}_{l,\pi}(n,k)&\lesssim  n^{\frac{2K+1}{2}} n^{K n^{\alpha}}\sum_{\boldsymbol k} \sum_{\boldsymbol l} \sum_{\boldsymbol \sigma} \sum_{\boldsymbol l'} \prod_{i=1}^{b}{\tanh\left(\mathsf E\frac{d(v_{h(i)}^{\text{fi}},v_{h(i)}^{\text{la}})-k_{h(i)}}{\mathsf L_{opt} n}\right)}^{d(v_{h(i)}^{\text{fi}},v_{h(i)}^{\text{la}})-k_{h(i)}}\\
&\hspace{2.5cm}\times{\cosh\left(\mathsf E\frac{ d(v_{h(i)}^{\text{fi}},v_{h(i)}^{\text{la}})-k_{h(i)} }{\mathsf L_{opt} n}\right)}^{n} {\left(\frac{\mathsf L_{opt} n}{e \mathsf E}\right)}^{d(v_{h(i)}^{\text{fi}},v_{h(i)}^{\text{la}})-k_{h(i)}}\\
&\hspace{1.5cm} \times \prod_{i=0}^{b} {\tanh\left(\frac{\hat{l_i} \mathsf E}{\mathsf L_{opt}n}\right)}^{d\left(\hat{v}_{h(i)}^{\text{la}},\hat{v}_{h(i+1)}^{\text{fi}}\right)}{\cosh\left(\frac{\hat{l_i} \mathsf E}{\mathsf L_{opt}n}\right)}^{n}{\left(\frac{\mathsf L_{opt}n}{\mathsf E e}\right)}^{\hat{l}_i}.\\
\eea\eeq
Using the monotonicity of the $\cosh$-function \eqref{e9}, and the fact that all paths in $\mathcal J$ have the same length\footnote{\label{foot} Recall from \eqref{a)''} that
$\sum_{i=0}^b \hat{l}_i +\sum_{i=1}^b d(v_{h(i)}^{\text{fi}},v_{h(i)}^{\text{la}})-k_{h(i)}  = \sf L_{opt} n -k,
$.}, in \eqref{gainmil1} yields 
\beq\bea\label{gainmil2}
f^{(s)}_{l,\pi}(n,k)&\lesssim  n^{\frac{2K+1}{2}} n^{K n^{\alpha}}\sum_{\boldsymbol k} \sum_{\boldsymbol l} \sum_{\boldsymbol \sigma} \sum_{\boldsymbol l'}{\cosh\left(\mathsf E\frac{\mathsf L_{opt} n-k }{\mathsf L_{opt}n}\right)}^{n}{\left(\frac{\mathsf L_{opt}n}{\mathsf E e}\right)}^{\mathsf L_{opt} n-k} \\
&\prod_{i=0}^{b} {\tanh\left(\frac{\hat{l_i} \mathsf E}{\mathsf L_{opt}n}\right)}^{d\left(\hat{v}_{h(i)}^{\text{la}},\hat{v}_{h(i+1)}^{\text{fi}}\right)}\prod_{i=1}^{b}{\tanh\left(\mathsf E\frac{d(v_{h(i)}^{\text{fi}},v_{h(i)}^{\text{la}})-k_{h(i)}}{\mathsf L_{opt} n}\right)}^{d(v_{h(i)}^{\text{fi}},v_{h(i)}^{\text{la}})-k_{h(i)}}.\\
\eea\eeq
Let $q\defi \min\{h(i)>m, k_{h(i)}>0\}$, splitting the product of the $\tanh$-terms according to $q$, we obtain 
\beq\bea\label{gainmil2'}
&\prod_{i=0}^{b} {\tanh\left(\frac{\mathsf E\hat{l_i}}{\mathsf L_{opt}n}\right)}^{d\left(\hat{v}_{h(i)}^{\text{la}},\hat{v}_{h(i+1)}^{\text{fi}}\right)}\prod_{i=1}^{b}{\tanh\left(\mathsf E\frac{d(v_{h(i)}^{\text{fi}},v_{h(i)}^{\text{la}})-k_{h(i)}}{\mathsf L_{opt} n}\right)}^{d(v_{h(i)}^{\text{fi}},v_{h(i)}^{\text{la}})-k_{h(i)}}\\
&= \prod_{i=0}^{q-1}{\tanh\left(\frac{ \mathsf E\hat{l_i}}{\mathsf L_{opt}n}\right)}^{d\left(\hat{v}_{h(i)}^{\text{la}},\hat{v}_{h(i+1)}^{\text{fi}}\right)}\prod_{i=1}^{q} {\tanh\left(\mathsf E\frac{d(v_{h(i)}^{\text{fi}},v_{h(i)}^{\text{la}})-k_{h(i)}}{\mathsf L_{opt} n}\right)}^{d(v_{h(i)}^{\text{fi}},v_{h(i)}^{\text{la}})-k_{h(i)}}\\
& \times \prod_{i=q}^{b} {\tanh\left(\frac{\mathsf E\hat{l_i}}{\mathsf L_{opt}n}\right)}^{d\left(\hat{v}_{h(i)}^{\text{la}},\hat{v}_{h(i+1)}^{\text{fi}}\right)} \prod_{i=q+1}^{b} {\tanh\left(\mathsf E\frac{d(v_{h(i)}^{\text{fi}},v_{h(i)}^{\text{la}})-k_{h(i)}}{\mathsf L_{opt} n}\right)}^{d(v_{h(i)}^{\text{fi}},v_{h(i)}^{\text{la}})-k_{h(i)}}\\
& \leq {\tanh\left(\mathsf E\frac{\widehat{L}_{q-1}+\mathcal D_q}{\mathsf L_{opt} n}\right)}^{\widehat{\mathcal D}_{q-1}+\mathcal D_q}\times{\tanh\left(\mathsf E\frac{\widehat{L}_{b}-\widehat{L}_{q-1}+\mathcal D_b-\mathcal D_q}{\mathsf L_{opt} n}\right)}^{\widehat{\mathcal D}_{b}-\widehat{\mathcal D}_{q-1}+\mathcal D_b-\mathcal D_q},
\eea\eeq
the last r.h.s using the monotonicity of the $\tanh$-terms \eqref{e8'''} two times: one time for the first line and a second time for the second line of the second equality. Putting \eqref{gainmil2'} into \eqref{gainmil2} yields
\beq\bea\label{gainmil3}
f^{(s)}_{l,\pi}(n,k)&\lesssim  n^{\frac{2K+1}{2}} n^{K n^{\alpha}}\sum_{\boldsymbol k} \sum_{\boldsymbol l} \sum_{\boldsymbol \sigma} \sum_{\boldsymbol l'} {\cosh\left(\mathsf E\frac{\mathsf L_{opt} n-k }{\mathsf L_{opt}n}\right)}^{n}{\left(\frac{\mathsf L_{opt}n}{\mathsf E e}\right)}^{\mathsf L_{opt} n-k}\\
&{\tanh\left(\mathsf E\frac{\widehat{L}_{q-1}+\mathcal D_q}{\mathsf L_{opt} n}\right)}^{\widehat{\mathcal D}_{q-1}+\mathcal D_q}\times{\tanh\left(\mathsf E\frac{\widehat{L}_{b}-\widehat{L}_{q-1}+\mathcal D_b-\mathcal D_q}{\mathsf L_{opt} n}\right)}^{\widehat{\mathcal D}_{b}-\widehat{\mathcal D}_{q-1}+\mathcal D_b-\mathcal D_q}.\\
\eea\eeq

We now claim that for $0<x\leq y\leq \mathsf E$,
\beq\bea\label{tricksmilieu}
\tanh(x)\leq \frac{3}{4} \tanh(x+y).
\eea\eeq
Indeed, using the addition formula for the $\tanh$ function, it holds
\beq\bea\label{tricksmilieu'}
\frac{\tanh(x)}{\tanh(x+y)}=\frac{\tanh(x)\left(1+\tanh(x)\tanh(y)\right)}{\tanh(x)+\tanh(y)}=\frac{1+\tanh(x)\tanh(y)}{1+\frac{\tanh(y)}{\tanh(x)}}\leq \frac{1+\tanh(\mathsf E)^2}{2}=\frac{3}{4},
\eea\eeq
the last inequality because the function $\tanh$ is increasing and the claim \eqref{tricksmilieu} is settled. 

Again using that $\tanh$ is increasing we also have that
\beq\bea\label{tricksmilieu1}
\tanh(y)\leq \tanh(x+y).
\eea\eeq
Using in  \eqref{gainmil3} the estimates \eqref{tricksmilieu} and \eqref{tricksmilieu1}  with
\beq
x\defi \min\{\widehat{L}_{q-1}+\mathcal D_q, \widehat{L}_{b}-\widehat{L}_{q-1}+\mathcal D_b-\mathcal D_q\},
\eeq
and
\beq
y\defi \max\{\widehat{L}_{q-1}+\mathcal D_q, \widehat{L}_{b}-\widehat{L}_{q-1}+\mathcal D_b-\mathcal D_q\},
\eeq
we obtain
\beq\bea\label{gainmil4}
f^{(s)}_{l,\pi}(n,k)&\lesssim  n^{\frac{2K+1}{2}} n^{K n^{\alpha}}\sum_{\boldsymbol k} \sum_{\boldsymbol l} \sum_{\boldsymbol \sigma} \sum_{\boldsymbol l'} \left(\frac{3}{4}\right)^{\min\{\widehat{\mathcal D}_{q-1}+\mathcal D_q,\widehat{\mathcal D}_{b}-\widehat{\mathcal D}_{q-1}+\mathcal D_b-\mathcal D_q\}}\\
& \hspace{2cm}{\tanh\left(\mathsf E\frac{\mathcal D_b+\widehat{L}_{b}}{\mathsf L_{opt} n}\right)}^{\mathcal D_b+\widehat{\mathcal D}_{b}}{\cosh\left(\mathsf E\frac{\mathsf L_{opt} n-k }{\mathsf L_{opt}n}\right)}^{n}{\left(\frac{\mathsf L_{opt}n}{\mathsf E e}\right)}^{\mathsf L_{opt} n-k}.\\
\eea\eeq
With the same line of reasoning as in \eqref{claim_facile}, we clearly have that
\beq\label{x1}
\widehat{\mathcal D}_{q-1}+\mathcal D_q \geq m \hat n_{K}-k,
\eeq
and 
\beq\label{x2}
\widehat{\mathcal D}_{b}-\widehat{\mathcal D}_{q-1}+\mathcal D_b-\mathcal D_q \geq \frac{n}{2}-k.
\eeq
Thus, it follows from \eqref{x1} and \eqref{x2} that
\beq\label{x}
\min\{\widehat{\mathcal D}_{q-1}+\mathcal D_q,\widehat{\mathcal D}_{b}-\widehat{\mathcal D}_{q-1}+\mathcal D_b-\mathcal D_q\}\geq m \hat n_{K}-k.
\eeq
Plugging \eqref{x} into \eqref{gainmil4} and recalling that paths in $ \mathcal J$ have the same, prescribed length (recall once more \eqref{a)''} or, which is the same, footnote \ref{foot}), it holds 
\beq\bea\label{gainmil5}
f^{(s)}_{l,\pi}(n,k)&\lesssim  n^{\frac{2K+1}{2}} n^{K n^{\alpha}}\sum_{\boldsymbol k} \sum_{\boldsymbol l} \sum_{\boldsymbol \sigma} \sum_{\boldsymbol l'} \; \Bigg[ \left(\frac{3}{4}\right)^{m \hat n_{K}-k} {\tanh\left(\mathsf E\frac{\mathsf L_{opt} n-k}{\mathsf L_{opt} n}\right)}^{\mathcal D_b+\widehat{\mathcal D}_{b}} \\
& \hspace{6cm} \times  {\cosh\left(\mathsf E\frac{\mathsf L_{opt} n-k }{\mathsf L_{opt}n}\right)}^{n}{\left(\frac{\mathsf L_{opt}n}{\mathsf E e}\right)}^{\mathsf L_{opt} n-k} \Bigg].\\
\eea\eeq
We follow {\it exactly} the same steps which from \eqref{stimo_due} lead to \eqref{estimate2'}, this time of course with the factor $\left(\frac{3}{4}\right)^{m \hat n_{K}-k}$. Omitting the details, we obtain
\beq\bea\label{gainmil6}
f^{(s)}_{l,\pi}(n,k)&\leq P_n n^{K n^{\alpha}}\left(\frac{3}{4}\right)^{m \hat n_{K}-k} {\tanh\left(\mathsf E\frac{\mathsf L_{opt} n-k}{\mathsf L_{opt} n}\right)}^{\max\left(n-k,\frac{\mathsf L_{opt} n-k}{4}\right)} \\
& \hspace{5cm}  \times {\cosh\left(\mathsf E\frac{ \mathsf L_{opt} n-k}{\mathsf L_{opt} n}\right)}^{n}{\left(\frac{\mathsf L_{opt} n}{\mathsf Ee}\right)}^{\mathsf L_{opt} n-k},
\eea\eeq
where $P_n$ is a finite degree polynomial. Combining  \eqref{symtrick} and \eqref{gainmil6} and the fact that for $k \leq 200 \hat n_{K}$, $\left(\frac{3}{4}\right)^{m \hat n_{K}-k}\leq \left(\frac{3}{4}\right)^{(m-200) \hat n_{K}}$ finishes the proof of Lemma \ref{r}.
\end{proof}

\section{Concentration of the optimal length: proof of Theorem \ref{stronger_statement}}\label{section_concentration}
Recall that claim \eqref{concentration} reads
\beq
\lim_{n\to \infty} \PP\left(  \#\left\{\pi \in \Pi_{n}:\;  X_{\pi}\leq E+\epsilon^2,  \frac{| l_\pi - \mathsf L n|}{n} \geq a \epsilon \right\} > 0 \right) = 0\,,
\eeq
for $a>0$ large enough. The proof, which is (vaguely) inspired by the {\it saddle point method} \cite{flajolet}, exploits the  strong concentration of the expansion of the $\sinh$-function on specific Taylor-terms. To see how this goes, in virtue of the by now "classical" route (union bounds and Markov's inequality / independence / tail estimates) it holds 
\beq\bea \label{van}
\PP\left(  \#\left\{\pi \in \Pi_{n}:\;  X_{\pi}\leq E+\epsilon^2,  \frac{| l_\pi - \mathsf L n|}{n} \geq a \epsilon \right\}  \right) & \lsim \sum_{\frac{|l - \mathsf L n|}{n}\geq a\epsilon} M_{n,l}\frac{(E+\e^2)^l}{l!}\,.
\eea\eeq 
Splitting the above sum 
\beq\bea\label{2sum}
\sum_{\frac{|l_\pi-\mathsf L n|}{n}\geq a\epsilon} M_{n,l}\frac{{\left(E+\epsilon^2\right)}^l}{l!}=\sum_{l=0}^{( \mathsf L-a\epsilon) n}M_{n,l}\frac{{\left(E+\epsilon^2\right)}^l}{l!}+\sum_{l=(\mathsf L+a\epsilon) n}^{\infty} M_{n,l}\frac{{\left(E+\epsilon^2\right)}^l}{l!}\,,
\eea\eeq 
we claim that both contributions vanish in the large-$n$ limit. 

Concerning the first sum, by Stanley's M-bound \eqref{sf}, and for {\it any} $x>0$, we have that
\beq\bea\label{4sum}
\sum_{l=0}^{(\mathsf L-a\epsilon) n}M_{n,l}\frac{{\left(E+\epsilon^2\right)}^l}{l!} \leq {\sinh(x)}^n\sum_{l=0}^{(\mathsf L-a\epsilon) n}{\left(\frac{E+\epsilon^2}{x}\right)}^l, 
\eea\eeq
We choose $x \defi E+\epsilon^2-\epsilon$, in which case the largest term in the above sum is given by $l= \mathsf L-a\epsilon$, and therefore
\beq\bea
(\ref{4sum})&\lsim {\sinh \left(E+\epsilon^2-\epsilon\right)}^n{\left(\frac{E+\epsilon^2}{E+\epsilon^2-\epsilon}\right)}^{(\mathsf L-a\epsilon) n} \times n \\
&=n \exp\left\{n \log \sinh(E+\epsilon^2-\epsilon)-(\mathsf L- a\epsilon)\log \left(1-\frac{\epsilon}{E+\epsilon^2}\right)\right\}\,.
\eea \eeq
To get a handle on the above exponent we proceed by Taylor expansions around $E$:
\beq \bea
\sinh(E+\epsilon^2-\epsilon)&=\sinh(E)+(\epsilon^2-\epsilon)\cosh(E)+(\epsilon^2-\epsilon)^2\frac{\sinh(E)}{2}+o(\e^2)\\
&=1+(\epsilon^2-\epsilon)\sqrt{2}+\frac{\epsilon^2}{2}+o(\e^2) \qquad (\e \downarrow 0).
\eea\eeq
Further using that $\log(1-x) = 1-x-\frac{x^2}{2}+o(x^2)$ for $x\downarrow 0$, we thus get
\beq \bea \label{Taylor}
& \log \sinh(E+\epsilon^2-\epsilon)-(\mathsf L- a\epsilon)\log \left(1-\frac{\epsilon}{E+\epsilon^2}\right)\\
&\qquad = (\e^2-\e)\sqrt{2}+\frac{\e^2}{2}-(\mathsf L -a\epsilon)\left(-\frac{\e}{E}-\frac{\e^2}{2E^2}\right)+o(\e^2) \\
&\qquad = \epsilon^2 \left(\frac{1}{2}+\sqrt2+\frac{1}{\sqrt2E}-\frac{a}{E} \right)+o(\e^2)\,, \\
\eea\eeq
for $\e \downarrow 0$. But the r.h.s. \eqref{Taylor} is clearly negative as soon as $a>\frac{E}{2}+\sqrt2E+\frac{1}{\sqrt2}$, implying that the first sum in \eqref{2sum} yields no contribution in the large-$n$ limit, as claimed. 

We proceed in full analogy for the second sum, but this time around via Stanley's M-bound with $x\defi E+\epsilon^2+\epsilon$: an elementary estimate of the ensuing geometric series yields
\beq\bea\label{sum0}
\sum_{l=(\mathsf L+a\epsilon) n}^{\infty}M_{n,l}\frac{{\left(E+\epsilon^2\right)}^l}{l!} &\lesssim {\sinh\left(E+\epsilon^2+\epsilon \right)}^n{\left(\frac{E+\epsilon^2}{E+\epsilon^2+\epsilon}\right)}^{(\mathsf L+a\epsilon) n} \frac{E+\epsilon^2+\epsilon}{\epsilon}\\
&\lsim \exp{ n\left\{\log \sinh(E+\epsilon^2+\epsilon) -(\mathsf L+a\epsilon)\log \left(1+\frac{\epsilon}{E+\epsilon^2}\right)\right\}}\,,
\eea \eeq
recalling in the last step the definition of $l_{\e, n} = \mathsf L +a \e$. Once again Taylor-expanding the exponent (around $E$) we get 
\beq \bea
& \log \sinh(\mathsf E+\epsilon^2+\epsilon) -(\mathsf L+a\epsilon)\log \left(1+\frac{\epsilon}{E+\epsilon^2}\right) = \epsilon^2\left(\frac{1}{2}+\sqrt2+\frac{1}{\sqrt2E}-\frac{a}{E}\right)+o(\epsilon^2),
\eea\eeq
for $\e\downarrow 0$: as this is also negative for $a>\frac{E}{2}+\sqrt2E+\frac{1}{\sqrt2}$, the second claim is also settled, and the proof of the Theorem \ref{stronger_statement} follows. \\

\hfill $\square$

\section{Appendix}
We give for completeness the short proof of Stanley's formula \eqref{e5}, which states that
\beq
{\sinh(x)}^{d}{\cosh(x)}^{n-d} =\sum_{l=0}^{\infty}M_{n,l,d}\frac{x^l}{l!}\,.
\eeq
Indeed, by the Binomial Theorem, it holds
\beq\bea
{\sinh(x)}^{d}{\cosh(x)}^{n-d}&=\frac{1}{2^n}{\left(e^{x}-e^{-x}\right)}^{d}{\left(e^{x}+e^{-x}\right)}^{n-d}\\
&=\frac{1}{2^n}\left(\sum_{j=0}^{d}\binom{d}{j}(-1)^j e^{(d-2j)x}\right)\left(\sum_{i=0}^{n-d}\binom{n-d}{i}e^{(n-d-2i)x}\right)\\
& = \frac{1}{2^n} \sum_{j=0}^{d}  \sum_{i=0}^{n-d} \binom{n-d}{i} \binom{d}{j}(-1)^j \exp\left( n-2(i+j) x \right)\,.
\eea \eeq
Taylor expanding the exponential function, we get that the r.h.s. above equals
\beq \bea
& \sum_{l=0}^{\infty}\frac{1}{2^n}\sum_{i=0}^{n-d}\sum_{j=0}^{d}\binom{d}{j}\binom{n-d}{i}{(-1)}^{j}{(n-2(i+j))}^{l}\frac{x^l}{l!}\\
& \qquad\qquad =  \sum_{l=0}^{\infty} \left\{ \frac{1}{2^n} \sum_{i'=j}^{n-d+j}\sum_{j=0}^{d}\binom{d}{j}\binom{n-d}{i'-j}{(-1)}^{j}{(n-2i')}^{l} \mathbbm{1}_{j\leq i'} \right\} \frac{x^l}{l!}\,,
\eea \eeq
the last step by the substitution $i' \hookrightarrow i+j$. By definition of the $M's$, Stanley's formula thus follows .

\hfill $\square$


\end{document}